\renewcommand{\b}{\beta}
\newcommand{\D}{\Delta}
\newcommand{\Si}{\Sigma}
\newcommand{\be}{\begin{equation}}
\newcommand{\ee}{\end{equation}}
\newcommand{\bes}{\begin{equation*}}
\newcommand{\ees}{\end{equation*}}
\renewcommand{\to}{\rightarrow}
\newcommand{\Ga}{\mathsf{G}}
\newcommand{\Ric}{\mathrm{Ric}}
\theoremstyle{plain}
\newtheorem{lemma}{Lemma}[section]
\newtheorem{proposition}[lemma]{Proposition}
\newtheorem{theorem}[lemma]{Theorem}
\newtheorem{Theorem}{Theorem}
\newtheorem{corollary}[lemma]{Corollary}
\newtheorem{Corollary}[Theorem]{Corollary}
\newtheorem{conjecture}[Theorem]{Conjecture}
\newtheorem{question}[Theorem]{Question}
\numberwithin{equation}{section}
\newenvironment{manualtheorem}[1]{%
  \manualtheoreminner
}{\endmanualtheoreminner}
\theoremstyle{definition}
\newtheorem{remark}[lemma]{Remark}
\newtheorem{example}[lemma]{Example}
\newtheorem{definition}[lemma]{Definition}
\newtheorem{notation}[lemma]{Notation}
\newtheorem*{ack}{Acknowledgements}
\DeclareMathOperator{\pt}{\frac{\partial}{\partial t}}
\DeclareMathOperator{\ps}{\frac{\partial}{\partial s}} 
\DeclareMathOperator{\pst}{\frac{\partial^2}{\partial s\partial t}}
\DeclareMathOperator{\dt}{\frac{d}{d t}} 
\DeclareMathOperator{\ds}{\frac{d}{d s}}
\DeclareMathOperator{\tr}{tr} 
\DeclareMathOperator{\Div}{div}
\DeclareMathOperator{\Ker}{Ker}
\DeclareMathOperator{\Coker}{Coker}
\DeclareMathOperator{\Dim}{\mathrm{dim}}
 \DeclareMathOperator{\Span}{span}
\DeclareMathOperator{\Int}{Int}
\DeclareMathOperator{\C}{\mathcal{C}}
\newcommand{\too}{\longrightarrow}
\newcommand{\margin}[1]{\marginpar{\parbox{0in}{\parbox{0.8in}{\color{blue} \raggedright \scriptsize #1}}}}
\newcommand{\dvol}{\, dv}
\newcommand{\da}{\, d\sigma}
\begin{document}

\title[Local structure theory of Einstein manifolds with boundary]{Local structure theory of Einstein manifolds with boundary}

\author{Zhongshan An}
\address{Department of Mathematics, University of Michigan, Ann Arbor, MI, USA}
\email{zsan@umich.edu}
\author{Lan-Hsuan Huang}
\address{Department of Mathematics, University of Connecticut, Storrs, CT 06269, USA}
\email{lan-hsuan.huang@uconn.edu}
\thanks{The second author was partially supported by NSF DMS-2005588 and DMS-2304966.}

\begin{abstract}
We study local structure of the moduli space of compact Einstein metrics with respect to the boundary conformal  metric and  mean curvature. In dimension three, we confirm M. Anderson's conjecture  in a strong sense, showing that the map from Einstein metrics to such boundary data is generically a local diffeomorphism. In dimensions greater than three, we obtain similar results for Ricci flat metrics and negative Einstein metrics under new non-degenerate boundary conditions. 

 \end{abstract}

\maketitle

\tableofcontents
\addtocontents{toc}{\setcounter{tocdepth}{1}}  

\section{Introduction}
The existence of Einstein metrics and the structure of the moduli space have been fundamental questions in geometry and theoretical physics for decades (see, for example, the survey by S.-T.~Yau~\cite{Yau:1999}). While there has been significant progress in the existence theory for closed or complete manifolds without boundary, a full understanding of the moduli space of Einstein metrics remains elusive, except in dimensions 2, 3, and 4 (see surveys by M. Anderson~\cite{Anderson:2010} and C. LeBrun~\cite{LeBrun:1999}).

In 2008, M. Anderson proposed a research program to study Einstein metrics on compact manifolds \emph{with boundary}, recognizing that prescribing both the conformal class of the induced metric $g^\intercal$ and the mean curvature $H_g$ of the boundary $\Sigma$ in $(\Omega, g)$ is an elliptic boundary condition~\cite{Anderson:2008, Anderson:2008-banach}. We denote by $[\gamma]$ the \emph{pointwise} conformal class of metrics on $\Sigma$, where $\gamma_1 \cong \gamma_2$ if $\gamma_1 = u\gamma_2$ for some positive function $u$ on $\Sigma$. The pair $([g^\intercal], H_g)$ is referred to as the \emph{Anderson boundary data} of $(\Omega, g)$. Notably, P.~Gianniotis~\cite{Gianniotis:2016} established short-time existence and uniqueness of the Ricci flow with prescribed Anderson boundary data

While our paper exclusively  addresses Riemannian metrics, we note the growing interest in Anderson boundary data for Lorentzian metrics in general relativity and theoretical physics (see, e.g., \cite{An-Anderson:2021, Liu-Santos-Wiseman:2024, Witten:2021}). Due to the wave nature of the Einstein equation for Lorentzian metrics, there have been local-in-time existence results for the Cauchy or initial boundary value problem. However, the existence and structure theory for \emph{Riemannian} metrics remain relatively unexplored, which is the primary focus of our paper.

{

Another reason that finding Einstein metrics with prescribed Anderson boundary data is attractive is its potential connection to the existence question of conformally compact Einstein (CCE) metrics with prescribed \emph{conformal infinity}. This question has a long history, originating from theoretical physics, such as R. Penrose's work in the '60s, and has been reenergized by the AdS-CFT theory since the '90s. There have been some existence results (see, for example,~\cite{Fefferman:1976, Cheng-Yau:1980, LeBrun:1982, Fefferman-Graham:1985, Graham-Lee:1991, Anderson:2008, Gursky-Szekelyhidi:2020} and a recent survey \cite{Chang-Ge:2022}), but the problem is still wide open. While the existence problem of CCE metrics and our boundary value problem both concern the conformal structure of the ``boundary,'' these two problems are quite different; for example, the Einstein metric obtained in our setting is complete with boundary, whereas the metric in the other setting is geodesically complete. Nevertheless, our boundary value problem may be viewed as an analogy on a bounded region.}

To set the stage, we let $n\ge 3$ and $\Omega$ be a compact, connected $n$-dimensional manifold with a smooth nonempty boundary $\Sigma$. For $k\ge 2$ and $\alpha\in (0, 1)$, we denote the space of $\C^{k,\alpha}$ Riemannian metrics on $\Omega$ by $\mathcal M^{k,\alpha}(\Omega)$. We will work on the following subsets of $\mathcal M^{k,\alpha}(\Omega)$:
\begin{itemize}
\item The subset consisting of  Einstein  metrics with the fixed Einstein constant 
\[
\mathcal M^{k, \alpha}_\Lambda(\Omega) = \Big\{ g\in \mathcal M^{k,\alpha}(\Omega): \Ric_g = (n-1) \Lambda g\Big\}.
\]
\item The subset consisting of negative Einstein metrics 
\[
\mathcal M^{k, \alpha}_-(\Omega) = \Big\{ g\in \mathcal M^{k,\alpha}(\Omega): \Ric_g = \lambda g \mbox{ for some } \lambda<0\Big\}.
\]
\end{itemize}

Let $\mathscr D^{k+1,\alpha} (\Omega)$ denote the space of $\C^{k+1, \alpha}$ diffeomorphisms of $\Omega$ whose restriction on $\Sigma$ is the identity map of $\Sigma$.  For each fixed $k, \alpha$, we define the \emph{boundary  map} $\Pi:\mathcal{M}^{k,\alpha}(\Omega)/ \mathscr D^{k+1,\alpha}(\Omega)\to \mathcal S_1^{k,\alpha}(\Sigma)\times \C^{k-1,\alpha}(\Sigma) $ by 
\[
\Pi (g) = ([g^\intercal], H_g)
\]
where  $\mathcal S_1^{k,\alpha}(\Sigma)$ denotes the space of conformal classes $[\gamma]$ of $\C^{k,\alpha}$ Riemannian metrics on $\Sigma$. We identify $\mathcal S_1^{k,\alpha}(\Sigma)$ with the Banach space of Riemannian metrics $\gamma$ on $\Sigma$ whose determinant   $|\gamma|=1$ in the class $[\gamma]$. We will consider the boundary map restricted on either the moduli space $\mathcal{M}^{k,\alpha}_\Lambda(\Omega)/ \mathscr D^{k+1,\alpha}(\Omega) $ or $\mathcal{M}^{k,\alpha}_-(\Omega)/ \mathscr D^{k+1,\alpha}(\Omega)$.

Through out the paper, we assume  the relative fundamental group $\pi_1(\Omega, \Sigma)=0$, which means that $\Sigma$ is connected and the inclusion map $i:\Sigma\to \Omega$ induces a  surjection $i_*: \pi_1(\Sigma)\to \pi_1(\Omega)$. Anderson showed that $\mathcal{M}^{k,\alpha}_\Lambda(\Omega)/ \mathscr D^{k+1,\alpha}(\Omega) $ is an infinite dimensional smooth Banach manifold if $\pi_1(\Omega, \Sigma)=0$~\cite[Theorem 1.1]{Anderson:2008-banach}. (In contrast,  the Einstein moduli space of a \emph{closed} manifold is locally finite-dimensional~\cite{Berger-Ebin:1969, Koiso:1982}.) It is direct to verify that $\mathcal{M}^{k,\alpha}_-(\Omega)/ \mathscr D^{k+1,\alpha}(\Omega)$ is also an infinite dimensional smooth Banach manifold locally diffeomorphic to $ \big(\mathcal{M}^{k,\alpha}_{\Lambda}(\Omega)/ \mathscr D^{k+1,\alpha}(\Omega) \big) \times ( 0, \infty)$ (see Remark~\ref{re:harmonic}). On either moduli space, the boundary map $\Pi$ is a smooth map.

{Since an Einstein metric is of constant sectional curvature in dimension 3 and thus is more rigid,  Anderson made the following conjecture.}

\begin{conjecture}[Anderson {\cite[p. 2]{Anderson:2012}}]\label{conjecture}
Let $\Omega$ be $3$-dimensional manifold with smooth boundary $\Sigma$ satisfying $\pi_1(\Omega, \Sigma)=0$.  Then for each $\Lambda$ fixed, the boundary map $\Pi:\mathcal{M}^{k,\alpha}_\Lambda (\Omega)/ \mathscr D^{k+1,\alpha}(\Omega)\to \mathcal S_1^{k,\alpha}(\Sigma)\times \C^{k-1,\alpha}(\Sigma) $ defined by $\Pi (g) = ([g^\intercal], H_g)$ is regular\footnote{Recall  a map $f: M\to N$ between Banach manifolds $M, N$ is \emph{regular} at $x\in M$ (or $x$ is a \emph{regular point} of $f$) if the linearization $f'|_x$ is surjective with split kernel. If $x$ is not a regular point of $f$, $x$ is called a \emph{critical point}. } at generic metrics. 
\end{conjecture}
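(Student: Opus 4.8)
The plan is to linearize $\Pi$, identify the obstruction to its surjectivity with a finite-dimensional ``adjoint kernel'' on $\Omega$, and then show that this kernel vanishes for generic $g$ by exploiting the rigidity of Einstein metrics in dimension three. \emph{Linearization.} At $g\in\mathcal M^{k,\alpha}_\Lambda(\Omega)$ the tangent space to the moduli space is the space of infinitesimal Einstein deformations modulo Lie derivatives along vector fields vanishing on $\Sigma$, which I would represent by a symmetric $2$-tensor $h$ on $\Omega$ in the Bianchi (divergence-free) gauge; then $h$ solves a Bianchi-gauged linearized Einstein equation $L_g h = 0$ of Laplace type, and $D\Pi|_g$ sends $h$ to the pair formed by the trace-free part of $h^\intercal$ (the conformal variation) and the linearized mean curvature $H_g'(h)$. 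By Anderson's observation that $([g^\intercal],H_g)$ is an elliptic boundary condition, the pair consisting of $L_g$ and the boundary operator $h\mapsto(\text{trace-free }h^\intercal,\,H_g'(h))$ satisfies the Lopatinski--Shapiro complementing condition, so $D\Pi|_g$, acting between the appropriate H\"older spaces, has closed range and finite-dimensional cokernel; its kernel is then automatically complemented and ``$g$ regular'' reduces to ``$D\Pi|_g$ surjective.''

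\emph{The obstruction space.} Using Green's formula for $L_g$ (formally self-adjoint up to curvature terms), I would identify $\Coker D\Pi|_g$ with the space $\cK_g$ of symmetric $2$-tensors $\kappa$ on $\Omega$ solving the homogeneous adjoint problem $L_g^{*}\kappa = 0$ in $\Int\Omega$ with the dual boundary conditions --- roughly, that $\kappa^\intercal$ be pointwise conformal to $g^\intercal$, together with a complementary Neumann-type condition on the normal derivative of $\kappa$ arising from the mean-curvature term (and the Bianchi gauge). Thus $g$ is critical precisely when $\cK_g\ne 0$; these $\kappa$ play the role that static potentials play in the Bartnik extension problem.

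\emph{Generic vanishing in dimension three.} This is the heart of the matter. When $n=3$, an Einstein metric has constant sectional curvature $\Lambda$, so its full curvature tensor is an explicit algebraic function of $g$; feeding this into $L_g^{*}\kappa = 0$ and combining with the contracted second Bianchi identity, I expect to show that a nonzero $\kappa\in\cK_g$ forces an overdetermined system of Killing (or conformal-Killing) type to admit a nontrivial solution, so that $(\Omega,g)$ carries a nontrivial infinitesimal symmetry whose behaviour along $\Sigma$ is constrained by the dual boundary conditions --- a unique-continuation argument, using that Einstein metrics and solutions of $L_g^{*}\kappa = 0$ are real-analytic in harmonic coordinates, propagates this constraint into the interior. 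One then shows that the set of constant-curvature metrics on $\Omega$ modulo $\mathscr D^{k+1,\alpha}(\Omega)$ carrying such a symmetry is meager, being locally contained in submanifolds of positive (in fact infinite) codimension: a generic normal deformation of $\Sigma$ can be realized by an Einstein deformation, since the moduli space is infinite-dimensional, and it destroys the symmetry. Hence the regular set of $\Pi$ is residual; and a parallel unique-continuation argument shows that $\ker D\Pi|_g$ is also trivial at such $g$, so that $D\Pi|_g$ is an isomorphism and $\Pi$ is a local diffeomorphism near generic $g$ --- the strong form announced in the abstract.

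\emph{Main obstacle.} The delicate step is the third one: for \emph{Einstein} rather than static-vacuum metrics the structure of $\cK_g$ is more intricate, and extracting a genuine symmetry and propagating the dual boundary conditions inward requires care; a secondary burden is verifying the complementing condition for the \emph{adjoint} problem so that $\Coker D\Pi|_g$ is correctly identified with $\cK_g$, and checking that deformations \emph{within} the Einstein moduli space suffice to break any symmetry --- exactly where the three-dimensional constant-curvature rigidity and the infinite-dimensionality of $\mathcal M^{k,\alpha}_\Lambda(\Omega)/\mathscr D^{k+1,\alpha}(\Omega)$ come in.
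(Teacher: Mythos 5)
Your first two steps (ellipticity of the Anderson boundary condition, Fredholm reduction, and an attempt to identify the obstruction space) are broadly consistent with the paper's setup, but the heart of your argument --- the generic vanishing of the obstruction --- is a genuine gap. You assert that a nonzero element $\kappa$ of the adjoint kernel ``forces an overdetermined system of Killing (or conformal-Killing) type to admit a nontrivial solution,'' so that criticality of $\Pi$ is tied to an infinitesimal symmetry of $(\Omega,g)$, and that generic Einstein deformations destroy such symmetries. Neither claim is substantiated, and the first misidentifies the nature of the obstruction. Because the linearized problem is essentially self-adjoint with respect to the Anderson data (this is exactly what the paper's Green-type identity for the linearized Einstein operator $P$ expresses), the cokernel is governed by the same kernel: infinitesimal Einstein deformations $h$ with vanishing linearized Anderson data. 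These are not Killing fields of $g$; at the known critical point (the round ball) the nontrivial kernel elements are Lie derivatives $\mathscr L_X\bar g$ with $X|_\Sigma$ a \emph{conformal Killing field of the boundary sphere}, i.e.\ the failure is a boundary-conformal phenomenon (failure of unique continuation for the conformal Cauchy boundary condition), not the presence of an interior symmetry that a generic perturbation could ``break.'' Consequently your meagerness argument (``a generic normal deformation of $\Sigma$ can be realized by an Einstein deformation and it destroys the symmetry'') is circular: showing that a perturbation kills the kernel is precisely the hard analytic content, and no transversality computation is offered.

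For comparison, the paper closes exactly this gap by a different mechanism: (i) the Green-type identity with Anderson boundary data yields ``hidden'' boundary conditions for kernel elements (Lemma~\ref{le:hidden}); (ii) an inward domain-deformation argument --- upper semicontinuity of the nullity $N(t)$, a differentiable family $h(t_j)$ of kernel elements, and the Green identity applied to $h$ and $w=p-\mathscr L_V h$ --- shows that at generic deformation times kernel elements satisfy the \emph{conformal Cauchy boundary condition} $A'(h)=\tfrac1{n-1}(\tr h^\intercal)A$ (Theorem~\ref{th:generic-bdry}); and (iii) an infinitesimal unique-continuation theorem (extend $h$ by zero into the ambient Einstein manifold, correct to harmonic gauge, use analyticity and analytic continuation together with $\pi_1(\Omega,\Sigma)=0$, plus the $n=3$ non-umbilic/$R_\Sigma\not\equiv0$ dichotomy) shows such $h$ is pure gauge (Proposition~\ref{pr:zero}, Theorem~\ref{th:infinitesimal}). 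Triviality of the kernel plus Fredholm index zero then gives a local diffeomorphism by the Inverse Function Theorem, and genericity comes from density of the good deformation times, not from destroying symmetries. Your proposal would need a concrete substitute for steps (ii)--(iii) --- or a genuine transversality/Sard--Smale computation identifying and killing the actual obstruction --- before it could be considered a proof.
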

  As observed by Anderson, the ``genericity'' in the above conjecture cannot be removed~\cite{Anderson:2015}. Specifically, a round ball is a critical point of $\Pi$ because a result of B.~Ammann, E. Humbert, and M. Ould Ahmedou~~\cite{Ammann-Humbert-Ahmedou:2007} says that any conformal immersion $F$ from a round sphere $(S^{n-1}, g_{S^{n-1}})$   into the Euclidean space $(\mathbb R^n, \bar g)$ must satisfy
\begin{align}\label{eq:AHO}
	\int_{S^{n-1}} Z(H\circ F)\da_{F^* \bar g} =0
\end{align}
where  $Z$ is   any conformal Killing vector field  on the round sphere. Therefore, $Z(H\circ F)$ must change signs. Since some small perturbations of $H_{S^{n-1}}  =n-1$ does not satisfy \eqref{eq:AHO}, it implies that  the boundary map cannot be locally surjective.

We confirm Conjecture~\ref{conjecture} in a stronger form, showing that $\Pi$ is generically a local diffeomorphism. Consequently,  an open neighborhood of a generic metric in the moduli space of 3-dimensional Einstein metrics is uniquely parametrized by the Anderson boundary data. Our result  implies that the image space $\Pi(\mathcal{M}^{k,\alpha}_\Lambda(\Omega)/ \mathscr D^{k+1,\alpha}(\Omega))$, which is already known to be a variety of finite codimension, in fact contains a dense subset that is open in  the codomain $\mathcal S_1^{k,\alpha}(\Sigma)\times \C^{k-1,\alpha}(\Sigma)$. It also provides some answers to the question raised in~\cite[pp. 2011-2012]{Anderson:2008-banach}  about the structure of the image space. \begin{Theorem}\label{th:generic0}
 Let  $\Omega$ be $3$-dimensional with smooth boundary $\Sigma$ satisfying $\pi_1(\Omega, \Sigma)=0$. Then for each fixed $\Lambda$, the  boundary map $\Pi$  is a local diffeomorphism on  an open dense subset of $\mathcal M^{k,\alpha}_\Lambda(\Omega)/\mathscr D^{k+1,\alpha}(\Omega)$. 
  \end{Theorem}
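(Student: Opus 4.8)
The plan is to combine the inverse function theorem with a Lyapunov--Schmidt reduction and real analyticity, reducing the statement to a local question about infinitesimal Einstein deformations in dimension three. First I would record that prescribing $([g^\intercal],H_g)$ is an elliptic boundary condition for the Einstein operator (the starting point of Anderson's program recalled above; in a harmonic gauge the linearized Einstein equation becomes a Lichnerowicz-type elliptic system with a well-posed boundary condition coming from these data together with the $n$ gauge conditions), so that, as in Anderson's analysis of this boundary value problem, $d\Pi|_g$ on $\mathcal M^{k,\alpha}_\Lambda(\Omega)/\mathscr D^{k+1,\alpha}(\Omega)$ is Fredholm of index zero. Then $\Ker d\Pi|_g$ and $\Range d\Pi|_g$ are automatically complemented, so ``$g$ is a regular point'' is equivalent to ``$d\Pi|_g$ is surjective,'' hence to ``$d\Pi|_g$ is an isomorphism,'' hence, by the inverse function theorem, to ``$\Pi$ is a local diffeomorphism near $g$.'' It therefore suffices to show that the set $\mathcal R$ of regular points of $\Pi$ is open and dense; openness is immediate since the invertible operators form an open set and $g\mapsto d\Pi|_g$ is continuous.

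\textbf{Step 2: a real-analytic dichotomy.} Using that Einstein metrics are real analytic in harmonic coordinates, I would equip $\mathcal M^{k,\alpha}_\Lambda(\Omega)/\mathscr D^{k+1,\alpha}(\Omega)$ with a real-analytic Banach manifold structure (via Anderson's slice description, which uses $\pi_1(\Omega,\Sigma)=0$) for which $\Pi$ is real analytic, being polynomial in $g$, $g^{-1}$, and the first derivatives of $g$ along $\Sigma$. Fix a critical $g_0$ and split $d\Pi|_{g_0}$ by its finite-dimensional kernel and a complement of its range; for $g$ near $g_0$ the block form of $d\Pi|_g$ has invertible interior block, so $g$ is a regular point iff the finite-dimensional Schur complement $M(g)\colon\Ker d\Pi|_{g_0}\to\Coker d\Pi|_{g_0}$ is invertible, i.e. $\det M(g)\neq 0$, with $g\mapsto\det M(g)$ real analytic and $M(g_0)=0$. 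Since near any critical point the critical set coincides with the zero set of a real-analytic function, its interior is open and closed in the moduli space; hence on each connected component $\mathcal R$ is either empty or open and dense. The theorem then reduces to showing that every connected component contains a regular point --- equivalently, that there is no component on which $d\Pi|_g$ has nontrivial kernel for every $g$.

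\textbf{Step 3: the main obstacle.} It remains to analyze $\Ker d\Pi|_g$ and rule out totally degenerate components; this is where dimension three enters, and where I expect the real work to lie. A nonzero $h\in\Ker d\Pi|_g$ is a gauge representative of a nontrivial infinitesimal Einstein deformation whose induced variation of the boundary conformal class vanishes ($h^\intercal$ pure trace on $\Sigma$) and with $H_g'(h)=0$; dually, the Green identity behind the index computation identifies a cokernel element with a symmetric $2$-tensor on $\Sigma$ --- the conjugate boundary data, built from the trace-free linearized second fundamental form and the conformal factor of $g^\intercal$ --- against which a natural linear functional on the moduli space vanishes. In dimension three $\Ric_g=2\Lambda g$ forces constant curvature $\Lambda$, so $(\Omega,g)$ is modelled on the simply connected space form of curvature $\Lambda$ by a developing map on the universal cover $\widetilde\Omega$ with holonomy into its isometry group; the assumption $\pi_1(\Omega,\Sigma)=0$ makes this rigidity usable and identifies $\Ker d\Pi|_g$ with a finite-dimensional, explicitly describable space of boundary-data-preserving deformations. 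The hard part will be to show that a deformation of this type persisting for every metric in an open subset of the moduli space would force the associated conjugate boundary tensor to satisfy an overdetermined elliptic system along $\Sigma$, which a unique-continuation argument --- propagating the constant-curvature structure inward from $\Sigma$ --- forces to vanish, a contradiction; alternatively one could exhibit explicit regular metrics in each component, such as space-form domains with trivial isometry group obtained by a generic boundary deformation of a ball. Either route yields the required regular point and completes the proof, with the dimension-three classification of $\Ker d\Pi|_g$ and the genericity of its triviality as the technical core, Steps 1--2 being soft.
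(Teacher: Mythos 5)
Your Steps 1--2 are essentially soft framing (and Step 1 is consistent with the paper: Fredholm index zero for the gauged linearization plus the Inverse Function Theorem is exactly how regularity is converted into ``local diffeomorphism''), but the proof has a genuine gap precisely where you acknowledge it: Step 3, which is the entire mathematical content, is not carried out, and neither of the two routes you sketch would work as stated. The ``exhibit a regular point in each connected component'' route founders because the connected components of the infinite-dimensional moduli space $\mathcal M^{k,\alpha}_\Lambda(\Omega)/\mathscr D^{k+1,\alpha}(\Omega)$ are not understood, so you cannot place an explicit space-form domain in an arbitrary component; moreover, triviality of the isometry group is not the relevant condition --- the round ball is a critical point whose kernel elements come from boundary conformal Killing fields (Example~\ref{ex:conformal}), not isometries, so ``generic boundary deformation of a ball has trivial isometry group, hence is regular'' is a non sequitur. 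The ``overdetermined system plus unique continuation on a totally degenerate component'' route is only a hope: you never identify the extra boundary condition a kernel element must satisfy, which is the crux. Separately, your Step 2 dichotomy rests on equipping the quotient moduli space with a real-analytic Banach manifold structure for which the Lyapunov--Schmidt determinant is real analytic in $g$; this is a nontrivial unproved claim (analyticity of the solution operator and of the slice/gauge construction in the $\C^{k,\alpha}$ topology), and the paper never needs it.

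For comparison, the paper's density mechanism is quite different and entirely local: given any $\bar g$, restrict it to an inward-deformed family of subdomains $\Omega_t\subset\Omega$ and pull back; the nullity of the gauged linearization is locally constant on an open dense set of $t$ (Lemma~\ref{le:nullity}), which permits differentiating kernel elements in $t$ (Proposition~\ref{pr:kernel}); feeding the resulting pair $(h,w)$ into the Green-type identity for the linearized Einstein operator with Anderson boundary data (Corollary~\ref{co:Green-kernel}) forces, in dimension three, the \emph{conformal Cauchy boundary condition} $A'(h)=\tfrac12(\tr h^\intercal)A$ (Theorem~\ref{th:generic-bdry}); and infinitesimal rigidity for that boundary condition (Theorem~\ref{th:infinitesimal}, via extension by zero, harmonic-gauge correction, and analytic continuation --- using that $\Omega_t$ sits inside the larger Einstein manifold $\Omega$, and choosing the deformation so that $\Sigma_t$ is non-umbilic with $R_{\Sigma_t}\not\equiv 0$) shows the kernel is trivial, whence $\Pi$ is a local diffeomorphism at $\llbracket g(t)\rrbracket$ for generic $t$, and $g(t)\to\bar g$ gives density. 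Your proposal contains no analogue of the domain-deformation/Green-identity/unique-continuation chain, so as written it does not prove the theorem.
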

  
 Our approach also extends to higher dimensions $n\ge 4$ under non-degenerate boundary conditions that we will introduce below. For $\bar g\in \mathcal M_\Lambda^{k,\alpha} (\Omega)$, we define an elliptic operator on the boundary $\Sigma$ as follows: for a scalar function $v$, 
  \begin{align}\label{eq:bdry-op}
  L_\Sigma v:= -\Delta_\Sigma v - \tfrac{1}{n-2} R_\Sigma v
  \end{align}
   where $\Delta_\Sigma, R_\Sigma$ are respectively the Beltrami-Laplace operator and the scalar curvature of the induced metric $\bar g^\intercal$ on the boundary.\footnote{The operator $L_\Sigma$ comes from the linearization of the scalar curvature $R_\Sigma$ among conformal deformations; more precisely, $L_\Sigma v  = \frac{1}{n-2}(R_\Sigma)'|_{\bar g^\intercal} (v\bar g^\intercal)$.} 
\begin{definition}
\begin{enumerate}
\item  We say $(\Omega, \bar g)$ has  \emph{non-degenerate boundary} if       
   \begin{align}	
  	&\int_\Sigma vL_\Sigma v\da_{\bar g} >0 \quad \mbox{ for all $v\not \equiv 0$ satisfying $\int_\Sigma v H_{\bar g}  \da_{\bar g} = 0$} \tag{$\star$} \label{eq:bd1}
  \end{align}
  where $H_{\bar g}$ is the mean curvature of $\Sigma$ in $(\Omega, \bar  g)$.   
\item    We say $(\Omega, \bar g)$ has  \emph{$H$-non-degenerate boundary} if     
   \begin{align}	
  	&\int_\Sigma  H_{\bar g} (vL_\Sigma v)\da_{\bar g} >0 \quad \mbox{ for all $v\not \equiv 0$ satisfying $\int_\Sigma v H_{\bar g}  \da_{\bar g} = 0$} \tag{$\star_H$}. \label{eq:bd2}
  \end{align}
\end{enumerate}
   \end{definition}
 
{We note that if  $R_\Sigma, H_{\bar g}$ are nonzero constants, then both non-degenerate boundary conditions become the condition that the \emph{second eigenvalue} of $L_\Sigma$ is positive (while the first eigenvalue is $-\tfrac{1}{n-2} R_\Sigma$ with the first eigenfunction being $1$). } For instance, if $\Sigma$ is the round sphere, $L_\Sigma$ becomes the spherical Laplace operator whose kernel comprising spherical harmonics of degree 1. Thus, the round ball fails to satisfy the non-degenerate condition.

We first consider $\Lambda=0$; namely, the space of the Ricci flat metrics $\mathcal M^{k,\alpha}_0 (\Omega)$. Define 
\begin{align*}
&\widehat {\mathcal M}^{k,\alpha}_0 (\Omega) = \Big\{ g\in \mathcal M^{k,\alpha}_0 (\Omega):\, \mbox{either $(\Omega, \bar g)$ has non-degenerate boundary} \\
&\quad  \mbox{ or $(\Omega, \bar g)$ has $H$-non-degenerate boundary and strictly mean convex boundary}\Big\}.
\end{align*}
 It is clear that $\widehat {\mathcal M}^{k,\alpha}_0 (\Omega)$ is an open subspace of $\mathcal M^{k,\alpha}_0(\Omega)$.

  \begin{Theorem}\label{th:generic-higher}
 Let $n \ge  4$  and $\Omega$ be $n$-dimensional with smooth boundary $\Sigma$ satisfying $\pi_1(\Omega, \Sigma)=0$.   Then the  boundary map $\Pi: \widehat {\mathcal M}^{k,\alpha}_0 (\Omega)/ \mathscr D^{k+1,\alpha}(\Omega)\to \mathcal S_1^{k,\alpha}(\Sigma)\times \C^{k-1,\alpha}(\Sigma)$ is a local diffeomorphism on  an open dense subset. 

 \end{Theorem}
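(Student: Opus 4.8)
The plan is to prove that the set of regular points of $\Pi$ is open and dense in $\widehat{\mathcal M}^{k,\alpha}_0(\Omega)/\mathscr D^{k+1,\alpha}(\Omega)$, since at a regular point the inverse function theorem gives a local diffeomorphism. By the ellipticity of the Anderson boundary condition (together with Anderson's Banach manifold structure on the moduli space), $D\Pi$ is a Fredholm operator of index zero, so a point is regular precisely when $\ker D\Pi$ is trivial, and the regular set is automatically open. For density, I would exploit that $\Pi$, expressed in a fixed gauge modeling the moduli space (a Bianchi/harmonic-type gauge), is a \emph{real-analytic} Fredholm map of index zero, because the Ricci-flat equation together with the gauge condition is an analytic elliptic system whose solutions depend analytically on the data; a Lyapunov--Schmidt reduction then realizes the critical set locally as the zero set of a real-analytic function on a finite-dimensional space, so on each connected component of the domain the critical set is either the whole component or nowhere dense. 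It therefore suffices to show that no connected component of $\widehat{\mathcal M}^{k,\alpha}_0(\Omega)/\mathscr D^{k+1,\alpha}(\Omega)$ consists entirely of critical points.

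Suppose to the contrary that some component did, and fix $\bar g$ in it. Then there is a nonzero $h$ representing a tangent vector to the moduli space --- an infinitesimal Ricci-flat deformation in the chosen gauge --- with $D\Pi|_{\bar g}(h)=0$; concretely the variation of the conformal class vanishes, so $h^\intercal=\psi\,\bar g^\intercal$ for a function $\psi$ on $\Sigma$, and $H'_{\bar g}(h)=0$. The first task is to turn these kernel conditions into a scalar equation for $\psi$ on $\Sigma$. Restricting the linearized Hamiltonian (Gauss) constraint to $\Sigma$ and using $\Ric'_{\bar g}(h)=0$, $H'_{\bar g}(h)=0$, and $h^\intercal=\psi\,\bar g^\intercal$, one obtains a relation expressing $(n-2)L_\Sigma\psi$ through the trace-free part of the second-fundamental-form variation and the normal data of $h$; this is to be combined below with the linearized Codazzi constraint and the divergence-free gauge.

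The decisive step is a Green-type identity for the linearized Einstein operator together with its natural boundary term (equivalently, the second variation along the moduli space of the Einstein--Hilbert action with its boundary term). Pairing the linearized equation with $h$, integrating over $\Omega$, and using the Gauss relation and the Codazzi constraint to rewrite the boundary term with $h^\intercal=\psi\,\bar g^\intercal$ and $H'_{\bar g}(h)=0$, the interior contributions cancel or acquire a favorable sign, and one is left with $\int_\Sigma\psi L_\Sigma\psi\,\da\le 0$ in the non-degenerate case and $\int_\Sigma H_{\bar g}\,(\psi L_\Sigma\psi)\,\da\le 0$ in the $H$-non-degenerate case (the strict mean convexity $H_{\bar g}>0$ being used to sign an intermediate term), together with the orthogonality $\int_\Sigma\psi H_{\bar g}\,\da=0$ needed to apply the hypothesis. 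By $(\star)$, respectively $(\star_H)$, this forces $\psi\equiv 0$, i.e. $h^\intercal=0$ on $\Sigma$. There remains a residual kernel --- transverse-traceless infinitesimal Ricci-flat deformations with $h^\intercal\equiv0$ and $H'_{\bar g}(h)=0$ --- which is the kernel of an elliptic boundary value problem with trivial Dirichlet-type data, hence finite-dimensional; a further finite-dimensional analytic reduction for this transverse-traceless sector (again using the real-analyticity of $\Pi$), or the production of a single regular metric in the component, shows the residual kernel is trivial on a dense set, so no component can be entirely critical, which is the desired contradiction.

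The main obstacle I anticipate is the Green-type identity: the interior pairing and the gauge must be chosen so that every bulk term cancels or carries the correct sign and so that, once the kernel constraints $h^\intercal=\psi\,\bar g^\intercal$ and $H'_{\bar g}(h)=0$ are imposed, the boundary term reduces exactly to the quadratic form $\int_\Sigma\psi L_\Sigma\psi\,\da$ or its $H_{\bar g}$-weighted counterpart --- this is where the linearized Gauss--Codazzi equations, the gauge choice, and the definition of $L_\Sigma$ as the conformal linearization of $R_\Sigma$ must all fit together. The secondary difficulty is controlling the residual transverse-traceless kernel in the absence of a clean unique-continuation statement, which is what forces the finite-dimensional reduction above and, as in the three-dimensional case, is the reason the conclusion holds only on a dense (rather than the whole) subset.
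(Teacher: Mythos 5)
Your overall frame (kernel of the gauged linearization trivial $\Rightarrow$ inverse function theorem $\Rightarrow$ local diffeomorphism, plus openness from Fredholm index zero) matches the paper, but the two load-bearing steps of your density argument have genuine gaps. The decisive one is the Green-type identity step. The identity coming from the second variation of the Einstein--Hilbert functional (Proposition~\ref{pr:Green-conf}, Corollary~\ref{co:Green-kernel}) is \emph{antisymmetric} in the pair $(h,w)$: for a kernel element $h$ of the linearized problem one has $P(h)=0$ in $\Omega$ and vanishing linearized Anderson data on $\Sigma$, so pairing $h$ against itself produces the tautology $0=0$; no choice of gauge or use of the linearized Gauss--Codazzi relations turns this self-pairing into the coercive quadratic form $\int_\Sigma \psi L_\Sigma\psi\,\da$ (or its $H$-weighted version) with a sign. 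In the paper the sign comes from testing $h$ against a genuinely different solution $w$ of the linearized Einstein equation, namely $w=p-\mathscr L_V h$ obtained by differentiating a family of kernel elements $h(t)\in\Ker L_t$ along an \emph{inward domain deformation} $\Omega_t$ (with $\zeta=1$ or $\zeta=H_{\bar g}$), at parameters where the nullity $\dim\Ker L^\Ga_t$ is locally constant; the boundary data of this $w$ are what produce the term $\int_\Sigma\zeta\,|A'(h)-\tfrac1{n-1}(\tr h^\intercal)A|^2\,\da$ and the term $\int_\Sigma\zeta\,\tr h^\intercal\,L_\Sigma(\tr h^\intercal)\,\da$ to which \eqref{eq:bd1} or \eqref{eq:bd2} (together with the zero mean value $\int_\Sigma(\tr h^\intercal)H\,\da=0$) applies. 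Your genericity mechanism (real-analyticity plus Lyapunov--Schmidt) supplies no such test deformation, so the inequality you assert at the critical point is unsupported; moreover the analyticity of the moduli space and of $\Pi$ in $\C^{k,\alpha}$ is itself asserted, not proved.

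The second gap is the ``residual transverse-traceless kernel.'' Even granting $h^\intercal=0$ and $A'(h)=0$ on $\Sigma$, concluding $h=\mathscr L_X\bar g$ with $X|_\Sigma=0$ is not a soft finite-dimensionality remark: it is the infinitesimal unique continuation statement (Proposition~\ref{pr:zero}), whose proof extends $h$ by zero and requires $(\Omega,\bar g)$ to sit inside a strictly larger Einstein manifold. This hypothesis is exactly what the paper's inward deformation provides for free ($\Omega_t\subset\Omega$ with the same Einstein metric), but it is not available for an arbitrary $\bar g$ in your putative all-critical component. Finally, your fallback of ``producing a single regular metric in the component'' is circular: exhibiting regular points is precisely the density statement being proved, and the paper does so only through the domain-deformation construction sketched above.
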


Recall the space of negative Einstein metrics $\mathcal M_{-}^{k,\alpha}(\Omega)$. Let $\widehat {\mathcal M}^{k,\alpha}_- (\Omega)$  be the open subset of $\mathcal M^{k,\alpha}_- (\Omega)$ that consists of metrics $\bar g$ so that $(\Omega, \bar g)$ has $H$-non-degenerate boundary and strictly mean convex boundary.  
\begin{Theorem}\label{th:general}
Let $n\ge 4$ and $\Omega$ be $n$-dimensional with smooth boundary $\Sigma$ satisfying $\pi_1(\Omega, \Sigma)=0$. Then the boundary map $\Pi :\widehat {\mathcal M}_{-}^{k,\alpha}(\Omega) /\mathscr D^{k+1,\alpha}(\Omega)\to \mathcal S_1^{k,\alpha}(\Sigma)\times \C^{k-1,\alpha}(\Sigma)$ is regular on an open dense subset $\mathcal W$. 

More specifically, for  $\llbracket g \rrbracket\in \mathcal W$ and  $(\gamma, \phi)$ sufficiently close to $([g^\intercal], H_g)$ in $\mathcal S_1^{k,\alpha}(\Sigma)\times \C^{k-1,\alpha}(\Sigma)$, there exists a unique path $g_s$, for $|s|$ small, in a neighborhood of $g$ in $\widehat{ \mathcal M}_{-}^{k,\alpha}(\Omega) /\mathscr D^{k+1,\alpha}(\Omega)$ so that $g_s$ realize the same Anderson boundary data  $(\gamma, \phi)$, satisfy
\[
	\Ric_{g_s} = (1+s)\Lambda \left( \frac{V_g}{V_{g_s}}\right)^{\frac{2}{n}} g_s,
\]
and $g_s$ is not isometric to $g_{s'}$ for all $s\neq s'$, where $\Lambda$ is the constant such that $\Ric_g = (n-1)\Lambda g$. 
\end{Theorem}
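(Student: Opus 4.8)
The plan is to produce the family $g_s$ by an implicit function theorem in which the scalar $s$ supplies the extra modulus — the volume-normalized Einstein constant — and to reduce the whole statement to the non-degeneracy of a single linear elliptic boundary value problem, which is where the hypotheses defining $\widehat{\mathcal M}_-^{k,\alpha}(\Omega)$ enter. Fix a representative $\bar g\in\widehat{\mathcal M}_-^{k,\alpha}(\Omega)$ with $\Ric_{\bar g}=(n-1)\Lambda\bar g$ and work in a slice transverse to the $\mathscr D^{k+1,\alpha}(\Omega)$-action, using the gauge-fixing from the construction of the moduli space. For $g$ near $\bar g$ and $s$ near $0$, consider the DeTurck-type map
\[
\Psi(g,s)=\Bigl(\ \Ric_g-\mu_s(g)\,g+\mathcal G_{\bar g}(g),\ [g^\intercal],\ H_g\ \Bigr),\qquad\mu_s(g):=(1+s)(n-1)\Lambda\bigl(V_{\bar g}/V_g\bigr)^{2/n},
\]
where $\mathcal G_{\bar g}$ is the usual Bianchi/DeTurck gauge term rendering the interior operator elliptic, so that $\mu_0(\bar g)$ is the Einstein constant of $\bar g$ and $\Psi(\bar g,0)=(0,[\bar g^\intercal],H_{\bar g})$; the target is the product of the $\C^{k-2,\alpha}$ symmetric $2$-tensors on $\Omega$ with $\mathcal S_1^{k,\alpha}(\Sigma)$ and $\C^{k-1,\alpha}(\Sigma)$. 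Its partial linearization $D_g\Psi|_{(\bar g,0)}$ sends $h$ to $\bigl(\mathcal L_{\bar g}h+\tfrac{(n-1)\Lambda}{n}\overline h\,\bar g,\ (h^\intercal)_0,\ H'_{\bar g}(h)\bigr)$, where $\mathcal L_{\bar g}$ is the gauge-fixed linearized Einstein operator, $\overline h:=V_{\bar g}^{-1}\int_\Omega\tr_{\bar g}h\,\dvol_{\bar g}$, and $(h^\intercal)_0$ is the $\bar g^\intercal$-tracefree part of $h|_{T\Sigma}$. Since Anderson's boundary condition is Lopatinski--Shapiro elliptic for $\mathcal L_{\bar g}$ and $\tfrac{(n-1)\Lambda}{n}\overline h\,\bar g$ is a finite-rank correction, $D_g\Psi|_{(\bar g,0)}$ is Fredholm of index $0$; the normalization term is tuned so that the scaling mode $h=\bar g$ lies in the kernel of the interior operator while being detected on the boundary, since $H'_{\bar g}(\bar g)=-\tfrac12H_{\bar g}\not\equiv0$ by strict mean convexity.

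The crux, and the main obstacle, is to show that $D_g\Psi|_{(\bar g,0)}$ is an isomorphism for $\bar g$ in an open dense subset $\mathcal W$ of $\widehat{\mathcal M}_-^{k,\alpha}(\Omega)/\mathscr D^{k+1,\alpha}(\Omega)$. Openness is automatic from stability of isomorphisms among Fredholm operators, so, the index being $0$, the task is to exclude — on a dense set of $\bar g$ — a nonzero $h$ with $\mathcal L_{\bar g}h+\tfrac{(n-1)\Lambda}{n}\overline h\,\bar g=0$, $(h^\intercal)_0=0$, and $H'_{\bar g}(h)=0$. I expect this to proceed in two stages. A contracted-Bianchi argument (pairing with the Bianchi operator of $h$ and using the boundary conditions, as in the proof that the boundary condition is elliptic) removes the gauge term, leaving $h$ divergence-free and solving the volume-normalized linearized Einstein equation; then, integrating the linearized scalar-curvature identity over $\Omega$ and applying the divergence theorem collapses the interior to a boundary integral in which the normalization term $\overline h$ contributes a multiple of $\int_\Sigma(\cdots)H_{\bar g}\da_{\bar g}$, while the Gauss equation of $\Sigma$ expresses the boundary appearance of $R'_{\bar g}(h)$ through $L_\Sigma$ acting on the linearized conformal factor $v$ of $\bar g^\intercal$ (well defined because $(h^\intercal)_0=0$ forces $h|_{T\Sigma}=v\,\bar g^\intercal$). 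This should produce an identity of the form $\int_\Sigma H_{\bar g}(vL_\Sigma v)\da_{\bar g}+(\text{a sign-definite remainder})=0$ under the constraint $\int_\Sigma vH_{\bar g}\da_{\bar g}=0$ coming from $H'_{\bar g}(h)=0$, forcing $v\equiv0$ by the $H$-non-degeneracy \eqref{eq:bd2} together with $H_{\bar g}>0$; with $h|_{T\Sigma}=0$ and $H'_{\bar g}(h)=0$ in hand, the residual kernel elements not ruled out by \eqref{eq:bd2} are eliminated by an arbitrarily small deformation of $\bar g$ inside $\widehat{\mathcal M}_-^{k,\alpha}(\Omega)$, which yields density of $\mathcal W$. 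Two points are delicate. First, because Anderson's data prescribes only $[\bar g^\intercal]$, the boundary conformal factor is a genuine unknown governed by $L_\Sigma$, and making the volume-normalization bookkeeping land precisely on the $H_{\bar g}$-weighted pairing of \eqref{eq:bd2} — rather than the unweighted \eqref{eq:bd1} available in the Ricci-flat Theorem~\ref{th:generic-higher} — is exactly why strict mean convexity is assumed here and cannot be dropped. Second, $\mathcal L_{\bar g}$ need not be sign-definite for negative Einstein metrics, so the isomorphism cannot hold on all of $\widehat{\mathcal M}_-^{k,\alpha}(\Omega)$; the density argument, showing the bad set is nowhere dense by perturbation (generic Einstein metrics supporting no nontrivial solution of the residual overdetermined system), carries the bulk of the technical weight.

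Once this is established, the implicit function theorem gives, for each $\llbracket\bar g\rrbracket\in\mathcal W$, each small $s$, and each $(\gamma,\phi)$ near $([\bar g^\intercal],H_{\bar g})$, a unique nearby class $\llbracket g_s\rrbracket$ with $\Psi(g_s,s)=(0,\gamma,\phi)$; the contracted-Bianchi argument again forces the gauge term to vanish on the solution, so $g_s$ satisfies the volume-normalized Einstein equation $\Ric_{g_s}=\mu_s(g_s)\,g_s$ of the statement with $\Pi(g_s)=(\gamma,\phi)$, and $g_s\in\widehat{\mathcal M}_-^{k,\alpha}(\Omega)$ for $|s|$ small since the defining conditions are open. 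Taking $s=0$ and varying $(\gamma,\phi)$ shows in particular that $\Pi$ is locally surjective, so $D\Pi$ is onto. Conversely, any class near $\llbracket\bar g\rrbracket$ in the moduli space realizing $(\gamma,\phi)$ is represented by an Einstein metric with constant near that of $\bar g$, hence, after the gauge normalization, solves $\Psi(\cdot,s)=(0,\gamma,\phi)$ for the unique small $s$ matching its constant; by uniqueness it coincides with $\llbracket g_s\rrbracket$. Thus near $\llbracket\bar g\rrbracket$ the fibers of $\Pi$ are exactly the one-parameter families $\{g_s\}$, so $\Pi$ is a submersion there with one-dimensional split kernel, i.e.\ regular on $\mathcal W$. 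Finally, the scale-invariant normalized Einstein--Hilbert quantity $E(g):=\bigl(\int_\Omega R_g\dvol_g\bigr)V_g^{-(n-2)/n}$ is forced by the equation defining $g_s$ to equal $(1+s)\,n(n-1)\Lambda\,V_{\bar g}^{2/n}$, which is injective in $s$ because $\Lambda<0$; since $E$ is an isometry invariant, $g_s$ is not isometric to $g_{s'}$ for $s\ne s'$.
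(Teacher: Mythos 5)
Your overall skeleton — treat the volume-normalized Einstein constant as the extra modulus, gauge the operator to make it elliptic of Fredholm index $0$, apply the inverse/implicit function theorem, identify the fibers of $\Pi$ with the one-parameter families $g_s$, and certify non-isometry by a scale-invariant quantity — matches the paper's strategy (the paper bookkeeps the modulus as an extra unknown $\eta$ with the equation $2\eta^{n/2}V_g$, which is equivalent to your $\mu_s(g)$; your use of the normalized total scalar curvature $E(g)$ for the non-isometry claim is a clean alternative to the paper's uniqueness argument). But the heart of the theorem is the kernel-triviality on a dense set, and there your proposal has a genuine gap. You try to extract a sign-definite boundary identity directly from a single kernel element $h$ at a fixed metric ("integrating the linearized scalar-curvature identity over $\Omega$"). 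That cannot work: what one gets from $h$ alone are the pointwise hidden boundary conditions \eqref{eq:hiddenbd1}--\eqref{eq:hiddenbd2}, in which $L_\Sigma \tr h^\intercal$ is expressed through $A\cdot\big(A'(h)-\tfrac{1}{n-1}(\tr h^\intercal)A\big)$ and the constant $b$, with $A'(h)$ completely unconstrained — no coercive quadratic form appears, and indeed none can, since the kernel is genuinely nontrivial at some metrics and the statement is only generic. In the paper the decisive identity \eqref{eq:bulk-sign} is obtained by pairing $h$ in the Green-type identity with a \emph{second} solution $w=h'-\mathscr L_V h$, manufactured as a difference quotient of kernel elements of the pulled-back operators along the inward domain deformation by $V=-H_{\bar g}\nu$ at parameters where the nullity is locally constant; the boundary data of $w$ is what produces the nonnegative term $\int_\Sigma H\,|A'(h)-\tfrac{1}{n-1}(\tr h^\intercal)A|^2\,d\sigma$ alongside the $H$-weighted $L_\Sigma$-form, while the bulk contribution $\tfrac12 (n-1)(n-2)n\,\Lambda\, b^2\int_\Sigma H\,d\sigma$ is nonpositive precisely because $\Lambda<0$ — a sign you never invoke, even though it is the one place the hypothesis $\Lambda<0$ enters.

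The same domain-deformation machinery is what delivers the density you defer to "an arbitrarily small deformation of $\bar g$ inside $\widehat{\mathcal M}^{k,\alpha}_-(\Omega)$": in the paper, upper semicontinuity of the nullity gives an open dense set of parameters $t$, the convergence of kernels yields the tensor $w$, and $g(t)\to\bar g$ gives density in the moduli space; your version of this step is exactly the unproven claim, with no mechanism supplied, and you yourself note it "carries the bulk of the technical weight." Finally, even granting the boundary vanishing $h^\intercal=0$, $A'(h)=0$ and $b=0$, one still must conclude that $h$ is a boundary-fixing Lie derivative; this requires the infinitesimal rigidity for the Cauchy boundary condition (extension of $h$ by zero, gauge correction, analyticity of the gauged solution, and analytic continuation), which in turn needs $(\Omega,\bar g)$ to sit inside a strictly larger Einstein manifold — automatic for the inward-deformed domains $\Omega_t\subset\Omega$ in the paper, but not addressed in your setup. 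Without these ingredients the proposal identifies the right objects but does not prove the theorem.
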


 Note that the Einstein constant $\Lambda_s:=(1+s)\Lambda \left( \frac{V_g}{V_{g_s}}\right)^{\frac{2}{n}}$ of $g_s$ in Theorem~\ref{th:general} depends on the volume $V_{g_s}$, which we have no control over.  Potentially, $\Lambda_s$ can be the same for different values of $s$, and thus we could not improve the boundary map $\Pi$ to a local \emph{diffeomorphism} when restricted to a smaller space $\widehat {\mathcal M}_{\Lambda}^{k,\alpha}(\Omega) /\mathscr D^{k+1,\alpha}(\Omega)$ with some fixed Einstein constant $\Lambda$. This highlights the subtlety for negative Einstein metrics when compared with the Ricci flat case in Theorem~\ref{th:generic-higher}. The assumption that $\Lambda<0$ comes in mysteriously in the application of the Green-type identity in \eqref{eq:bulk-sign}. It remains an open question whether an analogous statement holds for $\Lambda>0$.

 Given that 3-dimensional Einstein metrics have constant sectional curvature, the problem of finding an Einstein metric with prescribed Anderson boundary data in dimension 3 is also related to  the question of conformal immersion  into spaceforms. Specifically,  Anderson raised the following question about immersions into Euclidean space~\cite[p. 143]{Anderson:2015}.
 
\begin{question}\label{qe}
Given a closed orientable surface $\Sigma$ and arbitrary Anderson boundary data $([\gamma], \phi)$ on $\Sigma$, does there exist an immersion $f: \Sigma\to \mathbb R^3$ with  $([f^*\big(g_0^\intercal \big) ], f^* \big(H_{f}\big) ) = ([\gamma], \phi)$, where $g_0^\intercal$ and $H_{f}$ are respectively the induced metric and mean curvature of the immersed image $f(\Sigma)$.
\end{question}

Anderson~\cite{Anderson:2015} solved a related problem where prescribing mean curvature is weakened to prescribing mean curvature up to certain affine functions and also illustrated the contrasting challenges for Question~\ref{qe} (, essentially, the map $\Phi$ as defined below is of zero degree, and thus it is inadequate to derive existence via a degree argument.)  Below, we give local existence of Question~\ref{qe} for Alexandrov immersions into a general spaceform. Recall that  $f: \Sigma\to S_\Lambda$ is an \emph{Alexandrov immersion} if there is a compact $3$-manifold $\Omega$ with boundary $\Sigma$ and an immersion $F:\Omega\to S_\Lambda$ such that $F|_\Sigma = f$.  

In the next two results, we let $\Omega$ be a simply-connected 3-dimensional manifold whose boundary $\Sigma$ is a  smooth connected surface. Suppose $F_0$ is a smooth immersion of $\Omega$ into  a 3-dimensional spaceform $S_\Lambda$.  Fix $p\in \Omega$. Denote the space of  $\C^{k+1,\alpha}$ immersions that fix the $1$-jet at $p$ by 
\begin{align*}
\mathrm{Imm}^{k+1,\alpha}_p(\Omega, S_\Lambda)  =\big \{ F:\Omega \to S_\Lambda :  \mbox{$F\in \C^{k+1,\alpha}$ with } F(p) = F_0(p), dF|_p = dF_0|_p \big\}.
\end{align*}
Denote the space of $\C^{k+1,\alpha}$ diffeomorphisms of $\Omega$ that fix the boundary and the $1$-jet at $p$ by
 \[
 \mathscr D_p^{k+1,\alpha} (\Omega) = \big\{ \psi \in \mathscr D^{k+1,\alpha}(\Omega): \psi(p)=p, \,  d\psi|_p= \mathrm{Id} \big\}.
 \]
We show in Lemma~\ref{le:embedding} that $\mathrm{Imm}^{k+1,\alpha}_p(\Omega, S_\Lambda)/\mathscr D_p^{k+1,\alpha} (\Omega) $ is locally diffeomorphic to $\mathcal M^{k,\alpha}_\Lambda/\mathscr D^{k+1,\alpha}(\Omega)$, which gives the following immediate consequence from Theorem~\ref{th:generic0}.

\begin{Corollary}\label{co:bdrymap0}
Let $\Omega$ be a simply-connected $3$-dimensional manifold immersed in $S_{\Lambda}$,  with smooth connected boundary $\Sigma$. For each fixed $\Lambda$, define the map 
\begin{align*}
&\Phi: \mathrm{Imm}^{k+1,\alpha}_p(\Omega, S_\Lambda) /\mathscr D_p^{k+1,\alpha}(\Omega) \to\mathcal S_1^{k,\alpha}(\Sigma)\times \C^{k-1,\alpha}(\Sigma)\\
&\quad \Phi (F)= \big([F^*\big(g_\Lambda^\intercal \big) ], F^* \big(H_{F}\big) \big)
\end{align*}
where $g_\Lambda^\intercal$ and $H_{F}$ are respectively the induced metric and mean curvature of the immersed image $F(\Sigma)\subset (S_\Lambda, g_\Lambda)$. Then $\Phi$ is a local diffeomorphism on an open dense subset. 
\end{Corollary}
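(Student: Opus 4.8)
The plan is to deduce Corollary~\ref{co:bdrymap0} from Theorem~\ref{th:generic0} by transporting the local diffeomorphism statement through the identification $\mathrm{Imm}^{k+1,\alpha}_p(\Omega, S_\Lambda)/\mathscr D_p^{k+1,\alpha}(\Omega) \cong \mathcal M^{k,\alpha}_\Lambda(\Omega)/\mathscr D^{k+1,\alpha}(\Omega)$ supplied by Lemma~\ref{le:embedding}. First I would recall that, since $\Omega$ is simply-connected and 3-dimensional and $F_0$ is an immersion into the spaceform $S_\Lambda$, the pullback metric $F_0^*g_\Lambda$ is an Einstein metric on $\Omega$ with Einstein constant $\Lambda$; conversely, by the developing-map argument underlying Lemma~\ref{le:embedding}, any $\C^{k,\alpha}$ metric near $F_0^*g_\Lambda$ with $\Ric = (n-1)\Lambda g$ and constant sectional curvature $\Lambda$ (automatic in dimension three) arises as $F^*g_\Lambda$ for an immersion $F$ fixing the 1-jet at $p$, uniquely up to $\mathscr D_p^{k+1,\alpha}(\Omega)$. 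So Lemma~\ref{le:embedding} gives a diffeomorphism of Banach manifolds
\[
\Theta: \mathrm{Imm}^{k+1,\alpha}_p(\Omega, S_\Lambda)/\mathscr D_p^{k+1,\alpha}(\Omega) \overset{\sim}{\too} \mathcal M^{k,\alpha}_\Lambda(\Omega)/\mathscr D^{k+1,\alpha}(\Omega), \qquad \Theta(\llbracket F\rrbracket) = \llbracket F^*g_\Lambda\rrbracket,
\]
at least locally near the given immersion, and this is where most of the content has already been done.

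The key compatibility to verify is that $\Phi$ factors through $\Theta$, i.e. $\Phi = \Pi \circ \Theta$. This is essentially tautological: if $g = F^*g_\Lambda$, then the induced boundary metric is $g^\intercal = (F|_\Sigma)^*(g_\Lambda^\intercal) = F^*(g_\Lambda^\intercal)$ and, since pulling back by an immersion is conformally natural and commutes with the second fundamental form and mean curvature computations, $H_g = F^*(H_F)$. Hence $\Pi(\Theta(\llbracket F\rrbracket)) = ([g^\intercal], H_g) = ([F^*(g_\Lambda^\intercal)], F^*(H_F)) = \Phi(\llbracket F\rrbracket)$. Therefore $\Phi$ is a local diffeomorphism at $\llbracket F\rrbracket$ exactly when $\Pi$ is a local diffeomorphism at $\Theta(\llbracket F\rrbracket)$.

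With the factorization in hand, I would finish as follows. Theorem~\ref{th:generic0} provides an open dense subset $\mathcal U \subset \mathcal M^{k,\alpha}_\Lambda(\Omega)/\mathscr D^{k+1,\alpha}(\Omega)$ on which $\Pi$ is a local diffeomorphism. Since $\Theta$ is a local diffeomorphism (indeed a diffeomorphism onto an open set), the preimage $\Theta^{-1}(\mathcal U)$ is open and dense in $\mathrm{Imm}^{k+1,\alpha}_p(\Omega, S_\Lambda)/\mathscr D_p^{k+1,\alpha}(\Omega)$ — openness is immediate from continuity, and density follows because a homeomorphism sends dense sets to dense sets and the preimage of a dense open set under an open continuous surjection onto its image is dense. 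On $\Theta^{-1}(\mathcal U)$, the composition $\Phi = \Pi\circ\Theta$ is a composition of local diffeomorphisms, hence a local diffeomorphism, which is exactly the claim.

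The only genuine subtlety — and the step I expect to require the most care — is making the identification $\Theta$ precise and global enough: one must check that Lemma~\ref{le:embedding}'s local diffeomorphism can be arranged to be base-point independent so that the open dense set produced is genuinely open dense in the immersion moduli space (not merely in one chart), and that the quotient by $\mathscr D_p^{k+1,\alpha}(\Omega)$ rather than $\mathscr D^{k+1,\alpha}(\Omega)$ correctly accounts for the rigid-motion ambiguity of the immersion. Since $\Omega$ is connected, fixing the 1-jet of $F$ at the single point $p$ rigidifies the $\mathrm{Isom}(S_\Lambda)$-action completely (a rigid motion of $S_\Lambda$ fixing a frame is the identity), so the two quotients match dimension-by-dimension; once this bookkeeping is done, the rest is formal. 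Everything else is routine given Theorem~\ref{th:generic0} and Lemma~\ref{le:embedding}.
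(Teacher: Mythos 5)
Your proposal is correct and follows essentially the same route as the paper: the corollary is deduced by composing the boundary map $\Pi$ of Theorem~\ref{th:generic0} with the identification $\llbracket F\rrbracket \mapsto \llbracket F^*g_\Lambda\rrbracket$ of Lemma~\ref{le:embedding}, checking $\Phi = \Pi\circ\Psi$ and pulling back the open dense set through the (open, continuous) local diffeomorphism. The paper treats this as an immediate consequence, and your added care about density under a local diffeomorphism and the $1$-jet rigidification of the isometry ambiguity matches the content of Lemma~\ref{le:embedding} and Proposition~\ref{pr:immersion}.
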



We can further strengthen Corollary~\ref{co:bdrymap0} by removing genericity for star-shaped regions. Let $\widehat {\mathrm{Emb}}^{k+1,\alpha}_p(\Omega, \mathbb R^3) \subset \mathrm{Imm}_p^{k+1,\alpha}(\Omega, \mathbb R^3)$  be the subspace consisting of embeddings $F$ so that $F(\Omega)$ is star-shaped  and is not a round ball.

\begin{Theorem}\label{th:bdrymap}
Let $\Omega$ be a simply-connected $3$-dimensional submanifold of $\mathbb R^3$ with smooth connected boundary $\Sigma$.  The map $\Phi$ restricted on $\widehat{\mathrm{Emb}}_p^{k+1,\alpha}(\Omega, \mathbb R^3) /\mathscr D_p^{k+1,\alpha}(\Omega)$ is a local diffeomorphism. 
\end{Theorem}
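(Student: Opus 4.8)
The plan is to reduce the statement to the surjectivity of a linearized operator at flat metrics, to identify the resulting obstruction space with a finite-dimensional space of affine functions on $\mathbb R^3$, and then to kill it using the star-shaped geometry; the round ball will reappear as the unique degenerate case.

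\emph{Reduction.} Given $F\in\widehat{\mathrm{Emb}}^{k+1,\alpha}_p(\Omega,\mathbb R^3)$, let $\bar g$ be the flat metric that $F$ induces on $\Omega$, so $\bar g\in\mathcal M^{k,\alpha}_0(\Omega)$ and $\Ric_{\bar g}=0$, while $F(\Omega)$ is star-shaped and not a round ball. By Lemma~\ref{le:embedding}, near $\llbracket F\rrbracket$ the map $\Phi$ is identified with the boundary map $\Pi$ near $\llbracket\bar g\rrbracket$. By Corollary~\ref{co:bdrymap0}, $D\Phi$ is an isomorphism on an open dense set; since the linearization of $\Pi$ at any Einstein metric is an elliptic boundary value problem, it is Fredholm with locally constant index, hence of index $0$ everywhere. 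Therefore $\Phi$ is a local diffeomorphism at $\llbracket F\rrbracket$ if and only if $\Coker D\Pi|_{\bar g}=0$, and it suffices to prove this when $F(\Omega)$ is star-shaped and not a round ball.

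\emph{The obstruction space.} By the Green-type identity for the linearized conformal--Einstein boundary value problem developed in the earlier sections, a nonzero element of $\Coker D\Pi|_{\bar g}$ is represented by a scalar potential $v$ on $\Omega$ solving the adjoint (``static'') equation $\nabla^2_{\bar g}v=-\Lambda v\,\bar g$; for $\Lambda=0$ this is $\nabla^2_{\bar g}v=0$, so $v$ is the restriction to $\Omega$ of an affine function $v=\langle a,x\rangle+c$, $a\in\mathbb R^3$, $c\in\mathbb R$ (in dimension $3$ there is no additional tensorial contribution to the cokernel, in contrast to $n\ge4$). The boundary conditions recorded by the Green identity cut this $4$-dimensional space down to $\int_\Sigma v\,H_{\bar g}\,\da_{\bar g}=0$ together with $L_\Sigma(v|_\Sigma)=\mu H_{\bar g}$ for some $\mu\in\mathbb R$, with $L_\Sigma$ as in \eqref{eq:bdry-op}, the remaining Neumann-type datum being automatic because $v$ is affine. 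When $F(\Omega)=B_r$ these conditions hold for every $a$ (with $c=\mu=0$): this is precisely the identity \eqref{eq:AHO} and is what makes the round ball a critical point. The task is to show that no other star-shaped region admits a nonzero such $v$.

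\emph{The star-shaped rigidity, and the main obstacle.} Fix a center of star-shapedness $q$ and set $h_q:=\langle x-q,\nu\rangle\ge0$ on $F(\Sigma)$. Using $\Delta_\Sigma(v|_\Sigma)=\langle a,\Delta_\Sigma x\rangle=-H_{\bar g}\langle a,\nu\rangle$ and the Gauss identity $R_\Sigma=H_{\bar g}^2-|\mathrm{II}|^2$, the condition $L_\Sigma(v|_\Sigma)=\mu H_{\bar g}$ becomes a pointwise relation among $\langle a,x-q\rangle$, $\langle a,\nu\rangle$, $H_{\bar g}$, $|\mathrm{II}|$ and $\mu$. I would test this relation against $h_q$ and against $v|_\Sigma$, integrate by parts, and combine with the classical identities for star-shaped hypersurfaces --- $\int_\Sigma h_q\,\da_{\bar g}=nV_{\bar g}$, Minkowski's formula $\int_\Sigma H_{\bar g}h_q\,\da_{\bar g}=(n-1)|\Sigma|_{\bar g}$, and $\Delta_\Sigma h_q+|\mathrm{II}|^2h_q=H_{\bar g}+\langle x-q,\nabla_\Sigma H_{\bar g}\rangle$ --- and with the constraint $\int_\Sigma v\,H_{\bar g}\,\da_{\bar g}=0$, aiming to reach an integral inequality all of whose surviving terms carry a sign controlled by $h_q\ge0$ and Cauchy--Schwarz. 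This should force $a=0$, hence $v\equiv0$, unless every inequality is an equality; the equality case should collapse to $\mathrm{II}$ being pure trace, i.e.\ $F(\Sigma)$ totally umbilic and $F(\Omega)$ a round ball --- which is excluded. One may instead run the argument on the kernel side, ruling out an infinitesimal deformation of $F(\Sigma)\subset\mathbb R^3$, modulo $\mathscr D^{k+1,\alpha}_p(\Omega)$ and ambient rigid motions, that preserves the conformal class and the mean curvature to first order; here one uses that $\Sigma$ is a sphere, so the Codazzi equation determines the trace-free second fundamental form from $(\bar g^\intercal,H_{\bar g})$ and the conformal Killing operator on $\Sigma$ is surjective. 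I expect the main obstacle to be exactly this last step: arranging the integration by parts so that star-shapedness produces a \emph{strict} inequality rather than merely a sign, controlling the Lagrange multiplier $\mu$ and the freedom in $a$, and pinning down the equality case as precisely the round ball; a minor additional point is that the (lower-dimensional) zero set of $h_q$ does not affect the strict sign.
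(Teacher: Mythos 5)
There is a genuine gap, and it sits exactly at the decisive step. Your reduction (identify $\Phi$ with $\Pi$ via Lemma~\ref{le:embedding}, Fredholm index $0$, so it suffices to kill the kernel/cokernel at a star-shaped, non-round flat domain) matches the paper. But your description of the obstruction space is not justified and does not match this boundary value problem: by the Green-type identity (Proposition~\ref{pr:Green-conf}), the operator $P$ is formally self-adjoint with respect to the Anderson data, so the cokernel of the linearized problem is represented by symmetric $(0,2)$-tensors $w$ solving the linearized Einstein equation with the conformal Cauchy-type boundary conditions --- not by scalar potentials solving $\nabla^2 v=-\Lambda v\,\bar g$. The scalar/affine description you invoke is the static-potential picture from the Bartnik (scalar curvature) problem, and your parenthetical claim that ``in dimension $3$ there is no additional tensorial contribution to the cokernel'' is precisely the assertion that would need proof; in the paper the nontrivial kernel at the round ball is tensorial, namely $\mathscr L_X\bar g$ with $X|_\Sigma$ a conformal Killing field of $S^2$ (Example~\ref{ex:conformal}, Corollary~\ref{co:star}\,(2)). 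Moreover, even granting your scalar reduction, the core rigidity claim --- that no star-shaped domain other than the ball admits a nonzero obstruction --- is only proposed as a plan (``I would test \dots, I expect the main obstacle to be exactly this last step''), so the proof is not closed.

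The paper closes this step by a completely different mechanism, which you may want to compare: for a star-shaped domain the dilations $\psi_t(x)=(1-t)x$ give an inward deformation with $g(t)=(1-t)^2\bar g$, and the scaling of $\Ric'$, of the trace-free part of $h^\intercal$, and of $H'$ shows that any kernel element of $L_0$ lies in $\Ker L_t$ for every $t$ (Corollary~\ref{co:star}). Generic-in-$t$ rigidity is then supplied by the domain-deformation argument (Theorem~\ref{th:generic-bdry} via the Green-type identity, feeding into the unique-continuation results Proposition~\ref{pr:zero} and Theorem~\ref{th:infinitesimal}, which for a flat domain sitting inside $\mathbb R^3$ apply directly; for $n=3$ one only needs $\Sigma_t$ non-umbilic and $R_{\Sigma_t}\not\equiv 0$, both automatic for a non-round boundary of a star-shaped region). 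This yields infinitesimal rigidity at $t=0$, and Proposition~\ref{pr:Fredholm} together with Lemma~\ref{le:embedding} then gives the local diffeomorphism. No Minkowski-type integral identity on $\Sigma$ is needed, and no direct classification of the cokernel is attempted; if you want to pursue your route, the honest formulation is the one you mention in passing --- rule out nontrivial infinitesimal deformations of the embedded surface preserving $([\gamma],H)$ --- but that statement is exactly as hard as the theorem and is not established by the outline you give.
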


Regarding Question~\ref{qe}, our results indicate that an arbitrary Anderson boundary data $([\gamma],\phi)$ sufficiently close to that of a star-shaped embedding $F$, which is not a round sphere, is realized by a geometrically unique star-shaped embedding in a neighborhood of $F$.   

Although we only focus on studying regular points of the boundary map in this paper,  we will expand the current approach to analyze critical points and in particular construct Ricci flat metrics perturbed from round balls with prescribed boundary mean curvature in an upcoming paper.

\subsection{The motivating example and organization of the paper}

We first discuss central ideas of the approach for the three-dimensional result (Theorem~\ref{th:generic0}) and the case of Ricci flat (Theorem~\ref{th:generic-higher}). Then we discuss the necessary modification for the case of negative Einstein constant (Theorem~\ref{th:general}) at  the end of the section. The approach is inspired by our recent work on the Bartnik static extension conjecture in general relativity~\cite{An-Huang:2022, An-Huang:2022-JMP, An-Huang:2024}.

 Let $\Omega$ be a compact manifold with boundary $\Sigma$. We can formulate the problem as follows: Given a Riemannian metric $\gamma$  with determinant $1$ and a scalar function $\phi$ on $\Sigma$, we would like to solve the geometric boundary value problem $T(g) = (0, \gamma, \phi)$, where the operator $T$, defined between suitable Banach spaces, is given by
\begin{align*}
T (g)= \left\{ \begin{array}{ll} \Ric_g - (n-1)\Lambda  g  \quad \mbox{ in } \Omega \\
	\left\{ \begin{array}{l} |g^{\intercal}|^{-\frac{1}{n-1}} g^\intercal\\ H_g 
	\end{array}\right. \quad \mbox{ on } \Sigma
	\end{array} \right..
\end{align*}
 The operator modified by the harmonic gauge 
\begin{align*}
	&T^\Ga (g ) = \left\{ \begin{array}{ll} \Ric_{g}  - (n-1)\Lambda \eta(g) g + \mathcal D_g  \beta_{\bar g} g\quad \mbox{ in } \Omega \\
	\left\{ \begin{array}{l} \beta_{\bar g} g \\   |g^{\intercal}|^{-\frac{1}{n-1}} g^\intercal\\ H_g 
	\end{array}\right. \quad \mbox{ on } \Sigma
	\end{array} \right.
\end{align*}
is elliptic and has a Fredholm index of $0$ \cite{Anderson:2008, Anderson:2008-banach} (also see Section~\ref{se:gauge} for details). The goal is to show that its linearization at $\bar g$ has a trivial kernel for a generic $\bar g\in \mathcal M^{k,\alpha}_\Lambda(\Omega)$. Then one applies the Inverse Function Theorem.

To motivate the arguments in Section~\ref{se:generic}, which establishes the triviality of the kernel, we consider the following simple elliptic boundary value problem. Let $\bar g$ be a Riemannian metric and $u$ a scalar or vector-valued function on $\Omega$. We define
\begin{align}\label{eq:example0}
	B u = \left\{ \begin{array}{ll} \Delta_{\bar g} u + f u  & \mbox{ in } \Omega\\
	u & \mbox{ on } \Sigma \end{array}\right.,
\end{align}
where $f$ is a smooth scalar function on $\Omega$. It is well-known that the kernel of $B$ is generically trivial for inward domain deformations. Below, we present a proof that is different from standard proofs, reflecting our method for the Ricci boundary value problems within this simple setting.

Consider a smooth one-parameter family of inward deformations of $\Omega$ by $\psi_t: \Omega \to \Omega_t$ for $t\in [0, \delta)$, such that $\psi_0=\mathrm{Id}$, $\left. \frac{\partial}{\partial t}\right|_{t=0} \psi_t = V$, and $V|_\Sigma = -\nu$, where $\nu$ is the outward unit normal to $\Sigma$. Denote the pull-back metrics  on $\Omega$ by $g(t) = \psi_t^* (\bar g|_{\Omega_t})$. Studying the kernel of $B$ on $\Omega_t$ is equivalent to studying the kernel of the pull-back operators $B_t$ on the fixed $\Omega$:
\begin{align*}
	B_t u = \left\{ \begin{array}{ll} \Delta_{g(t)} u + (\psi_t^* f) u  & \mbox{ in } \Omega\\
	u & \mbox{ on } \Sigma. \end{array}\right.
\end{align*}

\begin{lemma}\label{le:motivation}
There is an open dense subset $J_0\subset [0, \delta)$ such that $\Ker B_t=\{0\}$ for $t\in J_0$.  
\end{lemma}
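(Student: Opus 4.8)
The idea is a standard analytic-perturbation argument combined with unique continuation, packaged to mirror the method of Section~\ref{se:generic}. First I would observe that for each $t$ the operator $B_t$ (with Dirichlet boundary condition) is self-adjoint, elliptic, and has discrete real spectrum $\mu_1(t)\le \mu_2(t)\le\cdots$ with eigenvalues depending real-analytically on $t$ (this is where smoothness of the family $g(t)=\psi_t^*(\bar g|_{\Omega_t})$ and of $\psi_t^* f$ in $t$ matters; analytic perturbation theory of Kato applies since the domain is fixed and only the coefficients vary). The kernel of $B_t$ is nontrivial exactly when $0$ is one of the $\mu_j(t)$. Each function $t\mapsto \mu_j(t)$ is real-analytic, hence either vanishes identically on $[0,\delta)$ or vanishes on a discrete (hence nowhere dense) set. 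So the plan reduces to ruling out the possibility that some eigenvalue branch is identically zero.

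\textbf{Ruling out a persistent zero eigenvalue.} Suppose $\mu_j(t)\equiv 0$ on an open subinterval, so there is a smooth family $u_t$ with $\Delta_{g(t)}u_t + (\psi_t^* f)u_t=0$ in $\Omega$, $u_t=0$ on $\Sigma$, normalized $\|u_t\|_{L^2}=1$. Pulling back to the moving domains, $\tilde u_t:=(\psi_t^{-1})^*u_t$ solves $\Delta_{\bar g}\tilde u_t + f\tilde u_t = 0$ on $\Omega_t$ with $\tilde u_t=0$ on $\partial\Omega_t=\Sigma_t$. Differentiating in $t$ at $t=0$ and using Hadamard-type variation (the domain moves inward with normal speed $1$ since $V|_\Sigma=-\nu$), the boundary condition $\tilde u_t|_{\Sigma_t}=0$ forces, at first order,
\begin{align}\label{eq:hadamard-plan}
\dot{\tilde u}_0 + \frac{\partial \tilde u_0}{\partial \nu}\cdot(-1) \ \text{vanishes on }\Sigma,
\end{align}
i.e.\ $\dot{\tilde u}_0 = \partial_\nu \tilde u_0$ on $\Sigma$; meanwhile $\dot{\tilde u}_0$ again satisfies $(\Delta_{\bar g}+f)\dot{\tilde u}_0 = 0$ in $\Omega$. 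Now pair $\dot{\tilde u}_0$ against $\tilde u_0$ with Green's identity: since both solve the same homogeneous equation, $\int_\Sigma(\tilde u_0\,\partial_\nu\dot{\tilde u}_0 - \dot{\tilde u}_0\,\partial_\nu\tilde u_0)\da = 0$, and using $\tilde u_0|_\Sigma=0$ this collapses to $\int_\Sigma (\partial_\nu\tilde u_0)^2\da = 0$, hence $\partial_\nu\tilde u_0\equiv 0$ on $\Sigma$. Combined with $\tilde u_0|_\Sigma=0$, the function $\tilde u_0$ has vanishing Cauchy data along $\Sigma$ while solving a second-order linear elliptic equation, so by Aronszajn's unique continuation theorem $\tilde u_0\equiv 0$ on $\Omega$, contradicting $\|u_0\|_{L^2}=1$. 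Therefore no eigenvalue branch is identically zero.

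\textbf{Assembling the statement.} Each branch $\mu_j$ thus has zero set $Z_j\subset[0,\delta)$ nowhere dense and closed in $[0,\delta)$; since the branches can be enumerated and for any fixed $t$ only finitely many $\mu_j(t)$ lie in a bounded neighborhood of $0$, a Baire-category argument shows $\bigcup_j Z_j$ is still nowhere dense, and its complement $J_0$ is open and dense in $[0,\delta)$. For $t\in J_0$ no eigenvalue of $B_t$ vanishes, i.e.\ $\Ker B_t=\{0\}$, which is the claim.

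\textbf{Main obstacle.} The delicate point is the Hadamard variation bookkeeping in \eqref{eq:hadamard-plan}: getting the correct boundary identity relating $\dot{\tilde u}_0$ and $\partial_\nu \tilde u_0$ from differentiating the moving-boundary Dirichlet condition, and making sure the regularity of the family $u_t$ in $t$ (supplied by analytic perturbation theory together with elliptic regularity up to the boundary) is strong enough to justify differentiating under the trace. Once that first-order boundary relation is in hand, the Green's identity step and the appeal to unique continuation are routine; the eigenvalue-analyticity and Baire steps are standard.
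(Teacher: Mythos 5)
The analytic core of your argument---differentiating a family of kernel elements in $t$, extracting the first-order boundary relation $\dot{\tilde u}_0=\partial_\nu \tilde u_0$ on $\Sigma$, and feeding it into Green's identity to force $\partial_\nu\tilde u_0\equiv 0$ and then unique continuation---is exactly the paper's computation \eqref{eq:motivation}. The gap is in the scaffolding that is supposed to produce that differentiable family. The deformation $\psi_t$ is only assumed to be a \emph{smooth} one-parameter family, so the coefficients $g(t)=\psi_t^*(\bar g|_{\Omega_t})$ and $\psi_t^*f$ depend only smoothly, not real-analytically, on $t$; Kato--Rellich analytic perturbation theory therefore does not apply, the eigenvalue branches $\mu_j(t)$ need not be real-analytic, and the dichotomy ``identically zero or zero on a discrete set'' on which your structure rests is unjustified. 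In fact your unique-continuation step only rules out a branch vanishing on an open subinterval, and that weaker statement is all you need (the bad set $\{t:\Ker B_t\neq\{0\}\}$ is closed by normalization plus elliptic estimates, so it suffices that it have empty interior); but to run that step you must first produce a family of normalized kernel elements $u_t$ that is differentiable in $t$ on such a subinterval, and with merely smooth dependence this is not automatic when eigenvalue multiplicities jump. Supplying exactly this is the technical content of the paper's argument: upper semicontinuity and local constancy of the nullity function, and a difference-quotient-with-correction construction of $u'(0)$ (Lemma~\ref{le:nullity}, Proposition~\ref{pr:kernel}). A repair along your lines is possible without analyticity---restrict to a subinterval where $\dim\Ker B_t$ is constant and check that the spectral projection onto the zero eigenspace is then smooth in $t$, using a contour around $0$---but as written this step is deferred entirely to analytic perturbation theory, which is not available.

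Two smaller points. First, a countable union of nowhere dense sets is meager, not necessarily nowhere dense; your assembly step should instead obtain openness of $J_0$ directly (only finitely many eigenvalues lie near $0$ locally uniformly in $t$, or equivalently the bad set is closed by the compactness argument above) and then density from Baire or from the empty-interior statement. Second, since $\Sigma\not\subset\Omega_t$ for $t>0$, the quantity $\dot{\tilde u}_0$ on $\Sigma$ requires a one-sided interpretation with regularity up to the boundary; the paper sidesteps this by working with difference quotients of the pulled-back functions on the fixed domain $\Omega$, which is the cleaner way to justify your relation $\dot{\tilde u}_0=\partial_\nu\tilde u_0$.
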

\begin{proof}
Define the nullity function $N(t) = \Dim \Ker B_t$. One can show that the set $J_0=\{ t\in [0, \delta): N(t) \mbox{ is constant in an open neighborhood of $t$}\}$ is an open and dense subset. (See Lemma~\ref{le:nullity}.) For $a\in J_0$, there is a decreasing sequence $t_j\searrow a$ such that $N(t_j)= N(a)$.  Without loss of generality, we assume $a=0$. 

Suppose $N(0) >0$. For each $t_j$, let $u(t_j)$ be a nontrivial kernel element of $B_{t_j}$ with the normalization $\| u(t_j)\|_{\C^0(\Omega)}= 1$. By elliptic estimate, there is a convergent subsequence, labeled still as $u(t_j)$, that converges to $u \neq 0$ with $B u=0$ as $t_j\searrow 0$.

Because $\Omega_{t_j}\subset \Omega$, we can restrict $u$ on $\Omega_{t_j}$ and pull it back to $\Omega$, denoted by $\psi_{t_j}^* u $, which satisfies
\[
\Delta_{g(t_j)} \psi_{t_j}^* u + (\psi_{t_j}^* f) (\psi_{t_j}^* u)= 0 \quad \text{in } \Omega \quad \text{for all } j.
\]
Subtracting the equation for $u(t_j)$, we get 
\[
\Delta_{g(t_j)} (u(t_j)- \psi_{t_j}^* u )+ (\psi_{t_j}^* f) (u(t_j)-\psi_{t_j}^* u)= 0 \quad \text{in } \Omega.
\] 
Taking the difference quotient in $t$ and evaluating at $t=0$, we obtain 
\[
\Delta_{\bar g} (u'(0) - V(u)) + f (u'(0)- V(u)) = 0
\]
where $u'(0) := \lim_{j\to \infty} \frac{u(t_j) - u}{t_j}$. Note $u'(0)$ exists. Cf. Proposition~\ref{pr:kernel} in a more general setting.

To complete the argument, we apply the Green identity for $u$ and $w:= u'(0)-V(u)$, obtaining
\begin{align}\label{eq:motivation}
0 = \int_\Omega \Big ( w\Delta u - u \Delta w\Big) \dvol= \int_\Sigma\Big( w \nu(u) - u \nu (w)  \Big)\da =   \int_\Sigma (\nu(u))^2\da
\end{align}
where we use that $u$ has zero Dirichlet boundary value and $w|_\Sigma = \nu (u)$. This shows that $u$ satisfies the Cauchy boundary condition $u=0, \nu(u)=0$ on $\Sigma$, and thus $u\equiv 0$ by unique continuation, contradicting our initial assumption.
\end{proof}

We extend the  approach in the previous simple situation to our geometric boundary value problem. In addition to the subtlety of gauge choices for our geometric tensor equation, the general argument is significantly more challenging due to several factors and requires novel ingredients, which will be highlighted below.

In Section~\ref{se:pre}, we consider the \emph{Green-type identity} for the Einstein operator with respect to the Anderson boundary data, which will play the role as the second equality in \eqref{eq:motivation}. Indeed, the Ricci operator (as it appears in the operator $T$ defined above)  is not self-adjoint with respect to the Anderson boundary data. By the second variation of the Einstein-Hilbert functional, we use instead the (linearized) Einstein operator $P(h)$  in the Green-type identity, which  is self-adjoint with respect to the Anderson boundary data and  is also ``algebraically equivalent'' to the linearized Ricci operator. See Proposition~\ref{pr:Green-conf}.

Next, we use Green-type identity to find the correct ``Cauchy boundary condition'' in this setting, which we refer to as the (linearized) \emph{conformal Cauchy boundary condition}.  The argument of applying the Green-type identity to obtain the conformal Cauchy boundary condition as in \eqref{eq:motivation} yields a stronger result in the three-dimensional case than in dimensions greater than three, where, in the latter case, the non-degenerate boundary condition comes into play. In Section~\ref{se:conf}, we discuss unique continuation, referred to as \emph{infinitesimal rigidity}, with respect to the conformal Cauchy boundary condition. While we find that infinitesimal rigidity holds in most cases, we also observe that it fails for round balls, explaining the nature of the boundary map fails to be regular at the round ball. We then prove Theorems~\ref{th:generic0}, \ref{th:generic-higher}, and \ref{th:bdrymap} in Section~\ref{se:generic}.

In Section~\ref{se:nonzero}, we prove Theorem~\ref{th:general}; specifically, the case where $\Lambda <0$ and $n\ge 4$. A fundamental difference in the case of $\Lambda \neq 0$ compared to $\Lambda=0$ is that  the Hilbert-Einstein functional is not invariant under metric dilation. Consequently, a ``zero mean value'' property (Lemma~\ref{le:trH}, cf. Lemma~\ref{le:vanish}), necessary for incorporating the non-degenerate boundary conditions, does not hold when $\Lambda\neq 0$. Instead, the mean value is linked to volume deformation. This observation motivates the consideration of the Ricci equation with varying Einstein constants in conjunction with volume, modified  from the operator $T$. Specifically, the new map is defined for a pair consisting of a Riemannian metric $g$ and a real number $\eta$ as follows:
 \begin{align*}
	\overline T(g, \eta) =  \left\{\begin{array}{ll} \Ric_g - (n-1) \Lambda \eta g \quad \mbox{ in } \Omega \\
	\left\{ \begin{array}{ll} |g|^{-\frac{1}{n-1}}g^\intercal \\
	H_g \end{array} \right. \mbox{ on } \Sigma\\
		2\eta^{\frac{n}{2}} V_g 
	\end{array}  \right..
\end{align*}
We then employ the domain deformation approach to show that the gauged version of $\overline{T}$ has a trivial kernel generically and obtain Theorem~\ref{th:general} via Inverse Function Theorem. 
 
In Section~\ref{se:example}, we present the family of $n$-dimensional Euclidean Schwarzschild metrics for $n\ge 4$ as examples for the non-degenerate boundary condition. Such solutions are  called gravitational instantons in general relativity. We demonstrate that for $n=4$, all sublevel sets of the radius function have non-degenerate boundary. We extend our discussion to higher dimensions, as well as the analogous cases of $\Lambda\neq 0$, namely the Euclidean anti-de Sitter Schwarzschild and the Euclidean de~Sitter Schwarzschild, to identify necessary and sufficient conditions for the radius to have non-degenerate boundary.

 \begin{ack}
We thank Michael Anderson for suggesting those examples in Section~\ref{se:example} and for providing valuable comments on a previous version.
\end{ack}

\section{Green-type identity and harmonic gauge}\label{se:pre}

Throughout the paper, let $n\ge 3$ and $\Omega$ be an $n$-dimensional compact manifold with a nonempty smooth  boundary $\Sigma$. Assume that $\Omega$ carries a smooth Riemannian metric $\bar g$ with $\Ric_{\bar g} = (n-1) \Lambda \bar g$, referred to as a \emph{background metric}. {Fixing an atlas of $\Omega$ such that the components of $\bar g$ are analytic (e.g. using harmonic coordinates by \cite[Theorem 5.2]{DeTurck-Kazdan:1981}), we use  $\C^{k,\alpha}$ to denote the H\"older spaces (for both functions and tensors) with respect to that atlas.} We denote $\bar g^\intercal$ as the induced metric on $\Sigma$ and $\nu$ as the \emph{outward} unit normal of $\bar g$ to $\Sigma$. The second fundamental form is defined as $A= \nabla_\Sigma \nu$, and the mean curvature is denoted by $H = \tr_\Sigma A$. In the subsequent sections, additional assumptions on $\Omega$ will be imposed as needed; see Notation~\ref{notation} and Notation~\ref{no:foliation}.

\subsection{The setup of the geometric boundary value problem}
 Fixing $k\ge 2$ and $\alpha \in (0, 1)$, we denote by $\mathcal M^{k,\alpha}(\Omega)$ the space of $\C^{k,\alpha}$ Riemannian metrics on $\Omega$. For $\Lambda\in\mathbb R$, let  $\mathcal  M^{k,\alpha}_\Lambda(\Omega)\subset \mathcal M^{k,\alpha}(\Omega)$ be the subspace of  metrics~$g$ satisfying $\Ric_{g} = (n-1)\Lambda g$ in  $\Omega$. Fixing $\Omega$, we always assume  $\mathcal  M^{k,\alpha}_\Lambda(\Omega)\neq \emptyset$. 
 
 Given a Riemannian metric $\gamma$ and a scalar function $\phi$ on $\Sigma$, we would like to solve for $g\in \mathcal M^{k,\alpha}_\Lambda(\Omega)$ that satisfies the boundary condition
\begin{align}\label{eq:bdry}
		( [g^\intercal], H_g ) &= ([\gamma], \phi)  \quad \mbox{ on } \Sigma
\end{align}
where $[\gamma]$ denotes the conformal class such that $\gamma_1\cong \gamma_2$  if and only if $\gamma_1 = u\gamma_2$ for some positive function $u$ on $\Sigma$. Let $\mathcal S_1^{k,\alpha}(\Sigma)$ denote the space of conformal classes of $\C^{k,\alpha}$ Riemannian metrics on $\Sigma$. {Define the determinant of $\gamma$ with respect to the background metric $\bar g^\intercal$ by  $|\gamma|=\left(\frac{\da_{\gamma}}{\da_{\bar g^\intercal}}\right)^2$.} (In particular, $|\bar g^\intercal|=1$.)   We can identify $\mathcal S_1^{k,\alpha}(\Sigma)$ with the space of $\C^{k,\alpha}$ Riemannian metrics $\gamma$ on $\Sigma$ whose determinant is identically $1$. Note $\mathcal S_1^{k,\alpha}(\Sigma)$ is a Banach manifold, whose tangent space at  $\bar g^\intercal$, denoted by $T \mathcal S_1^{k,\alpha}(\Sigma)$, consists of symmetric $(0,2)$-tensors that are traceless with respect to $\bar g^\intercal$. See  Lemma~\ref{le:Banach} below. 

Finding an Einstein metric $g$ with the boundary condition \eqref{eq:bdry} can be reformulated into solving $T(g) = (0, \gamma, \phi)$ where the operator $T:\mathcal M^{k,\alpha} (\Omega) \to \C^{k-2,\alpha}(\Omega)\times \mathcal S_1^{k,\alpha}(\Sigma)\times \C^{k-1,\alpha}(\Sigma)$ is defined by
\begin{align}\label{eq:T}
T(g ) = \left\{ \begin{array}{ll} \Ric_g - (n-1)\Lambda  g \quad \mbox{ in } \Omega \\
	\left\{ \begin{array}{l} |g^{\intercal}|^{-\frac{1}{n-1}} g^\intercal\\ H_g 
	\end{array}\right. \quad \mbox{ on } \Sigma
	\end{array} \right..
\end{align}
(Technically, we should denote the operator $T$ as $T_{\Lambda}$, but we omit the subscript $\Lambda$ for clarity, as we consistently fix $\Lambda$, making it clear from the context.)

The linearized map at  $\bar g$ is denoted by  $L: \C^{k,\alpha}(\Omega)\to\C^{k-2,\alpha}(\Omega)\times T \mathcal  S_1^{k,\alpha}  (\Sigma)\times \C^{k-1,\alpha}(\Sigma)$ 
\begin{align}\label{eq:L}
	L (h ) = \left\{ \begin{array}{ll} \Ric'(h) - (n-1)\Lambda h  \quad \mbox{ in } \Omega \\
	\left\{ \begin{array}{l}   h^\intercal - \tfrac{1}{n-1} (\tr h^\intercal) \bar g^{\intercal} \\ H'(h) 
	\end{array}\right. \quad \mbox{ on } \Sigma
	\end{array} \right..
\end{align}

Here and below, we omit the subscript $\bar g$ when linearizing at $\bar g$. Please refer to Section~\ref{se:va} for the formulas of the linearized operators, including $\Ric'(h)$ and $H'(h)$.

The main emphasis of this paper is to analyze of the kernel of $L$, denoted by $\Ker L$. To provide context for our subsequent results on $\Ker L$, we present two prototype examples of kernel elements. Throughout the paper, we denote the \emph{restriction} of a vector field $X$ along the boundary as $X|_\Sigma = f\nu + X^\intercal$, where $X^\intercal$ is the tangential component  to $\Sigma$.

\begin{example}[Trivial kernel] 
Since $T$ is geometric, if $T(g) = (0, \gamma, \phi)$, then $T(\psi^* g ) = (0, \gamma, \phi)$ for any diffeomorphism $\psi: \Omega\to \Omega$ that fixes the boundary. Consider the diffeomorphisms $\psi_t$ generated by a vector field $X$ with $X|_\Sigma = 0$. Differentiating $T(\psi_t^* g)$ gives $L (\mathscr L_X \bar g)=0$. Thus, we consider $h\in \Ker L$ as \emph{trivial} if
$h = \mathscr L_X \bar g$ for some $X$ satisfying $X|_\Sigma =0$.

Later, in Corollary~\ref{co:rigid}, we show that $\Ker L$ is generically trivial in this sense.
\qed
 \end{example}

 Recall that a vector field $Z$ on an $(n-1)$-dimensional Riemannian manifold $(\Sigma, \gamma)$ is called a \emph{conformal Killing vector field} if $ \mathscr L_Z  \gamma =\left( \tfrac{2}{n-1}\Div_\gamma Z\right) \gamma$. The space of conformal Killing vector fields on the unit round sphere $(S^{n-1}, g_{S^{n-1}})$ consists of ${ Z_1, \dots, Z_n}$ and rotation vector fields, where $Z_i =\left(\tfrac{\partial}{ \partial x_i}\right)^\intercal$ is the tangential component of the $\mathbb R^n$ coordinate vector on the unit sphere centered at the origin. 

\begin{example}[Nontrivial kernel elements for round spheres]\label{ex:conformal}
Let $(\Omega, \bar g)$ be a compact manifold with boundary $\Sigma$. Suppose $(\Sigma, \bar g^\intercal)$ is the round sphere $(S^{n-1}, g_{S^{n-1}})$ and has constant mean curvature in $\Omega$. Let $X$ be any vector field in $\Omega$ such that $X|_\Sigma = Z\in \Span \{Z_1, \dots, Z_n\}$. Denote by $h = \mathscr L_X \bar g$. We compute
 \begin{align*}
h^\intercal &= \mathscr L_Z g_{S^{n-1}} = \left( \tfrac{2}{n-1}\Div_{S^{n-1}} Z\right) g_{S^{n-1}} = \tfrac{1}{n-1} (\tr h^\intercal ) \bar g^\intercal, \\
H'(h) &= Z(H_{\bar g}) = 0.
\end{align*}
In other words, $h$ is a \emph{nontrivial} kernel element of $L$.

We also note that if, in addition, $\Sigma$ is umbilic, then we compute
\begin{align}\label{eq:2ffK}
	A'(h)_{\alpha\beta} =Z^\delta A_{\alpha \beta| \delta} + Z_{\gamma|\beta} A_{\alpha \gamma} + Z_{\gamma|\alpha} A_{\beta \gamma} = \tfrac{1}{n-1} (\tr h^\intercal) A_{\alpha \beta}
\end{align}
where the subscript $_{|}$ denotes the covariant derivative of $\Sigma$. This shows that such $h$ also satisfies the \emph{conformal Cauchy boundary} condition, which is particularly relevant to Theorem~\ref{th:infinitesimal}.

We will prove that those are the only nontrivial kernel elements for $L$ at the round ball in $\mathbb R^n$ in  Corollary~\ref{co:star}.

\qed
\end{example}

 For later references, we denote the subspace of vector fields by 
\begin{align} 
\mathcal X^{k+1,\alpha} (\Omega)&= \big\{ X\in \C^{k+1,\alpha}(\Omega): X|_\Sigma=0\big\}  \label{eq:vector}.
\end{align}
For $(\Sigma,\bar g^\intercal) = (S^{n-1}, g_{S^{n-1}})$, we define
\begin{align}
\mathcal Z^{k+1,\alpha} (\Omega) &=\big\{ X\in \C^{k+1,\alpha}(\Omega): X|_\Sigma\in \Span\{ Z_1, \dots, Z_n\}\big\}\label{eq:Zvector},
\end{align}
where  $Z_i =\left(\tfrac{\partial}{ \partial x_i}\right)^\intercal$ denotes the conformal Killing vector field as above. 

\subsection{The Green-type identity}\label{se:green}

The classical Green identity states that the Laplace operator is self-adjoint with respect to either the Dirichlet or Neumann boundary condition. 
Here, we establish a Green-type identity for an operator $P(h)$, essentially the linearized Einstein operator,  with the Anderson boundary data in Proposition~\ref{pr:Green-conf} below and derive other useful identities. 

For a fixed real number $\Lambda$, consider the Einstein-Hilbert functional 
\[
	\mathcal F_\Lambda(g) = \int_\Omega (R_g - (n-1)(n-2) \Lambda) \dvol_g. 
\]
Let $g(s, t)$ be a smooth two-parameter family of Riemannian metrics on $\Omega$ with $g(0, 0) = g$, $\left.\ps\right|_{s=t=0} g(s,t) = h$, $\left.\pt\right|_{s=t=0} g(s,t) = w$. We calculate 
\begin{align*}
	&\left.\pt\right|_{t=0}	\mathcal F_\Lambda(g(s,t))\\
	&  =  \int_{\Omega} \Big\langle  - \Ric_{g(s, 0)} +\tfrac{1}{2} \big(R_{g(s, 0)} - (n-1)(n-2) \Lambda \big) g(s,0),  \left.\pt \right|_{t=0} g(s,t)\Big\rangle_{g(s,0)}  \dvol_{g(s,0)} \\
	&\quad - \int_\Sigma \Big\langle (A_{g(s, 0)}, 2),  \left.\pt \right|_{t=0}\big(g^\intercal(s,t), H_{g(s,t)}\big) \Big\rangle_{g(s,0)} \,\da_{g(s,0)}
\end{align*}
and
\begin{align*}
	&\left. \pst\right|_{s=t=0} \mathcal{F}_\Lambda (g(s,t))\\
	& =\int_{\Omega}\left(\big\langle P_{(g)}(h),  w\big \rangle_g  +  \Big\langle - \Ric_g +\tfrac{1}{2} \big(R_g - (n-1)(n-2) \Lambda \big) g, \left. \pst\right|_{s=t=0}g(s,t)\Big\rangle_g\right) \dvol_g\\
	&\quad -\int_\Sigma  \bigg(\Big\langle  \big(\tilde Q_{(g)}(h) - (2A_g\circ h^\intercal, 0)\big), \big(w^\intercal, H'|_g(w)\big)\Big\rangle_g\da_g\\
	&\quad -\int_\Sigma \Big\langle \big(A_g , 2\big),  \left. \pst\right|_{s=t=0}\big(g^\intercal(s,t), H_{g(s,t)}\big) \Big\rangle_g \bigg) \da_g
\end{align*}
where $P_{(g)}, \tilde Q_{(g)}$ are the linear differential operators defined by
\begin{align}\label{eq:PQ}
\begin{split}
	P_{(g)}(h) &= -\Ric'|_g(h) + \tfrac{1}{2} R'|_g(h) g + \tfrac{1}{2} \big(R_g-  (n-1)(n-2)\Lambda\big) h\\
	&\quad   - 2\Big( - \Ric_g +\tfrac{1}{2} \big(R_g - (n-1)(n-2) \Lambda \big) g\Big)\circ h  \\
	&\quad + \tfrac{1}{2} (\tr_g h)  \big( - \Ric_g +\tfrac{1}{2} \big(R_g - (n-1)(n-2) \Lambda \big) g \big) \quad \quad  \mbox{ in } \Omega 
	\\
	\tilde Q_{(g)}(h)&= \Big( A'|_g(h) +\tfrac{1}{2} (\tr_g h^\intercal )A_g , \,  \tr_g h^\intercal \Big) \quad \quad \mbox{ on } \Sigma.
\end{split}
\end{align}
Since the second variation is symmetric with respect to $s$ and $t$, we obtain the following identity. (See, e.g. \cite[Proposition 3.3]{An-Huang:2022}.) 
\begin{lemma}[Green-type identity]\label{le:Green}
For any symmetric $(0,2)$-tensors $h$ and $w$ on $(\Omega, g)$, 
\begin{align}\label{eq:Green}
\begin{split}
	&\int_\Omega \big\langle P_{(g)}(h), w\big \rangle_g  \dvol_g - \int_\Omega \big\langle P_{(g)}(w),  h \big \rangle_g \dvol_g \\
	&= \int_\Sigma \big\langle \tilde Q_{(g)}(h), \big(w^\intercal, H'|_g(w) \big) \big\rangle_g\da_g - \int_\Sigma \big \langle\tilde Q_{(g)}(w)\, (h^\intercal, H'|_g(h)) \big\rangle_g  \da_g.
\end{split}
\end{align}
\end{lemma}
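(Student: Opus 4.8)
The plan is to realize the identity \eqref{eq:Green} as the mixed second partial derivative $\left.\pst\right|_{s=t=0}\mathcal F_\Lambda(g(s,t))$, exploiting the symmetry in the order of differentiation. First I would choose a convenient two-parameter family of metrics: since the expressions $P_{(g)}(h)$, $\tilde Q_{(g)}(h)$ depend only on $h$ (not on the second-order jet of $g(s,t)$), it suffices to pick \emph{any} smooth family with $g(0,0)=g$, $\left.\ps\right|_0 g=h$, $\left.\pt\right|_0 g=w$; the most economical choice is the affine family $g(s,t)=g+sh+tw$, for which $\left.\pst\right|_0 g(s,t)=0$, so the terms involving the second-order variation of $g$ drop out entirely. (One should note $g+sh+tw$ is a Riemannian metric only for $(s,t)$ near the origin, which is all that is needed.)

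Next I would compute $\left.\pt\right|_{t=0}\mathcal F_\Lambda(g(s,t))$ for each fixed small $s$, using the first-variation formula already displayed in the excerpt: this gives a bulk term paired against $\left.\pt\right|_0 g(s,t)=w$ and a boundary term paired against $\left.\pt\right|_0(g^\intercal(s,t),H_{g(s,t)})$. Then I would differentiate this expression in $s$ at $s=0$ by the product rule. Differentiating the \emph{coefficients} (the bulk tensor $-\Ric+\tfrac12(R-(n-1)(n-2)\Lambda)g$ and the boundary pair $(A_g,2)$) in the direction $h$ produces, by the definitions \eqref{eq:PQ}, exactly $\langle P_{(g)}(h),w\rangle_g$ in the bulk and $\langle \tilde Q_{(g)}(h)-(2A_g\circ h^\intercal,0),(w^\intercal,H'|_g(w))\rangle_g$ on the boundary (the Jacobi-type combination and the volume/area-form variations assembling into precisely these operators); differentiating the \emph{arguments} $w$ and $(w^\intercal,H_{g(s,t)})$ in $s$ contributes the terms with $\left.\pst\right|_0 g(s,t)$, which vanish for the affine family. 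This reproduces the second-variation formula quoted before the lemma. Symmetrically, $\left.\psts\right|_0\mathcal F_\Lambda$ — i.e. differentiating first in $s$ then in $t$ — yields the same formula with $h$ and $w$ interchanged. Setting the two equal and cancelling the common bulk and boundary terms that are manifestly symmetric under $h\leftrightarrow w$ (the term with $\langle -\Ric+\cdots,\left.\pst\right|_0 g\rangle$, which is zero anyway, and the boundary term $\langle(A_g,2),\left.\pst\right|_0(g^\intercal,H)\rangle$, likewise zero) leaves precisely \eqref{eq:Green}, after moving the $(2A_g\circ h^\intercal,0)$ pieces to regroup them into $\tilde Q_{(g)}$ as written. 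A short remark is needed to confirm that the pairing $\langle 2A_g\circ h^\intercal, w^\intercal\rangle_g$ is symmetric in $h,w$ — it equals $2\,\mathrm{tr}_g(A_g\circ h^\intercal \circ w^\intercal)$ up to raising indices — so that these cross-terms also cancel and only the $A'|_g(h)+\tfrac12(\mathrm{tr}_g h^\intercal)A_g$ part survives inside $\tilde Q_{(g)}$.

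The main obstacle is purely bookkeeping rather than conceptual: one must verify that the first-variation coefficients differentiate, in the direction $h$, to exactly the operators $P_{(g)}$ and $\tilde Q_{(g)}$ as normalized in \eqref{eq:PQ} — this requires the standard formulas $\left.\ps\right|_0\dvol_{g(s)}=\tfrac12(\mathrm{tr}_g h)\dvol_g$, $\left.\ps\right|_0\da_{g(s)}=\tfrac12(\mathrm{tr}_g h^\intercal)\da_g$, $(g^{-1})'= -g^{-1}hg^{-1}$, together with the variation formulas for $\Ric$, $R$, $A$, $H$ from Section~\ref{se:va} — and to track every factor of $\tfrac12$ and every contraction $\circ h$ correctly so that the bulk and boundary cross-terms pair up in a way that is symmetric in $(h,w)$. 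Since this exact computation is carried out in \cite[Proposition 3.3]{An-Huang:2022} in the same framework, I would organize the proof by invoking that second-variation computation to get the displayed second-variation formula, then simply note that symmetry of $\left.\pst\right|_0=\left.\psts\right|_0$ forces \eqref{eq:Green} upon cancellation of the $h\leftrightarrow w$ symmetric terms, keeping the new content of the argument to that final symmetrization step.
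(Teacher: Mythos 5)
Your proposal is correct and takes essentially the same route as the paper: the paper obtains \eqref{eq:Green} precisely by computing the first and second variations of $\mathcal F_\Lambda$ (citing \cite[Proposition 3.3]{An-Huang:2022} for that bookkeeping) and then invoking the symmetry of the mixed second derivative, with the $h\leftrightarrow w$-symmetric cross terms --- including the $2(A_g\circ h^\intercal)\cdot w^\intercal$ pairing you single out --- cancelling in the difference. One tiny caveat: for your affine family $\left.\pst\right|_{s=t=0} H_{g(s,t)}$ need not vanish (the mean curvature is nonlinear in $g$), but that term is symmetric in $h,w$, which is all your cancellation step actually requires.
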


Rearranging the terms allows us to involve the (linearized) Anderson boundary data. 

\begin{proposition}[Green-type identity for the Anderson boundary data]\label{pr:Green-conf}
 For any symmetric $(0,2)$-tensors $h$ and $w$ on $(\Omega, g)$, 
\begin{align*}
\begin{split}
	&\int_\Omega \big\langle P_{(g)}(h), w\big \rangle_g  \dvol_g - \int_\Omega \big\langle P_{(g)}(w),  h \big \rangle_g \dvol_g \\
	&= \int_\Sigma \left\langle Q_{(g)}(h), \big(w^\intercal - \tfrac{1}{n-1} (\tr_g w^\intercal) g^\intercal, H'|_g(w) \big) \right\rangle_g\da_g \\
	&\quad - \int_\Sigma \left \langle Q_{(g)}(w), \big(h^\intercal - \tfrac{1}{n-1} (\tr_g h^\intercal)  g^\intercal , H'|_g(h)\big ) \right\rangle_g  \da_g,
\end{split}
\end{align*}
where $P_{(g)}$ is defined the same as in \eqref{eq:PQ}, and $Q_{(g)}$ is given by
\begin{align*}
	Q_{(g)}(h) &=\left( A'|_g(h) + \big(\tfrac{1}{2} - \tfrac{1}{n-1} \big) (\tr_g h^\intercal) A_g, \big( 1-\tfrac{1}{n-1} \big)\tr_g h^\intercal\right).
\end{align*}
\end{proposition}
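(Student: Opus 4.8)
The plan is to derive Proposition~\ref{pr:Green-conf} directly from the Green-type identity of Lemma~\ref{le:Green}, without any new geometric input. The only difference between the two statements is that Lemma~\ref{le:Green} pairs $\tilde Q_{(g)}(h)$ against the \emph{full} boundary data $(w^\intercal, H'|_g(w))$, whereas Proposition~\ref{pr:Green-conf} pairs $Q_{(g)}(h)$ against the \emph{conformal-class} boundary data $(w^\intercal - \tfrac{1}{n-1}(\tr_g w^\intercal) g^\intercal, H'|_g(w))$. So the entire content is an algebraic rearrangement of the boundary integrand.

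First I would fix notation: write the boundary integrand of Lemma~\ref{le:Green} as $\langle \tilde Q_{(g)}(h), (w^\intercal, H'|_g(w))\rangle_g - (h\leftrightarrow w)$, and expand using $\tilde Q_{(g)}(h) = (A'|_g(h) + \tfrac12(\tr_g h^\intercal)A_g,\ \tr_g h^\intercal)$ from \eqref{eq:PQ}. The key observation is the following trace-correction identity: for any symmetric $2$-tensors $S$ and $w^\intercal$ on $\Sigma$,
\[
\langle S, w^\intercal\rangle_g = \langle S, w^\intercal - \tfrac{1}{n-1}(\tr_g w^\intercal) g^\intercal\rangle_g + \tfrac{1}{n-1}(\tr_g w^\intercal)(\tr_g S).
\]
Applying this with $S = A'|_g(h) + \tfrac12(\tr_g h^\intercal) A_g$ splits the $w^\intercal$-slot of Lemma~\ref{le:Green} into the desired traceless-projection pairing plus a leftover term proportional to $(\tr_g w^\intercal)(\tr_g S)$. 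Collecting these leftover terms from both the $h$ and the $w$ contributions, I would check that they assemble into a \emph{symmetric} bilinear expression in $h$ and $w$ — essentially because $\tr_g S$ contains $\tr_g(A'|_g(h))$ and $\tfrac12(\tr_g h^\intercal)H$, so the products $(\tr_g w^\intercal)\tr_g(A'|_g(h))$ and $(\tr_g h^\intercal)\tr_g(A'|_g(w))$ are the asymmetric pieces; one uses the identity $\tr_g(A'|_g(h)) = H'|_g(h) + \langle A_g, h^\intercal\rangle_g$ (or the analogous formula from Section~\ref{se:va}) to rewrite $\tr_g(A'|_g(h))$ in terms of $H'|_g(h)$, and then the $H'$ contributions get reabsorbed so that the net asymmetric remainder cancels. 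Tracking the numerical coefficients, the $\tfrac12$ appearing in $\tilde Q_{(g)}$ gets shifted to $\tfrac12 - \tfrac1{n-1}$ in the first slot of $Q_{(g)}$ and the $1$ in the second slot becomes $1 - \tfrac1{n-1}$, exactly matching the stated formula for $Q_{(g)}$.

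In practice I would organize the computation as: (i) substitute the explicit form of $\tilde Q_{(g)}$ into \eqref{eq:Green}; (ii) apply the trace-correction identity to both the $w^\intercal$ and $h^\intercal$ slots; (iii) use the linearized mean-curvature / trace-of-second-fundamental-form identities from Section~\ref{se:va} to express everything through $H'|_g$, $A'|_g$, $\tr_g h^\intercal$, $\tr_g w^\intercal$; and (iv) verify that all terms not of the form $\langle Q_{(g)}(h), (\cdots w \cdots)\rangle - \langle Q_{(g)}(w),(\cdots h\cdots)\rangle$ cancel in the antisymmetrization.

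The main obstacle is purely bookkeeping: one must be careful that the ``leftover'' scalar terms coming from the trace correction really do cancel antisymmetrically rather than contributing a genuine extra boundary term. This hinges on the precise form of $\tr_g(A'|_g(h))$ and on the coefficient $\tfrac12$ in $\tilde Q_{(g)}$ being exactly what it is — any other coefficient would leave an uncancelled symmetric-in-$(h,w)$ remainder, but that remainder is symmetric and hence drops out of the antisymmetric combination on the left-hand side of \eqref{eq:Green} regardless, so in fact the identity is robust; the real point is simply to present the rearrangement cleanly. I expect the verification to be a few lines once the trace-correction identity and the Section~\ref{se:va} formulas are invoked, and I would present it at that level of detail rather than expanding every index.
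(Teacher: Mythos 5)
Your proposal is correct and follows essentially the same route as the paper's proof: the paper likewise rewrites the boundary integrand of Lemma~\ref{le:Green} as $\langle Q_{(g)}(h), (w^\intercal-\tfrac{1}{n-1}(\tr_g w^\intercal)g^\intercal, H'|_g(w))\rangle_g$ plus a remainder, uses $\tr_g A'|_g(h)=H'|_g(h)+A_g\cdot h^\intercal$, and observes that the remainder is symmetric in $(h,w)$ and hence cancels in the antisymmetrized identity. The only caveat is your closing aside: the symmetry of the leftover terms does depend on the exact coefficients $\tfrac12-\tfrac1{n-1}$ and $1-\tfrac1{n-1}$ in $Q_{(g)}$ (other choices would leave a genuinely asymmetric remainder), but since you use the correct $Q_{(g)}$ this does not affect the argument.
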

\begin{proof}
We calculate 
\begin{align*}
\Big\langle \tilde Q_{(g)}(h), (w^\intercal, H'|_g(w))\Big\rangle_g 
&=\Big\langle Q_{(g)}(h), \big(w^\intercal -\tfrac{1}{n-1} (\tr w^\intercal) g^\intercal, H'|_g(w)\big)\Big\rangle_g\\
&\quad  + (\tfrac{1}{2}-\tfrac{1}{n-1})\tfrac{1}{n-1} (\tr_g w^\intercal) (\tr_g h^\intercal) H_g  \\
&\quad + \tfrac{1}{n-1}(\tr_g h^\intercal ) H'|_g(w)+ \tfrac{1}{n-1}( \tr_g w^\intercal) H'|_g(h) \\
&\quad  + \tfrac{1}{n-1} (\tr_g h^\intercal) A_g\cdot w^\intercal+\tfrac{1}{n-1}( \tr_g w^\intercal)A_g\cdot h^\intercal
\end{align*}
where we use $\tr_g A'|_g(h) = H'|_g(h) + A_g\cdot h^\intercal$. 

By switching $h$ and $w$, we obtain a similar expression for $\Big\langle \tilde Q_{(g)}(w), (h^\intercal, H'|_g(h))\Big\rangle_g$. Because the last three lines above are symmetric in $h$ and $w$, they are canceled out when inserted into \eqref{eq:Green}.

\end{proof}

We discuss the consequences of Proposition~\ref{pr:Green-conf} when letting $g$ be an Einstein metric $\bar{g}$ satisfying $\Ric_{\bar{g}} = (n-1)\Lambda \bar{g}$. In what follows, we omit the subscript $\bar{g}$ when considering linearization and geometric quantities (e.g., trace, inner product, volume forms) with respect to the background $\bar{g}$. In this case, the expression for $P$ is simplified as below (we also include the definition of $Q(h)$ for easier reference later)
\begin{align}\label{eq:Green-conf2}
\begin{split}
	P(h) &=  -\Ric'(h) + \tfrac{1}{2} R'(h) \bar g + (n-1)\Lambda h \\		
	Q(h) &=\left( A'(h) + \big(\tfrac{1}{2} - \tfrac{1}{n-1} \big)\tr h^\intercal A, \big( 1-\tfrac{1}{n-1} \big)\tr h^\intercal\right).
\end{split}
\end{align}
Note that $P(h)$ is algebraically equivalent to the linearized Ricci operator. Specifically, 
\[
\Ric'(h) - (n-1) \Lambda h =-P(h)+\tfrac{1}{n-2}\big(\tr P (h)\big)\bar g.
\] 
Or equivalently, if we denote the left hand by $K(h) = \Ric'(h) - (n-1) \Lambda h$, then 
\begin{align}\label{eq:K}
	P(h) = - K(h) + \tfrac{1}{2} \big(\tr K (h)\big) \bar g.
\end{align}
It is straightforward to check that the ranges of $P$ and $K$ are isomorphic.

\begin{corollary}\label{co:Green-kernel}
 For any symmetric $(0,2)$-tensors $h$ and $w$ on $(\Omega, \bar g)$, 
\begin{align}\label{eq:Green-cor}
\begin{split}
	&\int_\Omega \big\langle P(h), w\big \rangle  \dvol - \int_\Omega \big\langle P(w),  h \big \rangle \dvol \\
	&= \int_\Sigma \left\langle Q(h), \big(w^\intercal - \tfrac{1}{n-1} (\tr w^\intercal) \bar g^\intercal, H'(w) \big) \right\rangle\da \\
	&\quad - \int_\Sigma \left \langle Q(w) ,  \big(h^\intercal - \tfrac{1}{n-1} (\tr h^\intercal)  \bar g^\intercal , H'(h)\big ) \right\rangle  \da
\end{split}
\end{align}
where $P, Q$ are defined by \eqref{eq:Green-conf2}.
\end{corollary}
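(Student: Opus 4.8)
The plan is to obtain Corollary~\ref{co:Green-kernel} as the direct specialization of Proposition~\ref{pr:Green-conf} to the case $g=\bar g$, where $\bar g$ is the fixed Einstein background with $\Ric_{\bar g}=(n-1)\Lambda\bar g$. The only thing that needs verification is that, under this Einstein condition, the operators $P_{(\bar g)}$ and $Q_{(\bar g)}$ appearing in Proposition~\ref{pr:Green-conf} reduce to the simplified operators $P$ and $Q$ of \eqref{eq:Green-conf2}; after that, \eqref{eq:Green-cor} is just \eqref{eq:PQ}--\eqref{eq:Green-conf2} substituted into the identity of Proposition~\ref{pr:Green-conf}, with the subscript $\bar g$ suppressed on traces, inner products, and volume/area forms per the stated convention.

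First I would record the consequence of the Einstein condition for the scalar curvature: tracing $\Ric_{\bar g}=(n-1)\Lambda\bar g$ gives $R_{\bar g}=n(n-1)\Lambda$. Plugging this in, the shifted Einstein tensor that appears repeatedly inside the definition \eqref{eq:PQ} of $P_{(g)}$ vanishes identically,
\[
-\Ric_{\bar g}+\tfrac12\big(R_{\bar g}-(n-1)(n-2)\Lambda\big)\bar g=-(n-1)\Lambda\bar g+\tfrac12\big(n(n-1)\Lambda-(n-1)(n-2)\Lambda\big)\bar g=0,
\]
so the second and third lines in the expression for $P_{(\bar g)}(h)$ drop out. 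The surviving terms are exactly $-\Ric'(h)+\tfrac12 R'(h)\bar g+\tfrac12\big(R_{\bar g}-(n-1)(n-2)\Lambda\big)h=-\Ric'(h)+\tfrac12 R'(h)\bar g+(n-1)\Lambda h=P(h)$, matching \eqref{eq:Green-conf2}. For the boundary term, I would observe that $Q_{(g)}(h)=\big(A'|_g(h)+(\tfrac12-\tfrac1{n-1})(\tr_g h^\intercal)A_g,\ (1-\tfrac1{n-1})\tr_g h^\intercal\big)$ is a purely differential/algebraic expression in $h^\intercal$, $A_g$, and $A'|_g(h)$, with no explicit dependence on the Einstein condition, so setting $g=\bar g$ literally gives the $Q$ of \eqref{eq:Green-conf2}.

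There is essentially no obstacle here: the only step requiring care is the one-line cancellation above, i.e.\ confirming that the cosmological constant in $\mathcal{F}_\Lambda$ was normalized precisely so that the Einstein tensor of $\bar g$ shifted by $(n-1)(n-2)\Lambda$ vanishes on $\bar g$. (If one prefers, the same identification of $P$ can be cross-checked via \eqref{eq:K}, since $K(h)=\Ric'(h)-(n-1)\Lambda h$ is manifestly independent of any normalization and $P(h)=-K(h)+\tfrac12(\tr K(h))\bar g$.) Substituting the two identifications $P_{(\bar g)}=P$, $Q_{(\bar g)}=Q$ into Proposition~\ref{pr:Green-conf} then yields \eqref{eq:Green-cor} verbatim, completing the proof.
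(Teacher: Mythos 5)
Your proposal is correct and follows exactly the paper's own route: the corollary is simply Proposition~\ref{pr:Green-conf} evaluated at the Einstein background $\bar g$, where the shifted Einstein tensor $-\Ric_{\bar g}+\tfrac12\big(R_{\bar g}-(n-1)(n-2)\Lambda\big)\bar g$ vanishes (since $R_{\bar g}=n(n-1)\Lambda$), reducing $P_{(\bar g)}$ and $Q_{(\bar g)}$ to the $P$, $Q$ of \eqref{eq:Green-conf2}. Your cancellation computation and the cross-check via \eqref{eq:K} are both accurate, so nothing further is needed.
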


Recall the operator $L$ defined in \eqref{eq:L}. We consider $h\in \Ker L$ and find ``test'' $w$ satisfying $\Ric'(w) = (n-1) \Lambda w$ in \eqref{eq:Green-cor} to obtain additional boundary conditions for $h$. The first choice of $w$ is ``infinitesimal dilation'' by letting $w=\bar g$.  

\begin{lemma}[Zero mean value when $\Lambda =0$]
\label{le:trH}
If $h$ solves $L(h)=0$, then 
\begin{align} \label{eq:trH}
	\int_\Sigma (\tr h^\intercal )H\da = -(n-1)^2 \Lambda \int_\Omega \tr h \dvol.
\end{align}
Consequently, when $\Lambda =0$, we obtain $\int_\Sigma (\tr h^\intercal )H\da =0$. 
\end{lemma}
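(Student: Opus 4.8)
The plan is to apply the Green-type identity of Corollary~\ref{co:Green-kernel} with the ``test tensor'' $w = \bar g$, which corresponds to infinitesimal dilation. Since $\Ric_{\bar g} = (n-1)\Lambda \bar g$, one checks that $\bar g$ satisfies $\Ric'(\bar g) = (n-1)\Lambda \bar g$ — more precisely we should verify directly that $P(\bar g) = 0$. Using the standard linearization formulas for $\Ric'$ and $R'$ at an Einstein metric evaluated on $h = \bar g$ (a pure trace deformation with constant conformal factor), the terms collapse: $\Ric'(\bar g) = -\tfrac12 \Delta \bar g \cdot(\text{stuff}) = 0$ in the relevant combination, and in fact $P(\bar g) = -\Ric'(\bar g) + \tfrac12 R'(\bar g)\bar g + (n-1)\Lambda \bar g$ reduces to $0$ after inserting $\Ric'(\bar g) = -(n-1)\Lambda \bar g$ wait — more carefully, the cleanest route is: $\Ric'(c\bar g) = 0$ for constant $c$ isn't right either, so I will instead just record that $\tfrac{d}{ds}\big|_{s=0}\Ric_{(1+s)\bar g} = 0$ since $\Ric$ is scale-invariant, hence $\Ric'(\bar g) = 0$; similarly $R'(\bar g) = -R_{\bar g}$ since $R_{(1+s)\bar g} = (1+s)^{-1} R_{\bar g}$. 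Then $P(\bar g) = 0 - \tfrac12 R_{\bar g}\bar g + (n-1)\Lambda\bar g$; since $R_{\bar g} = n(n-1)\Lambda$ this gives $P(\bar g) = -\tfrac{n(n-1)}{2}\Lambda\bar g + (n-1)\Lambda\bar g = -(n-1)\Lambda\tfrac{n-2}{2}\bar g$, which is not zero unless $\Lambda = 0$. So the left side of \eqref{eq:Green-cor} is $-\int_\Omega \langle P(h),\bar g\rangle\,\dvol + \int_\Omega \langle P(\bar g), h\rangle\,\dvol$ wait let me reorder: with $w = \bar g$, the left side is $\int_\Omega \langle P(h),\bar g\rangle - \int_\Omega\langle P(\bar g), h\rangle$.

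Next I would compute each piece. For the bulk: $\langle P(h),\bar g\rangle = \tr P(h)$, and by \eqref{eq:K}, $\tr P(h) = -\tr K(h) + \tfrac{n}{2}\tr K(h) = \tfrac{n-2}{2}\tr K(h)$; but since $h\in\Ker L$ we have $K(h) = \Ric'(h) - (n-1)\Lambda h = 0$ in $\Omega$, so $\int_\Omega\langle P(h),\bar g\rangle\,\dvol = 0$. The other bulk term is $\int_\Omega\langle P(\bar g), h\rangle\,\dvol = -(n-1)\Lambda\tfrac{n-2}{2}\int_\Omega\langle\bar g, h\rangle\,\dvol = -\tfrac{(n-1)(n-2)}{2}\Lambda\int_\Omega\tr h\,\dvol$. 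So the left side of \eqref{eq:Green-cor} equals $0 - \big(-\tfrac{(n-1)(n-2)}{2}\Lambda\int_\Omega\tr h\,\dvol\big) = \tfrac{(n-1)(n-2)}{2}\Lambda\int_\Omega\tr h\,\dvol$.

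For the boundary side: with $w = \bar g$ we have $w^\intercal = \bar g^\intercal$, so $w^\intercal - \tfrac{1}{n-1}(\tr w^\intercal)\bar g^\intercal = \bar g^\intercal - \tfrac{n-1}{n-1}\bar g^\intercal = 0$, and $H'(\bar g) = \tfrac{d}{ds}\big|_{s=0}H_{(1+s)\bar g}$; since $H$ scales like $(1+s)^{-1/2}$, $H'(\bar g) = -\tfrac12 H$. Also $\tr_{\bar g}\bar g^\intercal = n-1$ and $A'(\bar g) = -\tfrac12 A$. Thus the first boundary integrand $\langle Q(h),(0, -\tfrac12 H)\rangle = \big(1-\tfrac{1}{n-1}\big)(\tr h^\intercal)\cdot(-\tfrac12 H)$. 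For the second boundary term, $Q(\bar g) = \big(A'(\bar g) + (\tfrac12 - \tfrac1{n-1})(n-1)A,\ (1-\tfrac1{n-1})(n-1)\big) = \big(-\tfrac12 A + (\tfrac{n-1}{2} - 1)A,\ n-2\big) = \big(\tfrac{n-2}{2}A - wait, -\tfrac12 A + \tfrac{n-3}{2}A = \tfrac{n-4}{2}A\big)$ — I'd need to recompute this carefully, but the point is $\langle Q(\bar g),(h^\intercal - \tfrac1{n-1}(\tr h^\intercal)\bar g^\intercal, H'(h))\rangle$ will have an $A$-contraction term and an $(n-2)H'(h)$ term. Putting the boundary side together and simplifying, one obtains a multiple of $\int_\Sigma(\tr h^\intercal)H\,\da$ (the cross terms involving $A\cdot h^\intercal$ and $H'(h)$ should be arranged to cancel or to reassemble into $(\tr h^\intercal)H$ using the divergence structure of the boundary, or more simply they vanish because the tracefree part of $A'(\bar g)$ contracted against the tracefree part of $h^\intercal$ needs handling). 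Matching the constants on both sides yields \eqref{eq:trH}, and setting $\Lambda = 0$ kills the bulk term, giving $\int_\Sigma(\tr h^\intercal)H\,\da = 0$.

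The main obstacle is the bookkeeping on the boundary side: correctly evaluating $Q(\bar g)$ and the pairing $\langle Q(\bar g),(h^\intercal - \tfrac1{n-1}(\tr h^\intercal)\bar g^\intercal, H'(h))\rangle$, and showing that the terms involving $A'(h)$, $A\cdot h^\intercal$, and $H'(h)$ recombine cleanly so that only the term proportional to $(\tr h^\intercal)H$ survives on the right-hand side. A slicker alternative that avoids the messy algebra is to go back to the second variation of $\mathcal F_\Lambda$ directly: take the two-parameter family $g(s,t)$ with $\partial_s|_0 g = h$ and $\partial_t|_0 g = \bar g$ (i.e. the $t$-variation is pure dilation $g(0,t) = e^{2t}\bar g$ or $(1+t)\bar g$), use that $\mathcal F_\Lambda((1+t)\bar g)$ can be computed explicitly, and that the first variation in $s$ at an Einstein metric with $L(h)=0$ forces the bulk term to vanish. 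I would likely present whichever of these two computations is shorter, and flag that the $\Lambda = 0$ consequence is what feeds into the non-degenerate boundary condition arguments later.
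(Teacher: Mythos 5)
Your overall route is exactly the paper's: apply the Green-type identity \eqref{eq:Green-cor} with $w=\bar g$, and your bulk computation is correct and matches the paper's ($P(\bar g)=-\tfrac{(n-1)(n-2)}{2}\Lambda\bar g$, and $\int_\Omega\langle P(h),\bar g\rangle\dvol=0$ since $K(h)=\Ric'(h)-(n-1)\Lambda h=0$ by \eqref{eq:K}), as is your evaluation of the first boundary term via $w^\intercal-\tfrac1{n-1}(\tr w^\intercal)\bar g^\intercal=0$ and $H'(\bar g)=-\tfrac12 H$, which gives $-\tfrac{n-2}{2(n-1)}\int_\Sigma(\tr h^\intercal)H\da$.

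The gap is in what you flag as the ``main obstacle'': there is no boundary bookkeeping to do at all, and your plan to compute $Q(\bar g)$ and hope the $A\cdot h^\intercal$ and $H'(h)$ cross terms cancel is chasing a term that is identically zero. The second boundary integral pairs $Q(\bar g)$ against $\big(h^\intercal-\tfrac1{n-1}(\tr h^\intercal)\bar g^\intercal,\,H'(h)\big)$, and these two entries are precisely the boundary components of $L(h)$ in \eqref{eq:L}; since $h\in\Ker L$ they vanish pointwise on $\Sigma$, so that integral is zero regardless of what $Q(\bar g)$ is. (You wrote the pairing down correctly but then treated its second factor as unknown data rather than as the hypothesis.) With that observation the proof closes immediately: equating $\tfrac{(n-1)(n-2)}{2}\Lambda\int_\Omega\tr h\dvol$ with $-\tfrac{n-2}{2(n-1)}\int_\Sigma(\tr h^\intercal)H\da$ yields \eqref{eq:trH}, which is exactly the paper's argument; the alternative second-variation computation of $\mathcal F_\Lambda$ you sketch is not needed.
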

\begin{proof}
We let $w = \bar g$. Using $\Ric'(\bar g)=0$, $P(\bar g)= -\tfrac{1}{2} (n-1)(n-2) \Lambda \bar g$ in $\Omega$, and $w^\intercal = \tfrac{1}{n-1} (\tr w^\intercal ) \bar g^\intercal$ and $ H'(w) = -\tfrac{1}{2} H$  on $\Sigma$. We compute 
\[
	\int_\Omega \langle P(h), \bar g \rangle \dvol- \int_\Omega \langle P(\bar g), h\rangle \dvol = \tfrac{1}{2} (n-1)(n-2) \Lambda \int_\Omega \tr h \dvol
\]
and the boundary terms
\begin{align*}
 &\int_\Sigma \left\langle Q(h), \big(w^\intercal - \tfrac{1}{n-1} \tr w^\intercal \bar g^\intercal, H'(w) \big) \right\rangle\da \\
 &\quad - \int_\Sigma \left \langle Q(w), \big(h^\intercal - \tfrac{1}{n-1} \tr h^\intercal  \bar g^\intercal , H'(h)\big ) \right\rangle  \da\\
 &=\int_\Sigma \left\langle Q(h), \big(0, -\tfrac{1}{2} H \big) \right\rangle\da= -\tfrac{n-2}{2(n-1)} \int_\Sigma (\tr h^\intercal) H \da. 
\end{align*}
Equating the previous two identities yields the result. 
\end{proof}

Our next choice of $w$ is ``infinitesimal diffeomorphism'' by letting $w = \mathscr L_X\bar g$ for arbitrary vector fields $X$. For our later applications, we consider $h$ which satisfies a more general linear equation as in the statement. In particular, when $b=0$, $h\in \Ker L$. 
\begin{lemma}[Hidden boundary conditions]\label{le:hidden}

Let $b\in \mathbb R$ and $h$ solve 
\begin{align*}
	&\Ric'(h)- (n-1) \Lambda h - (n-1) \Lambda b\bar g=0 \quad \mbox{ in } \Omega\\
	&\left\{\begin{array}{l}   h^\intercal = \tfrac{1}{n-1} (\tr h^\intercal) \bar g^{\intercal} \\ H'(h)=0 
	\end{array}\right. \quad \mbox{ on } \Sigma.
\end{align*}	

Then $h$ satisfies the following boundary conditions on $\Sigma$:
\begin{align}
	-2 \Div_\Sigma \left( A'(h) - \tfrac{1}{n-1} \tr h^\intercal A\right) &= \big(A-\tfrac{1}{n-1} H \bar g^\intercal\big) (\nabla_\Sigma \tr h^\intercal, \cdot)\label{eq:hiddenbd1}\\
	2A \cdot \big(A'(h) - \tfrac{1}{n-1}  \tr h^\intercal A\big) &=   -\tfrac{n-2}{n-1} L_\Sigma \tr h^\intercal + (n-1)(n-2) \Lambda b\label{eq:hiddenbd2}
\end{align}
where recall $L_\Sigma v:= -\Delta_\Sigma v - \tfrac{1}{n-2} R_\Sigma v $ as defined in \eqref{eq:bdry-op}.
\end{lemma}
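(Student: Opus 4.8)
The plan is to feed $w = \mathscr{L}_X\bar g$ into the Green-type identity of Corollary~\ref{co:Green-kernel} and then extract the two stated boundary identities by testing against normal and tangential vector fields $X$ along $\Sigma$. First I would observe that since $P(h)$ is algebraically equivalent to $\Ric'(h) - (n-1)\Lambda h$ via \eqref{eq:K}, the bulk equation $\Ric'(h) - (n-1)\Lambda h = (n-1)\Lambda b\,\bar g$ translates into $P(h) = -(n-1)\Lambda b\,\bar g + \tfrac{1}{2}(n-1)\cdot n\Lambda b\,\bar g = \tfrac{1}{2}n(n-1)\Lambda b \cdot\bar g$ — more precisely, $\tr K(h) = n(n-1)\Lambda b$ so $P(h) = -( n-1)\Lambda b\,\bar g + \tfrac12 n(n-1)\Lambda b\,\bar g$. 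The point is that $P(h)$ is a pure trace multiple of $\bar g$, a known constant. Meanwhile the Bianchi/variational identity gives $P(\mathscr L_X \bar g) = 0$ in $\Omega$, since $\mathscr L_X\bar g$ is an infinitesimal diffeomorphism of an Einstein metric and the Einstein–Hilbert second variation vanishes in the direction of diffeomorphisms (alternatively, compute directly: $\Ric'(\mathscr L_X\bar g) = \mathscr L_X\Ric_{\bar g} = (n-1)\Lambda\mathscr L_X\bar g$ so $K(\mathscr L_X\bar g)=0$ and hence $P(\mathscr L_X\bar g)=0$). Therefore the left-hand side of \eqref{eq:Green-cor} collapses to $\int_\Omega \langle P(h), \mathscr L_X\bar g\rangle\dvol$, which, since $P(h)$ is a constant multiple of $\bar g$, equals $c\int_\Omega \tr(\mathscr L_X\bar g)\dvol = 2c\int_\Omega \Div X\dvol = 2c\int_\Sigma \langle X,\nu\rangle\da$, a pure boundary integral depending only on the normal part $f = \langle X|_\Sigma,\nu\rangle$.

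Next I would expand the right-hand boundary integrals of \eqref{eq:Green-cor} with $w = \mathscr L_X\bar g$. Using $X|_\Sigma = f\nu + X^\intercal$, the standard formulas (to be recorded from Section~\ref{se:va}) give $w^\intercal = (\mathscr L_X\bar g)^\intercal = 2fA + \mathscr L_{X^\intercal}\bar g^\intercal$ and $H'(w) = -\Delta_\Sigma f - (|A|^2 + \Ric_{\bar g}(\nu,\nu))f + X^\intercal(H)$, together with the corresponding formula for $A'(w)$; the traceless part $w^\intercal - \tfrac{1}{n-1}(\tr w^\intercal)\bar g^\intercal$ and $\tr w^\intercal = 2fH + 2\Div_\Sigma X^\intercal$ are then explicit. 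Pairing these against $Q(h) = \big(A'(h) + (\tfrac12 - \tfrac{1}{n-1})\tr h^\intercal A,\ (1-\tfrac{1}{n-1})\tr h^\intercal\big)$ and using the boundary hypotheses on $h$ (namely $h^\intercal = \tfrac{1}{n-1}(\tr h^\intercal)\bar g^\intercal$ and $H'(h) = 0$, which make the second boundary integral in \eqref{eq:Green-cor} vanish term by term except for the purely tangential contributions), I would integrate by parts on $\Sigma$ to move all derivatives off $f$ and off $X^\intercal$. After integration by parts the resulting identity has the schematic form $\int_\Sigma f\cdot(\text{functional of }h)\,\da + \int_\Sigma \langle X^\intercal, \text{vector functional of }h\rangle\,\da = \int_\Sigma f\cdot(\text{const})\,\da$ for all $f$ and all $X^\intercal$. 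Since $f$ and $X^\intercal$ are arbitrary, the coefficients must vanish pointwise: the coefficient of $f$ gives \eqref{eq:hiddenbd2} (the scalar identity involving $A\cdot(A'(h) - \tfrac{1}{n-1}\tr h^\intercal A)$, $L_\Sigma\tr h^\intercal$, and the constant $(n-1)(n-2)\Lambda b$), and the coefficient of $X^\intercal$ gives the divergence identity \eqref{eq:hiddenbd1}.

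The main obstacle I anticipate is the bookkeeping in the tangential integration by parts — correctly matching the contracted second Bianchi identity on $\Sigma$ (the Codazzi equation relating $\Div_\Sigma A'(h)$ to curvature terms and to $\nabla_\Sigma H'(h)$) with the divergence of $h^\intercal$ and the trace terms, so that everything assembles into precisely the operator $L_\Sigma v = -\Delta_\Sigma v - \tfrac{1}{n-2}R_\Sigma v$ rather than some other second-order operator. In particular one must use the constraint $h^\intercal = \tfrac{1}{n-1}(\tr h^\intercal)\bar g^\intercal$ to rewrite $\Div_\Sigma h^\intercal = \tfrac{1}{n-1}\nabla_\Sigma\tr h^\intercal$, and one must use the Einstein condition $\Ric_{\bar g} = (n-1)\Lambda\bar g$ together with the Gauss equation to convert $\Ric_{\bar g}(\nu,\nu)$, $|A|^2$, $H^2$ combinations into $R_\Sigma$; this is where the specific coefficient $\tfrac{1}{n-2}$ and the constant $(n-1)(n-2)\Lambda b$ emerge, and it is the step most prone to sign and normalization errors. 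A secondary subtlety is ensuring the cross terms that are genuinely symmetric in the roles of $h$ and $w$ (analogous to the cancellation noted in the proof of Proposition~\ref{pr:Green-conf}) are correctly identified and dropped, rather than mistakenly retained. Once the pointwise identities are isolated, no further argument is needed.
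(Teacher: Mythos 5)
Your proposal follows essentially the same route as the paper: substitute $w=\mathscr L_X\bar g$ into the Green-type identity, use that $P(\mathscr L_X\bar g)=0$ while $P(h)$ is a constant multiple of $\bar g$ so the bulk terms reduce to $(n-1)(n-2)\Lambda b\int_\Sigma X\cdot\nu\,\da$, then expand the boundary pairing with the linearization formulas, integrate by parts on $\Sigma$, and use the arbitrariness of the tangential and normal parts of $X$ together with the Codazzi and Gauss equations to extract \eqref{eq:hiddenbd1} and \eqref{eq:hiddenbd2}. Two small corrections: $P(h)=\tfrac12(n-1)(n-2)\Lambda b\,\bar g$, not $\tfrac12 n(n-1)\Lambda b\,\bar g$ (your subsequent unevaluated expression is the right one), and under the boundary hypotheses the second boundary integral in \eqref{eq:Green-cor} vanishes entirely, not merely up to tangential contributions.
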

\begin{proof}
Substitute $w= \mathscr L_X \bar g$ in \eqref{eq:Green-cor}  for an arbitrary vector field $X$ supported in the collar neighborhood of $\Sigma$. 
We have  $P(h) = \frac{1}{2} (n-1)(n-2) \Lambda b \bar g$, and the interior integrals become
\begin{align*}
	\int_\Omega \big \langle P(h), w  \big\rangle\dvol - \int_\Omega \big\langle P(w), h \big \rangle  \dvol &= \tfrac{1}{2} (n-1) (n-2) \Lambda b \int_\Omega \tr (\mathscr L_X \bar g) \dvol \\
	& = (n-1)(n-2) \Lambda b \int_\Sigma X\cdot \nu \da.
\end{align*}

First, let $X|_\Si$ be tangential to $\Sigma$ and apply \eqref{eq:Green-cor}:
\begin{align*}
	0&=\int_\Sigma \left\langle \big(A'(h) + \left(\tfrac{1}{2} - \tfrac{1}{n-1} \right) \tr h^\intercal A, \big(1-\tfrac{1}{n-1}\big) \tr h^\intercal \big),  \big( \mathscr L_X \bar g^\intercal - \tfrac{2}{n-1} \Div_\Sigma X \bar g^\intercal, X (H)\big )\right\rangle\da\\
	&=\int_\Sigma -2 \Div_\Sigma \left(A'(h) + \big(\tfrac{1}{2} - \tfrac{1}{n-1} \big) \tr h^\intercal A \right)\cdot X\da \\
	&\quad + \int_\Sigma \left(\tfrac{2}{n-1} \nabla_\Sigma \tr \left(A'(h)+ (\tfrac{1}{2} - \tfrac{1}{n-1} )\tr h^\intercal A\right) + \big(1-\tfrac{1}{n-1}\big) \tr h^\intercal \nabla_\Sigma H\right)\cdot X \da.
\end{align*}
Since $X$ can be an arbitrary tangential vector on $\Sigma$, we obtain~\eqref{eq:hiddenbd1} as follows:
\begin{align*}
	0&= -2 \Div_\Sigma \left(A'(h) + \left(\tfrac{1}{2} - \tfrac{1}{n-1} \right) \tr h^\intercal A \right) \\
	&\quad +\tfrac{2}{n-1} \nabla_\Sigma \tr \left(A'(h)+ (\tfrac{1}{2} - \tfrac{1}{n-1} )\tr h^\intercal A\right) +\big(1-\tfrac{1}{n-1}\big) \tr h^\intercal \nabla_\Sigma H\\
	&=-2 \Div_\Sigma \left( A'(h) - \tfrac{1}{n-1} \tr h^\intercal A \right) - \Div_\Sigma (\tr h^\intercal A) + \tfrac{1}{n-1} \nabla_\Sigma (\tr h^\intercal H) + \big(1-\tfrac{1}{n-1}\big) \tr h^\intercal \nabla_\Sigma H\\
	&=-2 \Div_\Sigma \left( A'(h) - \tfrac{1}{n-1} \tr h^\intercal A\right) - \big(A-\tfrac{1}{n-1} H \bar g^\intercal\big) (\nabla_\Sigma \tr h^\intercal, \cdot)
\end{align*}
where we use $\tr A'(h) = H'(h) + A\cdot h^\intercal = \tfrac{1}{n-1} (\tr h^\intercal) H$ and $\Div_\Sigma A=\nabla_\Sigma H$ from the Codazzi equation. 

Then let $X|_\Sigma = f \nu$ be a normal vector to $\Sigma$ and apply \eqref{eq:Green-cor}:
\begin{align*}
	 &(n-1)(n-2) \Lambda b \int_\Sigma f \da\\
	 &=\int_\Sigma \left\langle \big(A'(h) + \left(\tfrac{1}{2} - \tfrac{1}{n-1} \right) \tr h^\intercal A,\big(1-\tfrac{1}{n-1}\big) \tr h^\intercal \big), ( 2f A - \tfrac{2}{n-1} fH \bar g^\intercal,-\Delta_\Sigma f - |A|^2 f -\Ric(\nu,\nu)f)\right \rangle\da\\
	&= \int_\Sigma \left (2A \cdot A'(h) + 2\big(\tfrac{1}{2} - \tfrac{1}{n-1} \big) \tr h^\intercal |A|^2\right) f \da \\
	&\quad +\int_\Sigma \left( - \tfrac{2}{n-1} H \tr \big(A'(h) + \big(\tfrac{1}{2} - \tfrac{1}{n-1} \big)  \tr h^\intercal A\big) - \tfrac{n-2}{n-1} \Delta_\Sigma \tr h^\intercal - \tfrac{n-2}{n-1} (|A|^2+(n-1)\Lambda)\tr h^\intercal \right) f\da.
\end{align*}
Since $f$ is arbitrary, we obtain 
\begin{align*}
	&(n-1)(n-2) \Lambda b&\\
	&=2A \cdot A'(h) -  \tfrac{2}{n-1}  \tr h^\intercal |A|^2 - \tfrac{1}{n-1} (H^2-|A|^2+(n-1)(n-2)\Lambda) \tr h^\intercal  - \tfrac{n-2}{n-1} \Delta_\Sigma \tr h^\intercal\\
	& =2A \cdot \big(A'(h) - \tfrac{1}{n-1}  \tr h^\intercal A\big) -   \tfrac{n-2}{n-1} \Delta_\Sigma \tr h^\intercal - \tfrac{1}{n-1} R_\Sigma \tr h^\intercal.
\end{align*}
where we use the Gauss equation $R_\Sigma=H^2-|A|^2+(n-1)(n-2)\Lambda$. It gives \eqref{eq:hiddenbd2}.

\end{proof}

\subsection{Harmonic gauge and Fredholm alternative}\label{se:gauge}
Modify the operator $T$ defined in \eqref{eq:T} using the \emph{harmonic gauge}:
\begin{align}
	&T^{\Ga}:\mathcal M^{k,\alpha} (\Omega) \to \C^{k-2,\alpha}(\Omega)\times \C^{k-1,\alpha}(\Sigma) \times  \mathcal S_1^{k,\alpha}(\Sigma)\times \C^{k-1,\alpha}(\Sigma)\notag\\
\begin{split} \label{eq:TG}
	&T^{\Ga}(g ) = \left\{ \begin{array}{ll} \Ric_{g}  - (n-1)\Lambda g+ \mathcal D_g  \beta_{\bar g} g\quad \mbox{ in } \Omega \\
	\left\{ \begin{array}{l} \beta_{\bar g} g \\   |g^{\intercal}|^{-\frac{1}{n-1}} g^\intercal\\ H_g 
	\end{array}\right. \quad \mbox{ on } \Sigma
	\end{array} \right.
\end{split}
\end{align}
where for any vector field $X$ and symmetric $2$-tensor $h$, we denote the Killing operator $\mathcal D_g$ and the Bianchi operator $\beta_g$ as 
\begin{align*} 
	\mathcal D_g X = \tfrac{1}{2} \mathscr L_X g \quad\mbox{ and } \quad \beta_{g} h = - \Div_{g} h + \tfrac{1}{2} d \tr_{g} h.
\end{align*}
The first copy of $\C^{k-1,\alpha}(\Sigma)$ in the codomain of $T^\Ga$ denotes the space of 1-forms on the tangent bundle of $\Omega$ along $\Sigma$ (so it acts on all normal and tangential vectors to $\Sigma$).  The linearized operator of $T^{\Ga}$ at $\bar g$ is given by
\begin{align} \label{eq:LG}
 &L^\Ga: \C^{k,\alpha}(\Omega)\to\C^{k-2,\alpha}(\Omega)\times \C^{k-1,\alpha}(\Sigma)\times T\mathcal S_1^{k,\alpha}  (\Sigma)\times \C^{k-1,\alpha}(\Sigma)\notag\\
 \begin{split}
	&L^{\Ga}(h ) = \left\{ \begin{array}{ll} \Ric'(h)  -(n-1)\Lambda h + \mathcal D_{\bar g}  \beta_{\bar g} h\quad \mbox{ in } \Omega  \\
	\left\{ \begin{array}{l} \beta_{\bar g} h \\   
	h^\intercal - \tfrac{1}{n-1} (\tr h^\intercal) \bar g^{\intercal} \\ H'(h) 
	\end{array}\right. \quad \mbox{ on } \Sigma. 
	\end{array} \right.
\end{split}
\end{align}
Anderson showed that $L^\Ga$ is elliptic with Fredholm index $0$ in~\cite[Theorem 1.2]{Anderson:2008-banach} and  \cite[Proposition 3.1]{Anderson:2008}.

Consider the operator $B$ from \eqref{eq:example0} by letting $f = (n-1) \Lambda$: For vector fields $Y$,
\begin{align}\label{eq:B}
	B Y = \left\{ \begin{array}{ll} \Delta_{\bar g} Y + (n-1)\Lambda Y  & \mbox{ in } \Omega\\
	Y & \mbox{ on } \Sigma \end{array}\right..
\end{align}
The existence of the harmonic gauge relies on the solvability of $B$. By Fredholm alternative, $B$ is an isomorphism if and only if its kernel is trivial. For $\Lambda \le 0$, the kernel is always trivial. However, when $\Lambda > 0$, we make the following no-kernel assumption: 
\begin{align}\label{eq:NK}
\text{Ker}\, B = \{ 0 \}.
\end{align}
In light of Lemma~\ref{le:motivation}, the assumption~\eqref{eq:NK} holds for generic $\bar g$.

The next lemma is a standard fact that asserts solving $T(g) = (0, \gamma, \phi)$ can be reduced to solving the elliptic problem $T^\Ga(g) = (0, 0, \gamma, \phi)$. We include the proof to explain where we rely on the assumption~\eqref{eq:NK}.

Recall $\mathscr D^{k+1,\alpha} (\Omega)$  is the space of $\C^{k+1, \alpha}$-diffeomorphisms of $\Omega$ whose restriction on $\Sigma$ is the identity map $\mathrm{Id}_\Sigma$. The tangent space of $\mathscr D^{k+1,\alpha}  (\Omega)$ at the identity map $\mathrm{Id}_\Omega$ is given by $\mathcal X^{k+1,\alpha} (\Omega)$ in \eqref{eq:vector}.

\begin{lemma}\label{le:gauge}
Fix $\Lambda$ and let $\bar g\in \mathcal M^{k,\alpha}_\Lambda(\Omega)$. In the case where $\Lambda>0$, we additionally make the no-kernel assumption \eqref{eq:NK}. Then there exists a neighborhood $\mathcal U$ of $\bar g$ in $\mathcal M^{k,\alpha}(\Omega)$ and a neighborhood $\mathscr D_0$ of the identity map $\mathrm{Id}_\Omega$ in $\mathscr D^{k+1,\alpha}(\Omega)$ such that the following holds:
\begin{enumerate}
\item \label{it:gauge0} If $g\in \mathcal U$ solves $T^{\Ga}(g) = (0, 0, \gamma, \phi)$, then $g$ satisfies the harmonic gauge $\beta_{\bar g} g=0$ in $\Omega$, and thus $T(g) = (0, \gamma, \phi)$.
\item For any $g\in \mathcal U$, there exists a unique diffeomorphism $\psi\in \mathscr D_0$ such that $\beta_{\bar g} (\psi^* g)=0$ in $\Omega$, and $\mathscr D_0$ acts on $\mathcal U$ freely; that is, $g = \psi^* g$  implies $\psi=\mathrm{Id}_\Omega$. As a direct consequence, if $g\in \mathcal U$ solves $T(g)=(0, \gamma, \phi)$, then there exists a unique $\psi\in \mathscr D_0$ such that $T^\Ga(\psi^* g) = (0, 0, \gamma, \phi)$.\label{it:gauge}
\end{enumerate}
\end{lemma}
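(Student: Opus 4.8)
The plan is to construct the gauge-fixing diffeomorphism via the Inverse Function Theorem applied to a fixed-point-type operator, using the solvability of $B$ (guaranteed by Fredholm alternative and the no-kernel assumption \eqref{eq:NK}) at the linearized level. For part~\eqref{it:gauge}, I would consider the map $\Psi \colon \mathscr D^{k+1,\alpha}(\Omega)\times \mathcal M^{k,\alpha}(\Omega)\to \C^{k-2,\alpha}(\Omega)\times \C^{k-1,\alpha}(\Sigma)$ given by $\Psi(\psi, g) = \big(\beta_{\bar g}(\psi^* g),\ (\psi^* g - g)|_\Sigma\big)$, or more simply work with the Bianchi-gauge equation on the vector field $Y$ generating $\psi = \psi_Y$ near $\mathrm{Id}_\Omega$. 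Writing $\psi^* g$ and differentiating in $Y$ at $Y=0$, $g = \bar g$, the principal part of $Y\mapsto \beta_{\bar g}(\mathscr L_Y \bar g)$ is (up to constants and lower-order curvature terms) $\tfrac12 \Delta_{\bar g} Y$ — this is exactly why the operator $B$ from \eqref{eq:B} appears, since $\beta_{\bar g}\mathcal D_{\bar g} Y = -\tfrac12 \Delta_{\bar g} Y - \tfrac12 \Ric_{\bar g}(Y) = -\tfrac12(\Delta_{\bar g} Y + (n-1)\Lambda Y)$ using $\Ric_{\bar g} = (n-1)\Lambda \bar g$. Imposing $Y|_\Sigma = 0$ so that $\psi \in \mathscr D^{k+1,\alpha}(\Omega)$, the linearization of the gauge condition at $(\mathrm{Id}_\Omega, \bar g)$ with respect to $Y$ is precisely $-\tfrac12 B$ restricted to vector fields, which is an isomorphism $\mathcal X^{k+1,\alpha}(\Omega)\to \C^{k-1,\alpha}(\Omega)$ under \eqref{eq:NK} (and automatically when $\Lambda \le 0$). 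By the Implicit Function Theorem there are neighborhoods $\mathcal U$ of $\bar g$ and $\mathscr D_0$ of $\mathrm{Id}_\Omega$ and a smooth map $g\mapsto \psi_g\in\mathscr D_0$ with $\beta_{\bar g}(\psi_g^* g) = 0$, unique in $\mathscr D_0$; shrinking $\mathcal U$ if necessary, $\psi_{\bar g} = \mathrm{Id}_\Omega$, and freeness of the action follows from the uniqueness clause in the IFT (if $\psi^* g = g$ then both $\psi$ and $\mathrm{Id}_\Omega$ solve the same gauge equation for $g$ in $\mathscr D_0$, forcing $\psi = \mathrm{Id}_\Omega$).

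For part~\eqref{it:gauge0}, suppose $g\in \mathcal U$ solves $T^\Ga(g) = (0,0,\gamma,\phi)$. Set $V = \beta_{\bar g} g$, a $1$-form on $\Omega$; the boundary condition forces $V|_\Sigma = 0$. Applying the Bianchi operator $\beta_g$ to the interior equation $\Ric_g - (n-1)\Lambda g + \mathcal D_g V = 0$ and using the contracted second Bianchi identity $\beta_g(\Ric_g - \tfrac12 R_g g) = 0$ (equivalently $\beta_g \Ric_g = -\tfrac12 d\,(\text{appropriate contraction})$ — the standard computation showing $\beta_g(\Ric_g - (n-1)\Lambda g) = 0$ when one accounts for the constant term), one derives that $V$ satisfies a second-order homogeneous elliptic equation $\beta_g \mathcal D_g V = 0$ in $\Omega$ with $V|_\Sigma = 0$. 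The operator $\beta_g \mathcal D_g = -\tfrac12(\Delta_g + \Ric_g)$ acting on $1$-forms: since $g$ is close to $\bar g$, this is a small perturbation of $-\tfrac12 B$, hence still has trivial kernel with Dirichlet boundary data for $\mathcal U$ small enough (again invoking \eqref{eq:NK} when $\Lambda>0$; for $\Lambda\le 0$ a Bochner/integration-by-parts argument suffices directly). Therefore $V\equiv 0$, the harmonic gauge holds, the interior equation reduces to $\Ric_g = (n-1)\Lambda g$, and $T(g) = (0,\gamma,\phi)$. The last sentence of \eqref{it:gauge} then follows by combining: given $g\in\mathcal U$ with $T(g) = (0,\gamma,\phi)$, take $\psi = \psi_g\in\mathscr D_0$ from part~\eqref{it:gauge}; then $\psi^* g$ is still Einstein with the same boundary data (as $T$ is diffeomorphism-invariant and $\psi|_\Sigma = \mathrm{Id}_\Sigma$) and satisfies $\beta_{\bar g}(\psi^* g) = 0$, so $T^\Ga(\psi^* g) = (0,0,\gamma,\phi)$; uniqueness of $\psi$ in $\mathscr D_0$ is the freeness established above.

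The main obstacle is the careful bookkeeping in part~\eqref{it:gauge0}: one must verify that applying $\beta_g$ to the gauged Einstein equation genuinely yields a \emph{homogeneous} elliptic equation for $V$ with no zeroth-order inhomogeneity surviving — this uses the second Bianchi identity and the fact that $\beta_g(\Lambda g) = \Lambda(-\Div_g g + \tfrac12 d\tr_g g) = \Lambda(0 + \tfrac12 d(n)) = 0$ since $\tr_g g = n$ is constant, so the constant-curvature term is annihilated cleanly. The second point requiring care is the perturbation argument: that triviality of $\Ker B$ (a statement at $\bar g$) persists for the operator $\beta_g\mathcal D_g$ with $g$ near $\bar g$; this is standard since the kernel of a family of Fredholm-index-$0$ elliptic operators is upper semicontinuous, but it is the place where the neighborhood $\mathcal U$ must be shrunk and where the hypothesis \eqref{eq:NK} is genuinely used rather than just for $\Lambda\le 0$. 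Everything else is the routine application of the Implicit Function Theorem and the invariance properties of the geometric operator $T$.
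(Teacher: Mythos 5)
Your proposal is correct and takes essentially the same route as the paper: part (2) via the Implicit/Inverse Function Theorem applied to $(\psi,g)\mapsto \beta_{\bar g}(\psi^*g)$, whose linearization in $\psi$ at $(\mathrm{Id}_\Omega,\bar g)$ is $-\tfrac{1}{2}(\Delta_{\bar g}Y+(n-1)\Lambda Y)$ on $\mathcal X^{k+1,\alpha}(\Omega)$, and part (1) by applying $\beta_g$ to the gauged equation, using the Bianchi identity to reduce to $\beta_g\mathcal D_g V=0$ with $V|_\Sigma=0$ and the persistence of invertibility of $B$ for $g$ near $\bar g$. The only cosmetic difference is your optional extra boundary component in the map $\Psi$, which is unnecessary since $\psi\in\mathscr D^{k+1,\alpha}(\Omega)$ already fixes $\Sigma$; otherwise the argument matches the paper's.
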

\begin{proof}
To prove the first statement, let $g$ solve $T^{\Ga}(g) = (0, 0, \gamma, \phi)$. Denote $Y= \beta_{\bar g} g$. Apply $\beta_g$ on the first equation of $T^\Ga(g)$  gives 
\begin{align*}
	0&=\beta_g (\Ric_g -(n-1)\Lambda g) + \beta_g \mathcal D_g Y= \beta_g \mathcal D_g  Y= - \tfrac{1}{2}( \Delta_g Y + \Ric_{g}(Y, \cdot) ),
\end{align*}
where we use \eqref{eq:Laplace0}. At $\bar g$, the operator $\Delta_{\bar g} Y + (n-1)\Lambda Y$ is an isomorphism, so is the operator  $\Delta_g Y + \Ric_{g}(Y, \cdot)$ for $g$ sufficiently close to $\bar g$. Hence $Y\equiv 0$ in $\Omega$. 

For the second statement, we define a map $f: \mathscr D^{k+1,\alpha} (\Omega) \times \mathcal M^{k,\alpha}  (\Omega) \to \C^{k-1,\alpha}(\Omega)$ by $f(\psi, g) = \beta_{\bar g} (\psi^* g)$. Linearizing with respect to the first argument at $(\mathrm{Id}_\Omega, \bar g)$ gives $D_1 f : \mathcal X^{k+1, \alpha} (\Omega) \to \C^{k-1,\alpha}(\Omega)$ by $D_1f (Y)= -\tfrac{1}{2} (\Delta_{\bar g} Y+ (n-1)\Lambda Y)$.  Since $D_1f$ is an isomorphism, by the Inverse Function Theorem, there exist $\mathcal{U}$ and $\mathscr{D}_0$ such that for each $g\in \mathcal{U}$, there exists a unique $\psi\in \mathscr{D}_0$ such that $\beta_{\bar{g}} (\psi^* g) = 0$. The fact that $\mathscr{D}_0$ acts on $\mathcal{U}$ freely follows from the fact that if $g = \psi^* g$ satisfies the harmonic gauge, then $\psi = \mathrm{Id}_\Omega$ from the above application of the Inverse Function Theorem.
\end{proof}

A corresponding statement for the linearized operators also hold. 
\begin{lemma}\label{le:gauge-linear}
Fix $\Lambda$ and let $\bar g\in \mathcal M^{k,\alpha}_\Lambda(\Omega)$. In the case where $\Lambda>0$, we additionally  make the no-kernel assumption \eqref{eq:NK}. Then the following holds:
\begin{enumerate}
\item If $h\in \Ker L^\Ga $, then $\beta_{\bar g} h =0$ and $h\in \Ker L$.
\item If $h\in \Ker L$, then there exists a unique $Y\in \mathcal X^{k+1,\alpha}(\Omega)$ such that $\beta_{\bar g} (h+ \mathscr L_Y \bar g)=0$ and  $h+ \mathscr L_Y\bar g\in \Ker L^\Ga$. 
\end{enumerate}
\end{lemma}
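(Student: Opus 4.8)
\textbf{Proof plan for Lemma~\ref{le:gauge-linear}.}

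The statement is the linearized counterpart of Lemma~\ref{le:gauge}, so the plan is to mirror that proof, replacing the Inverse Function Theorem by a direct linear-algebra argument and the nonlinear Bianchi identity by its linearization. First I would record the two relevant linearized identities at $\bar g$: the contracted second Bianchi identity gives $\beta_{\bar g}\big(\Ric'(h) - (n-1)\Lambda h\big) = -\tfrac12\big(\Delta_{\bar g} + (n-1)\Lambda\big)(\beta_{\bar g}h)$ up to the standard zeroth-order terms (this is the linearization of the computation used in Lemma~\ref{le:gauge}, and rests on $\beta_g\mathcal D_g Y = -\tfrac12(\Delta_g Y + \Ric_g(Y,\cdot))$ from \eqref{eq:Laplace0}), and the identity $\beta_{\bar g}\mathcal D_{\bar g} Y = -\tfrac12\big(\Delta_{\bar g}Y + (n-1)\Lambda Y\big) = -\tfrac12 BY$ in the interior, with $\mathcal D_{\bar g} Y|_\Sigma = \tfrac12\mathscr L_Y\bar g|_\Sigma = 0$ and $\beta_{\bar g}(\mathcal D_{\bar g}Y)|_\Sigma$ vanishing when $Y|_\Sigma = 0$, so that $Y \mapsto \beta_{\bar g}\mathcal D_{\bar g}Y$ is (up to the factor $-\tfrac12$) exactly the operator $B$ of \eqref{eq:B} acting on $\mathcal X^{k+1,\alpha}(\Omega)$.

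For part (1): suppose $h \in \Ker L^\Ga$. Then $\Ric'(h) - (n-1)\Lambda h + \mathcal D_{\bar g}\beta_{\bar g}h = 0$ in $\Omega$ and $\beta_{\bar g}h = 0$ on $\Sigma$. Apply $\beta_{\bar g}$ to the interior equation; using the linearized Bianchi identity above on the first two terms and $\beta_{\bar g}\mathcal D_{\bar g}(\beta_{\bar g}h) = -\tfrac12 B(\beta_{\bar g}h)$ on the last, one gets a homogeneous equation of the form $\big(\Delta_{\bar g} + (n-1)\Lambda\big)(\beta_{\bar g}h) = 0$ in $\Omega$ with $\beta_{\bar g}h = 0$ on $\Sigma$; since $B$ is an isomorphism (automatic for $\Lambda \le 0$, and the no-kernel assumption \eqref{eq:NK} when $\Lambda > 0$), this forces $\beta_{\bar g}h \equiv 0$ in $\Omega$. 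Feeding this back into $L^\Ga(h) = 0$ kills the gauge term, so the interior equation becomes $\Ric'(h) - (n-1)\Lambda h = 0$ and the boundary conditions of $L^\Ga$ reduce exactly to those of $L$; hence $h \in \Ker L$.

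For part (2): given $h \in \Ker L$, I seek $Y \in \mathcal X^{k+1,\alpha}(\Omega)$ with $\beta_{\bar g}(h + \mathscr L_Y\bar g) = 0$, i.e. $2\beta_{\bar g}\mathcal D_{\bar g}Y = -\beta_{\bar g}h$, i.e. $-BY = -\beta_{\bar g}h$, which has a unique solution $Y$ precisely because $B: \mathcal X^{k+1,\alpha}(\Omega) \to \C^{k-2,\alpha}(\Omega)$ (acting on vector fields vanishing on $\Sigma$) is an isomorphism. With this $Y$, set $\tilde h = h + \mathscr L_Y\bar g$. It remains to check $\tilde h \in \Ker L^\Ga$: the interior equation $\Ric'(\tilde h) - (n-1)\Lambda\tilde h + \mathcal D_{\bar g}\beta_{\bar g}\tilde h = 0$ holds because $\mathscr L_Y\bar g$ lies in the kernel of $\Ric' - (n-1)\Lambda(\cdot)$ (it is an infinitesimal diffeomorphism of the Einstein metric $\bar g$) and $\beta_{\bar g}\tilde h = 0$ by construction; the boundary conditions hold since $Y|_\Sigma = 0$ implies $(\mathscr L_Y\bar g)^\intercal = 0$ and $H'(\mathscr L_Y\bar g) = 0$ on $\Sigma$, so $\tilde h$ and $h$ have the same boundary data, and $\beta_{\bar g}\tilde h|_\Sigma = 0$. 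Uniqueness of $Y$ follows from injectivity of $B$, completing the proof.

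The only genuine subtlety — the step I would be most careful about — is the boundary-term bookkeeping when applying $\beta_{\bar g}$ to the interior equation in part (1): one must confirm that $\beta_{\bar g}h = 0$ on $\Sigma$ (an input) together with the structure of $\mathcal D_{\bar g}\beta_{\bar g}h$ really does give $\beta_{\bar g}h$ zero \emph{Dirichlet} data, so that the isomorphism property of $B$ (a Dirichlet problem) applies verbatim; everything else is a direct transcription of Lemma~\ref{le:gauge} to the linear level.
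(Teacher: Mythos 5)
Your argument is correct and follows the paper's proof essentially verbatim: for (1) you apply $\beta_{\bar g}$ to the gauged interior equation, use the boundary datum $\beta_{\bar g}h=0$ on $\Sigma$ together with triviality of $\Ker B$ (automatic for $\Lambda\le 0$, the assumption \eqref{eq:NK} for $\Lambda>0$) to conclude $\beta_{\bar g}h\equiv 0$, and for (2) you solve $BY=\beta_{\bar g}h$ with $Y|_\Sigma=0$ and invoke linearized diffeomorphism invariance at the Einstein background, which is exactly the paper's (terser) argument. One small correction that does not affect validity: at an Einstein metric the linearized contracted Bianchi identity gives $\beta_{\bar g}\big(\Ric'(h)-(n-1)\Lambda h\big)=0$ exactly, not $-\tfrac12\big(\Delta_{\bar g}+(n-1)\Lambda\big)(\beta_{\bar g}h)$; the only surviving term after applying $\beta_{\bar g}$ to the gauged equation is $\beta_{\bar g}\mathcal D_{\bar g}(\beta_{\bar g}h)=-\tfrac12\big(\Delta_{\bar g}+(n-1)\Lambda\big)(\beta_{\bar g}h)$, so your homogeneous equation and the ensuing conclusions are unchanged up to an irrelevant overall factor.
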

\begin{proof}
To prove Item (1), we suppose $L^\Ga (h)=0$ and denoting $W =\beta h$. Applying the Bianchi operator $\beta$ on the first equation of $L^\Ga(h)$ gives
\[
	0=\beta (\Ric_{\bar g} - (n-1)\Lambda \bar g) + \beta \mathcal D W = \beta \mathcal D W = -\tfrac{1}{2} (\Delta W + (n-1)\Lambda W).
\]	
Since $W=0$ on $\Sigma$, we obtain $W\equiv 0$ in $\Omega$. Item~(2) is obvious.

\end{proof}

\begin{remark}\label{re:harmonic}
By Item~\eqref{it:gauge} of Lemma~\ref{le:gauge}, we can identify $\mathcal M^{k,\alpha}_\Lambda(\Omega) / \mathscr D^{k+1,\alpha} (\Omega)$ locally with the space of metrics that satisfy the harmonic gauge. More specifically, for $\bar g\in \mathcal M^{k,\alpha}_\Lambda(\Omega)$, let $\mathcal U$ be the neighborhood from the previous lemma and let $\mathcal U_\Lambda=\mathcal M^{k,\alpha}_\Lambda(\Omega)\cap\, \mathcal U$. Then we can identify the equivalent class $\llbracket g\rrbracket$ in $\mathcal U_\Lambda/\mathscr D^{k+1,\alpha}(\Omega)$ by taking the unique ``harmonic representative'':
\[
	\mathcal U_\Lambda/\mathscr D^{k+1,\alpha}(\Omega) = \big\{ g \in \mathcal U:  \beta_{\bar g} g=0 \mbox{ and }\Ric_g = (n-1)\Lambda g \mbox{ in  } \Omega \big\}.
\] 
Furthermore, the moduli space of negative Einstein metrics $\mathcal M^{k,\alpha}_-(\Omega) / \mathscr D^{k+1,\alpha} (\Omega)$ is also a smooth Banach manifold. To see this, denoting $\mathcal U_-=\mathcal M^{k,\alpha}_-(\Omega)\cap\, \mathcal U$, we can identify  $\mathcal M^{k,\alpha}_-(\Omega) / \mathscr D^{k+1,\alpha} (\Omega)$ locally with
\begin{align*}
	\mathcal U_-/\mathscr D^{k+1,\alpha}(\Omega) &= \big\{ g \in \mathcal U:  \beta_{\bar g} g=0 \mbox{ and }\Ric_g = \lambda g \mbox{ in  } \Omega \mbox{ for some } \lambda <0\big\}.
\end{align*} 
By identifying $g\in \mathcal U_-/\mathscr D^{k+1,\alpha}(\Omega)$  with $(\lambda g, \lambda^{-1})$, we see that $\mathcal U_-/\mathscr D^{k+1,\alpha}(\Omega)$ is locally diffeomorphic to $\left( \mathcal U_{-1}/\mathscr D^{k+1,\alpha}(\Omega) \right)\times (-\infty , 0)$ (note that $ \mathcal U_{-1}$ denotes $\mathcal U_\Lambda$ where $\Lambda = -1$) and hence is a smooth Banach manifold. 
\end{remark}

We conclude this section by illustrating how the triviality of the kernel of $L$ (in the sense of Example 2.1) leads to Theorem~\ref{th:generic0} and Theorem~\ref{th:generic-higher}.

\begin{definition}\label{de:rigid}
Fix $\Lambda$. We say that $(\Omega, \bar g)$ is \emph{infinitesimally rigid with respect to the Anderson boundary data} (among $\mathcal M_\Lambda^{k,\alpha} (\Omega)$) if
\begin{align*}
\Ker L = \big\{ \mathscr L_X \bar g: X\in \mathcal X^{k+1,\alpha}(\Omega) \big\}.
\end{align*}
\end{definition}

\begin{proposition}\label{pr:Fredholm}
Suppose $(\Omega, \bar g)$ is infinitesimally rigid with respect to the Anderson boundary data. In the case where $\Lambda>0$, we additionally make the no-kernel assumption \eqref{eq:NK}.  Then the boundary map $\Pi: \mathcal M^{k,\alpha}_\Lambda(\Omega) / \mathscr D^{k+1,\alpha}(\Omega) \to \mathcal S_1^{k,\alpha}(\Sigma)\times \C^{k-1,\alpha}(\Sigma)$ is a local diffeomorphism at $\llbracket \bar g \rrbracket$.
\end{proposition}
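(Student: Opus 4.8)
The plan is to reduce the statement to the Inverse Function Theorem applied to the gauged operator $T^\Ga$ from \eqref{eq:TG}, whose linearization $L^\Ga$ at $\bar g$ is, by \cite{Anderson:2008, Anderson:2008-banach}, elliptic with Fredholm index $0$. Since a Fredholm operator of index $0$ is a Banach space isomorphism exactly when it is injective, the first step is to show $\Ker L^\Ga = \{0\}$. Granting this, $T^\Ga$ is a local diffeomorphism from a neighborhood $\mathcal U$ of $\bar g$ in $\mathcal M^{k,\alpha}(\Omega)$ onto a neighborhood of $T^\Ga(\bar g) = (0,0,\bar g^\intercal, H_{\bar g})$ in $\C^{k-2,\alpha}(\Omega)\times\C^{k-1,\alpha}(\Sigma)\times\mathcal S_1^{k,\alpha}(\Sigma)\times\C^{k-1,\alpha}(\Sigma)$ (here we use $|\bar g^\intercal|=1$, so $|\bar g^\intercal|^{-\frac1{n-1}}\bar g^\intercal=\bar g^\intercal$). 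The remaining task is then purely bookkeeping: to descend this to the boundary map on the moduli space.

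For the kernel computation, let $h\in\Ker L^\Ga$. By Lemma~\ref{le:gauge-linear}(1) --- which is where the no-kernel assumption \eqref{eq:NK} enters in the case $\Lambda>0$ --- one has $\beta_{\bar g}h=0$ and $h\in\Ker L$. The infinitesimal rigidity hypothesis then gives $h=\mathscr L_X\bar g$ for some $X\in\mathcal X^{k+1,\alpha}(\Omega)$, i.e. $X|_\Sigma=0$. Applying $\beta_{\bar g}$ and using $\Ric_{\bar g}=(n-1)\Lambda\bar g$ together with the identity $\beta_{\bar g}\mathcal D_{\bar g}X=-\tfrac12\big(\Delta_{\bar g}X+\Ric_{\bar g}(X,\cdot)\big)$, one gets $0=\beta_{\bar g}h=2\beta_{\bar g}\mathcal D_{\bar g}X=-\big(\Delta_{\bar g}X+(n-1)\Lambda X\big)$, so $X$ lies in the kernel of the operator $B$ from \eqref{eq:B}. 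When $\Lambda\le 0$ this kernel is trivial, and when $\Lambda>0$ it is trivial by \eqref{eq:NK}. Hence $X\equiv 0$ and $h=0$, so $L^\Ga$ is an isomorphism.

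To finish, I would invoke Lemma~\ref{le:gauge}(1): after shrinking $\mathcal U$, every $g\in\mathcal U$ solving $T^\Ga(g)=(0,0,\gamma,\phi)$ automatically satisfies $\beta_{\bar g}g=0$ on all of $\Omega$, hence is Einstein. Consequently the level set $(T^\Ga)^{-1}\big(\{0\}\times\{0\}\times\mathcal S_1^{k,\alpha}(\Sigma)\times\C^{k-1,\alpha}(\Sigma)\big)$ coincides with the harmonic slice $\{g\in\mathcal U:\beta_{\bar g}g=0,\ \Ric_g=(n-1)\Lambda g\}$, which by Remark~\ref{re:harmonic} is a local chart for $\mathcal M^{k,\alpha}_\Lambda(\Omega)/\mathscr D^{k+1,\alpha}(\Omega)$ near $\llbracket\bar g\rrbracket$. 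Restricting the local diffeomorphism $T^\Ga$ to this level set and reading off the last two components gives a diffeomorphism onto a neighborhood of $(\bar g^\intercal,H_{\bar g})$ in $\mathcal S_1^{k,\alpha}(\Sigma)\times\C^{k-1,\alpha}(\Sigma)$; on the slice those two components are exactly $\big(|g^\intercal|^{-\frac1{n-1}}g^\intercal, H_g\big)$, which is a representative of $\Pi(\llbracket g\rrbracket)=([g^\intercal],H_g)$. This identifies $\Pi$, in a neighborhood of $\llbracket\bar g\rrbracket$, with a local diffeomorphism.

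I expect the only genuinely delicate point to be this last step --- matching the moduli-space chart of Remark~\ref{re:harmonic} with a level set of $T^\Ga$ and verifying that $\Pi$ factors through $T^\Ga$ --- since the triviality of $\Ker L^\Ga$ is essentially immediate from infinitesimal rigidity together with \eqref{eq:NK}, and the hard analytic input (ellipticity and Fredholm index $0$ of $L^\Ga$) is quoted from \cite{Anderson:2008, Anderson:2008-banach}.
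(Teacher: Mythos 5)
Your proposal is correct and follows essentially the same route as the paper: infinitesimal rigidity plus Lemma~\ref{le:gauge-linear} (and \eqref{eq:NK} when $\Lambda>0$, used again to kill the vector field $X\in\Ker B$) give $\Ker L^\Ga=\{0\}$, the index-zero Fredholm property and the Inverse Function Theorem make $T^\Ga$ a local diffeomorphism, and Lemma~\ref{le:gauge}(1) together with the harmonic-slice identification of Remark~\ref{re:harmonic} lets this descend to show $\Pi$ is a local diffeomorphism at $\llbracket\bar g\rrbracket$. Your explicit level-set bookkeeping is just a spelled-out version of the paper's concluding paragraph.
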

\begin{proof}
By the infinitesimal rigidity assumption and Lemma~\ref{le:gauge-linear},  $\Ker L^\Ga=\{ 0 \}$ (for $\Lambda>0$, we also use \eqref{eq:NK}). Because $L^\Ga$ is Fredholm of index zero, $L^\Ga$ is an isomorphism and thus by   $T^\Ga$ is a local diffeomorphism at $\bar g$ by the Inverse Function Theorem. Specifically, there exists a neighborhood $\mathcal U \subset \mathcal M^{k,\alpha}(\Omega)$ of $\bar g$ and a neighborhood of $T^\Ga(\bar g)$, denoted by 
\[
\mathcal W_1\times \mathcal W_2 \subset \Big(\C^{k-2,\alpha}(\Omega)\times \C^{k-1,\alpha}(\Sigma)\Big) \times \Big( \mathcal S_1^{k,\alpha}(\Sigma)\times \C^{k-1,\alpha}(\Sigma)\Big),
\]
 such that $T^\Ga: \mathcal U \to  \mathcal W_1\times \mathcal W_2$ is a diffeomorphism. Therefore, the restriction of $T^\Ga$ on $\mathcal U_\Lambda$ is also a diffeomorphism onto its image.   By Remark~\ref{re:harmonic},  the restriction descends to a well-defined map from $\mathcal U_\Lambda/\mathscr D^{k+1,\alpha}(\Sigma)$ to  $\mathcal W_1\times \mathcal W_2$ that is a diffeomorphism onto its image $\{ (0, 0) \} \times \mathcal W_2$ (where the ``onto'' part follows Item~\eqref{it:gauge0} of Lemma~\ref{le:gauge}). Thus, the boundary map $\Pi$ is a diffeomorphism from $\mathcal U_\Lambda/\mathscr D^{k+1,\alpha}(\Sigma)$ to  $\mathcal W_2$.

\end{proof}


\section{Conformal Cauchy boundary condition}\label{se:conf}

In this section, we analyze the infinitesimal rigidity (also referred to as unique continuation) of the conformal Cauchy boundary condition, as well as non-rigidity in the case of round spheres. We will assume that $(\Omega, \bar{g})$ satisfies $\pi_1(\Omega, \Sigma)=0$ from now on. Furthermore, only in this section,  we assume that $(\Omega, \bar g)$ is contained in a larger Einstein manifold. .

\begin{notation} \label{notation}
Let $(M, \bar{g})$ be a complete compact Riemannian $n$-dimensional manifold with smooth boundary, and $\Ric_{\bar{g}} = (n-1) \Lambda \bar{g}$. Consider $\Omega$ as a connected, proper compact subset  in $M$ whose boundary $\Sigma$ is a smooth embedded hypersurface in $\Int M$, and $\pi_1(\Omega, \Sigma) = 0$.
\end{notation}

\begin{definition}
Let $h$ be a symmetric $(0,2)$-tensor on $(\Omega, \bar{g})$. We say that $h$ satisfies the \emph{conformal Cauchy boundary condition} on $\Sigma$ if 
\begin{align*}
	h^\intercal &= \tfrac{1}{n-1}(\tr h^\intercal) \bar{g}^\intercal, \\
	A'(h) &= \tfrac{1}{n-1}(\tr h^\intercal) A.
\end{align*}
If, additionally, $\tr h^\intercal=0$, then we say $h$ satisfies the \emph{Cauchy boundary condition}. 
\end{definition}

Note that the conformal Cauchy boundary condition implies the Anderson boundary condition because $H'(h) = -h\cdot A + \tr A'(h) = 0$. We also remark that $h^\intercal=0$ and $A'(h)=0$ are referred to as the Cauchy boundary condition because, in the geodesic gauge (i.e., $h(\nu, \cdot )=0$), those boundary conditions become the classical Cauchy boundary condition $h=0$ and $\nabla_\nu h=0$ on $\Sigma$.

Before presenting the main result of this section, Theorem~\ref{th:infinitesimal} below, we first discuss the infinitesimal rigidity for the Cauchy boundary condition. For the following results, we recall the spaces of vector fields $\mathcal{X}^{k+1,\alpha} (\Omega)$ and $\mathcal{Z}^{k+1,\alpha}(\Omega)$ as defined in~\eqref{eq:vector} and \eqref{eq:Zvector}, respectively. The next result can be thought of as an infinitesimal version of the unique continuation for Einstein metrics of \cite{Anderson-Herzlich:2008,Anderson-Herzlich:erratum, Biquard:2008} under the additional assumption that the metric admits an Einstein metric extension, by a different argument.

\begin{proposition}[Infinitesimal rigidity for Cauchy boundary condition]\label{pr:zero}
Let $(\Omega, \bar g)$ be a compact Riemannian manifold as in Notation~\ref{notation}.  Let $h\in \C^{k,\alpha}(\Omega)$ satisfy 
\begin{align*}
\begin{split}
	&\Ric'(h)- (n-1) \Lambda h=0\quad  \mbox{ in $\Omega$}\\
	&\left\{ \begin{array}{l} h^\intercal = 0\\
	A'(h)= 0 \end{array} \right. \mbox{ on } \Sigma.
\end{split}
\end{align*}
Then $h=\mathscr L_X\bar g$ in $\Omega$ for some $X \in \mathcal X^{k+1,\alpha} (\Omega)$. 
\end{proposition}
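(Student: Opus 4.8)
The plan is to follow the philosophy of Lemma~\ref{le:motivation}: produce a gauge in which $h$ satisfies a true elliptic equation with vanishing Cauchy data, then invoke unique continuation (Aronszajn) to conclude $h\equiv 0$ in a collar of $\Sigma$, and finally propagate this to all of $\Omega$ using the connectedness and the hypothesis $\pi_1(\Omega,\Sigma)=0$. The first step is to fix the diffeomorphism freedom. Since we are only asked for $h=\mathscr L_X\bar g$ with $X|_\Sigma=0$, it suffices to find $X\in\mathcal X^{k+1,\alpha}(\Omega)$ such that $\tilde h:=h-\mathscr L_X\bar g$ satisfies the harmonic (Bianchi) gauge $\beta_{\bar g}\tilde h=0$ in $\Omega$ \emph{and still has vanishing Cauchy data on $\Sigma$}. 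The latter is the delicate point: solving $\beta_{\bar g}\mathscr L_X\bar g = \beta_{\bar g}h$ with $X|_\Sigma=0$ is the equation $-\tfrac12(\Delta_{\bar g}X+\Ric_{\bar g}(X,\cdot))=\beta_{\bar g}h$, i.e. $\tfrac12(\Delta_{\bar g}X+(n-1)\Lambda X)=\beta_{\bar g}h$; by Lemma~\ref{le:gauge-linear}-type reasoning (using the no-kernel assumption when $\Lambda>0$, and automatically when $\Lambda\le 0$) this has a unique solution with $X|_\Sigma=0$. The point is that $\beta_{\bar g}h$ vanishes to first order on $\Sigma$: from $h^\intercal=0$ and $A'(h)=0$ one checks (geodesic-gauge normal form for $h$ along $\Sigma$) that all first-order boundary jets of $h$ vanish, hence $\beta_{\bar g}h$ and enough of its derivatives vanish on $\Sigma$, which forces $X$ to vanish to second order on $\Sigma$, so that $\mathscr L_X\bar g$ also has vanishing Cauchy data. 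Thus $\tilde h$ has zero Cauchy data on $\Sigma$ and satisfies $\beta_{\bar g}\tilde h=0$.

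The second step is to upgrade the Einstein-linearization equation to a genuine Laplace-type equation for $\tilde h$. Using $\Ric'(h)=\tfrac12\Delta_L h - \mathcal D\beta h$ (the standard identity relating the linearized Ricci tensor, the Lichnerowicz Laplacian, and the Bianchi gauge term) together with $\beta_{\bar g}\tilde h=0$ and $\Ric'(\tilde h)=(n-1)\Lambda\tilde h$ (which holds because $\mathscr L_X\bar g$ lies in the kernel of $L$ and in particular satisfies the linearized Einstein equation in the interior), one obtains an equation of the schematic form $\Delta_{\bar g}\tilde h + \mathfrak{Rm}_{\bar g}\ast\tilde h = 0$ in $\Omega$, i.e. a second-order elliptic system with diagonal principal part $\Delta_{\bar g}$ and zeroth-order coefficients built from the curvature of the \emph{background} Einstein metric — which is analytic in the chosen atlas. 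Combined with $\tilde h=0$, $\nabla_\nu \tilde h=0$ on $\Sigma$ (the Cauchy data), Aronszajn's unique continuation theorem gives $\tilde h\equiv 0$ in a collar neighborhood of $\Sigma$ inside $\Omega$ (here is where we use that $\Omega$ sits inside the larger manifold $M$ only insofar as we may take $\Sigma$ two-sided with a genuine collar; in fact the collar is entirely within $\Omega$, so this is automatic).

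The third step is the propagation from the collar to all of $\Omega$. Since the equation $\Delta_{\bar g}\tilde h+\mathfrak{Rm}\ast\tilde h=0$ has analytic coefficients, $\tilde h$ is real-analytic, and the set where $\tilde h$ vanishes to infinite order is both open and closed; as $\Omega$ is connected, $\tilde h\equiv 0$ on $\Omega$. (Alternatively, one iterates Aronszajn unique continuation along chains of balls, using connectedness of $\Omega$; the relative-fundamental-group hypothesis $\pi_1(\Omega,\Sigma)=0$ is what is actually needed later to match up gauges globally, but for this interior statement connectedness suffices.) Hence $h=\mathscr L_X\bar g$ with $X\in\mathcal X^{k+1,\alpha}(\Omega)$, and elliptic regularity for the gauge equation gives the claimed regularity $X\in\C^{k+1,\alpha}$.

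The main obstacle, and the step I would spend the most care on, is the \emph{boundary-jet bookkeeping} in step one: verifying that the Cauchy data $h^\intercal=0$, $A'(h)=0$ genuinely force $\beta_{\bar g}h$ to vanish to sufficiently high order on $\Sigma$ so that the gauge-fixing vector field $X$ inherits vanishing second-order data and $\mathscr L_X\bar g$ does not spoil the Cauchy condition. This requires writing $h$ in geodesic gauge near $\Sigma$ (i.e. choosing coordinates so that $h(\nu,\cdot)$ can be normalized, or tracking the non-tangential components carefully), expressing $A'(h)$ in terms of $\partial_\nu h^\intercal$ and tangential derivatives of $h(\nu,\cdot)$, and then reading off which components of $\nabla h$ are determined on $\Sigma$. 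I expect this to be the technical heart; the elliptic/unique-continuation machinery afterwards is standard once the equation is in Laplace-type form with vanishing Cauchy data.
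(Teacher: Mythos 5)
There is a genuine gap, and it sits exactly where you flagged it: the claim that the gauge field $X$ ``vanishes to second order on $\Sigma$'' so that $\mathscr L_X\bar g$ does not spoil the Cauchy data. Two things go wrong. First, the hypotheses $h^\intercal=0$, $A'(h)=0$ only give $\beta_{\bar g}h=0$ \emph{on} $\Sigma$ after you have normalized $h$ to geodesic gauge (so that $h=0$, $\nabla_\nu h=0$ on $\Sigma$); they give no control on $\nabla_\nu(\beta_{\bar g}h)$, which involves second normal derivatives of $h$ that the Cauchy data does not determine. Second, and decisively, even granting $\beta_{\bar g}h|_\Sigma=0$, solving the Dirichlet problem $\tfrac12(\Delta_{\bar g}X+(n-1)\Lambda X)=\beta_{\bar g}h$ with $X|_\Sigma=0$ does \emph{not} force $\nabla_\nu X=0$ on $\Sigma$: the normal derivative of the solution is determined nonlocally by the source (already in the one--dimensional model $X''=F$, $X(0)=X(1)=0$, a source vanishing at the endpoints generically produces $X'(0)\neq 0$). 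Consequently $\tilde h=h-\mathscr L_X\bar g$ satisfies the harmonic gauge but in general has \emph{nonzero} literal Cauchy data ($(\mathscr L_X\bar g)(\nu,\cdot)\neq 0$ on $\Sigma$ when $\nabla_\nu X\neq0$), so Aronszajn cannot be applied, and your Steps 2--3 do not get started. Note also that if your Step 1 were correct, you would conclude $h=\mathscr L_X\bar g$ with $X$ vanishing to second order on $\Sigma$, which is strictly stronger than the proposition and false in general (any $\mathscr L_X\bar g$ with $X|_\Sigma=0$, $\nabla_\nu X\neq0$ lies in the kernel with this boundary data).

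This incompatibility between the harmonic gauge and vanishing Cauchy data inside $\Omega$ is precisely what the paper's proof is built to avoid, and it is why the standing hypothesis of Notation~\ref{notation} (that $(\Omega,\bar g)$ sits properly inside a larger Einstein manifold $M$) and the assumption $\pi_1(\Omega,\Sigma)=0$ appear: the paper extends $h$ by zero across $\Sigma$ into $M$ (legitimate because the geodesic-gauge Cauchy data vanishes, so the extension is $\C^1$ and weakly solves the linearized equation), corrects by a Lie derivative $\mathscr L_Y\bar g$ solved \emph{globally on $M$} so that the harmonic gauge holds only weakly, upgrades $w=\hat h+\mathscr L_Y\bar g$ to an analytic solution of an elliptic system, and then exploits that $w=\mathscr L_Y\bar g$ on the open set $M\setminus\Omega$ together with analytic continuation of the vector field (here $\pi_1(\Omega,\Sigma)=0$ is used to get a single-valued $\widehat Y$ on a collar neighborhood of $\Omega$) to write $w=\mathscr L_{\widehat Y}\bar g$ and hence $h=\mathscr L_{\widehat Y-Y}\bar g$ with $(\widehat Y-Y)|_\Sigma=0$. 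Your proposal uses neither the ambient extension nor the relative fundamental group hypothesis, which is a sign that the intended mechanism is different; to repair your argument you would essentially have to reproduce the paper's extension-and-analytic-continuation scheme rather than gauge-fix within $\Omega$.
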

\begin{proof}
We may without loss of generality, assume that $h$ is in the geodesic gauge, i.e., $h(\nu, \cdot)=0$ on $\Sigma$,  and thus the Cauchy boundary condition becomes
\begin{align}\label{eq:Cauchy2}
\begin{split}
	h= 0 \quad \mbox{ and } \quad \nabla_\nu h= 0 \quad \mbox{ on } \Sigma.
\end{split}
\end{align}
See, e.g. \cite[Lemma 2.5 and Remark 2.6]{An-Huang:2022}. We extend $h$  to the entire $M$ by defining $\hat h\in \C^{1}(M)$ as
\begin{align*}
	\hat h &= h\quad \mbox{ in } \Omega\\
	\hat h &=0 \quad \mbox{ in } M\setminus  \Omega.
\end{align*}

The steps  of our proof are as follows. 
\begin{enumerate}
\item \label{it:step1} We first ``correct'' $\hat h$ on $M$ by adding $\mathscr L_Y \bar g$ so that $w:=\hat h+\mathscr L_Y \bar g$ weakly satisfies the harmonic gauge. 
\item  \label{it:step2} We show that $w$ satisfies an elliptic equation and is thus analytic  by elliptic regularity. 
\item \label{it:step3} Using the fact that $w=\mathscr L_Y\bar g$  in $M\setminus \Omega$ and analytic continuation  (see, e.g. \cite[Theorem 9]{An-Huang:2024}), we conclude $w \equiv \mathscr L_{\hat Y} \bar g$  for some vector field $\hat Y$ on the entire $\Omega$. Then it implies the desired statement for $h$. 
\end{enumerate}

Recall the Bianchi operator $\beta_{\bar g}$ defined in \eqref{eq:KB}. Below we will  omit the subscript $\bar g$  when the geometric operators are evaluated at $\bar g$. For Step~\eqref{it:step1}, let $Y\in \C^{1,\alpha}(M)$ solve $\beta (\mathscr L_Y \bar{g} )= -\beta \hat h$, i.e. 
\[
- \Delta Y - (n-1) \Lambda Y = - \beta \hat h \quad \mbox{ in } M.
\]
(Note that the boundary value $Y|_{\partial M}$ is irreverent in this proof.)
If $\Lambda\le 0$, we let $Y$ be the unique weak solution with zero boundary value. If $\Lambda>0$, the operator $Y\mapsto \beta (\mathscr L_Y \bar{g} )$ is Fredholm and has finite-dimensional cokernel; thus the solution $Y$ exists for some boundary values. This shows that $\beta (\hat h+\mathscr L_Y \bar{g} )= 0$ weakly.  Denote $w:=\hat h+\mathscr L_Y\bar{g}$.

We proceed to prove Step~\eqref{it:step2}. We claim that $w$ weakly solves
\begin{align} \label{eq:elliptic}
	( \Delta w)_{ij} + 2 R_{ik\ell j} w^{k\ell} =0 \quad \mbox{ in }M
\end{align}
where $R_{ik\ell j}$ is the Riemann curvature tensor of $\bar g$. 
We will show that $w$ weakly solves the linearized Einstein equation:
\begin{align}\label{eq:weak}
	\Ric'(w) = (n-1) \Lambda w  \quad \mbox{ in } M.
\end{align} 
In fact, we show that both $\hat h$ and  $\mathscr L_Y \bar{g} $  weakly solve the linearized Einstein equation as follows:  For any $\gamma\in \C^\infty_c (M)$, we use integration by parts  in $\Omega$ and the boundary condition \eqref{eq:Cauchy2} to obtain
\begin{align*}
	\int_{M}  \hat h \cdot \big((\Ric')^* (\gamma) - (n-1)\Lambda \gamma\big) \dvol&= \int_{ \Omega} h\cdot \big((\Ric')^* (\gamma) - (n-1)\Lambda \gamma\big)) \dvol\\
	&=  \int_{\Omega}  \big(\Ric' (h) - (n-1) \Lambda h \big)\cdot \gamma \dvol=0.
\end{align*}
 For $\mathscr L_Y\bar{g}$, we integrate by parts and get
 \begin{align*}
 	&\int_{M} \mathscr L_{Y} \bar{g}\cdot \big( (\Ric')^* (\gamma) - (n-1)\Lambda \gamma\big)\dvol\\
	&=- \int_{M} 2Y \cdot  \Div \big((\Ric')^* (\gamma) - (n-1)\Lambda \gamma\big)\,\dvol=0
 \end{align*}
where we use \eqref{eq:divergence} in the last equality.  This proves \eqref{eq:weak}. To prove  \eqref{eq:elliptic}, we compute, for any $\gamma\in \C^\infty_c (M)$,
\begin{align*}
	0 
	&=\int_{M} w\cdot \big((\Ric')^* (\gamma)  - (n-1) \Lambda \gamma \big)\dvol \\
	&= \int_{M}  w\cdot (-\tfrac{1}{2} \Delta \gamma + \beta^* \Div \gamma -R_{ik\ell j} \gamma^{k\ell}  ) \dvol\\
	&=\int_{M} w\cdot \left(-\tfrac{1}{2} \Delta \gamma -R_{ik\ell j} \gamma^{k\ell} \right)\dvol
	\end{align*}
	where we use  \eqref{eq:liEin} and $\beta_{\bar g} w= 0$ weakly.

We finish with Step \eqref{it:step3}. Since $\bar g$ is analytic in the chosen atlas (because $\bar g$ is Einstein), $w$ is also analytic in $\Int M$  by \eqref{eq:elliptic} and elliptic regularity.  Let $U$ be an open collar neighborhood of $\Omega$ such that $\Omega \subset U \subset \Int M$ and $\pi_1( U, \Sigma)=0$ (here we use the assumption $\pi_1(\Omega, \Sigma)=0$). Since $w=\mathscr L_Y \bar g$ in the open set $U\setminus\Omega$,  by analytic continuation (see, e.g. \cite[Theorem 9]{An-Huang:2024}),  there is an analytic vector field $\widehat Y$ defined in $U$ such that $\widehat Y= Y$ in $U\setminus \Omega$ and $w = \mathscr L_{\widehat Y} \bar g$ everywhere in $U$. Based on how $w$ and $\hat h$ are constructed, by letting $X = \widehat Y - Y$ we conclude that $X|_\Sigma =0$ and $h = \mathscr L_X \bar g$ in $\Omega$. It completes the proof. 
\end{proof}

\subsection{Infinitesimal rigidity and non-rigidity}

For the conformal Cauchy boundary condition, the infinitesimal rigidity may fail, as demonstrated in Example~\ref{ex:conformal}. Nevertheless, we show that rigidity holds under mild assumptions. In the umbilic case considered in Example~\ref{ex:conformal}, we also demonstrate that these examples are the only non-trivial kernel elements. It is worth noting that corresponding global rigidity always holds, as stated in Proposition~\ref{pr:global};  the umbilic case is a consequence of Hopf's uniqueness of constant mean curvature spheres.

\begin{theorem}\label{th:infinitesimal}
Let $(\Omega, \bar g)$ be a compact Riemannian manifold as in Notation~\ref{notation}. Let $h\in \C^{k,\alpha}(\Omega)$  satisfy
\begin{align*}
	&\Ric'(h) - (n-1) \Lambda h=0 \quad \mbox{ in $\Omega$} \\
\begin{split}	
	&\left\{ \begin{array}{l} h^\intercal = \frac{1}{n-1} (\tr h^\intercal )\bar g^\intercal \\
	A'(h)=  \frac{1}{n-1} (\tr h^\intercal ) A\end{array} \right. \mbox{ on } \Sigma.
\end{split}
\end{align*}
Then the following holds:
\begin{enumerate}
\item (Infinitesimal rigidity)\label{it:nonspherical} Suppose that either 
\begin{enumerate}
\item  $n=3$, $\Sigma$ is not umbilic and $R_\Sigma$ is not identically zero, or 
\item  $n\ge 3$, all eigenvalues of $L_\Sigma$ are nonzero where $L_\Sigma$ is defined by \eqref{eq:bdry-op}
\end{enumerate} 
Then $h=\mathscr L_X \bar g$ in $\Omega$ for  some $X \in \mathcal X^{k+1,\alpha} (\Omega)$. 

\item  (Infinitesimal non-rigidity) \label{it:spherical} If $n\ge 3$ and  $(\Sigma,\bar g^\intercal)$ is a round sphere and umbilic, then  $h=\mathscr L_X\bar g$ for some $X\in \mathcal Z^{k+1,\alpha} (\Omega)$. 
\end{enumerate}
\end{theorem}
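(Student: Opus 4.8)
The plan is to reduce Theorem~\ref{th:infinitesimal} to the Cauchy boundary case handled in Proposition~\ref{pr:zero} by subtracting off an appropriate infinitesimal diffeomorphism. Given $h$ satisfying the conformal Cauchy boundary condition, set $v=\tr h^\intercal$ on $\Sigma$. The idea is to construct a vector field $X$ on $\Omega$, vanishing on $\Sigma$ in case~\eqref{it:nonspherical} (or lying in $\mathcal Z^{k+1,\alpha}(\Omega)$ in case~\eqref{it:spherical}), such that $\hat h := h-\mathscr L_X\bar g$ satisfies the \emph{genuine} Cauchy boundary condition $\hat h^\intercal=0$, $A'(\hat h)=0$, plus still solves the linearized Einstein equation; then Proposition~\ref{pr:zero} finishes case~\eqref{it:nonspherical}, and in case~\eqref{it:spherical} one traces through the construction of $X$ to see it lies in $\mathcal Z^{k+1,\alpha}(\Omega)$.

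First I would pin down what the function $v=\tr h^\intercal$ must satisfy. Apply Lemma~\ref{le:hidden} with $b=0$: the hidden boundary conditions \eqref{eq:hiddenbd1}, \eqref{eq:hiddenbd2} must hold for $h$. Because $h$ satisfies the conformal Cauchy condition $A'(h)=\tfrac{1}{n-1}vA$, the combination $A'(h)-\tfrac{1}{n-1}vA$ vanishes identically, so \eqref{eq:hiddenbd1} becomes $\big(A-\tfrac{1}{n-1}H\bar g^\intercal\big)(\nabla_\Sigma v,\cdot)=0$ and \eqref{eq:hiddenbd2} becomes $L_\Sigma v=0$. So $v$ lies in the kernel of $L_\Sigma$, and moreover the traceless part of the second fundamental form annihilates $\nabla_\Sigma v$. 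In case~(b), $\Ker L_\Sigma=\{0\}$ forces $v\equiv 0$, so $h$ already satisfies the Cauchy boundary condition and Proposition~\ref{pr:zero} applies directly — this is the easy sub-case. In case~(a) with $n=3$: $\Sigma$ is a surface, $L_\Sigma=-\Delta_\Sigma-R_\Sigma$; if $v\not\equiv 0$ then on the open set where $\nabla_\Sigma v\neq 0$ the traceless $A-\tfrac12 H\bar g^\intercal$ has $\nabla_\Sigma v$ in its kernel, and in dimension two a traceless symmetric $2$-tensor with a kernel vector is identically zero, forcing $\Sigma$ umbilic on that set; combined with $L_\Sigma v=0$ and unique continuation for the elliptic operator $L_\Sigma$, the hypotheses ($\Sigma$ not umbilic, $R_\Sigma\not\equiv 0$) should force $v\equiv 0$ again, reducing to Proposition~\ref{pr:zero}. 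I would need to argue this dichotomy carefully — that is one delicate point.

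Next, in the remaining (spherical, umbilic) case~\eqref{it:spherical}, $v$ can be a nonzero element of $\Ker L_\Sigma=\Span\{$first spherical harmonics$\}$, i.e. $v=Z(\text{const})$-type data matching the restriction of an $\mathbb R^n$-coordinate function; and the auxiliary condition $\big(A-\tfrac{1}{n-1}H\bar g^\intercal\big)(\nabla_\Sigma v,\cdot)=0$ is automatic since $\Sigma$ is umbilic. The construction: choose $Z\in\Span\{Z_1,\dots,Z_n\}$ with $\tfrac{2}{n-1}\Div_\Sigma Z=v$ (possible precisely because these $Z_i$ realize exactly the degree-one harmonics as divergences), extend $Z$ to a vector field $X$ on $\Omega$ with $X|_\Sigma=Z$, so $X\in\mathcal Z^{k+1,\alpha}(\Omega)$. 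As in Example~\ref{ex:conformal}, $\mathscr L_X\bar g$ restricted to $\Sigma$ satisfies $(\mathscr L_X\bar g)^\intercal=\tfrac{1}{n-1}v\bar g^\intercal$ and, by \eqref{eq:2ffK}, $A'(\mathscr L_X\bar g)=\tfrac{1}{n-1}vA$. Hence $\hat h:=h-\mathscr L_X\bar g$ has $\hat h^\intercal=0$ and $A'(\hat h)=0$, i.e. satisfies the Cauchy boundary condition, while still solving $\Ric'(\hat h)-(n-1)\Lambda\hat h=0$ in $\Omega$ (since $\mathscr L_X\bar g$ does). Here I must make sure $\hat h$ has enough regularity to invoke Proposition~\ref{pr:zero}; choosing the extension $X$ to be $\C^{k+1,\alpha}$ suffices. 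Applying Proposition~\ref{pr:zero} gives $\hat h=\mathscr L_{X'}\bar g$ for some $X'\in\mathcal X^{k+1,\alpha}(\Omega)$, whence $h=\mathscr L_{X+X'}\bar g$ with $X+X'\in\mathcal Z^{k+1,\alpha}(\Omega)$ (the sum of a $\mathcal Z$-field and an $\mathcal X$-field still restricts to $\Span\{Z_i\}$ on $\Sigma$), completing case~\eqref{it:spherical}.

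The main obstacle I anticipate is case~(a), the three-dimensional non-spherical argument: turning the pair of conditions ``$v\in\Ker L_\Sigma$'' and ``$(A-\tfrac12 H\bar g^\intercal)(\nabla_\Sigma v,\cdot)=0$'' together with ``$\Sigma$ not umbilic, $R_\Sigma\not\equiv 0$'' into $v\equiv 0$. The subtlety is that $\Sigma$ could be umbilic on part of the surface, or $\nabla_\Sigma v$ could vanish on a large set; handling this requires the unique continuation property of the second-order elliptic operator $L_\Sigma$ on the connected surface $\Sigma$ — a zero of $v$ of infinite order (which is what $v\equiv 0$ on an open set gives) propagates — plus a careful analysis of the umbilic locus. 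I would structure it as: let $W=\{p\in\Sigma:\nabla_\Sigma v(p)\neq 0\}$; on $W$, $\Sigma$ is umbilic; if $W=\Sigma$ then $\Sigma$ is everywhere umbilic, contradiction, so $W\neq\Sigma$, meaning $v$ is constant on some component of $\Sigma\setminus W$... and then push through that $L_\Sigma v=0$ with $v$ locally constant forces $R_\Sigma\equiv 0$ there, and unique continuation spreads it — ultimately contradicting the hypotheses unless $v\equiv 0$. Getting this clean is the real work; everything else is bookkeeping with Lemma~\ref{le:hidden}, Example~\ref{ex:conformal}, and Proposition~\ref{pr:zero}.
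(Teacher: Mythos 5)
Your overall reduction is the paper's: Lemma~\ref{le:hidden} with $b=0$ gives $\big(A-\tfrac{1}{n-1}H\bar g^\intercal\big)(\nabla_\Sigma v,\cdot)=0$ and $L_\Sigma v=0$ for $v=\tr h^\intercal$; case (1)(b) then follows immediately from Proposition~\ref{pr:zero}, and your case (2) — pick a conformal Killing field $Z$ whose divergence matches $v$, extend it into $\Omega$, use \eqref{eq:2ffK} via umbilicity, subtract, and apply Proposition~\ref{pr:zero} — is exactly the paper's argument. (Your normalization $\tfrac{2}{n-1}\Div_\Sigma Z=v$ should be $2\Div_\Sigma Z=v$ to get $(\mathscr L_X\bar g)^\intercal=\tfrac{1}{n-1}v\,\bar g^\intercal$, but since $\Span\{Z_1,\dots,Z_n\}$ is scale-invariant this is harmless.)

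The genuine gap is case (1)(a), which you leave open, and the sketch you propose would not close it. Three concrete problems. First, from $\nabla_\Sigma v=0$ on the closed set $\Sigma\setminus W$ you cannot conclude that $v$ is constant on a connected component of that set (a component of a closed set can have empty interior, and a $\C^1$ function need not be constant on a connected subset of its critical set); and even where $v$ is locally constant, the equation $L_\Sigma v=0$ yields the pointwise identity $R_\Sigma v=0$ only on an \emph{open} set. Second, ``unique continuation spreads it'' is misapplied: unique continuation propagates the vanishing of the \emph{solution} $v$ (from an open set, on the connected surface $\Sigma$), not of the coefficient $R_\Sigma$. Third — and this is the crux — nothing in your dichotomy guarantees that the open set where $\mathring A:=A-\tfrac12 H\bar g^\intercal\neq 0$ (on which indeed $\nabla_\Sigma v=0$, since a nonzero traceless symmetric $2$-tensor in dimension two has both eigenvalues nonzero) actually meets the set where $R_\Sigma\neq 0$; only on such an overlap do you get $v=0$ on an open set, after which genuine unique continuation gives $v\equiv 0$ and Proposition~\ref{pr:zero} applies. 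The paper supplies precisely this overlap: setting $\Sigma_0=\{R_\Sigma\neq 0\}$, if $\mathring A\equiv 0$ on $\Sigma_0$, then the Codazzi equation forces $H$ to be locally constant on $\Sigma_0$, the Gauss equation then forces $R_\Sigma$ to be locally constant there, and continuity at $\partial\Sigma_0$ (where $R_\Sigma=0$) forces $R_\Sigma\equiv 0$, contradicting $R_\Sigma\not\equiv 0$ (and when $\Sigma_0=\Sigma$, non-umbilicity gives the overlap directly). That Codazzi--Gauss step is the missing idea; without it your case (a) does not go through.
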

\begin{proof}

By Lemma~\ref{le:hidden}, we have the following equations on $\Sigma$:
\begin{align}
	\big(A - \tfrac{1}{n-1} H\bar g^\intercal\big)(\nabla_\Sigma \tr h^\intercal, \cdot)&=0 \notag\\
	L_\Sigma \tr h^\intercal :=- \Delta_\Sigma (\tr h^\intercal) - \tfrac{1}{n-2} R_\Sigma \tr h^\intercal&=0 \label{eq:Codazzi}.
\end{align}

We prove Item \eqref{it:nonspherical}. The assumption that $L_\Sigma$ does not have a zero eigenvalue implies $\tr h^\intercal =0$ by \eqref{eq:Codazzi}. Therefore, $h$ satisfies the Cauchy boundary condition, and the desired result follows from Proposition~\ref{pr:zero}.

For $n=3$, we obtain stronger results. Define $ \Sigma_0 := \{ p\in \Sigma: R_\Sigma (p)\neq 0\}$. We claim that $\mathring A := A - \tfrac{1}{n-1} H \bar g^\intercal $ is not identically zero on $\Sigma_0$. We may assume that $\Sigma_0$ is not the entire $\Sigma$. Otherwise, it is clear that $\mathring A$ is not identically zero on $\Sigma$ due to the assumption that $\Sigma$ is not umbilic.

Suppose, for a contradiction, that $\mathring A\equiv 0$ on $\Sigma_0$. By the Codazzi equation, the mean curvature $H$ must be constant on each component of $\Sigma_0$. The Gauss equation further implies that $R_\Sigma$ is also constant on each component of $\Sigma_0$. Thus, $R_\Sigma \equiv 0$ since $R_\Sigma=0$ at the boundary points of $\partial \Sigma_0$. This contradicts the assumption that $R_\Sigma\not \equiv 0$. Now we have shown $U= \Sigma_0\cap \{ p\in \Sigma: \mathring A(p)\neq 0\} \neq \emptyset$. When $n=3$, it implies both eigenvalues of $\mathring A$ are nonzero on $U$, and thus $\tr h^\intercal$ is constant on $U$ and thus must be zero everywhere by \eqref{eq:Codazzi}. Again, the result then follows Proposition~\ref{pr:zero}.

Now, let's discuss Item~\eqref{it:spherical}. The assumption that $(\Sigma, \bar g^\intercal)$ is a round sphere implies that solutions to $L_\Sigma v=0 $ are  spherical harmonic functions of degree $1$.  Thus, if $\tr h^\intercal \neq 0$, then $\tr h^\intercal= \Div_\Sigma Z$ for a conformal Killing vector field $Z\in \Span\{ Z_1, \dots, Z_n\}$. We smoothly extend $Z$ to be defined entirely on $\Omega$ and denote the extension by $\hat Z$. Define $w:= \mathscr L_{\hat Z} \bar g$. Then  $\Ric' (w)-(n-1)\Lambda w =0$  in $\Omega$. Since $\tr w^\intercal = \tr h^\intercal $ on  $\Sigma$, we have  $w^\intercal=h^\intercal$ and 
\begin{align*}
A'(w)&= \tfrac{1}{n-1} (\tr w^\intercal) A =   \tfrac{1}{n-1} (\tr h^\intercal) A =A'(h) 
\end{align*}
where in the first equality we use \eqref{eq:2ffK}. Therefore, $h-w$ satisfies the Cauchy boundary condition, so $ h- w = \mathscr L_X \bar g$ for some $X\in \mathcal X^{k+1,\alpha}(\Omega)$ from Proposition~\ref{pr:zero}. This implies $h = \mathscr L_{X+\hat Z} \bar g$ and $X+\hat Z\in \mathcal Z^{k+1,\alpha}(\Omega)$. 

\end{proof}

\section{Domain deformations and genericity}\label{se:generic}

In this section, we extend the outline described in Lemma~\ref{le:motivation} to our geometric operator.  The main result is  Theorem~\ref{th:generic-bdry} below, and then we will combine it with the results established in the prior sections to prove Theorems~\ref{th:generic0}-\ref{th:bdrymap}.

\subsection{Conformal Cauchy boundary condition}\label{se:deform}


\begin{notation} \label{no:foliation}
Let $\Omega$ be an $n$-dimensional compact manifold with smooth boundary $\Sigma$ and $\pi_1(\Omega, \Sigma)=0$. Let $\bar g$ be the background Einstein metric satisfying $\Ric_{\bar g} = (n-1) \Lambda \bar g$. 

Consider the \emph{inward deformation} $\{ \Omega_t\} $ for $t\in [0, \delta)$ for some $\delta>0$ whose boundaries $\{\Sigma_t \}$ form a smooth foliation of embedded hypersurfaces in $\Omega$. Denote the deformation vector $V = \left.\frac{\partial}{\partial t} \right|_{t=0}\Sigma_t$. We extend $V$ to be a smooth vector field on $\Omega$ and assume $V=-\zeta \nu_{\bar g}$ on each $\Sigma_t$ for some positive function $\zeta$.  Let $\psi_t:\Omega \to \Omega_t$  be the diffeomorphism generated by  $V$ with $\psi_0= \mathrm{Id}$. 
\end{notation}
\begin{remark}
When $n\geq 4$, we will primarily focus on two types of inward domain deformations. Setting $\zeta =1$, the family of regions $\Omega_t$ is deformed from $\Omega$ by the unit normal vector field. Setting  $\zeta = H_{\bar  g}$, the mean curvature of $\Sigma_t$ in $(\Omega, \bar g)$, the deformation is deformed by the boundary mean curvature flow.
\end{remark}

We will analyze the linearized operator $L$ at $(\Omega_t, \bar g|_{\Omega_t})$. It is more convenient to pull back $L$ from $\Omega_t$ to the fixed region $\Omega$, denoted by $L_t$,  and view the family of operators $\{ L_t \}$ acting on the same function space.  We denote the pull-back metric $g(t) = \psi_t^* (\bar g|_{\Omega_t} )$ and write the operators with respect to $g(t)$  by
\begin{align} \label{eq:Lt}
\begin{split}
&L_t: \C^{k,\alpha}(\Omega)\to\C^{k-2,\alpha}(\Omega)\times T \mathcal  S_1^{k,\alpha}  (\Sigma)\times \C^{k-1,\alpha}(\Sigma)\\
	&L_t(h ) = \left\{ \begin{array}{ll} \Ric'|_{g(t)}(h) - (n-1) \Lambda h  \quad \mbox{ in } \Omega \\
	\left\{ \begin{array}{l}   |g(t)^\intercal |^{-\frac{1}{n-1}} (h^\intercal - \tfrac{1}{n-1} (\tr_{g(t)} h^\intercal)g(t)^{\intercal}) \\ H'|_{g(t)}(h) 
	\end{array}\right. \quad \mbox{ on } \Sigma
	\end{array} \right..
\end{split}
\end{align}
The corresponding  operator in the harmonic gauge is defined by 
\begin{align*}
&L^{\Ga}_t: \C^{k,\alpha}(\Omega)\to\C^{k-2,\alpha}(\Omega)\times \C^{k-1,\alpha}(\Sigma)\times T \mathcal  S_1^{k,\alpha}  (\Sigma)\times \C^{k-1,\alpha}(\Sigma)\\
	&L^{\Ga}_t(h ) = \left\{ \begin{array}{ll} \Ric'|_{g(t)}(h) - (n-1) \Lambda h  + \mathcal D_{g(t)}  \beta_{g(t)} h\quad \mbox{ in } \Omega \\
	\left\{ \begin{array}{l}  \beta_{g(t)} h\\
	 |g(t)^\intercal |^{-\frac{1}{n-1}} (h^\intercal - \tfrac{1}{n-1} (\tr_{g(t)} h^\intercal)g(t)^{\intercal}) \\ H'|_{g(t)}(h) 
	\end{array}\right. \quad \mbox{ on } \Sigma
	\end{array} \right..
\end{align*}

Define the nullity function $N(t) =\Dim \Ker L^{\Ga}_t$. We note that since the kernel elements must be smooth by elliptic regularity, $N(t)$ is defined independently of the regularity $(k, \alpha)$.  

We show that the nullity function $N(t)$ satisfies the following properties. While a similar lemma was proven in \cite[Lemma 6.5]{An-Huang:2022}, we provide the proof for completeness; notably, the proof uses only the fact that the family of operators satisfy the uniform Schauder estimate.
\begin{lemma}\label{le:nullity}
\begin{enumerate}
\item Let $\limsup_{t\to a^+} N(t) = \ell$ for some integer $\ell$.  Then there exists a decreasing sequence $\{t_j\}$ with $t_j\searrow a$ such that $N(t_j) = \ell $ and that $\Ker L^{\Ga}_{t_j}$ converges to an $\ell$-dimensional subspace of $\Ker L^{\Ga}_a$. Consequently,  $N(t)$ is upper semi-continuous. \label{it:semi-cont}
\item Define the subset $J\subset [0, \delta)$ by 
\begin{align}\label{eq:J}
	J = \{ t\in [0, \delta): \mbox{$N(t)$ is constant in an open neighborhood of $t$} \}.
\end{align}
Then $J$ is open and dense. \label{it:J}
\end{enumerate}
\end{lemma}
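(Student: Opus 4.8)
The plan is to deduce both items from a single analytic input, exactly as the statement suggests: on any compact subinterval $[0,b]\subset[0,\delta)$, the family $\{L^\Ga_t\}$ is a smooth deformation of the elliptic boundary value problem $L^\Ga_0=L^\Ga$ (shown elliptic with the complementing condition in \cite{Anderson:2008,Anderson:2008-banach}), so it satisfies a \emph{uniform} Schauder estimate. Concretely, since $\psi_t$, the foliation $\{\Sigma_t\}$, and the pulled-back metrics $g(t)=\psi_t^*(\bar g|_{\Omega_t})$ depend smoothly on $t$, the coefficients of $L^\Ga_t$ (including its tangential/normal-mixed boundary operator) are uniformly bounded and the ellipticity and complementing conditions hold uniformly for $t\in[0,b]$; hence for every $h\in\Ker L^\Ga_t$ one has $\|h\|_{\C^{k,\alpha}(\Omega)}\le C\,\|h\|_{L^2(\Omega,g(t))}$ with $C$ independent of $t\in[0,b]$, and each $\Ker L^\Ga_t$ is finite-dimensional and consists of smooth tensors.

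For Item~\eqref{it:semi-cont}, fix $a$ and first take any finite $\ell$ with $\ell\le\limsup_{t\to a^+}N(t)$. Pick $t_j\searrow a$ with $N(t_j)\ge\ell$ and, for each $j$, an $L^2(\Omega,g(t_j))$-orthonormal family $h_1(t_j),\dots,h_\ell(t_j)$ in $\Ker L^\Ga_{t_j}$. By the uniform estimate, each sequence $(h_i(t_j))_j$ is bounded in $\C^{k,\alpha}(\Omega)$, so after passing to a subsequence, $h_i(t_j)\to h_i$ in $\C^{k,\alpha'}(\Omega)$ for all $\alpha'<\alpha$, with $L^\Ga_a h_i=0$. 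Because $g(t_j)\to g(a)$ smoothly, the $L^2$-inner products converge, so $h_1,\dots,h_\ell$ are $L^2(\Omega,g(a))$-orthonormal, hence linearly independent, giving $N(a)\ge\ell$. Since $\ell$ was arbitrary, $N(a)\ge\limsup_{t\to a^+}N(t)$; in particular this $\limsup$ is a finite integer $\ell$, and re-running the argument with a sequence $t_j\searrow a$ along which $N(t_j)=\ell$ and a full $L^2$-orthonormal basis of each $\Ker L^\Ga_{t_j}$ produces the advertised $\ell$-dimensional subspace $W=\Span\{h_1,\dots,h_\ell\}\subset\Ker L^\Ga_a$, with $\Ker L^\Ga_{t_j}\to W$. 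The identical argument for $t_j\to a$ from either side gives upper semi-continuity of $N$.

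For Item~\eqref{it:J}, openness of $J$ is immediate from the definition: if $N$ is constant on an open neighborhood $U$ of $t$, the same $U$ witnesses membership in $J$ for every point of $U$, so $U\subset J$. For density, it suffices to meet an arbitrary nonempty open subinterval $I\subset[0,\delta)$. The value set $\{N(t):t\in I\}\subset\mathbb Z_{\ge0}$ has a least element $m$, attained at some $t_0\in I$. By upper semi-continuity at $t_0$ and integrality of $N$, there is a neighborhood of $t_0$ in $I$ on which $N\le m$; since $N\ge m$ throughout $I$, we get $N\equiv m$ on that neighborhood, so $t_0\in J\cap I$. As every nonempty open subset of $[0,\delta)$ contains such an $I$, $J$ is dense.

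The step that genuinely needs care is the first paragraph: justifying that the inward deformation produces a \emph{uniformly} elliptic family obeying a uniform Schauder estimate, with the $t$-dependent boundary operator handled honestly. Once this is granted, Item~\eqref{it:semi-cont} is a routine normal-families argument and Item~\eqref{it:J} is purely point-set topology. For the present paper the uniform estimate can be quoted, the family being a smooth perturbation of Anderson's elliptic problem and the proof being essentially that of \cite[Lemma 6.5]{An-Huang:2022}.
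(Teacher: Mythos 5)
Your proposal is correct and follows essentially the same route as the paper: a uniform Schauder estimate for the smooth family $L^\Ga_t$, extraction of convergent orthonormal kernel bases to get $N(a)\ge\limsup N(t)$ and the convergence of kernels, and then the standard fact that an integer-valued upper semicontinuous function is locally constant on an open dense set (which the paper quotes and you prove directly). The only differences are cosmetic, e.g.\ normalizing the bases in $L^2(\Omega,g(t_j))$ rather than with respect to $\bar g$.
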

\begin{proof}
We first prove a general fact. Suppose that for some positive integer $\ell$ and $t_j\to a^+$, $N(t_j)\ge \ell$ for all $j$.  Let $K_j$ be an $\ell$-dimensional subspace of $\Ker L_{t_j}^\Ga$. We claim that $K_j$ converges subsequently to an $\ell$-dimensional subspace of $\Ker L_a^{\Ga}$. To see that, we assume $K_j$ is spanned by a basis $\{ h_1(t_j), \dots, h_\ell(t_j)\}$, orthonormal with respect to $\bar g$. By Schauder estimate, $h_1(t_j)$ admits a subsequence, say labeled by $t_{j_k}$, convergent to $h_1(a)\in \Ker L^{\Ga}_a$. Then we can take a further subsequence $\{t'_{j_k}\}\subset\{t_{j_k}\}$ so that $\{ h_2(t'_{j_k})\}$ converges to $h_2(a)\in \Ker L^{\Ga}_a$ and verify that $h_2(a)$ is orthogonal to $h_1(a)$. Repeating the steps finitely many times, we obtain an $\ell$-dimensional subspace of $\Ker L^{\Ga}_a$.  

 
For Item~\eqref{it:semi-cont}, since $N(t)$ takes values on integers, there is a sequence $t_j\to a^+$ such that $N(t_j) =\ell$ for all $j$. The above argument implies that  $\ell \le N(a)$. The same inequality also holds for $t\to a^-$, and thus we conclude the upper semi-continuity of $N(t)$. Item~\eqref{it:J} follows from a general fact that an upper semicontinuous and integer-valued function is locally constant on an open dense subset.

\end{proof}

\begin{proposition}\label{pr:kernel}
Let $J$ be the open dense subset defined by \eqref{eq:J}. Then, for any $a\in J$ and for any $h\in \Ker L^{\Ga}_a$, there exists a sequence $\{ t_j \}\subset J$ with $t_j \searrow a$, $h(t_j)\in \Ker L^{\Ga}_{t_j}$, and a symmetric $(0,2)$-tensor $p\in \C^{k,\alpha}(\Omega)$ such that as $t_j \searrow a$
\[
    h(t_j) \to h \quad \mbox{ and } \quad \frac{h(t_j)- h}{t_j -a} \to p \quad \mbox{ in } \C^{k,\alpha}(\Omega).
\]

\end{proposition}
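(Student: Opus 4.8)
The plan is to use that $a\in J$, so the nullity $N(t)$ is constant, say equal to $N$, on an open interval $I\ni a$. First I would fix an orthonormal basis $\{h^{(1)},\dots,h^{(N)}\}$ of $\Ker L^\Ga_a$ with respect to the $L^2(\bar g)$ inner product; the given $h$ is a linear combination of these, so it suffices to produce, for each basis element $h^{(m)}$, a differentiable-in-$t$ family $h^{(m)}(t)\in\Ker L^\Ga_t$ with $h^{(m)}(a)=h^{(m)}$ and then take the same linear combination. So I reduce to constructing a $\C^{k,\alpha}$-differentiable selection $t\mapsto h(t)\in\Ker L^\Ga_t$ along a sequence $t_j\searrow a$.

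The main step is a finite-dimensional reduction (Lyapunov--Schmidt). Let $W_a=\Ker L^\Ga_a$ and let $\Pi_a$ be the $L^2(\bar g)$-orthogonal projection onto $W_a$ in the domain space, and $\Pi_a^\perp$ its complement. Since $L^\Ga_a$ is Fredholm of index $0$ and $L^\Ga_t$ depends smoothly on $t$ (the coefficients of $\Ric'|_{g(t)}$, $\beta_{g(t)}$, $\mathcal D_{g(t)}$, and the boundary operators are smooth in $t$ because $g(t)=\psi_t^*(\bar g|_{\Omega_t})$ is smooth in $t$), the restriction of $L^\Ga_t$ to $\Pi_a^\perp(\C^{k,\alpha})$, composed with the projection onto a complement of $\Coker L^\Ga_a$, is invertible for $t$ near $a$, with inverse depending smoothly (in fact $\C^1$) on $t$ by the uniform Schauder estimates in Lemma~\ref{le:nullity} and the implicit function theorem for bounded linear operators. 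Writing $h(t)=\xi(t)+\Pi_a^\perp h(t)$ with $\xi(t)\in W_a$, the equation $L^\Ga_t h(t)=0$ splits: the $\Pi_a^\perp$-part determines $\Pi_a^\perp h(t)=-\big(L^\Ga_t|_{\Pi_a^\perp}\big)^{-1}L^\Ga_t\xi(t)$ as a smooth function of $(t,\xi(t))$ vanishing to first order in $(t-a)$, and substituting back yields the finite-dimensional \emph{bifurcation equation} $G(t,\xi)=0$ where $G:I\times W_a\to \Coker L^\Ga_a$ is $\C^1$ and $G(a,\cdot)\equiv 0$ on $W_a$. Because $N(t)\equiv N=\dim W_a$ on $I$, the solution set $\{\xi:G(t,\xi)=0\}$ is exactly an $N$-dimensional subspace of $W_a$ for every $t\in I$; since it contains $0$ and has the right dimension it must equal all of $W_a$, i.e. $G(t,\cdot)\equiv 0$ on $W_a$ for every $t\in I$. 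Hence for each fixed $\xi_0\in W_a$ the path $h(t):=\xi_0+\Pi_a^\perp h(t)$ with $\Pi_a^\perp h(t)=-\big(L^\Ga_t|_{\Pi_a^\perp}\big)^{-1}L^\Ga_t\xi_0$ lies in $\Ker L^\Ga_t$ for all $t\in I$ and is $\C^1$ in $t$ with values in $\C^{k,\alpha}(\Omega)$. Taking $\xi_0$ so that $h(a)=\xi_0=h$ gives the family; set $p=\frac{d}{dt}\big|_{t=a}h(t)$ and choose any sequence $t_j\searrow a$ in $I\subset J$.

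The one point needing care is that $\Coker L^\Ga_t$ and $\Ker L^\Ga_t$ may jump in dimension at endpoints of maximal intervals of constancy, but this is irrelevant here since we work strictly inside $I$ where everything is constant; the Fredholm property and index $0$ guarantee $\dim\Coker L^\Ga_t=N$ throughout $I$, so the reduction is genuinely to maps into a fixed finite-dimensional space after trivializing the cokernel bundle over $I$ (again using smoothness of $t\mapsto L^\Ga_t$ and stability of Fredholm operators). I expect the main obstacle to be bookkeeping the smooth dependence on $t$: one must verify that $t\mapsto L^\Ga_t$ is $\C^1$ as a map into bounded operators $\C^{k,\alpha}\to\C^{k-2,\alpha}\times\C^{k-1,\alpha}\times T\mathcal S_1^{k,\alpha}\times\C^{k-1,\alpha}$, which follows from the explicit variational formulas in Section~\ref{se:va} for $\Ric'$, $H'$, $A'$, $\beta$, $\mathcal D$ together with smoothness of $g(t)$, and then that the partial inverse inherits $\C^1$ dependence — standard but slightly tedious. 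The rest is the elementary linear-algebra fact that an affine subspace through the origin with the generic (maximal, constant) dimension must be the whole space, which is what converts the genericity hypothesis $a\in J$ into the existence of the differentiable kernel selection.
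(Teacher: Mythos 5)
Your argument is correct, but it follows a genuinely different route from the paper's. The paper does not construct a differentiable family of kernel elements at all: it takes, via Lemma~\ref{le:nullity}, a sequence $\hat h(t_j)\in \Ker L^{\Ga}_{t_j}$ converging to $h$, then adds a correction $w(t_j)\in\Ker L^{\Ga}_{t_j}$ so that the difference quotient lies in a fixed closed complement $\mathcal Y$ of the kernels, applies $L^{\Ga}_{t_j}$ to it (which produces $-\tfrac{1}{t_j-a}(L^{\Ga}_{t_j}-L^{\Ga}_a)h$, bounded since the family of operators is differentiable in $t$), and uses the uniform Schauder estimate on $\mathcal Y$ plus a subsequence extraction to get convergence of the corrected difference quotient; this yields exactly the sequential statement of the proposition and nothing more. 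Your Lyapunov--Schmidt reduction instead exploits linearity together with the index-zero Fredholm property: constancy of $N(t)$ on a neighborhood of $a$ forces the finite-dimensional bifurcation map $G(t,\cdot)$ to vanish identically, so $\xi\mapsto \xi+y(t,\xi)$ gives a $\C^1$-in-$t$ isomorphism $\Ker L^{\Ga}_a\to\Ker L^{\Ga}_t$, i.e.\ a local $\C^1$ trivialization of the kernels, which is strictly stronger than the claimed conclusion (no subsequence is needed, and $p$ is an honest derivative). What you must verify carefully, and what the paper only needs in a weaker form, is that $t\mapsto L^{\Ga}_t$ is $\C^1$ into the bounded operators $\C^{k,\alpha}\to\mathcal B$ and that the partially inverted operator $\big(Q L^{\Ga}_t|_{Y}\big)^{-1}$ (your notation for the inverse should really be of $L^{\Ga}_t$ restricted to the complement and projected onto $\Range L^{\Ga}_a$, as you indicate) inherits this regularity; given the explicit formulas for $g(t)=\psi_t^*(\bar g|_{\Omega_t})$ this is routine. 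In exchange, the paper's compactness argument is softer: it only uses upper semicontinuity of the nullity and a uniform elliptic estimate, and it is the template the authors reuse verbatim for the modified operators $\overline L^{\Ga}_t$ in Section~\ref{se:nonzero}, whereas your construction gives more structure (a differentiable kernel bundle) at the cost of the smooth-dependence bookkeeping you acknowledge.
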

\begin{proof}
In the proof below, we denote the codomain space of $L^{\Ga}_t$ by $\mathcal B:=\C^{k-2,\alpha}(\Omega)\times \C^{k-1,\alpha}(\Sigma)\times T \mathcal  S_1^{k,\alpha}  (\Sigma)\times \C^{k-1,\alpha}(\Sigma)$.

 Since $a\in J$, there is a nonnegative integer $\ell$ such that $ N(t) = \ell$ for $t$ in an open neighborhood of $a$ in $[0,\delta)$.  By Lemma~\ref{le:nullity},  there is a sequence $t_j\searrow a$ such that $N(t_j)=\ell=N(a)$ and  $\Ker L^{\Ga}_{t_j}$ converges to $\Ker L^{\Ga}_a$. That is,  for any $h\in \Ker L^{\Ga}_a$, there is a sequence $\hat{h}(t_j)$ in $\Ker L^{\Ga}_{t_j}$ such that $\hat{h}(t_j)\to h$ in $\C^{k,\alpha}(\Omega)$.

Note the difference quotient $\frac{ \hat{h}(t_j)-h} {t_j -a}$ 
needs not converge in general.  We will  modify the sequence $\hat{h}(t_j)$ by adding a sequence of ``correction'' terms from $\Ker L^{\Ga}_{t_j}$.  To proceed, observe that since $\Ker L^{\Ga}_{t_j}$ converges to $\Ker L^{\Ga}_a$, there is a closed subspace $\mathcal{Y}$ such that, for all $j$ sufficiently large,  $\mathcal{Y}$ is the complement to both $\Ker L^{\Ga}_{t_j}$ and  $\Ker L^{\Ga}_a$. Then the following elliptic estimate holds for all $h\in \mathcal Y$ and $j$ sufficiently large:
\begin{align}\label{eq:elliptic-ker}
	\| h \|_{\C^{k,\alpha}(\Omega)} \le C \| L^{\Ga}_{t_j} h \|_{\mathcal B} \quad \mbox{ for a constant $C$ independent of $j$}.
\end{align}
Let $w(t_j)\in \Ker L^{\Ga}_{t_j}$ be the ``correction'' term that satisfies
\begin{align*}
	\frac{ \hat{h}(t_j)-h} {t_j -a} - w(t_j) \in \mathcal{Y}.
\end{align*}
Applying $L_{t_j}^\Ga$ to the previous element gives 
\begin{align*}
	&L^{\Ga}_{t_j} \left(\frac{ \hat{h}(t_j)-h} {t_j -a} - w(t_j)  \right)=-\frac{1}{t_j-a} L^{\Ga}_{t_j} h = -\frac{1}{t_j-a} (L^{\Ga}_{t_j} - L^{\Ga}_a)h.
\end{align*}
Note that $f_j:= -\frac{1}{t_j-a} \big(L^{\Ga}_{t_j} - L^{\Ga}_a\big)h$ converges to some $f$ in $\mathcal{B}$ as $j\to \infty$ because $L_{t}^\Ga$ is a differentiable family of operators.  Thus, $\| f_j \|_{\mathcal{B}}$ is bounded above by a constant uniformly in~$j$. By \eqref{eq:elliptic-ker} there is a positive constant $C$, uniformly in $j$, such that 
\begin{align*}
	\left\|\frac{ \hat{h}(t_j)-h} {t_j -a} - w(t_j)   \right\|_{\C^{k,\alpha}(\Omega)}&\le C \left\|f_j \right\|_{\mathcal{B}}.
\end{align*}
Since the right hand side is bounded from above uniformly in $j$, by passing to a subsequence, which we still label by $j$, we have 
\[
	\frac{ \hat{h}(t_j)-h} {t_j -a} - w(t_j) \to p \quad  \mbox{ in } \C^{k,\alpha}(\Omega)
\]
and $\|(t_j -a) w(t_j)\|_{\C^{k,\alpha}(\Omega)}\to 0 $. Finally, let $h(t_j) =  \hat{h}(t_j)-(t_j-a) w(t_j)$, then it satisfies the desired properties. 
\end{proof}

We restate the preceding proposition for the ``ungauged'' operators $L_{t}$, which will be used in the subsequent discussion.


From this point forward, to establish a connection between $L_{t}$ and the gauged operator $L^{\Ga}_{t}$, we revisit the no-kernel assumption for the case $\Lambda>0$ given by \eqref{eq:NK}. We also recall the operator $B$ from \eqref{eq:B}. Let us define the pull-back operators as follows:
\begin{align*}
	B_t Y = \left\{ \begin{array}{ll} \Delta_{g(t)} Y + (n-1)\Lambda Y  & \mbox{ in } \Omega\\
	Y & \mbox{ on } \Sigma \end{array}\right..
\end{align*}
Define $J_0 = \big\{t \in [0, \delta): \Ker B_t = \{ 0 \}\big\}$. In other words, for $t\in J_0$, $(\Omega, g(t))$ satisfies the no-kernel assumption. Lemma~\ref{le:motivation} implies that $J_0$ is open and dense.

\begin{definition}\label{de:J}
For the case $\Lambda \le 0$, we define the open dense subset $J \subset [0,\delta)$ as specified in \eqref{eq:J}. In the case of $\Lambda > 0$, we replace $J$ in \eqref{eq:J} with $J \cap J_0$, which is also an open and dense subset of $[0,\delta)$.
\end{definition}

\begin{corollary}\label{co:kernel}
Let $J$ be the open dense subset defined in Definition~\ref{de:J}. Then, for any $a\in J$ and any $h\in \Ker L_a$, there exists a sequence $\{ t_j \}\subset J$ with $t_j \searrow a$, $h(t_j)\in \Ker L_{t_j}$, and a symmetric $(0,2)$-tensor $p\in \C^{k,\alpha}(\Omega)$ such that as $t_j \searrow a$
\[
	h(t_j) \to h \quad \mbox{ and } \quad \frac{h(t_j)- h}{t_j -a} \to p  \quad \mbox{ in } \C^{k,\alpha}(\Omega).
\]
\end{corollary}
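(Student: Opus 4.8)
The plan is to reduce Corollary~\ref{co:kernel} to Proposition~\ref{pr:kernel} by exploiting the correspondence between $\Ker L_t$ and $\Ker L^{\Ga}_t$ established in Lemma~\ref{le:gauge-linear}, carried out uniformly in $t$ over the subset $J$ from Definition~\ref{de:J}. First I would observe that for each $t \in J$ (in the case $\Lambda > 0$ we also have $t \in J_0$, so the no-kernel assumption \eqref{eq:NK} holds for $g(t)$), Lemma~\ref{le:gauge-linear} gives a linear bijection $\Ker L_t \to \Ker L^{\Ga}_t$ sending $h \mapsto h + \mathscr L_{Y(h)}\bar g$, where $Y = Y(h) \in \mathcal X^{k+1,\alpha}(\Omega)$ is the unique vector field solving $\beta_{g(t)}(h + \mathscr L_Y g(t)) = 0$ with $Y|_\Sigma = 0$ — equivalently $B_t Y = -\tfrac{1}{2}\beta_{g(t)}\mathscr L_Y g(t) + (\text{l.o.t.})$, i.e. $Y$ is obtained by inverting the family $B_t$, which is invertible precisely on $J_0$ (and always when $\Lambda \le 0$). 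The inverse $B_t^{-1}$ depends smoothly on $t$ since $\{B_t\}$ is a differentiable family of isomorphisms, so $h \mapsto Y(h)$ and hence $h \mapsto h + \mathscr L_{Y(h)}\bar g$ is a $t$-dependent family of bounded linear maps, differentiable in $t$.

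Next, given $a \in J$ and $h \in \Ker L_a$, set $\tilde h := h + \mathscr L_{Y(h)}\bar g \in \Ker L^{\Ga}_a$ (using the $t = a$ gauge map). Apply Proposition~\ref{pr:kernel} to $\tilde h$: it produces a sequence $t_j \searrow a$ in $J$, elements $\tilde h(t_j) \in \Ker L^{\Ga}_{t_j}$, and $\tilde p \in \C^{k,\alpha}(\Omega)$ with $\tilde h(t_j) \to \tilde h$ and $\tfrac{\tilde h(t_j) - \tilde h}{t_j - a} \to \tilde p$ in $\C^{k,\alpha}(\Omega)$. Now I would pull everything back through the (now $t$-dependent) bijection of Lemma~\ref{le:gauge-linear} in the other direction: since $h(t_j) := \tilde h(t_j) - \mathscr L_{Y(t_j)}g(t_j)$ for the appropriate vector field $Y(t_j)$ (obtained by inverting $B_{t_j}$ applied to $\beta_{g(t_j)}\tilde h(t_j)$... but $\beta_{g(t_j)}\tilde h(t_j) = 0$ by Item (1) of Lemma~\ref{le:gauge-linear}, so in fact $\tilde h(t_j)$ already satisfies the gauge and $h(t_j) = \tilde h(t_j) \in \Ker L_{t_j}$ directly). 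Thus the cleanest route: Item (1) of Lemma~\ref{le:gauge-linear} says any $\tilde h(t_j) \in \Ker L^{\Ga}_{t_j}$ automatically satisfies $\beta_{g(t_j)}\tilde h(t_j) = 0$ and lies in $\Ker L_{t_j}$ already. Hence we may simply take $h(t_j) := \tilde h(t_j) \in \Ker L_{t_j}$ and $p := \tilde p$. The only remaining point is the convergence $h(t_j) \to h$: we have $\tilde h(t_j) \to \tilde h = h + \mathscr L_{Y(h)}\bar g$, so $h(t_j) \to h$ holds \emph{if} $\mathscr L_{Y(h)}\bar g = 0$, which we cannot assume. So instead set $h(t_j) := \tilde h(t_j) - \mathscr L_{Y(h)}g(t_j)$; this still lies in $\Ker L_{t_j}$ (it differs from the kernel element $\tilde h(t_j)$ by an infinitesimal diffeomorphism with $Y(h)|_\Sigma = 0$, which is a trivial kernel element of $L_{t_j}$ by the argument in Example 2.1 applied on $\Omega_{t_j}$), and now $h(t_j) \to \tilde h - \mathscr L_{Y(h)}\bar g = h$ as $t_j \searrow a$ since $g(t_j) \to \bar g$ in $\C^{k,\alpha}$, while $\tfrac{h(t_j) - h}{t_j - a} = \tfrac{\tilde h(t_j) - \tilde h}{t_j - a} - \mathscr L_{Y(h)}\tfrac{g(t_j) - \bar g}{t_j - a} \to \tilde p - \mathscr L_{Y(h)}\dot g(a) =: p$ in $\C^{k,\alpha}(\Omega)$, using that $t \mapsto g(t) = \psi_t^*(\bar g|_{\Omega_t})$ is differentiable with derivative $\dot g(a) = \mathscr L_{V}\bar g|_{\Omega_a}$ pulled back appropriately.

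The main obstacle — or rather the point requiring the most care — is verifying that the modified sequence $h(t_j) = \tilde h(t_j) - \mathscr L_{Y(h)}g(t_j)$ genuinely lies in $\Ker L_{t_j}$: one must check that $\mathscr L_{Y(h)}g(t_j)$ is a trivial kernel element of the ungauged operator $L_{t_j}$, which follows because $Y(h)|_\Sigma = 0$ and $T$ is diffeomorphism-invariant (Example 2.1), but one should confirm the boundary terms $h^\intercal - \tfrac{1}{n-1}(\tr h^\intercal)g(t_j)^\intercal$ and $H'(h)$ vanish for $h = \mathscr L_{Y(h)}g(t_j)$ — they do since $Y(h)$ vanishes to first order on $\Sigma$ up to the normal derivative, and the induced-metric and mean-curvature variations only involve $Y(h)|_\Sigma$. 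A secondary subtlety is ensuring the constant $C$ in the elliptic estimate used inside Proposition~\ref{pr:kernel} is unaffected — but that is internal to the already-proven proposition. Everything else is bookkeeping: transporting convergence through the smooth $t$-family of isomorphisms $B_t^{-1}$ and through the smooth family $t \mapsto g(t)$.
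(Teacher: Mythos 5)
Your proposal is correct and follows essentially the same route as the paper's own proof: gauge-fix $h$ at $t=a$ via a vector field $W=Y(h)$ vanishing on $\Sigma$, apply Proposition~\ref{pr:kernel} to $\tilde h = h+\mathscr L_W\bar g$, then set $h(t_j)=\tilde h(t_j)-\mathscr L_W g(t_j)$ and pass to the limits, obtaining $p=\tilde p-\mathscr L_W\mathscr L_V\bar g$. The preliminary remarks about $t$-smoothness of $B_t^{-1}$ are unnecessary (only the single fixed $W$ from $t=a$ is used), but they do no harm.
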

\begin{proof}
For $h\in \Ker L_a$, there is a vector field $W$ with $W|_\Sigma = 0$ such that $\tilde{h}:= h+\mathscr L_W \bar{g}$ satisfies the harmonic gauge, $\b_{g(a)} \tilde h=0$, and thus $L^\Ga_a (\tilde{h})=0$. (For the case $\Lambda>0$, we use the no-kernel assumption~\eqref{eq:NK}.) By Proposition~\ref{pr:kernel}, there exists a sequence $\tilde{h}(t_j) \in \Ker L^{\Ga}_{t_j}$ and a symmetric $(0,2)$-tensor $\tilde{p}\in \C^{k,\alpha}(\Omega)$ such that 
\[
    \tilde{h}(t_j) \to \tilde{h} \quad \text{ and } \quad \frac{\tilde{h}(t_j) - \tilde{h}}{t_j-a } \to \tilde{p} \quad \text{ in } \C^{k,\alpha}(\Omega).
\]
Define $h(t_j):= \tilde{h} (t_j) - \mathscr{L}_W g(t_j)$, and note that it belongs to $\Ker L_{t_j}$ because $\tilde{h} (t_j) \in \Ker L_{t_j}$ by Lemma~\ref{le:gauge-linear}. Then
\begin{align*}
    h(t_j) &\too  \tilde{h} - \mathscr{L}_W \bar{g}  = h,\\
    \frac{h(t_j) - h }{t_j-a} = \frac{\tilde{h}(t_j) - \mathscr{L}_W g(t_j) - (\tilde{h} - \mathscr{L}_W \bar{g}) } {t_j-a} &\too \tilde{p}  -\mathscr{L}_W \mathscr{L}_V \bar{g}
\end{align*}
in $\C^{k,\alpha}(\Omega)$, where we recall  $V$ is the deformation vector of $\{ \Omega_t\}$.  Letting $p= \tilde{p}  - \mathscr{L}_W \mathscr{L}_V \bar{g}$ concludes the proof.
\end{proof}

We prove the main result of this section. 

\begin{theorem}\label{th:generic-bdry}
    Let $\{\Omega_t\}$ be defined as in Notation~\ref{no:foliation}, and let $J\subset [0, \delta)$ be the  open dense subset defined in Definition~\ref{de:J}. For $a\in J$ and $L_a( h)=0$,  the following results hold: 
    \begin{enumerate}
        \item \label{it:1} Assume either $n=3$, or $n>3$ and $\Sigma_a$ is umbilic. Then $h$ satisfies
        \[
            A'|_{g(a)}(h ) = \tfrac{1}{n-1} (\tr_{g(a)} h^\intercal ) A_{g(a)} \quad \text{on } \Sigma.
        \]
        \item \label{it:2} Assume $n\geq 4$, $\Lambda=0$.  Suppose either (i) $\zeta=1$ and $(\Omega, g(a))$ satisfies the non-degenerate boundary condition \eqref{eq:bd1}, or (ii) $\zeta = H_{\bar g}$ and $(\Omega, g(a))$ satisfies the $H$-non-degenerate boundary condition \eqref{eq:bd2} and has strictly mean convex boundary, i.e. $H_{\bar g}>0$. Then $h$ satisfies the Cauchy boundary condition 
        \begin{align*}
            h^\intercal = 0 \quad \text{and} \quad A'|_{g(a)}(h ) = 0 \quad \text{on } \Sigma.
        \end{align*}
    \end{enumerate}
\end{theorem}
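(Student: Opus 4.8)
The plan is to run the domain--deformation argument of Lemma~\ref{le:motivation} for the geometric operator, using the Green-type identity of Corollary~\ref{co:Green-kernel} in place of the classical Green identity and the hidden boundary conditions of Lemma~\ref{le:hidden} to absorb the trace term $\tr h^\intercal$. Fix $a\in J$ and $h\in\Ker L_a$. By Corollary~\ref{co:kernel} there are $t_j\searrow a$, $h(t_j)\in\Ker L_{t_j}$, and $p\in\C^{k,\alpha}(\Omega)$ with $h(t_j)\to h$ and $\frac{h(t_j)-h}{t_j-a}\to p$. Since $h$ solves $\Ric'|_{g(a)}(h)-(n-1)\Lambda h=0$ in $\Omega$ and the deformation is inward, $\Omega_{t_j}\subset\Omega_a$, restricting the push-forward $(\psi_a)_* h$ to $\Omega_{t_j}$ and pulling back by $\psi_{t_j}$ gives $\tilde h_j:=\psi_{t_j}^*\big((\psi_a)_* h\big)$, which solves the same linearized Einstein equation with respect to $g(t_j)$, with $\tilde h_j\to h$ and $\frac{\tilde h_j-h}{t_j-a}$ converging to a Lie-derivative term. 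Setting $w:=\lim_j\frac{h(t_j)-\tilde h_j}{t_j-a}$, the difference $h(t_j)-\tilde h_j$ solves the linearized Einstein equation with respect to $g(t_j)$ for every $j$, so $w$ solves $\Ric'|_{g(a)}(w)-(n-1)\Lambda w=0$ in $\Omega$; taking traces, $R'(w)=0$, hence $P(w)=0$, while $P(h)=0$ because $h\in\Ker L_a$.

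The heart of the argument is computing the boundary data of $w$ on $\Sigma$. Because every $h(t_j)$ satisfies the conformal boundary condition on $\Sigma$, differencing that relation in $t$ and using the first and second variations of the induced metric $g(t)^\intercal$, the second fundamental form $A_{g(t)}$, and the mean curvature $H_{g(t)}$ of the foliation $\{\Sigma_t\}$ along $V=-\zeta\nu$ expresses the trace-free part of $w^\intercal$ and $H'(w)$ on $\Sigma$ in terms of $\zeta$, $A$, $H$, $\tr h^\intercal$, and the tensor $\mathring A'(h):=A'|_{g(a)}(h)-\tfrac{1}{n-1}(\tr h^\intercal)A$ --- the obstruction to the conformal Cauchy condition, which is automatically trace-free since $H'(h)=0$ and $h^\intercal$ is pure trace. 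After a gauge reduction to $h(\nu,\cdot)=0$ (which leaves Corollary~\ref{co:Green-kernel} unchanged), the expected outcome is that, modulo a pure gauge term, $w^\intercal-\tfrac{1}{n-1}(\tr w^\intercal)\bar g^\intercal=-2\zeta\,\mathring A'(h)$ plus terms algebraic in $\mathring A$ and $\tr h^\intercal$, and $H'(w)$ is first order in $\zeta\,\mathring A'(h)$ and $\zeta\,\tr h^\intercal$.

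Apply Corollary~\ref{co:Green-kernel} to $(h,w)$: the interior integrals vanish since $P(h)=P(w)=0$, and the term $\int_\Sigma\langle Q(w),(h^\intercal-\tfrac{1}{n-1}(\tr h^\intercal)\bar g^\intercal,H'(h))\rangle\da$ vanishes since $h$ satisfies the conformal boundary condition, leaving $0=\int_\Sigma\langle Q(h),(w^\intercal-\tfrac{1}{n-1}(\tr w^\intercal)\bar g^\intercal,H'(w))\rangle\da$. Substituting Step 2 and rewriting the $\tr h^\intercal$-contributions via the hidden boundary conditions \eqref{eq:hiddenbd1}--\eqref{eq:hiddenbd2} (with $b=0$) and the Gauss equation reduces this to an identity of the form
\[
    0=\int_\Sigma \zeta\Big(\,2\,|\mathring A'(h)|^2+c\,(\tr h^\intercal)\,L_\Sigma(\tr h^\intercal)+\cdots\Big)\,\da
\]
with $c>0$ depending only on $n$. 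For Item~\eqref{it:1}: if $\Sigma_a$ is umbilic then $\mathring A\equiv0$ forces $L_\Sigma(\tr h^\intercal)=0$ by \eqref{eq:hiddenbd2}, killing the trace terms; if $n=3$, the algebra of trace-free symmetric $2$-tensors on a surface together with both hidden boundary conditions should render the integrand a nonnegative multiple of $|\mathring A'(h)|^2$; either way $\zeta>0$ gives $\mathring A'(h)\equiv0$, i.e.\ $A'|_{g(a)}(h)=\tfrac{1}{n-1}(\tr h^\intercal)A$. For Item~\eqref{it:2}: since $\Lambda=0$, Lemma~\ref{le:trH} gives $\int_\Sigma(\tr h^\intercal)H\,\da=0$, so $v:=\tr h^\intercal$ is admissible in \eqref{eq:bd1}--\eqref{eq:bd2}; with $\zeta=1$ and the non-degenerate condition \eqref{eq:bd1} (resp.\ $\zeta=H>0$ and the $H$-non-degenerate condition \eqref{eq:bd2}), $\int_\Sigma\zeta\,vL_\Sigma v\ge0$ with equality iff $v\equiv0$, so the identity forces $\tr h^\intercal\equiv0$ and $\mathring A'(h)\equiv0$, hence $h^\intercal=0$ and $A'|_{g(a)}(h)=0$.

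The main obstacle is the computation of the boundary data of $w$ in Step 2: differentiating the conformal boundary condition as the hypersurface moves, keeping track of the variations of $g(t)^\intercal$, $A_{g(t)}$, $H_{g(t)}$ under $V=-\zeta\nu$, and verifying that the normal-derivative information of $h$ not already pinned down by the conformal condition is exactly $\zeta\,\mathring A'(h)$ up to a pure gauge term --- this is where the choice of deformation ($\zeta=1$ versus $\zeta=H$) enters, and it requires a gauge reduction compatible with the domain deformation as well as the mild regularity bookkeeping in passing to the difference-quotient limit. A secondary delicate point is organizing the final integrand into the precise form matched to \eqref{eq:bd1}--\eqref{eq:bd2}, and in particular the dimension-three argument that disposes of the trace term without any spectral hypothesis.
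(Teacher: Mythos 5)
Your outline reproduces the paper's strategy step for step (Corollary~\ref{co:kernel}, the tensor $w=p-\mathscr L_V h$, the Green-type identity of Corollary~\ref{co:Green-kernel}, the hidden boundary condition \eqref{eq:hiddenbd2}, Lemma~\ref{le:trH}, and the non-degeneracy conditions \eqref{eq:bd1}--\eqref{eq:bd2}), but the decisive step is not carried out: you never actually compute the boundary data of $w$, and this computation is the heart of the proof. In the geodesic gauge $h(\nu,\cdot)=0$ the paper shows the \emph{exact} identities
\[
w^\intercal-\tfrac{1}{n-1}(\tr w^\intercal)\bar g^\intercal \;=\; 2\zeta\Big(A'(h)-\tfrac{1}{n-1}(\tr h^\intercal)A\Big),
\qquad
H'(w) \;=\; -2\zeta\, A\cdot\Big(A'(h)-\tfrac{1}{n-1}(\tr h^\intercal)A\Big),
\]
the first by differentiating the conformal boundary condition of $h(t)$ in $t$ and using $(\mathscr L_V h)^\intercal=-2\zeta A'(h)$ together with $g'(0)^\intercal=-2\zeta A$, the second from $H'|_{g(t)}(h(t))=0$ and the linearized Riccati equation $\nu(H'(h))=-2\big(A\cdot A'(h)-A\cdot(A\circ h^\intercal)\big)$. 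Your version of this step is only an ``expected outcome,'' stated modulo a pure gauge term, with the opposite sign $-2\zeta$, and with unspecified extra ``terms algebraic in $\mathring A$ and $\tr h^\intercal$.'' Any genuine extra terms, or an inconsistent sign between the two components, would destroy the sign structure of the final boundary identity on which the whole positivity argument rests, so as written the proof is incomplete precisely where the work lies: these formulas come out on the nose, not up to gauge or lower-order corrections.

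A second, smaller discrepancy concerns the trace term. Plugging the above into the Green identity yields
\[
\int_\Sigma \zeta\,\big|A'(h)-\tfrac{1}{n-1}(\tr h^\intercal)A\big|^2\da
=\Big(\tfrac12-\tfrac{1}{n-1}\Big)\int_\Sigma \zeta\,(\tr h^\intercal)\,A\cdot\Big(A'(h)-\tfrac{1}{n-1}(\tr h^\intercal)A\Big)\da ,
\]
so for $n=3$ the right-hand side vanishes \emph{identically} because the coefficient $\tfrac12-\tfrac{1}{n-1}$ is zero: no algebra of trace-free tensors and no hidden boundary condition is needed, and your claimed final identity with a constant $c>0$ is in fact wrong at $n=3$ (there $c=0$, which is exactly the mechanism). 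For $n>3$ umbilic the right-hand side vanishes pointwise since $A\cdot\big(A'(h)-\tfrac{1}{n-1}(\tr h^\intercal)A\big)=\tfrac{1}{n-1}H\,\tr\big(A'(h)-\tfrac{1}{n-1}(\tr h^\intercal)A\big)=0$; your detour through \eqref{eq:hiddenbd2} also works. Your endgame for Item~\eqref{it:2} (rewriting the trace term via \eqref{eq:hiddenbd2}, admissibility of $v=\tr h^\intercal$ from Lemma~\ref{le:trH} with $\Lambda=0$, then \eqref{eq:bd1} or \eqref{eq:bd2}) matches the paper and is fine once the boundary data of $w$ is actually established.
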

\begin{proof}
Let $a\in J$. By relabeling, we may, without loss of generality, assume $a=0$, and denote $ g(a) = \bar g$ and $L_a = L$. Let $h$ satisfy $L(h)=0$. As before, for linearizations and geometric operators taken with respect to $\bar g$, we omit the subscripts.  We further assume that $h$ satisfies the geodesic gauge $h(\nu,\cdot)=0$ on $\Sigma$ (see, e.g., \cite[Lemma 2.5]{An-Huang:2022} for existence of the geodesic gauge).

We denote  $\psi_{t_j}^* h := \psi_{t_j}^* \big(h|_{\Omega_{t_j}}\big) $ on $\Omega$. Let  $h(t_j)$ come from Corollary~\ref{co:kernel}. Observe that $h(t_j) - \psi_{t_j}^* h \to 0$ as $t_j\to 0$. Define $w$ as 
\begin{align*}
    w = \lim_{t_j\to 0^+} \frac{h(t_j) - \psi_{t_j}^* h}{t_j} =\lim_{t_j\to 0^+} \frac{h(t_j)  - h}{t_j} -\lim_{t_j\to 0^+} \frac{ \psi_{t_j}^* h - h}{t_j}  = p - \mathscr{L}_V h.
\end{align*}
We recall $V|_\Sigma = - \zeta \nu$. For the rest of the arguments, we slightly abuse the terminology and use ``differentiating in $t$ and evaluating at $t=0$'' to refer to taking the difference quotient in $t_j$ and letting $t_j\to 0^+$.

We \emph{claim} the following:
\begin{align}
    &\Ric'(w) - (n-1) \Lambda w = 0 \quad \text{in } \Omega\label{eq:int}\\
    &\left\{ \begin{array}{l} w^\intercal - \tfrac{1}{n-1} (\tr w^\intercal) \bar g^\intercal = 2\zeta  \left( A'(h) -\tfrac{1}{n-1} (\tr h^\intercal) A \right)\\
    H'(w) = - 2\zeta \big(A \cdot A'(h) - \tfrac{1}{n-1}\tr h^\intercal  |A|^2 \big)\end{array} \right.\quad   \text{on } \Sigma.\label{eq:bd}
\end{align}

To establish \eqref{eq:int}, we differentiate $\Ric'|_{g(t)}\big(h(t)- \psi^*_t (h) \big) - (n-1) \Lambda\big(h(t)- \psi^*_t (h) \big) = 0$ in $t$ and evaluate at $t=0$. Since all $t$-derivatives that don't involve $h(t)- \psi^*_t (h)$ vanish when evaluated at $t=0$, we derive \eqref{eq:int}.

For the first identity of \eqref{eq:bd}, we differentiate $h(t)^\intercal - \tfrac{1}{n-1} (\tr_{g(t)} h(t)^\intercal ) g(t)^\intercal=0$ in $t$ and evaluating at $t=0$ and obtain 
\[
    p^\intercal + \tfrac{1}{n-1} (g'(0)^\intercal \cdot h^\intercal )\bar g^\intercal - \tfrac{1}{n-1} (\tr p^\intercal )\bar g^\intercal - \tfrac{1}{n-1} (\tr h^\intercal) g'(0)^\intercal = 0.
\]
Rearranging terms and using $h^\intercal = \tfrac{1}{n-1}( \tr h^\intercal )\bar g^\intercal$ and $g'(0)^\intercal=( \mathscr L_V \bar g)^\intercal=-2\zeta A$, we obtain 
\begin{align}\label{eq:p-bdry}
    \begin{split}
        p^\intercal - \tfrac{1}{n-1} (\tr p^\intercal) \bar g^\intercal &= \tfrac{1}{n-1} \tr h^\intercal \left( g'(0)^\intercal - \tfrac{1}{n-1} (\tr g'(0)^\intercal) \bar g^\intercal \right)\\
        &=-\tfrac{2}{n-1}\zeta \tr h^\intercal \left(A -\tfrac{1}{n-1} H \bar g^\intercal \right).
    \end{split}
\end{align}
Since $h$ is in the geodesic gauge, $(\mathscr L_Vh)^\intercal = -2\zeta A'(h)$ by \eqref{eq:A}. Therefore,
\begin{align}\label{eq:LX}
    \begin{split}
        (\mathscr L_Vh)^\intercal  - \tfrac{1}{n-1} (\tr (\mathscr L_Vh)^\intercal ) \bar g^\intercal &= -2\zeta \left(A'(h) - \tfrac{1}{n-1} (\tr A'(h)) \bar g^\intercal\right)\\
        &=- 2\zeta \left(A'(h) - \tfrac{1}{(n-1)^2} (\tr h^\intercal )H \bar g^\intercal\right)
    \end{split}
\end{align}
where we use $\tr A'(h) = H'(h) + h^\intercal \cdot A = \tfrac{1}{n-1} (\tr h^\intercal ) H$ from the boundary conditions of $h$. Subtracting \eqref{eq:p-bdry} from \eqref{eq:LX} gives the first identity in \eqref{eq:bd}.

We compute the second identity in \eqref{eq:bd}:
\begin{align*}
    H'(w) &= H'(p-\mathscr L_V h) = \left. \dt\right|_{t=0} H'|_{g(t)}(h(t) - \psi_t^* h)\\
    &= \left. \dt\right|_{t=0} H'|_{g(t)}( - \psi_t^* h)\\
    &= - \left. \dt\right|_{t=0}\psi_t^* (H'(h))\\
    &= \zeta \nu(H'(h))\\
    &= -2\zeta ( A\cdot A'(h)- A\cdot (A\circ h^\intercal ) )\\
    &= - 2\zeta \left(A\cdot A'(h)-  \tfrac{1}{n-1}\tr h^\intercal |A|^2\right)
\end{align*}
where we use $H'|_{g(t)}(h(t))=0$ in the second line, $h^\intercal=\tfrac{1}{n-1} \tr h^\intercal \bar g^\intercal$ in the last line, and the linearized Ricatti equation in the fifth line:
\[
    \nu(H'(h)) = (\nu(H) )'(h) = -(|A|^2 + \Ric(\nu, \nu))' (h)= -2 ( A\cdot A'(h)- A\cdot (A\circ h^\intercal ) )
\] 
where $(A_g\circ h^\intercal )_{ab} = \tfrac{1}{2} (A_{ac} h^c_{b}+A_{bc} h^c_{a})$ and we use $\nu'(h)=0, \Ric'(\nu,\nu)=(n-1)\Lambda h(\nu,\nu)=0$ since $h$  is in the geodesic gauge. It completes the proof of \eqref{eq:bd}. 

We insert $h, w$ into the Green-type identity \eqref{eq:Green-cor} and use $L(h)=0$, \eqref{eq:int}, and \eqref{eq:bd} to compute
\begin{align}\label{eq:Gb}
 & \int_\Sigma\left\langle \big( A'(h) + \big(\tfrac{1}{2} - \tfrac{1}{n-1} \big) \tr h^\intercal A, \big( 1-\tfrac{1}{n-1} \big)\tr h^\intercal\big),  \big(w^\intercal - \tfrac{1}{n-1} \tr w^\intercal \bar g^\intercal, H'(w)\big) \right\rangle  \da\notag\\
 \begin{split}
 &=\int_\Sigma \big( A'(h) -\tfrac{1}{2}  \tr h^\intercal A\big)\cdot  (w^\intercal - \tfrac{1}{n-1} \tr w^\intercal \bar g^\intercal) \da\\
 &=2 \int_\Sigma  \zeta \big( A'(h) -\tfrac{1}{2}  \tr h^\intercal A\big)\cdot \big( A'(h) -\tfrac{1}{n-1} (\tr h^\intercal) A \big)\da\\
 &= 2 \int_\Sigma \zeta |A'(h) - \tfrac{1}{n-1} (\tr h^\intercal) A|^2 \da \\
 &\quad - 2\left(\tfrac{1}{2} - \tfrac{1}{n-1} \right) \int_\Sigma \zeta \tr h^\intercal A\cdot \big( A'(h) -\tfrac{1}{n-1} (\tr h^\intercal) A \big) \da = 0.
 \end{split}
\end{align}
Thus, we obtain
\begin{align} \label{eq:zero}
\begin{split}
 & \int_\Sigma \zeta |A'(h) - \tfrac{1}{n-1} (\tr h^\intercal) A|^2 \da \\
 &= \left(\tfrac{1}{2} - \tfrac{1}{n-1} \right) \int_\Sigma \zeta \tr h^\intercal A\cdot \big( A'(h) -\tfrac{1}{n-1} (\tr h^\intercal) A \big) \da.
 \end{split}
\end{align}

If $n=3$, the right-hand side of \eqref{eq:zero} is zero. If $n>3$ and $\Sigma$ is umbilic, then the right-hand side is also zero because
\[
A\cdot \big( A'(h) -\tfrac{1}{n-1} (\tr h^\intercal) A \big)= \tfrac{1}{n-1} H \tr \big( A'(h) -\tfrac{1}{n-1} (\tr h^\intercal) A \big) = 0.
\]
In either case, we obtain $A'(h) - \tfrac{1}{n-1} (\tr h^\intercal) A=0$, which proves Item~\eqref{it:1}. 

Assume the hypothesis of Item~\eqref{it:2}. Observe that by \eqref{eq:hiddenbd2},  the integral in the right-hand side of \eqref{eq:zero} can be expressed as 
\begin{align*}
    \int_\Sigma \zeta \tr h^\intercal A\cdot \big( A'(h) -\tfrac{1}{n-1} (\tr h^\intercal) A \big) \da=  -\tfrac{(n-2)}{2(n-1)}\int_{\Sigma_t}\zeta  \tr h^\intercal L_\Sigma (\tr h^\intercal) \da.
\end{align*}
Since $\int_\Sigma ( \tr h^\intercal ) H\da = 0$ by  \eqref{eq:trH}, we can invoke either  the non-degenerate boundary condition \eqref{eq:bd1} or  the $H$-nondegenerate boundary condition \eqref{eq:bd2} (by letting $v=  \tr h^\intercal$ in either one) to conclude that the right-hand side is strictly negative unless $\tr h^\intercal =0$. But \eqref{eq:zero} implies the right-hand side is nonnegative. We conclude $\tr h^\intercal =0$ and $A'(h) - \tfrac{1}{n-1} (\tr h^\intercal) A= A'(h)=0$. It completes the proof.

\end{proof}

Combining the above theorem with Proposition~\ref{pr:zero} and Theorem~\ref{th:infinitesimal} gives the following corollary.

\begin{corollary}\label{co:rigid}
    Let $\{\Omega_t\}$ be defined as in Notation~\ref{no:foliation}, and let $J$ be the open dense subset defined in Definition~\ref{de:J}. For $a\in J$ and $L_a( h)=0$,  the following results hold: 
  \begin{enumerate}
\item \label{it:rigid} (Infinitesimal rigidity) Suppose either one of the following holds:
\begin{enumerate}
\item  $n=3$, $\Sigma_a$ is not umbilic, and $R_{\Sigma_a}$ is not identically zero
\item $n\ge 4$, $\Lambda=0$, and suppose either (i) $\zeta=1$, $(\Omega, g(a))$ satisfies the non-degenerate boundary condition \eqref{eq:bd1} or (ii)  $\zeta = H_{\bar g}$ and $(\Omega, g(a))$ satisfies the $H$-non-degenerate boundary condition \eqref{eq:bd2} and has strictly mean convex boundary, i.e. $H_{\bar g}>0$.
\end{enumerate}
Then $h =\mathscr L_X g(a)$ in $\Omega$ for some $X\in \mathcal X^{k+1,\alpha}(\Omega)$. That is, $(\Omega,  g(a))$ is  infinitesimally rigid  with respect to the Anderson boundary data. (Recall Definition~\ref{de:rigid}.)

\item \label{it:b}  (Infinitesimal non-rigidity) If $n\ge 3$ and the boundary $(\Sigma_a, g^\intercal(a))$ is a round sphere and umbilic, then $h =\mathscr L_X g(a)$ for some $X\in \mathcal Z^{k+1,\alpha} (\Omega)$. 
\end{enumerate}  
\end{corollary}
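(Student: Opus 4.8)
The plan is to chain together the three results already in hand: Theorem~\ref{th:generic-bdry} turns the bare hypothesis $L_a(h)=0$ into a (conformal) Cauchy boundary condition, and then Proposition~\ref{pr:zero} or Theorem~\ref{th:infinitesimal} upgrades this to the desired unique-continuation conclusion $h=\mathscr L_X g(a)$. The one point I need to check is that $(\Omega, g(a))$ satisfies the hypotheses of these latter two results, which are phrased for a domain compactly contained in a larger Einstein manifold as in Notation~\ref{notation}. This is indeed the case: $\psi_a$ is an isometry between $(\Omega, g(a))$ and $(\Omega_a, \bar g|_{\Omega_a})$, and for $a$ in the interior of $[0,\delta)$ the region $\Omega_a$ is a connected proper compact subset of $(\Omega, \bar g)$ whose boundary $\Sigma_a$ is an embedded hypersurface in $\Int\Omega$ with $\pi_1(\Omega_a,\Sigma_a)=0$; hence $(\Omega_a,\bar g|_{\Omega_a})$ --- equivalently $(\Omega,g(a))$ --- fits Notation~\ref{notation} with ambient manifold $(\Omega,\bar g)$ itself.

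For Item~\eqref{it:rigid}, I first treat case~(a): $n=3$, $\Sigma_a$ not umbilic, $R_{\Sigma_a}\not\equiv 0$. Since $n=3$, Item~\eqref{it:1} of Theorem~\ref{th:generic-bdry} gives $A'|_{g(a)}(h)=\tfrac{1}{n-1}(\tr_{g(a)}h^\intercal)A_{g(a)}$ on $\Sigma$; combined with the boundary condition $h^\intercal=\tfrac{1}{n-1}(\tr_{g(a)}h^\intercal)g(a)^\intercal$ and the interior equation $\Ric'|_{g(a)}(h)-(n-1)\Lambda h=0$ that are part of $L_a(h)=0$, the tensor $h$ satisfies exactly the hypotheses of Theorem~\ref{th:infinitesimal}, and the first case of Item~\eqref{it:nonspherical} of that theorem yields $h=\mathscr L_X g(a)$ for some $X\in\mathcal X^{k+1,\alpha}(\Omega)$. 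In case~(b), with $n\ge 4$, $\Lambda=0$ and the relevant ($H$-)non-degeneracy assumption (and $H_{\bar g}>0$ in subcase~(ii)), Item~\eqref{it:2} of Theorem~\ref{th:generic-bdry} gives the stronger Cauchy boundary condition $h^\intercal=0$, $A'|_{g(a)}(h)=0$, so $h$ solves the linearized Einstein equation with vanishing Cauchy data and Proposition~\ref{pr:zero} gives $h=\mathscr L_X g(a)$ with $X\in\mathcal X^{k+1,\alpha}(\Omega)$. In either case this is infinitesimal rigidity in the sense of Definition~\ref{de:rigid}.

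For Item~\eqref{it:b}, umbilicity of $\Sigma_a$ together with $n\ge 3$ again puts us in the applicable branch of Item~\eqref{it:1} of Theorem~\ref{th:generic-bdry} (the $n=3$ case when $n=3$, the umbilic case when $n>3$), so $h$ once more satisfies the conformal Cauchy boundary condition and the interior linearized Einstein equation --- the hypotheses of Theorem~\ref{th:infinitesimal}. Since $(\Sigma_a,g^\intercal(a))$ is a round sphere, Item~\eqref{it:spherical} of Theorem~\ref{th:infinitesimal} applies and produces $h=\mathscr L_X g(a)$ with $X\in\mathcal Z^{k+1,\alpha}(\Omega)$, which is the claimed non-rigidity. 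I do not expect a serious obstacle beyond this bookkeeping, since the analytic content lives entirely in Theorem~\ref{th:generic-bdry} (domain deformation plus the Green-type identity) and Theorem~\ref{th:infinitesimal} (analytic continuation). The only delicate point is the endpoint $a=0$, where $\Sigma_0=\partial\Omega$ is not interior: there I would simply restrict attention to $a\in J\cap(0,\delta)$, which is still open and dense and suffices for the later uses of the corollary, or else invoke the argument verbatim at $a=0$ whenever $\Omega$ itself embeds in a larger Einstein manifold.
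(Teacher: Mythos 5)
Your proposal is correct and follows essentially the same route as the paper, which proves the corollary precisely by combining Theorem~\ref{th:generic-bdry} with Proposition~\ref{pr:zero} and Theorem~\ref{th:infinitesimal}. Your additional bookkeeping — transporting $(\Omega,g(a))$ to $(\Omega_a,\bar g|_{\Omega_a})\subset(\Omega,\bar g)$ so that Notation~\ref{notation} applies, and handling the endpoint $a=0$ by restricting to $J\cap(0,\delta)$ — is a sound (indeed slightly more careful) treatment of details the paper leaves implicit.
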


\subsection{Proofs of Theorems~\ref{th:generic0}, \ref{th:generic-higher}, and \ref{th:bdrymap}}
We prove Theorem~\ref{th:generic0} (for $n=3$) and Theorem~\ref{th:generic-higher} (for $n\geq 4$) together. The distinction in the proof arises from the application of Corollary~\ref{co:rigid} in these two cases.

\begin{proof}[Proof of Theorem~\ref{th:generic0} and Theorem~\ref{th:generic-higher}]

Let $\bar g$ represent $\llbracket\bar g\rrbracket \in \mathcal M_\Lambda^{k,\alpha}(\Omega)/\mathscr D^{k+1,\alpha}(\Omega)$. Consider the inward deformation $\Omega_t \subset (\Omega, \bar g)$ for $t\in [0, \delta)$ as defined in Notation~\ref{no:foliation}, and note that $g(t)$ converges to $\bar g$ in $\C^{k,\alpha}$.

For the case when $n=3$, we can deform $\Omega$ such that the resulting boundary $\Sigma_t$ is not umbilic, and $R_{\Sigma_t} \not\equiv 0$ for all $t\in (0, \delta)$. In the case when $n\ge 4$, we let $\bar g \in \widehat {\mathcal M}^{k,\alpha}_0 (\Omega)$ and deform $\Omega$ by either $\zeta=1$ or $\zeta = H_{\bar g}$. 

According to Item~\eqref{it:rigid} in Corollary~\ref{co:rigid}, there exists an open dense subset $J\subset [0, \delta)$ such that $(\Omega, g(t))$ is infinitesimally rigid with respect to the Anderson boundary data for all $t\in J$. By Proposition~\ref{pr:Fredholm}, $\Pi$ is a local diffeomorphism at $\llbracket g(t) \rrbracket$ for all $t\in J$.
\end{proof}

We explore the dilation invariance of star-shaped regions $\Omega\subset \mathbb R^n$ and apply Corollary~\ref{co:rigid} to obtain the following corollary.
\begin{corollary}\label{co:star}
Let $\Omega$ be a star-shaped region in the Euclidean space $(\mathbb R^n, \bar g)$. Then the following holds:
\begin{enumerate}
\item \label{it:3} If $n=3$ and $\Omega$ is not a round ball, then $(\Omega, \bar g)$ is  infinitesimally rigid  with respect to the Anderson boundary condition. 
\item If $n\ge 3$ and $\Omega$ is a round ball, then for any $h$ solving $L(h)=0$, we have $h= \mathscr L_X \bar g$ for some $X \in \mathcal Z^{k+1,\alpha}(\Omega)$.  \label{it:sphere}
\end{enumerate} 
\end{corollary}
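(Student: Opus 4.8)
The plan is to use the dilation structure of a star-shaped region to produce an inward domain deformation, in the sense of Notation~\ref{no:foliation}, along which the nullity $N(t)=\Dim\Ker L^{\Ga}_t$ is \emph{constant}. Once this is known, the open dense set $J\subset[0,\delta)$ of Definition~\ref{de:J} is all of $[0,\delta)$; in particular $0\in J$, and then Corollary~\ref{co:rigid} applied at $a=0$ (where $g(0)=\bar g$ and $\Sigma_0=\Sigma$) yields both statements at once. In effect, the dilation symmetry upgrades the \emph{generic} conclusions of Theorems~\ref{th:generic0} and~\ref{th:generic-higher} to a statement valid at the given $\Omega$.

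\emph{Setting up the deformation.} First I would assume $\Omega$ is star-shaped with respect to the origin; for part~\eqref{it:sphere}, when $\Omega$ is a round ball, take the origin to be its center. For small $\delta>0$ and $t\in[0,\delta)$ put $\Sigma_t:=(1-t)\Sigma$ and $\Omega_t:=(1-t)\Omega$. Star-shapedness is precisely the statement that each ray from the origin meets $\Sigma$ once, so the $\Sigma_t$ are pairwise disjoint embedded hypersurfaces foliating a collar of $\Sigma$ in $\Omega$, with inner regions $\Omega_t$. Take $V$ to be the normal velocity field of this foliation, so $V|_{\Sigma_t}=-\zeta\,\nu_{\bar g}$ for a positive function $\zeta$ on the collar (when $\Omega$ is the ball centered at the origin one can simply take $V=-x$, i.e. $\zeta=|x|$), extend it smoothly to $\Omega$, let $\psi_t$ be its flow and $g(t)=\psi_t^*(\bar g|_{\Omega_t})$, exactly as in Notation~\ref{no:foliation}. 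Since $\Omega$ is star-shaped it is contractible, hence $\pi_1(\Omega,\Sigma)=0$ and the hypotheses of Notation~\ref{no:foliation} hold; as $\Lambda=0$, the no-kernel assumption \eqref{eq:NK} is vacuous.

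\emph{Constancy of $N(t)$ and conclusion.} Writing $m_t(x)=(1-t)x$ for the homothety $\Omega\to\Omega_t$, we have $m_t^*\bar g=(1-t)^2\bar g$, so $\Phi_t:=m_t^{-1}\circ\psi_t$ is an isometry from $(\Omega,g(t))$ onto $(\Omega,(1-t)^2\bar g)$ that maps $\Sigma$ onto $\Sigma$; since $L^{\Ga}$ is geometric, pullback by $\Phi_t$ identifies $\Ker L^{\Ga}_t$ with $\Ker L^{\Ga}$ at the rescaled metric $(1-t)^2\bar g$. But for a flat metric the equations cutting out $\Ker L^{\Ga}$ are invariant under a constant rescaling $\bar g\mapsto c^2\bar g$: the Ricci tensor is scale-invariant and the mean curvature scales by $c^{-1}$, so the interior terms $\Ric'$ and $\mathcal D\beta$, the trace-free part $h^\intercal-\tfrac1{n-1}(\tr h^\intercal)\bar g^\intercal$, and $H'$ are homogeneous under the rescaling (of weights $-2,-2,0,-3$ respectively) and therefore vanish simultaneously. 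Hence $\Ker L^{\Ga}$ at $(1-t)^2\bar g$ coincides with $\Ker L^{\Ga}$ at $\bar g$, so $N$ is constant on $[0,\delta)$ and $0\in J$. Applying Corollary~\ref{co:rigid} at $a=0$: for~\eqref{it:3}, with $n=3$ and $\Omega$ not a round ball, $\Sigma$ is diffeomorphic to $S^2$ (it bounds a contractible region), it is not totally umbilic (a closed totally umbilic surface in $\mathbb R^3$ is a round sphere, which would force $\Omega$ to be a round ball), and $\int_\Sigma R_\Sigma\da=2\int_\Sigma K\da=8\pi>0$ by Gauss--Bonnet, so $R_\Sigma\not\equiv0$; thus the case $n=3$ of Corollary~\ref{co:rigid}\eqref{it:rigid} gives $h=\mathscr L_X\bar g$ for some $X\in\mathcal X^{k+1,\alpha}(\Omega)$. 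For~\eqref{it:sphere}, if $\Omega$ is a round ball then $(\Sigma,\bar g^\intercal)$ is a round sphere and umbilic, so Corollary~\ref{co:rigid}\eqref{it:b} gives $h=\mathscr L_X\bar g$ for some $X\in\mathcal Z^{k+1,\alpha}(\Omega)$.

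The main obstacle is the constancy of $N(t)$; this is the one place where star-shapedness enters essentially, via the feature that such a region is foliated near its boundary by \emph{homothetic} copies of itself. Two points that need to be written out carefully are (a) that the offset hypersurfaces $\Sigma_t=(1-t)\Sigma$ really do foliate a collar, rather than self-intersecting as inward offsets of a general surface can, and (b) that the isometries $\Phi_t$, although not the identity on $\Sigma$, preserve the conformal class and mean curvature of the boundary and hence induce isomorphisms between the kernels of the geometric operators $L^{\Ga}_t$. Neither is hard, but both should be stated explicitly.
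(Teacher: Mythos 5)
Your proposal is correct and follows essentially the same route as the paper: both exploit the dilation structure of a star-shaped region so that the pulled-back metrics are constant rescalings of $\bar g$, use the scale-homogeneity of the linearized operators (weights $-2$, $0$, $-3$ for $\Ric'$, the trace-free part, and $H'$), and then invoke Corollary~\ref{co:rigid} together with the non-umbilicity/Gauss--Bonnet observations for part~\eqref{it:3} and the umbilic round-sphere case for part~\eqref{it:sphere}. The only difference is bookkeeping: the paper takes $\psi_t$ to be the dilation itself, so that $g(t)=(1-t)^2\bar g$ and any $h\in\Ker L_0$ lies in $\Ker L_t$ for all $t$, and then cites Corollary~\ref{co:rigid} at a generic $t\in J$, whereas you show the nullity $N(t)$ is constant (via the auxiliary isometry $\Phi_t$ and scale invariance of $\Ker L^{\Ga}$) so that $0\in J$ and apply Corollary~\ref{co:rigid} directly at $a=0$.
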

\begin{proof}

We assume $\Omega$ is star-shaped with respect to the origin. 
Let $t\in [0, 1)$ and $\psi_t: \mathbb R^n\to \mathbb R^n$ be the dilation map defined by $\psi_t(x_1, \dots, x_n) = (1-t)(x_1, \dots, x_n)$. Restricting $\psi_t: \Omega\to \Omega_t$, we obtain a smooth family of inward perturbed domains $\Omega_t$ as defined in Notation~\ref{no:foliation}. Let $g(t) = \psi_t^* (\bar g|_{\Omega_t})$, and we calculate $g(t) = (1-t)^2\bar g$. 

Observe that any kernel element $h \in \Ker L_0$ is also a kernel element of the pull-back operator \( L_t \) defined by \eqref{eq:Lt} for all $t$: Using the linearized formulas \eqref{eq:H} and \eqref{equation:Ricci}, and the fact that \( g(t) =  (1-t)^{2} \bar{g} \), we can compute 
\begin{align*}
    \Ric'\big|_{g(t)}(h) & = (1-t)^{-2} \Ric'\big|_{\bar{g}}(h), \\
    h^\intercal - \tfrac{1}{n-1} (\mathrm{tr}_{g(t)} h^\intercal)g(t)^\intercal & = h^\intercal - \tfrac{1}{n-1} (\mathrm{tr}_{\bar{g}} h^\intercal)\bar{g}^\intercal, \\
    H'\big|_{g(t)}(h) & = (1-t)^{-3} H'\big|_{\bar{g}}(h).
\end{align*}
Since we know $\Ker L_t$ for generic $t$ from Corollary~\ref{co:rigid}, we obtain the desired result.
 
\end{proof}

\begin{proof}[Proof of Theorem~\ref{th:bdrymap}]
Let $\Omega$ be a star-shaped region that is not a round ball in the Euclidean space $(\mathbb R^3, \bar g)$. By Corollary~\ref{co:star},  $(\Omega, \bar g)$ is  infinitesimally rigid  with respect to the Anderson boundary condition. By Proposition~\ref{pr:Fredholm}, the boundary map $\Pi: \mathcal M^{k,\alpha}_0(\Omega) / \mathscr D^{k+1,\alpha}(\Omega) \to \mathcal S_1^{k,\alpha}(\Sigma)\times \C^{k-1,\alpha}(\Sigma)$ is a local diffeomorphism at $\llbracket \bar g \rrbracket$. The desired result follows the identification of the moduli space of the Ricci flat metric with the moduli space of immersions,  Lemma~\ref{le:embedding}.
  
\end{proof}

\section{Negative Einstein metrics in higher dimensions}\label{se:nonzero}

In this section,  we prove Theorem~\ref{th:general}. Throughout the section, we assume  $n \geq 4$ and $\Lambda < 0$. When $\Lambda \neq 0$, we no longer have the zero mean value property Lemma~\ref{le:trH}, which is used in conjunction with the non-degenerate boundary conditions. Instead, the mean value is determined by the deformation of the interior volume, which motivates us to consider the modified map below. We also note that while most of the arguments in this section hold for any value of $\Lambda$, we use $\Lambda<0$ in \eqref{eq:bulk-sign}.

Fix a background metric $\bar g$ such that $\Ric_{\bar g} = (n-1)\Lambda \bar g$. Denote by $V_g = \mathrm{vol}(\Omega, g)$. Define the map $\overline T:\mathcal M^{k,\alpha}(\Omega)\times \mathbb R\to \C^{k-2,\alpha}(\Omega)\times \mathcal S_1^{k,\alpha}(\Sigma)\times \C^{k-1,\alpha}(\Sigma) \times \mathbb R$ by 
\begin{align*}
	\overline T(g, \eta) =  \left\{\begin{array}{ll} \Ric_g - (n-1) \Lambda \eta g \quad \mbox{ in } \Omega \\
	\left\{ \begin{array}{ll} |g|^{-\frac{1}{n-1}}g^\intercal \\
	H_g \end{array} \right. \mbox{ on } \Sigma\\
		2\eta^{\frac{n}{2}} V_g 
	\end{array}  \right..
\end{align*}
The linearization at $(\bar g, 1)$ is given by $\overline L:  \C^{k,\alpha}(\Omega) \times \mathbb R\to \C^{k-2,\alpha}(\Omega)\times T\mathcal S_1^{k,\alpha}(\Sigma)\times \C^{k-1,\alpha}(\Sigma)\times \mathbb R$ where 
\begin{align*}
	\overline L(h, b) = \left\{\begin{array}{ll} \Ric'(h) - (n-1) \Lambda h - (n-1)\Lambda b  \bar g \quad \mbox{ in } \Omega \\
	\left\{ \begin{array}{ll} h^\intercal- \tfrac{1}{n-1} (\tr h^\intercal) \bar g^\intercal \\
	H'(h) \end{array} \right. \mbox{ on } \Sigma\\
		\int_\Omega \tr h \dvol + n V_{\bar g} b
	\end{array}  \right..
\end{align*}
As in the previous sections, we omit the subscript $\bar g$ when referring to linearizations and geometric operators taken at the background $\bar g$. 

Define the corresponding gauged operator $\overline T^\Ga:\mathcal M^{k,\alpha}(\Omega)\times \mathbb R\to \C^{k-2,\alpha}(\Omega)\times \C^{k-1,\alpha}(\Sigma)\times \mathcal S_1^{k,\alpha}(\Sigma)\times \C^{k-1,\alpha}(\Sigma)\times \mathbb R$ by 
\begin{align*}
	\overline T^\Ga(g, \eta) =  \left\{\begin{array}{ll} \Ric_g - (n-1) \Lambda \eta g  + \mathcal D_g \beta_{\bar g} g \quad \mbox{ in } \Omega \\
	\left\{ \begin{array}{ll} \beta_{\bar g} g\\ |g|^{-\frac{1}{n-1}}g^\intercal \\
	H_g \end{array} \right. \mbox{ on } \Sigma\\
		2\eta^{\frac{n}{2}} V_g 
	\end{array}  \right..
\end{align*}
The linearization at $(\bar g, 1)$ is given by $\overline L^\Ga:  \C^{k,\alpha}(\Omega) \times \mathbb R\to \C^{k-2,\alpha}(\Omega)\times \C^{k-1,\alpha}(\Sigma)\times T\mathcal S_1^{k,\alpha}(\Sigma) \times  \C^{k-1,\alpha}(\Sigma)\times \mathbb R$ where 
\begin{align*}
	\overline L^\Ga(h, b) = \left\{\begin{array}{ll} \Ric'(h) - (n-1) \Lambda h - (n-1)\Lambda b  \bar g +  \mathcal D \beta h\quad \mbox{ in } \Omega \\
	\left\{ \begin{array}{ll} \beta h\\ 
	h^\intercal- \tfrac{1}{n-1} (\tr h^\intercal) \bar g^\intercal \\
	H'(h) \end{array} \right. \mbox{ on } \Sigma\\
		\int_\Omega \tr h \dvol + n V_{\bar g} b
	\end{array}  \right..
\end{align*}
We refer the definitions of above function spaces to the definitions of $T, L, T^\Ga, L^\Ga$ in \eqref{eq:T}, \eqref{eq:L}, \eqref{eq:TG}, \eqref{eq:LG} respectively.

Using the modified operator, we recover the zero mean value condition.  
\begin{lemma}[Zero mean value for $\Lambda \neq 0$]\label{le:vanish}
Suppose $\overline L(h, b)=0$. Then 
\[
\int_\Sigma (\tr h^\intercal) H\da=0.
\] 
\end{lemma}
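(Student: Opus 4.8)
## Proof Proposal

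The plan is to mimic the proof of Lemma~\ref{le:trH} using the Green-type identity of Corollary~\ref{co:Green-kernel}, but now feeding in the extra volume-constraint equation from $\overline{L}(h,b)=0$ to absorb the term that previously forced $\Lambda=0$. First I would record the three pieces of information packaged in $\overline{L}(h,b)=0$: the interior equation $\Ric'(h) - (n-1)\Lambda h = (n-1)\Lambda b \bar g$, the Anderson boundary conditions $h^\intercal = \tfrac{1}{n-1}(\tr h^\intercal)\bar g^\intercal$ and $H'(h)=0$, and the scalar constraint $\int_\Omega \tr h \dvol + n V_{\bar g} b = 0$. The key observation is that $h$ then satisfies exactly the hypotheses of Lemma~\ref{le:hidden} (with this same $b$), so we may either invoke that lemma directly or re-run its $w=\bar g$ case.

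Next I would apply Corollary~\ref{co:Green-kernel} with $w = \bar g$, just as in the proof of Lemma~\ref{le:trH}. Using $\Ric'(\bar g)=0$, $P(\bar g) = -\tfrac{1}{2}(n-1)(n-2)\Lambda \bar g$, and on the boundary $\bar g^\intercal - \tfrac{1}{n-1}(\tr \bar g^\intercal)\bar g^\intercal = 0$ together with $H'(\bar g) = -\tfrac12 H$, the right-hand side of \eqref{eq:Green-cor} collapses to $-\tfrac{n-2}{2(n-1)}\int_\Sigma (\tr h^\intercal) H\, \da$ exactly as before; the left-hand side is $\int_\Omega \langle P(h), \bar g\rangle - \langle P(\bar g), h\rangle \dvol$. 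The only change from Lemma~\ref{le:trH} is that now $P(h)$ is no longer $-\tfrac12(n-1)(n-2)\Lambda \bar g$: since $K(h) := \Ric'(h)-(n-1)\Lambda h = (n-1)\Lambda b \bar g$, identity \eqref{eq:K} gives $P(h) = -K(h) + \tfrac12(\tr K(h))\bar g = (n-1)\Lambda b\big(-\bar g + \tfrac{n}{2}\bar g\big) = \tfrac12(n-1)(n-2)\Lambda b\,\bar g$. Hence $\langle P(h),\bar g\rangle = \tfrac12 n(n-1)(n-2)\Lambda b$, and assembling the left-hand side yields
\[
\int_\Omega \langle P(h),\bar g\rangle - \langle P(\bar g),h\rangle \dvol = \tfrac12(n-1)(n-2)\Lambda\Big( nV_{\bar g}\, b + \int_\Omega \tr h\,\dvol\Big).
\]
By the volume constraint $nV_{\bar g} b + \int_\Omega \tr h\,\dvol = 0$, this left-hand side vanishes. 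Therefore $-\tfrac{n-2}{2(n-1)}\int_\Sigma (\tr h^\intercal)H\,\da = 0$, which gives the claim.

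I expect no serious obstacle here — this lemma is essentially a bookkeeping variant of Lemma~\ref{le:trH}, and the genuinely new input (the volume equation in the domain of $\overline{L}$) is designed precisely so that the offending $\int_\Omega \tr h\,\dvol$ term is cancelled rather than left over. The one point requiring a little care is the computation of $P(h)$ for the inhomogeneous equation $K(h) = (n-1)\Lambda b\bar g$: one must track the trace $\tr K(h) = n(n-1)\Lambda b$ correctly through \eqref{eq:K}. Everything else — the values of $P(\bar g)$, $H'(\bar g)$, and the boundary evaluation — is identical to the proof already given for Lemma~\ref{le:trH}, so I would simply cite that computation and indicate the single modified line.
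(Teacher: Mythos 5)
Your proposal is correct and follows essentially the same route as the paper: take $w=\bar g$ in the Green-type identity \eqref{eq:Green-cor}, compute $P(h)=\tfrac12(n-1)(n-2)\Lambda b\,\bar g$ from \eqref{eq:K}, and use the volume constraint $\int_\Omega \tr h\,\dvol + nV_{\bar g}b=0$ to kill the bulk terms, with the boundary terms identical to Lemma~\ref{le:trH}. No issues.
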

\begin{proof}
We follow closely Lemma~\ref{le:trH}.  Let $w=\bar g$ in the Green-type identity \eqref{eq:Green-cor} and we have 	$P(\bar g)= -\tfrac{1}{2} (n-1)(n-2) \Lambda \bar g$. We get $P(h) = \tfrac{1}{2}  (n-1)(n-2)\Lambda b \bar g$ by \eqref{eq:K}. Therefore, 
\begin{align*}
	&\int_\Omega P(h) \cdot \bar g \dvol- \int_\Omega P(\bar g)\cdot h  \dvol \\
	&= \tfrac{1}{2} (n-1)(n-2) n\Lambda b V_{\bar g} +\tfrac{1}{2} (n-1)(n-2) \Lambda \int_\Omega \tr h \dvol= 0.
\end{align*}	
Since the boundary terms of the Green identity are the same as in Lemma~\ref{le:trH}, we complete the proof.
\end{proof}

We establish some basic properties of the modified operator. The next lemma follows similarly to Lemma~\ref{le:gauge-linear}.
\begin{lemma}\label{le:overlineL}
\begin{enumerate}
\item If $h\in \Ker \overline L^\Ga $, then $\beta_{\bar g} h =0$ and $h\in \Ker \overline L$.
\item If $h\in \Ker \overline L$, then there exists a unique $Y\in \mathcal X^{k+1,\alpha}(\Omega)$ such that $\beta_{\bar g} (h+ \mathscr L_Y \bar g)=0$ and  $h+ \mathscr L_Y\bar g\in \Ker L^\Ga$. 
\end{enumerate}
\end{lemma}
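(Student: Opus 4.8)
The proof is a direct adaptation of Lemma~\ref{le:gauge-linear}; the only new point is to check that the extra interior term $-(n-1)\Lambda b\bar g$ and the scalar volume equation are transparent both to the Bianchi operator and to the addition of $\mathscr L_Y\bar g$ with $Y\in\mathcal X^{k+1,\alpha}(\Omega)$. The two facts that make this routine are $\beta_{\bar g}\bar g=0$, and $\int_\Omega\tr(\mathscr L_Y\bar g)\dvol = 2\int_\Omega\Div Y\dvol = 2\int_\Sigma Y\cdot\nu\da = 0$ for $Y|_\Sigma=0$. I will also use throughout the identity $\beta_{\bar g}\mathcal D_{\bar g}Y = -\tfrac12(\Delta Y + (n-1)\Lambda Y)$ at the Einstein metric $\bar g$ (as in the proof of Lemma~\ref{le:gauge}, via \eqref{eq:Laplace0}), and the linearized contracted Bianchi identity $\beta_{\bar g}(\Ric'(h)-(n-1)\Lambda h)=0$, which holds because $\beta_g(\Ric_g-(n-1)\Lambda g)=0$ for every metric and $\Ric_{\bar g}-(n-1)\Lambda\bar g=0$.

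\textbf{Item (1).} Let $(h,b)\in\Ker\overline L^\Ga$ and put $W=\beta_{\bar g}h$. Applying $\beta_{\bar g}$ to the interior equation $\Ric'(h)-(n-1)\Lambda h-(n-1)\Lambda b\bar g+\mathcal D_{\bar g}W=0$ kills the first two terms by the linearized Bianchi identity and kills the $b$-term since $\beta_{\bar g}\bar g=0$, leaving $\beta_{\bar g}\mathcal D_{\bar g}W=-\tfrac12(\Delta W+(n-1)\Lambda W)=0$. Since $W=\beta_{\bar g}h=0$ on $\Sigma$ from the boundary component of $\overline L^\Ga$, and since $\Lambda<0$ forces $\Ker B=\{0\}$ (for $\Lambda>0$ invoke \eqref{eq:NK}), we get $W\equiv0$ in $\Omega$. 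Then the interior equation reduces to $\Ric'(h)-(n-1)\Lambda h-(n-1)\Lambda b\bar g=0$, the gauge boundary condition is automatic, and the remaining boundary conditions and the volume equation are unchanged; hence $(h,b)\in\Ker\overline L$.

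\textbf{Item (2).} Let $(h,b)\in\Ker\overline L$. Solving $\Delta Y+(n-1)\Lambda Y=\beta_{\bar g}h$ in $\Omega$ with $Y=0$ on $\Sigma$ has a unique solution $Y\in\mathcal X^{k+1,\alpha}(\Omega)$, because this Dirichlet operator coincides with $B$ of \eqref{eq:B}, which is an isomorphism for $\Lambda<0$ (for $\Lambda>0$ use \eqref{eq:NK}); then $\beta_{\bar g}(\mathscr L_Y\bar g)=2\beta_{\bar g}\mathcal D_{\bar g}Y=-(\Delta Y+(n-1)\Lambda Y)=-\beta_{\bar g}h$, so $\hat h:=h+\mathscr L_Y\bar g$ satisfies $\beta_{\bar g}\hat h=0$ in $\Omega$, in particular on $\Sigma$. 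Since $\Ric$ is diffeomorphism-equivariant and $\bar g$ is Einstein, $\Ric'(\mathscr L_Y\bar g)-(n-1)\Lambda\mathscr L_Y\bar g=\mathscr L_Y(\Ric_{\bar g}-(n-1)\Lambda\bar g)=0$, so $\hat h$ satisfies the interior equation of $\overline L^\Ga$ (the $\mathcal D\beta$ term vanishes). Because $Y|_\Sigma=0$, the tensor $\mathscr L_Y\bar g$ is a trivial kernel element of $L$: $(\mathscr L_Y\bar g)^\intercal=0$ and $H'(\mathscr L_Y\bar g)=0$, so the conformal and mean-curvature boundary conditions for $\hat h$ coincide with those for $h$; and the volume equation is unchanged by the displayed integral identity. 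Hence $(\hat h,b)\in\Ker\overline L^\Ga$, and uniqueness of $Y$ is precisely the injectivity already used.

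\textbf{Main difficulty.} There is essentially no obstacle beyond bookkeeping: the entire content is verifying that the new ingredients in $\overline L,\overline L^\Ga$ relative to $L,L^\Ga$ — the term $-(n-1)\Lambda b\bar g$ and the scalar volume equation — interact trivially with the gauge-fixing argument, which reduces to $\beta_{\bar g}\bar g=0$ and $\int_\Sigma Y\cdot\nu\da=0$. Everything else is word-for-word as in Lemma~\ref{le:gauge-linear}.
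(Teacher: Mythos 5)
Your proposal is correct and takes essentially the same route as the paper, which gives no separate argument here and simply notes that the lemma "follows similarly to Lemma~\ref{le:gauge-linear}"; your verification that the extra term $-(n-1)\Lambda b\,\bar g$ and the volume equation are invisible to the gauge argument (via $\beta_{\bar g}\bar g=0$, the linearized Bianchi identity, and $\int_\Sigma Y\cdot\nu\,\da=0$) is exactly the intended bookkeeping, and your reading of the conclusion as $(h+\mathscr L_Y\bar g,\,b)\in\Ker\overline L^\Ga$ matches how the lemma is used later.
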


\begin{proposition}\label{pr:sur}
The operator $\overline L^\Ga$ is Fredholm of index $0$ and satisfies the Schauder estimate
\[
	\|(h, b) \|_{\C^{k,\alpha}(\Omega)\times \mathbb R} \le C \big(\| \overline L^\Ga(h, b) \|_{\mathcal B} + \| (h, b)\|_{\C^{0}(\Omega)\times \mathbb R} \big)
\]	
where $\mathcal B$ denotes the norm of the codomain space of $ \overline L^\Ga$ and the constant $C$ is uniform in the background metric $\| \bar g\|_{\C^{k,\alpha}(\Omega)}$.
\end{proposition}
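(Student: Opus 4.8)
The plan is to treat $\overline L^\Ga$ as a bounded finite-rank perturbation of the direct sum $L^\Ga\oplus\mathrm{Id}_{\mathbb R}$ and to deduce both assertions from the ellipticity of $L^\Ga$ established by Anderson. First I would record that the codomain of $\overline L^\Ga$ is exactly $\mathcal B=\mathcal C\times\mathbb R$, where $\mathcal C=\C^{k-2,\alpha}(\Omega)\times\C^{k-1,\alpha}(\Sigma)\times T\mathcal S_1^{k,\alpha}(\Sigma)\times\C^{k-1,\alpha}(\Sigma)$ is the codomain of $L^\Ga$ in \eqref{eq:LG}, and that the domain of $\overline L^\Ga$ is the domain of $L^\Ga$ crossed with $\mathbb R$. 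Comparing the two formulas one sees
\[
\overline L^\Ga(h,b)=\big(L^\Ga h-(n-1)\Lambda b\,\xi,\ \ell(h)+nV_{\bar g}\,b\big),
\]
where $\xi\in\mathcal C$ is the fixed element whose only nonzero component is the interior tensor $\bar g$, and $\ell(h)=\int_\Omega\tr h\dvol$ is a bounded linear functional on $\C^0(\Omega)$, hence on $\C^{k,\alpha}(\Omega)$. The key point is that $(h,b)\mapsto\big(-(n-1)\Lambda b\,\xi,\ \ell(h)+(nV_{\bar g}-1)b\big)$ maps $\C^{k,\alpha}(\Omega)\times\mathbb R$ boundedly into $\mathcal B$ with range contained in the finite-dimensional subspace $\Span\{\xi\}\oplus\mathbb R$, hence is compact; so $\overline L^\Ga$ differs from $L^\Ga\oplus\mathrm{Id}_{\mathbb R}$ by a compact operator.

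Given that, the Fredholm statement is immediate: $L^\Ga$ is Fredholm of index $0$ by \cite[Theorem 1.2]{Anderson:2008-banach} and \cite[Proposition 3.1]{Anderson:2008}, so $L^\Ga\oplus\mathrm{Id}_{\mathbb R}$ is Fredholm of index $0$, and Fredholmness together with the index are stable under compact perturbation.

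For the uniform Schauder estimate I would start from the a priori estimate that accompanies the ellipticity of $L^\Ga$, namely $\|h\|_{\C^{k,\alpha}(\Omega)}\le C\big(\|L^\Ga h\|_{\mathcal C}+\|h\|_{\C^0(\Omega)}\big)$ with $C$ controlled by an upper bound on $\|\bar g\|_{\C^{k,\alpha}(\Omega)}$. Since the boundary components of $\overline L^\Ga(h,b)$ coincide with those of $L^\Ga h$ and its interior component differs from that of $L^\Ga h$ only by $-(n-1)\Lambda b\,\bar g$, one obtains $\|L^\Ga h\|_{\mathcal C}\le\|\overline L^\Ga(h,b)\|_{\mathcal B}+C'|b|$ with $C'$ depending only on $\|\bar g\|_{\C^{k-2,\alpha}(\Omega)}$; substituting and adding $|b|$ to both sides yields the claimed inequality, with the extra $|b|$ absorbed into $\|(h,b)\|_{\C^0(\Omega)\times\mathbb R}$.

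The routine-but-delicate point — requiring care rather than ingenuity — is the assertion that the Schauder constant for $L^\Ga$ may be chosen uniformly as $\bar g$ ranges over a bounded subset of $\C^{k,\alpha}(\Omega)$. This follows from the standard interior-plus-boundary Schauder theory for the Agmon--Douglis--Nirenberg elliptic boundary value problem $L^\Ga$, whose constants depend only on the $\C^{k-2,\alpha}$-size of the coefficients, on the ellipticity and complementing (Lopatinski--Shapiro) constants, and on the geometry of $\Omega$ — all uniformly controlled by $\|\bar g\|_{\C^{k,\alpha}(\Omega)}$. No ingredient beyond Anderson's ellipticity result and this standard fact is needed.
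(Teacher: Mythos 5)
Your argument is correct, and the Schauder-estimate half is essentially the paper's: both exploit that the first four components of $\overline L^\Ga(h,b)$ equal $L^\Ga h$ up to the term $-(n-1)\Lambda b\,\bar g$, apply the elliptic estimate for $L^\Ga$, and absorb $|b|$ into the lower-order norm. Where you genuinely diverge is the index computation. The paper does not write $\overline L^\Ga$ as a perturbation of $L^\Ga\oplus\mathrm{Id}_{\mathbb R}$; instead it introduces an auxiliary operator $\widehat L^\Ga$ on $h$ alone, obtained from $L^\Ga$ by adding the nonlocal zeroth-order term $\tfrac{n-1}{nV_{\bar g}}\Lambda\big(\int_\Omega\tr h\dvol\big)\bar g$, observes that $\widehat L^\Ga$ has index $0$ (same compact-perturbation principle you invoke, applied there), and then identifies $\Ker\overline L^\Ga$ with $\Ker\widehat L^\Ga$ (with $b$ determined by $h$ via $b=-\tfrac{1}{nV_{\bar g}}\int_\Omega\tr h\dvol$) and shows the cokernels are isomorphic through an explicit dual pairing computation. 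Your route — noting that $(h,b)\mapsto\big(-(n-1)\Lambda b\,\xi,\ \ell(h)+(nV_{\bar g}-1)b\big)$ is bounded with range in the two-dimensional space $\Span\{\xi\}\oplus\mathbb R$, hence finite rank and compact, so Fredholmness and index pass from $L^\Ga\oplus\mathrm{Id}_{\mathbb R}$ to $\overline L^\Ga$ — is shorter and avoids both the auxiliary operator and the pairing calculation; what the paper's version buys in exchange is a concrete identification of the kernel and cokernel of $\overline L^\Ga$ in terms of those of an operator acting on $h$ alone, though that extra information is not used elsewhere. One caveat applies equally to both proofs: uniformity of the constant really requires uniform ellipticity and complementing-condition constants, not merely a bound on $\|\bar g\|_{\C^{k,\alpha}(\Omega)}$; your closing paragraph states the dependence at least as carefully as the paper does, so this is not a gap relative to the paper's own argument.
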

\begin{proof}
To verify the Schauder estimate, notice that the first four components of $\overline L^\Ga(h, b)$ form exactly $L^\Ga(h)$ with the inhomogeneous term $(n-1) \Lambda b \bar g$. Since $L^\Ga$ is elliptic, we have 
\[
		\|h \|_{\C^{k,\alpha}(\Omega)} \le C \big(\| \overline L^\Ga(h, b) \|_{\mathcal B} + \| (n-1) \Lambda b \bar g \|_{\C^{k-2,\alpha}(\Omega)}+ \| h\|_{\C^{0}(\Omega)} \big).
\]
It implies the desired estimate.

The Schauder estimate implies that $\overline L^\Ga$ has a  finite-dimensional kernel and closed range. We will show that $\Dim \Ker \overline L^\Ga = \Dim \Coker \overline L^\Ga$.

Define the operator 
\begin{align*}
	\widehat L^\Ga(h) = \left\{\begin{array}{ll} \Ric'(h) - (n-1) \Lambda h + \frac{n-1}{n V_{\bar g}}\Lambda\left(\int_\Omega \tr h \dvol \right)   \bar g +  \mathcal D \beta h\quad \mbox{ in } \Omega \\
	\left\{ \begin{array}{ll} \beta h\\ 
	h^\intercal- \tfrac{1}{n-1} (\tr h^\intercal) \bar g^\intercal \\
	H'(h) \end{array} \right. \mbox{ on } \Sigma
	\end{array}  \right..
\end{align*}
Since $\widehat L^\Ga$ is just the operator $L^\Ga$ with  an extra inconsequential zeroth order ``non-local'' term, one can verify that $\widehat L^\Ga$ is also of Fredholm index $0$, just as $L^\Ga$.  

Our motivation for defining $\widehat L^\Ga$ is that 
\[
	\Ker \overline{L}^\Ga = \Big\{ (h, b):  h\in \Ker \widehat L^\Ga \mbox{ and } b = -\tfrac{1}{n V_{\bar g} }\int_\Omega \tr h \dvol \Big\}.
\]
Thus, the kernel spaces for $\overline L^\Ga$ and for $ \widehat L^\Ga$ have the same dimensions.  We will show that their cokernel spaces are isomorphic. Then we can conclude that the Fredholm index of $\overline{L}^\Ga$ is also zero. 

We denote $(I, B, c)$ as the elements in the dual space of the codomain of $\overline L^\Ga$. For any $(h, b)$, the pairing with $(I, B, c)$ satisfies 
\begin{align}\label{eq:pairing}
\begin{split}
	&\Big\langle \overline L^\Ga(h, b), (I, B, c) \Big\rangle \\
	&=\Big \langle \widehat L^\Ga(h), (I, B)\Big \rangle + \left(c - \tfrac{n-1}{nV_{\bar g}}\Lambda \int_\Omega \tr  I  \dvol \right) \left( \int_\Omega \tr h \dvol + n V_{\bar g} b\right).
\end{split}
\end{align} 
To see it, since the pairing of the boundary terms are obviously the same, we just need to compute 
\begin{align*}
	&\int_\Omega \left\langle \Ric'(h) - (n-1) \Lambda h - (n-1)\Lambda b  \bar g +  \mathcal D \beta h, I\right\rangle \dvol  + c\Big(\int_\Omega \tr h \dvol + n V_{\bar g} b\Big)\\
	&=\int_\Omega \Big\langle \Ric'(h) - (n-1) \Lambda h + \tfrac{n-1}{n V_{\bar g}}\Lambda\Big(\int_\Omega \tr h \dvol \Big)   \bar g +  \mathcal D \beta h, I \Big\rangle \dvol\\
	&\quad +\left(- (n-1) \Lambda b - \tfrac{n-1}{n V_{\bar g}} \Lambda \int_\Omega \tr h \dvol \right) \int_\Omega \tr I \dvol+c\Big(\int_\Omega \tr h \dvol + n V_{\bar g} b\Big).
\end{align*}
Grouping the common factor $(\int_\Omega \tr h \dvol + n V_{\bar g} b)$  gives \eqref{eq:pairing}. 

Let $(I, B, c) \in \Coker \overline L^\Ga$; that is, $\Big\langle \overline L^\Ga(h, b), (I, B, c) \Big\rangle=0$ for all $(h, b)$. Then for any given $h$ if we choose $b$ so that $ \int_\Omega \tr h \dvol + n V_{\bar g} b=0$, then $\Big \langle \widehat L^\Ga(h), (I, B)\Big \rangle=0$ by \eqref{eq:pairing}, and thus $(I, B)\in \Coker \widehat L^\Ga$. Conversely, if $(I, B)\in \Coker  \widehat L^\Ga$, then it is easy to check that $(I, B, c)\in \Coker \overline L^\Ga$ where $c=\tfrac{n-1}{nV_{\bar g}}\Lambda \int_\Omega \tr  I  \dvol$. Thus, the two cokernel spaces are isomorphic. It completes the proof.

\end{proof}

Following Notation~\ref{no:foliation}, we let $\Omega_t$ be a family of inward normal deformations $\psi_t : \Omega \to \Omega_t$ where $\frac{\partial}{\partial t} \psi_t = V$ and $V|_\Sigma = -\zeta \nu$ where $\nu$ is the outward unit normal. 
Denote the pull-back metrics $g(t) = \psi^*_t (\bar g|_{\Omega_t})$. Let $\overline L_t$ be the linearized operator of $\overline T$ at $(g(t), 1)$; namely,
\begin{align*}
	\overline L_t(h, b) = \left\{\begin{array}{ll} \Ric'|_{g(t)}(h) - (n-1) \Lambda h - (n-1)\Lambda b   g(t) \quad \mbox{ in } \Omega \\
	\left\{ \begin{array}{ll} h^\intercal- \tfrac{1}{n-1} (\tr_{g(t)} h^\intercal) \bar g(t)^\intercal \\
	H'|_{g(t)}(h) \end{array} \right. \mbox{ on } \Sigma\\
		\int_\Omega \tr_{g(t)} h \dvol_{g(t)} +n V_{g(t)} b
	\end{array}  \right..
\end{align*}
The corresponding gauged operator is given by
\begin{align*}
	\overline L_t^\Ga(h, b) = \left\{\begin{array}{ll} \Ric'|_{g(t)}(h) - (n-1) \Lambda h - (n-1)\Lambda b   g(t) + \mathcal D_{g(t)} \beta_{g(t)} h  \quad \mbox{ in } \Omega \\
	\left\{ \begin{array}{ll} \beta_{g(t)} h\\
	h^\intercal- \tfrac{1}{n-1} (\tr_{g(t)} h^\intercal) \bar g(t)^\intercal \\
	H'|_{g(t)}(h) \end{array} \right. \mbox{ on } \Sigma\\
		\int_\Omega \tr_{g(t)} h \dvol_{g(t)} +n V_{g(t)} b
	\end{array}  \right..
\end{align*}

 Let $N_1(t) = \dim\overline L_t^\Ga$. 
\begin{align*}
	J_1 = \{ t\in [0, \delta): \mbox{$N_1(t)$ is constant in an open neighborhood of $t$} \}.
\end{align*}
Since $\overline L^\Ga_t$ satisfies a uniform Schauder estimate, the analogous Lemma~\ref{le:nullity}, as well as its consequential Proposition~\ref{pr:kernel} and Corollary~\ref{co:kernel},  also hold for the modified operators $\overline L^\Ga_t$ and $N_1(t), J_1$. We summarize the results below. 
\begin{proposition}\label{pr:conv}
For any $a\in J_1$ and any $(h, b)\in \Ker \overline L_a$, there exists a sequence $\{ t_j \}\subset J$ with $t_j \searrow a$, $(h(t_j), b(t_j))\in \Ker \overline L_{t_j}$, and a symmetric $(0,2)$-tensor $h'\in \C^{k,\alpha}(\Omega)$ and $b'\in \mathbb R$ such that as $t_j \searrow a$
\[
	(h(t_j), b(t_j)) \to (h, b) \quad \mbox{ and } \quad \frac{(h(t_j)- h, b(t_j)- b)}{t_j -a} \to (h', b')  \quad \mbox{ in } \C^{k,\alpha}(\Omega)\times \mathbb R.
\]
\end{proposition}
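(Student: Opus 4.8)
The plan is to repeat, essentially verbatim, the chain of arguments Lemma~\ref{le:nullity} $\Rightarrow$ Proposition~\ref{pr:kernel} $\Rightarrow$ Corollary~\ref{co:kernel}, now with $L^\Ga_t$ replaced by the modified gauged operator $\overline L^\Ga_t$ and $L_t$ by $\overline L_t$, with $N(t)$ replaced by $N_1(t)$ and $J$ by $J_1$. The one structural input that makes the machinery run is the uniform Schauder estimate of Proposition~\ref{pr:sur}: it already gives that $\overline L^\Ga_t$ has finite-dimensional kernel and closed range, and it lets one extract, from any bounded family in $\Ker\overline L^\Ga_{t_j}$ with $t_j\to a^+$, a subsequence converging in $\C^{k,\alpha}(\Omega)\times\mathbb R$ to an element of $\Ker\overline L^\Ga_a$. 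Running the proof of Lemma~\ref{le:nullity} with this input, $N_1$ is upper semicontinuous and integer-valued, hence $J_1$ is open and dense, and for $a\in J_1$ there is a sequence $t_j\searrow a$ with $N_1(t_j)=N_1(a)$ and $\Ker\overline L^\Ga_{t_j}\to\Ker\overline L^\Ga_a$.

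Next I would establish the analogue of Proposition~\ref{pr:kernel} for $\overline L^\Ga$. Fix $a\in J_1$ and $(h,b)\in\Ker\overline L^\Ga_a$; choose $(\hat h(t_j),\hat b(t_j))\in\Ker\overline L^\Ga_{t_j}$ with $(\hat h(t_j),\hat b(t_j))\to (h,b)$. The naive difference quotient need not converge, so I correct it exactly as in Proposition~\ref{pr:kernel}: fix a closed complement $\mathcal Y$ to $\Ker\overline L^\Ga_{t_j}$ (for all large $j$) and to $\Ker\overline L^\Ga_a$, on which the uniform bound $\|(\cdot)\|_{\C^{k,\alpha}(\Omega)\times\mathbb R}\le C\,\|\overline L^\Ga_{t_j}(\cdot)\|_{\mathcal B}$ holds with $C$ independent of $j$, and subtract the unique element of $\Ker\overline L^\Ga_{t_j}$ that places the corrected quotient into $\mathcal Y$. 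Since $\{\overline L^\Ga_t\}$ depends in a $C^1$ fashion on $t$ (its coefficients and the non-local volume terms depend smoothly on $g(t)$, which depends smoothly on $t$), the element $-\tfrac{1}{t_j-a}(\overline L^\Ga_{t_j}-\overline L^\Ga_a)(h,b)$ is bounded in $\mathcal B$; applying the uniform estimate on $\mathcal Y$ bounds the corrected quotient in $\C^{k,\alpha}(\Omega)\times\mathbb R$, and a subsequence converges. Setting $(h(t_j),b(t_j))$ to be $(\hat h(t_j),\hat b(t_j))$ minus $(t_j-a)$ times the correction term yields a sequence in $\Ker\overline L^\Ga_{t_j}$ for which both the sequence and its difference quotient converge.

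Finally I would descend from $\overline L^\Ga$ to $\overline L$ as in Corollary~\ref{co:kernel}. Given $(h,b)\in\Ker\overline L_a$, Lemma~\ref{le:overlineL} (applied at $g(a)$) produces $W\in\mathcal X^{k+1,\alpha}(\Omega)$ such that $\tilde h:=h+\mathscr L_W g(a)$ is in the harmonic gauge and $(\tilde h,b)\in\Ker\overline L^\Ga_a$; note that the value of the last component is unchanged since $\int_\Omega\tr(\mathscr L_W g(a))\,\dvol=2\int_\Sigma W\cdot\nu\,\da=0$ because $W|_\Sigma=0$. Apply the gauged result to $(\tilde h,b)$ to get $(\tilde h(t_j),b(t_j))\in\Ker\overline L^\Ga_{t_j}$ with convergent sequence and difference quotient, and set $h(t_j):=\tilde h(t_j)-\mathscr L_W g(t_j)$, which lies in $\Ker\overline L_{t_j}$ by Lemma~\ref{le:overlineL}. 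Using $\left.\tfrac{d}{dt}\right|_{t=a}\mathscr L_W g(t)=\mathscr L_W\mathscr L_V\bar g$, the difference quotient of $h(t_j)$ converges to $h'-\mathscr L_W\mathscr L_V\bar g$, while $b(t_j)$ behaves as in the gauged case. I expect the main obstacle to be the same as in Proposition~\ref{pr:kernel}: ensuring the corrected difference quotient actually converges, which genuinely needs the locally constant nullity on $J_1$, the $j$-uniform elliptic estimate on the fixed complement $\mathcal Y$, and $C^1$-dependence of the operator family on $t$. The only genuinely new point is that the extra $\mathbb R$-factor and the non-local volume term do not spoil any of this, which is precisely the content of Proposition~\ref{pr:sur} together with the observation that gauge-fixing of the pair $(h,b)$ reduces to that of $h$ alone.
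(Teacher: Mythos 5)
Your proposal is correct and follows essentially the same route as the paper, which simply observes that the uniform Schauder estimate of Proposition~\ref{pr:sur} lets one rerun Lemma~\ref{le:nullity}, Proposition~\ref{pr:kernel}, and Corollary~\ref{co:kernel} verbatim for $\overline L^\Ga_t$, $\overline L_t$, $N_1(t)$, $J_1$; your write-up just makes that transfer explicit, including the correct observation that the gauge correction $\mathscr L_W g(a)$ with $W|_\Sigma=0$ leaves the volume component unchanged.
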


We prove the main result of this section that the operators $\overline L_t$ has ``trivial'' kernel for generic $t$. Recall that $\widehat {\mathcal M}^{k,\alpha}_- (\Omega)$ denotes the space of negative Einstein metrics that have $H$-non-degenerate, strictly mean convex boundary.
\begin{theorem}\label{th:trivial}
Let $n\ge 4$ and  $\bar g\in \widehat {\mathcal M}^{k,\alpha}_- (\Omega)$ such that $\Ric  = (n-1) \Lambda \bar g$ for some $\Lambda < 0$. Let the inward domain deformation $\psi_t$ be generated from $V= - H_{\bar g} \nu$, where $H_{\bar g}$ denotes the mean curvature of $\Sigma_t \subset (\Omega, \bar g)$. Then $\overline T^\Ga$ is a local diffeomorphism at $(g(t), 1)$ for $t\in J_1$, where recall $g(t)= \psi_t^*(\bar g|_{\Omega_t})$. 
\end{theorem}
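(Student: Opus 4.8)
The plan is to run the scheme of Proposition~\ref{pr:Fredholm} for the modified operator. Since $\overline L_t^\Ga$ is Fredholm of index $0$ by Proposition~\ref{pr:sur}, it suffices to show $\Ker\overline L_t^\Ga=\{(0,0)\}$ for $t\in J_1$ and then apply the Inverse Function Theorem. By Lemma~\ref{le:overlineL}, and since the no-kernel assumption~\eqref{eq:NK} is automatic when $\Lambda<0$ (so that $\Delta X+(n-1)\Lambda X$ is invertible on $\mathcal X^{k+1,\alpha}(\Omega)$), this reduces to proving
\[
	\Ker\overline L_a=\big\{(\mathscr L_X\bar g,\,0):X\in\mathcal X^{k+1,\alpha}(\Omega)\big\}\qquad\text{for every }a\in J_1,
\]
where after relabeling I write $\bar g$ for $g(a)$. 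Since $\widehat{\mathcal M}^{k,\alpha}_-(\Omega)$ is open and $g(t)\to\bar g$ in $\C^{k,\alpha}$ as $t\to0$, I first shrink $\delta$ so that $g(t)\in\widehat{\mathcal M}^{k,\alpha}_-(\Omega)$ for all $t\in[0,\delta)$; in particular $(\Omega,g(a))$ has strictly mean convex, $H$-non-degenerate boundary and the deformation near $\Sigma_a$ is generated by $\zeta=H_{g(a)}>0$. As $\Ker\overline L_a$ is invariant under adding $(\mathscr L_Y\bar g,0)$ with $Y|_\Sigma=0$, it is enough to treat a representative $(h,b)\in\Ker\overline L_a$ with $h$ in the geodesic gauge $h(\nu,\cdot)=0$ on $\Sigma$ (as in the proof of Theorem~\ref{th:generic-bdry}).

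Next comes the domain-deformation argument, following Theorem~\ref{th:generic-bdry} but now tracking the extra scalar and the volume constraint. Relabel $a=0$, and by Proposition~\ref{pr:conv} pick $(h(t_j),b(t_j))\in\Ker\overline L_{t_j}$ with $(h(t_j),b(t_j))\to(h,b)$ and difference quotients converging to $(h',b')$ in $\C^{k,\alpha}(\Omega)\times\mathbb R$. Put $w:=h'-\mathscr L_Vh$ and $c:=b'$. Differentiating at $t=0$ the interior equation satisfied by $h(t_j)-\psi_{t_j}^*h$ (each of $(h(t_j),b(t_j))$ and $(\psi_{t_j}^*h,b)$ solving its own Einstein-type equation) yields $\Ric'(w)-(n-1)\Lambda w-(n-1)\Lambda c\,\bar g=0$ in $\Omega$, while the derivations of \eqref{eq:int}--\eqref{eq:bd} are unaffected by the $b$-term and give, on $\Sigma$,
\[
	w^\intercal-\tfrac{1}{n-1}(\tr w^\intercal)\bar g^\intercal=2\zeta\,\Phi,\qquad H'(w)=-2\zeta\,A\cdot\Phi,\qquad\Phi:=A'(h)-\tfrac{1}{n-1}(\tr h^\intercal)A.
\]
The new relation comes from the volume component: the functional $\int_\Omega\tr_{g(t_j)}(\cdot)\dvol_{g(t_j)}+nV_{g(t_j)}(\cdot)$ vanishes on $(h(t_j),b(t_j))$, whereas on $(\psi_{t_j}^*h,b)$ it equals an integral over the thin collar $\Omega\setminus\Omega_{t_j}$; subtracting, dividing by $t_j$, and using $\tfrac{d}{dt}\big|_{t=0}\int_{\Omega\setminus\Omega_t}f\dvol=\int_\Sigma\zeta f\,\da$ together with $\tr h=\tr h^\intercal$ on $\Sigma$ in the geodesic gauge, one obtains
\[
	\int_\Omega\tr w\dvol+nV_{\bar g}\,c=\int_\Sigma\zeta\,(\tr h^\intercal)\,\da+nb\int_\Sigma\zeta\,\da.
\]

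Now insert $h$ and $w$ into the Green-type identity \eqref{eq:Green-cor} and set $v:=\tr h^\intercal$. Since $\zeta=H$, the zero mean value Lemma~\ref{le:vanish} gives $\int_\Sigma\zeta v\,\da=\int_\Sigma vH\,\da=0$, annihilating several terms. The boundary integrals simplify exactly as in \eqref{eq:Gb}; substituting \eqref{eq:hiddenbd2} for $A\cdot\Phi$ and using $\int_\Sigma\zeta v\,\da=0$, the right-hand side of \eqref{eq:Green-cor} becomes $2\int_\Sigma H|\Phi|^2\,\da+\tfrac{(n-3)(n-2)}{2(n-1)^2}\int_\Sigma H\,v\,L_\Sigma v\,\da$. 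On the interior side, $P(h)=\tfrac12(n-1)(n-2)\Lambda b\,\bar g$ and $P(w)=\tfrac12(n-1)(n-2)\Lambda c\,\bar g$ by \eqref{eq:K}; feeding in $\int_\Omega\tr h\dvol=-nV_{\bar g}b$ (the volume constraint for $(h,b)$) and the displayed volume relation for $\int_\Omega\tr w\dvol$, the $c$-terms cancel and the left-hand side of \eqref{eq:Green-cor} collapses to $\tfrac{n}{2}(n-1)(n-2)\Lambda\,b^2\int_\Sigma H\,\da$. Equating the two sides: the left one is $\le0$ because $\Lambda<0$ and $H>0$ (the one place $\Lambda<0$ is used), while the right one is $\ge0$ since $n\ge4$ and, as $\int_\Sigma vH\,\da=0$, the $H$-non-degenerate condition \eqref{eq:bd2} forces $\int_\Sigma H\,vL_\Sigma v\,\da\ge0$ with equality only if $v\equiv0$. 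Hence both sides vanish, giving $b=0$, then $v\equiv0$ (else the second boundary term is strictly positive), and then $\Phi=A'(h)$, so $\int_\Sigma H|A'(h)|^2\,\da=0$ with $H>0$ yields $A'(h)=0$. Thus $h$ satisfies the Cauchy boundary condition and $\Ric'(h)-(n-1)\Lambda h=0$, so Proposition~\ref{pr:zero} gives $h=\mathscr L_X\bar g$ with $X\in\mathcal X^{k+1,\alpha}(\Omega)$ and $b=0$. This establishes the asserted description of $\Ker\overline L_a$; by Lemma~\ref{le:overlineL} and invertibility of $\Delta X+(n-1)\Lambda X$, we get $\Ker\overline L_a^\Ga=\{(0,0)\}$, hence $\overline L_a^\Ga$ is an isomorphism and $\overline T^\Ga$ is a local diffeomorphism at $(g(a),1)$ by the Inverse Function Theorem.

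The main obstacle is the bookkeeping in the volume sector: one must derive the collar-deformation relation for the pair $(w,c)$ and combine it with the two volume constraints so that all dependence on the auxiliary derivative $c$ cancels in the Green identity, leaving a quantity in $b$ alone whose sign is controlled by $\Lambda<0$ and strict mean convexity. This cancellation is precisely the mechanism that replaces the zero mean value Lemma~\ref{le:trH} available when $\Lambda=0$, and it is why the argument does not obviously extend to $\Lambda>0$.
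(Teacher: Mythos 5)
Your proposal follows essentially the same route as the paper's proof: Proposition~\ref{pr:conv}, the test tensor $w=h'-\mathscr L_V h$, the Green-type identity \eqref{eq:Green-cor} combined with the two volume constraints so that the auxiliary derivative $b'$ cancels, the sign argument from $\Lambda<0$, $H>0$ and the $H$-non-degenerate condition \eqref{eq:bd2} together with Lemma~\ref{le:vanish}, and finally Proposition~\ref{pr:zero}, Lemma~\ref{le:overlineL}, Proposition~\ref{pr:sur} and the Inverse Function Theorem. The bulk computation, the collar/volume relation, and the final sign dichotomy all match the paper.

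One inaccuracy to repair: the claim that the derivations of \eqref{eq:int}--\eqref{eq:bd} are ``unaffected by the $b$-term'' is not correct for the mean-curvature component. In the geodesic gauge the interior equation now gives $\Ric'(h)(\nu,\nu)=(n-1)\Lambda b$ rather than $0$, so the linearized Riccati equation picks up an extra term and in fact $H'(w)=-2\zeta\big(A\cdot A'(h)-\tfrac{1}{n-1}(\tr h^\intercal)|A|^2\big)-\zeta(n-1)\Lambda b$, not $-2\zeta A\cdot\Phi$ as displayed. This slip does not break your argument: in the Green identity the extra term is paired with $\big(1-\tfrac{1}{n-1}\big)\tr h^\intercal$ and contributes $-(n-2)\Lambda b\int_\Sigma H\,\tr h^\intercal\,\da=0$ by Lemma~\ref{le:vanish} (exactly as in the paper), so your stated boundary expression, the cancellation of the $c$-terms, and the conclusion all remain valid; but the displayed boundary condition for $H'(w)$ and the accompanying justification should be corrected to account for this term.
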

\begin{proof}
It suffices to show that for $a \in J$ and for $(h, a)\in \Ker\overline L_a$,  $h$ satisfies the Cauchy boundary condition $h^\intercal = 0, A'|_{g(a)}(h)=0$ on $\Sigma$ and $b=0$. Once it is obtained, by Proposition~\ref{pr:zero}, $h=\mathscr L_X g(a)$ for some $X\in \mathcal X^{k+1,\alpha}(\Omega)$. By Lemma~\ref{le:overlineL}, $\Ker \overline L^\Ga = \{ 0 \}$. Therefore,  Proposition~\ref{pr:sur} yields that $\overline L^\Ga$ is an isomorphism, and thus $\overline T^\Ga$ is a local diffeomorphism at $g(a)$ by the Inverse Function Theorem.

Let $a\in J_1$ and let $t_j\searrow  a$. Without loss of generality, we may assume $a=0$. By Proposition~\ref{pr:conv}, there is a sequence $(h(t_j), b(t_j))\in \Ker \overline L_{t_j}$ so that  $(h(t_j), b(t_j))\to (h, b)$ and the difference quotient $ \frac{(h(t_j)- h, b(t_j)- b)}{t_j }  \to (h', b')$.  We slightly abuse the terminology and use ``differentiating in $t$ and evaluating at $t=0$'' to refer to taking the difference quotient in $t_j$ and letting $t_j\searrow 0$.

 We have
\begin{align*}
	 \Ric'|_{g(t)}(h(t_j)) - (n-1) \Lambda h(t_j) - (n-1)\Lambda b(t_j)   g(t_j)&=0\\
	 \Ric'|_{g(t_j)} (\psi_{t_j}^* h) - (n-1) \Lambda \psi_{t_j}^* h - (n-1) \Lambda b g(t_j)&=0
\end{align*}
where in the first identity we use that $(h(t_j), b(t_j))\in \Ker \overline L_{t_j}$ and in the second identity we pull back the identity $\Ric'(h) - (n-1)\Lambda h - (n-1) \Lambda b\bar g=0$ restricted on $\Omega_{t_j}$ via $\psi_{t_j}$.  Subtracting the previous two identities, differentiating it in $t$ and evaluating at $t=0$, we obtain
\begin{align*}
	\Ric'(w) - (n-1) \Lambda w - (n-1)\Lambda b' \bar g=0
\end{align*}
where $w:=  h' - \mathscr L_V h$.

For the rest of the proof, we assume that $h$ satisfies the geodesic gauge. We claim the boundary data for $w$:
\begin{align*}
&\left\{ \begin{array}{l} w^\intercal - \tfrac{1}{n-1} (\tr w^\intercal) \bar g^\intercal = 2\zeta  \left( A'(h) -\tfrac{1}{n-1} (\tr h^\intercal) A \right)\\
 H'(w) =  -2\zeta \big(A \cdot A'(h) - \tfrac{1}{n-1}\tr h^\intercal  |A|^2 \big) - \zeta (n-1) \Lambda b\end{array} \right.\quad   \mbox{ on } \Sigma.
\end{align*}
We refer to the same computations as those in \eqref{eq:bd} but we simply highlight the additional term $-\zeta(n-1)\Lambda b$ in the second identity coming from $\Ric'(h)(\nu,\nu)$:
\begin{align*}
    H'(w) &= H'(h'-\mathscr L_V h) \\
    &= \zeta \nu(H'(h))\\
    &= -2\zeta ( A\cdot A'(h)- A\cdot (A\circ h^\intercal ) )  -\zeta(n-1)\Lambda b\\
    &= - 2\zeta \left(A\cdot A'(h)-  \tfrac{1}{n-1}\tr h^\intercal |A|^2\right)-\zeta(n-1)\Lambda b
\end{align*}
where we use
\begin{align*}
    \nu(H'(h)) &= (\nu(H) )'(h) = -(|A|^2 + \Ric(\nu, \nu))' (h)\\
    &= -2 ( A\cdot A'(h)- A\cdot (A\circ h^\intercal ) )  - (n-1) \Lambda b.
\end{align*}

Next, the last component of $\overline L_t(h(t), b(t))=0$ says
\[
- n V_{g(t)} b(t)=\int_\Omega \tr_{g(t)} h(t) \dvol_{g(t)}.
\]
We differentiate it in $t$ and evaluate at $t=0$ to give
\begin{align*}
	-nV_{\bar g} b' - n b \int_\Omega \Div V \dvol  &= \int_\Omega\Big( -\mathscr L_V \bar g \cdot h + \tr h' + \tr h \Div V \Big) \dvol\\
	&= \int_\Omega \tr\Big( h'- \mathscr L_V h \Big) \dvol + \int_\Sigma (\tr h) (V\cdot \nu) \da\\
	&= \int_\Omega \tr w \dvol  + \int_\Sigma (\tr h) (V\cdot \nu) \da
\end{align*}	
where we use the integration by part and  the identity $\tr (\mathscr L_V h) =  V(\tr h) + \mathscr  L_V \bar g \cdot h$. Because $\tr h = \tr h^\intercal$ on $\Sigma$ in the geodesic gauge and $V|_\Sigma= -\zeta \nu$ where $\zeta=H_{\bar g}$, we obtain
\begin{align*}
 \int_\Omega \tr w \dvol =- n V_{\bar g} b' + n b \int_\Sigma \zeta \da + \int_\Sigma \zeta \tr h^\intercal \da= - n V_{\bar g} b' + n b \int_\Sigma \zeta \da.
\end{align*}
Here and below, we use $\int_\Sigma  \zeta \tr h^\intercal\da=\int_\Sigma  (\tr h^\intercal)H_{\bar g} \da=0$ by Lemma~\ref{le:vanish}.

We apply the Green-type identity for $h$ and $w$. By \eqref{eq:K}, we compute 
\[
	P(h) = \tfrac{1}{2} (n-1) (n-2) \Lambda  b \bar g \quad \mbox{ and }\quad  P(w) =  \tfrac{1}{2} (n-1) (n-2) \Lambda  b' \bar g.
\]
We substitute them into the bulk integrals of the Green-type identity \eqref{eq:Green-cor} and get 
\begin{align*}
	&\int_\Omega \langle P(h),  w \rangle \dvol - \int_\Omega \langle P(w),  h \rangle \dvol \\
	&=\tfrac{1}{2} (n-1)(n-2) \Lambda b \int \tr w \dvol -\tfrac{1}{2} (n-1)(n-2) \Lambda b' \int_\Omega \tr h\dvol \\
	&= \tfrac{1}{2} (n-1)(n-2) \Lambda b \left(- n V_{\bar g} b' + nb\int_\Sigma \zeta \da\right)  +\tfrac{1}{2} (n-1)(n-2) \Lambda b'  n V_{\bar g} b \\
	&=\tfrac{1}{2} (n-1)(n-2) n \Lambda b^2 \int_\Sigma \zeta \da.
\end{align*}
We compute the boundary terms of the Green-type identity as in \eqref{eq:Gb} and obtain  
\begin{align*}
 & \int_\Sigma\left\langle \big( A'(h) + \big(\tfrac{1}{2} - \tfrac{1}{n-1} \big) \tr h^\intercal A, \big( 1-\tfrac{1}{n-1} \big)\tr h^\intercal\big),  \big(w^\intercal - \tfrac{1}{n-1} \tr w^\intercal \bar g^\intercal, H'(w)\big) \right\rangle  \da\notag\\
	  &=\int_\Sigma \zeta  |A'(h) - \tfrac{1}{n-1} (\tr h^\intercal) A|^2 \da\\
	  &\quad  - \left(\tfrac{1}{2} - \tfrac{1}{n-1} \right) \int_\Sigma \zeta \tr h^\intercal A\cdot \big( A'(h) -\tfrac{1}{n-1} (\tr h^\intercal) A \big) \da- (n-2) \Lambda b\int_\Sigma \zeta \tr h^\intercal  \da\\
	  &= \int_\Sigma \zeta  |A'(h) - \tfrac{1}{n-1} (\tr h^\intercal) A|^2 \da + \tfrac{1}{2} \left(\tfrac{1}{2} - \tfrac{1}{n-1} \right) \tfrac{n-2}{n-1} \int_\Sigma H \tr h^\intercal (L_\Sigma \tr h^\intercal )\da
\end{align*}
where in the last line we use $\int_\Sigma \zeta \tr h^\intercal\da=0$ again and \eqref{eq:hiddenbd2}. Equating \eqref{eq:bulk-sign} with the previous integral yields
\begin{align} \label{eq:bulk-sign}
\begin{split}
&\tfrac{1}{2} (n-1)(n-2) n \Lambda b^2 \int_\Sigma H \dvol\\
&= \int_\Sigma H  |A'(h) - \tfrac{1}{n-1} (\tr h^\intercal) A|^2 \da\\
&\quad +\tfrac{1}{2} \left(\tfrac{1}{2} - \tfrac{1}{n-1} \right) \tfrac{n-2}{n-1} \int_\Sigma H \tr h^\intercal (L_\Sigma \tr h^\intercal )\da.
\end{split}
\end{align}
The $H$-non-degenerate boundary condition implies that the last integral is strictly greater than zero, unless $\tr h^\intercal=0$. On the other hand, because of $\Lambda <0$ the left hand side is strictly less than zero, unless $b=0$.  Therefore,  we conclude $h^\intercal=0$, $A'(h)=0$ and $b=0$, which completes the proof.
 \end{proof}

We prove the following more detailed version of Theorem~\ref{th:general}.

\begin{manualtheorem}{\ref{th:general}$^\prime$}\label{th:general'}
Let $n\ge 4$ and $\Omega$ be $n$-dimensional with smooth boundary $\Sigma$ satisfying $\pi_1(\Omega, \Sigma)=0$. Then the boundary map $\Pi :\widehat {\mathcal M}_{-}^{k,\alpha}(\Omega) /\mathscr D^{k+1,\alpha}(\Omega)$ is regular on an open dense subset $\mathcal W$. 

More specifically, for each $\llbracket\bar g\rrbracket\in \mathcal W$, there exists $0<\epsilon \ll 1$, a neighborhood $\mathcal U\subset \widehat{ \mathcal M}_{-}^{k,\alpha}(\Omega) /\mathscr D^{k+1,\alpha}(\Omega)$ of $\llbracket \bar g\rrbracket $, and a neighborhood $\mathcal V\subset \mathcal S_1^{k,\alpha}(\Sigma)\times \C^{k-1,\alpha}(\Sigma)$ of $([\bar g^\intercal], H_{\bar g})$ such that for each $(\gamma, \phi)\in \mathcal V$,  $\Pi^{-1} (\gamma, \phi)$ is a smooth curve $g_s$ in $\mathcal U$ for  $ |s|<\epsilon $ satisfying
\[
	\Ric_{g_s} = (1+s)\Lambda \left( \frac{V_{\bar g}}{V_{g_s}}\right)^{\frac{2}{n}} g_s,
\]
and  $g_s$ is not  isometric to $g_{s'}$ for all $s\neq s'$, where $\Lambda$ is the constant such that $\Ric_{\bar g} = (n-1)\Lambda \bar g$.
\end{manualtheorem}

\begin{proof}
By Theorem~\ref{th:trivial}, $\overline T^\Ga$ is a local diffeomorphism at $(g, 1)$ for $g$ in an open dense subset  $\mathcal W_0 \subset \widehat {\mathcal M}^{k,\alpha}_-(\Omega)$. We let $\mathcal W =\mathcal W_0 /\mathscr D^{k+1,\alpha}(\Omega)$. Let $\bar g\in \mathcal W_0$. We show that the linearized boundary map $D\Pi$ at $\llbracket \bar g\rrbracket$ is surjective and its kernel is one-dimensional, and thus $\Pi$ is regular at $\llbracket \bar g\rrbracket$. To simplify notation in this proof, we denote  $\mathscr M:=\widehat{ \mathcal M}_{-}^{k,\alpha}(\Omega) /\mathscr D^{k+1,\alpha}(\Omega)$. By local representation of $\mathscr M$ in the harmonic gauge with respect to $\bar g$ (see Remark~\ref{re:harmonic}), its tangent space at $\bar g$ is given by 
\[
	T_{\bar g} \mathscr M = \Big\{ h\in \C^{k,\alpha}(\Omega): \Ric'(h) - (n-1) h - (n-1) \Lambda b \bar g = 0  \mbox{ and } \beta h = 0  \mbox{ in } \Omega\Big \}
\]	  
where the constant $b$ depends on $h$, explicitly $b = \frac{1}{n(n-1)\Lambda} R'(h)$. Since $\overline L^\Ga$ is an isomorphism and in particular surjective onto the Anderson boundary data, it implies that $D\Pi$  is surjective. The kernel of $D\Pi$ is given by $T_{\bar g} \mathscr M  \cap \big\{ h: h^\intercal = \frac{1}{n-1} (\tr h^\intercal) \bar g^\intercal \mbox{ and } H'(h)=0  \mbox{ on } \Sigma\big\}$ and is equal to the space $\{ (h, b):  \overline L^\Ga(h, b) = (0, 0, 0, 0, 0, a) \mbox{ for some } a\in \mathbb R\}$, which is one-dimensional  because $\overline L^\Ga$ is an isomorphism. This completes the proof of the first part.

We discuss the second part of the theorem. Because $\overline T^\Ga$ is a local diffeomorphism,  there exists a neighborhood $\overline {\mathcal U}\subset \mathcal M^{k,\alpha}(\Omega)\times \mathbb R$ of $(\bar g, 1)$ so that $\overline T^\Ga$ is diffeomorphic onto its image, say  $\overline {\mathcal V}$, a neighborhood of $T^\Ga(\bar g, 1) = (0, 0, [g^\intercal], H_{\bar g}, 2V_{\bar g})$. For each $(0, 0, \gamma, \phi, 2s^{\frac{n}{2}}V_{\bar g})\in \overline {\mathcal V}$ where $|s-1|$ small, there is a unique solution $(g_s, \eta_s)\in \overline{\mathcal U}$ such that 
\[
	T^\Ga (g_s, \eta_s) = (0, 0, \gamma, \phi, 2s^{n/2}V_{\bar g}).	
\]
By shrinking $\overline{\mathcal U}$ smaller if necessary, we have $\beta g_s =0$ and thus 
\begin{align*}
	\Ric_{g_s} &= (n-1) \Lambda \eta_s g_s\quad  \mbox{ in } \Omega\\
	2\eta_s^{\frac{n}{2}} V_{g_s}&=2s^{n/2}V_{\bar g},
\end{align*}
From there, we can solve $\eta_s$ and find that 
\[
	\Ric_{g_s}=(n-1) \Lambda s \left(\frac{V_{\bar g}}{V_{g_s}}\right)^{\frac{2}{n}} g_s. 
\]
We note that $g_s$ is not isometric to $g_{s'}$ for any $s\neq s'$ by uniqueness of $(g_s, \eta_s)$ in $\overline{\mathcal U}$. It is straightforward to verify that $\overline{\mathcal U}$ and $\overline{\mathcal V}$ give rise to the desired neighborhoods in the theorem.

\end{proof}

\section{Examples with non-degenerate boundary}\label{se:example}

We give examples of $(\Omega, \bar g)$ that have non-degenerate boundary. Since those examples have constant mean curvature boundary, they also satisfy the $H$-non-degenerate boundary condition.

For each $n\ge 4$,  $\Lambda \in \mathbb R$, and $m>0$, we define the function $f(r)=1-2mr^{3-n}-\Lambda r^2$. 
\begin{itemize}
\item If $\Lambda=0$,  $f(r)$ has a unique positive root $r_0 = (2m)^{\frac{1}{n-3}}$;
\item If $\Lambda< 0$, $f(r)$ has a unique positive root $r_0$; 
\item If $\Lambda>0$, $f(r)$ attains a unique maximum at  $r^*=\left(\frac{\Lambda}{m(n-3)}\right)^{1/(1-n)}$, and we require $f(r^*)>0$; equivalently, 
 \begin{align}\label{eq:Lambda}
 m^2\Lambda^{n-3}<\frac{(n-3)^{n-3}}{(n-1)^{n-1}}.
 \end{align} 
 Under this assumption,  $f(r)$ has two positive roots $r_0, r_1$ satisfying $r_0<r^*<r_1$. For the rest of this section, for the case $\Lambda>0$ we always assume \eqref{eq:Lambda}.
 \end{itemize}
 
Using $f(r)$, we define a Lorentzian metric as follows:
\[
\mathbf g= -f(r)dt^2+\frac{1}{f(r)}dr^2+r^2 g_{S^{n-2}},
\]
where $t\in \mathbb R$ and  $r\in (r_0, r_1)$ (allowing $r_1 = \infty$ for $\Lambda \le 0$). Here and below, $g_{S^j}$ denotes the round metric of the  $j$-dimensional unit sphere. When $\Lambda = 0$, $\Lambda < 0$, and $\Lambda > 0$,  this metric represents respectively a Schwarzschild, anti de-Sitter Schwarzschild, and de-Sitter Schwarzschild spacetime metric that satisfies the (spacetime) Einstein equation $\Ric_{\mathbf g} = (n-1) \Lambda \mathbf g$. 

By changing the negative signature of $\mathbf g$ to a positive signature, we obtain the corresponding \emph{Euclidean} Schwarzschild,  anti-de-Sitter Schwarzschild metric, or de-Sitter Schwarzschild:
\begin{align*}
g= f(r)dt^2+\frac{1}{f(r)}dr^2+r^2 g_{S^{n-2}}.
\end{align*}
The metric appears to be singular at $r_0$, but one can ``desingularize'' the coordinate singularity at $r_0$ by quotienting $t\in \mathbb{R}$ to a circle of appropriate radius $2\ell$, denoted by $S^1_{2\ell}$, where
\[
\ell =\frac{1}{(n-3)r_0^{-1}-(n-1)\Lambda r_0}.
\]
(Note that $\ell = \ell_{\Lambda, n, m}$  depends on $\Lambda, n, m$. For example $\ell =\frac{1}{n-3} (2m)^{\frac{1}{n-3}}$ for $\Lambda =0$.) Then we can extend $g$ to be defined on a manifold that is topologically $S^1 \times  [r_0, r_1)\times S^{n-1}$. We refer the details to Appendix~\ref{se:remove}. In summary,
\begin{itemize}
\item When $\Lambda \le 0$, we identify $S^1 \times  [r_0, \infty)$ with $\mathbb R^2$ and let $M=\mathbb R^2\times S^{n-2}$. Then  $(M, g)$ is  Einstein with $\Ric_g = (n-1)\Lambda g$ and is complete without boundary.
\item When $\Lambda > 0$, we identify $S^1 \times  [r_0, r_1)$ with an open disk $B^2$ and let $M=B^2\times S^{n-2}$. Then  $(M, g)$ is  Einstein satisfying $\Ric_g = (n-1)\Lambda g$ and is complete, except  a conical singularity at $\partial B^2\times S^{n-2}$.
\end{itemize}

Consider the compact subset $\Omega_{s} \subset M$ as the sublevel set of $r$:
\begin{align*}
\Omega_{s}=\left\{r_0\leq r\leq s \right\},
\end{align*}
where $\Omega_{s}$ is topologically $B^2\times S^{n-2}$ and the  boundary $\Si_{s}$ is topologically $S^1\times S^{n-2}$. In particular, the topological condition $\pi_1(\Omega_s, \Sigma_s)=0$ holds.

\begin{proposition}
For each $n\ge 4$,  $\Lambda \in \mathbb R$ (satisfying \eqref{eq:Lambda} if $\Lambda>0$), and $m>0$,  the compact subset $(\Omega_s, g)$ has a non-degenerate boundary if and only if $s\ge r_0$ satisfies
\begin{align}\label{eq:radius}
	s^2> 4 (n-3)\ell^2 f(s). 
\end{align}
\end{proposition}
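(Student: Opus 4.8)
The plan is to reduce the non-degeneracy condition $(\star)$ to an explicit eigenvalue inequality for $L_\Sigma$, exploiting that the boundary is a Riemannian product. On $\Si_s = \{r=s\}$ the induced metric is $\bar g^\intercal = f(s)\,dt^2 + s^2 g_{S^{n-2}}$, where $t$ runs over a circle whose period is dictated by the desingularization at $r_0$ (Appendix~\ref{se:remove}): smoothness forces $t \sim t + 4\pi\ell$ with $\ell = 1/f'(r_0)$, using that $f(r_0)=0$ gives $f'(r_0) = (n-3)r_0^{-1} - (n-1)\Lambda r_0$. Thus $(\Si_s,\bar g^\intercal)$ is the product of a flat circle with the round sphere $(S^{n-2}, s^2 g_{S^{n-2}})$, so $R_{\Sigma} = \frac{(n-2)(n-3)}{s^2}$ is constant, and a direct computation with the outward normal $\nu = \sqrt{f}\,\partial_r$ gives the constant mean curvature $H_{\bar g} = \frac{2(n-2)f(s) + s f'(s)}{2s\sqrt{f(s)}}$. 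I will check that $H_{\bar g} \ne 0$ on $\Si_s$; when $\Lambda \le 0$ this is immediate, and when $\Lambda > 0$ it is where the standing assumption \eqref{eq:Lambda} (equivalently $f(r^*)>0$, equivalently $(n-3) - (n-1)\Lambda r_0^2 > 0$) enters. Granting this, the Remark following the definition of non-degenerate boundary applies verbatim: since $R_\Sigma$ and $H_{\bar g}$ are nonzero constants, $(\star)$ holds for $(\Omega_s, g)$ if and only if the second eigenvalue of $L_\Sigma = -\Delta_\Sigma - \frac{n-3}{s^2}$ is strictly positive.

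The second step is to diagonalize $L_\Sigma$ by separation of variables. A complete orthogonal set of eigenfunctions is $e^{ikt/(2\ell)} \otimes Y_j$ with $k \in \bZ$, $j \ge 0$, and $Y_j$ a degree-$j$ spherical harmonic on $S^{n-2}$; using that $-\Delta$ has eigenvalue $\frac{k^2}{4\ell^2 f(s)}$ on the circle $(S^1, f(s)\,dt^2)$ of period $4\pi\ell$ and eigenvalue $\frac{j(j+n-3)}{s^2}$ on $(S^{n-2}, s^2 g_{S^{n-2}})$, the corresponding eigenvalue of $L_\Sigma$ is
\[
	\mu_{k,j} = \frac{k^2}{4\ell^2 f(s)} + \frac{j(j+n-3) - (n-3)}{s^2}.
\]
The lowest is $\mu_{0,0} = -\frac{n-3}{s^2} = -\frac{1}{n-2}R_\Sigma$, with eigenfunction $1$, as predicted by the Remark. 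For the remaining pairs: $\mu_{0,j} = \frac{j^2 + (n-3)(j-1)}{s^2} > 0$ for all $j \ge 1$; and if $k \ge 1$ and $j \ge 1$ then $\mu_{k,j} \ge \frac{1}{4\ell^2 f(s)} + \frac{1}{s^2} > 0$. Hence the only pairs that can produce a non-positive value besides $(0,0)$ are those with $k \ge 1$, $j = 0$, and these are minimized at $k = 1$. Therefore the second eigenvalue is
\[
	\lambda_2(L_\Sigma) = \min\left\{ \frac{1}{4\ell^2 f(s)} - \frac{n-3}{s^2},\ \frac{1}{s^2} \right\}.
\]

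Finally, since $\frac{1}{s^2} > 0$ automatically, $\lambda_2(L_\Sigma) > 0$ is equivalent to $\frac{1}{4\ell^2 f(s)} - \frac{n-3}{s^2} > 0$, i.e. to $s^2 > 4(n-3)\ell^2 f(s)$, which is exactly \eqref{eq:radius}; this proves the ``if and only if.'' The strictness of \eqref{eq:radius} is also necessary: if $s^2 = 4(n-3)\ell^2 f(s)$ then $\cos(t/2\ell), \sin(t/2\ell) \in \ker L_\Sigma$, and since these have zero $\bar g^\intercal$-average they satisfy $\int_{\Si_s} v H_{\bar g}\,\da = 0$ while $\int_{\Si_s} v L_\Sigma v\,\da = 0$, so $(\star)$ fails. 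The main obstacle I anticipate is essentially bookkeeping: correctly pinning down the circle period $4\pi\ell$ from the desingularization (the final inequality depends on this through the factor $4\ell^2$), and confirming that $H_{\bar g}$ never vanishes on $\Si_s$, which in the $\Lambda > 0$ case is precisely the role played by \eqref{eq:Lambda}.
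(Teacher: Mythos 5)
Your route is the same as the paper's: use that $R_{\Sigma_s}$ and $H_{\Sigma_s}$ are constant to reduce \eqref{eq:bd1} to positivity of the second eigenvalue of $L_{\Sigma_s}$, then separate variables on the product of the circle of circumference $4\pi\ell$ (your $\ell=1/f'(r_0)$ agrees with the paper's $\ell$, since $f(r_0)=0$ gives $f'(r_0)=(n-3)r_0^{-1}-(n-1)\Lambda r_0$) with $(S^{n-2}, s^2 g_{S^{n-2}})$. Your eigenvalue list $\mu_{k,j}=\frac{k^2}{4\ell^2 f(s)}+\frac{j(j+n-3)-(n-3)}{s^2}$, the identification of the threshold mode $(k,j)=(1,0)$, and the equivalence $\mu_{1,0}>0\Leftrightarrow$ \eqref{eq:radius} (including the failure at equality) reproduce exactly the paper's computation.

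The one step that does not hold as you state it is the claim that \eqref{eq:Lambda} guarantees $H_{\bar g}\neq 0$ on $\Sigma_s$ when $\Lambda>0$. With $\nu=\sqrt{f}\,\partial_r$ one has
\[
H_{\Sigma_s}=\frac{f'(s)}{2\sqrt{f(s)}}+\frac{(n-2)\sqrt{f(s)}}{s},
\]
which tends to $+\infty$ as $s\to r_0^{+}$ (this is where \eqref{eq:Lambda}, i.e. $f'(r_0)>0$, enters) but to $-\infty$ as $s\to r_1^{-}$ because $f'(r_1)<0$; hence $H_{\Sigma_s}$ changes sign and vanishes at some $s_0\in(r^*,r_1)$, and \eqref{eq:Lambda} does not rule this out. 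At that radius the reduction ``\eqref{eq:bd1} $\Leftrightarrow\ \lambda_2(L_{\Sigma_s})>0$'' genuinely breaks down: when $H\equiv 0$ the constraint $\int_\Sigma vH\da=0$ is vacuous, so $v\equiv 1$ is an admissible test function and $\int_\Sigma v\,L_{\Sigma_s}v\da=-\tfrac{n-3}{s^2}\,|\Sigma_s|<0$, so non-degeneracy fails regardless of whether \eqref{eq:radius} holds (and one can check \eqref{eq:radius} can indeed hold at $s_0$). So the verification you promise cannot succeed for that one slice; to be fair, the paper's own proof invokes only the constancy of $H_{\Sigma_s}$ and is silent on this point, so the $\Lambda>0$, $H=0$ slice is an edge case for the statement itself rather than a defect peculiar to your argument. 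For $\Lambda\le 0$, where $f'>0$ makes $H_{\Sigma_s}>0$ for all $s>r_0$, and for every $s\neq s_0$ when $\Lambda>0$, your proof is complete and coincides with the paper's.
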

\begin{proof}

The metric induced on $\Sigma_s$ is given by $g^\intercal =4\ell^2 f(s)g_{ S^{1}}+s^2 g_{ S^{n-2}}$. 
By direct computation, the induced scalar curvature $R_{\Sigma_s}$ and the mean curvature $H_{\Sigma_s}$ are both constant, and  
\[	
	R_{\Sigma_s} = \frac{(n-3)(n-2)}{s^2}.
\]
Consider the operator $L_{\Sigma_s} v: = -\Delta_{\Sigma_s} v  - \tfrac{1}{n-2} R_{\Sigma_s} v$. It is obvious that the first eigenfunction of $L_{\Sigma_s}$ is constant with the first eigenvalue $\lambda_1 = -\frac{1}{n-2} R_{\Sigma_s} = -\frac{n-3}{s^2}  < 0$. 

Because $H_{\Sigma_s}$ is constant, the non-degenerate boundary condition is equivalent to showing that the second eigenvalue of $L_{\Sigma_s}$ is strictly positive. Equivalently, we show that  any eigenfunction of $L_{\Sigma_s}$ with a non-positive eigenvalue must be constant.

Let $v$ solve $L_{\Sigma_s} v = \lambda v$ and $\lambda \le 0$. Denote the coordinates of $S^{n-2}$ by $\theta = (\theta_1, \dots, \theta_{n-2})$. For $k = 0, 1, 2, \dots$ and integers $j$ with $|j|\le k$,  let $\{ Y_j^k(\theta) \}$ be a basis of spherical harmonics on $S^{n-2}$ of degree $k$ such that $\Delta_{S^{n-2}} Y_j^k = -k (k + n -3 ) Y^k_j$. Then we can express $v$ as $v (t, \theta) = \sum_{k, j} a_{jk}(t) Y^k_j(\theta)$. Note that 
\[
 \Delta_{\Sigma_s} v =\frac{1}{4\ell^2 f(s)}\D_{S^1}v+\frac{1}{s^2}\D_{S^{n-2} }v. 
 \]
 We calculate 
 \begin{align*}
 	0 &= \Delta_{\Sigma_s} v + \frac{1}{n-2} R_{\Sigma_s} v + \lambda v\\
	& =\sum_{k, j} \left (\frac{1}{4\ell^2f(s)} \Delta_{S^1} a_{jk}  + \left( \frac{- k(k+n-3) + (n-3) }{s^2} + \lambda \right) a_{jk} \right) Y^k_j (\theta).
 \end{align*}	
 It implies, for each $j, k$, 
\begin{align} \label{eq:circle}
	\Delta_{S^1} a_{jk}  = 4\ell^2 f(s)\left( \frac{ k(k+n-3) - (n-3)}{s^2} - \lambda \right) a_{jk}.
\end{align}
Note that the eigenvalues of $\Delta_{S^1}$ are $-i^2$ for $i=0, 1, 2, \dots$. So  in particular
\[
\frac{ k(k+n-3) - (n-3)}{s^2} - \lambda\le 0.
\]
Since the left hand side is positive for $k>0$, we must have  $a_{jk}\equiv 0$ for all $k >0$. When $k=0$, the condition  \eqref{eq:radius} is chosen to ensure that the eigenvalue in \eqref{eq:circle} satisfies
 \[
 	 4\ell^2 f(s) \left( \frac{ - (n-3)}{s^2} - \lambda \right) \ge  -4\ell^2 f(s)  \frac{ (n-3)}{s^2}> -1.
 \]
Thus, the eigenvalue in \eqref{eq:circle} must be zero, and hence $\lambda = -\frac{n-3}{s^2}$ and $v$ is constant. 

It is also direct to see that if $s$ doesn't satisfy \eqref{eq:radius}, then there exists $\lambda\le 0$ such that  $4\ell^2 f(s) \left( \frac{ - (n-3)}{s^2} - \lambda \right)=-1$, and thus the second eigenvalue of  $L_{\Sigma_s}$ is equal to or less than zero. It implies that $\Omega_s$ doesn't satisfy the non-degenerate boundary condition.

\end{proof}

We discuss some examples of the range of $s$ for which \eqref{eq:radius} holds in the following lemma. It turns out to have a complicated dependence on  $\Lambda$,  $m$, and  $n$. 
\begin{lemma}
\begin{enumerate}
\item The case $\Lambda=0$:  \eqref{eq:radius} holds for all $s$ close to $r_0$ and for all $s$ sufficiently large. 
\item The case $\Lambda<0$:  \eqref{eq:radius} holds for all $s$ close to $r_0$. In this case \eqref{eq:radius}  holds for all $s$ sufficiently large if and only if  $\Lambda> -\frac{1}{4(n-3)\ell^2}$.
\item  The case $\Lambda>0$ satisfying \eqref{eq:Lambda}: \eqref{eq:radius} holds for all $s$ close to $r_0$ and to $r_1$. 
\item When $n=4$ and $\Lambda$ is sufficiently close to zero,  \eqref{eq:radius} holds for all $s\ge r_0$.
\end{enumerate}
\end{lemma}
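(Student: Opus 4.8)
The plan is to study the single function
\[
	\Psi(s) := s^2 - 4(n-3)\ell^2 f(s), \qquad s\in[r_0,r_1),
\]
so that \eqref{eq:radius} says precisely $\Psi(s)>0$. The elementary observation driving (1)--(3) is that whenever $f(\hat s)=0$ one has $\Psi(\hat s)=\hat s^{\,2}>0$; since $f$ is continuous with $f(r_0)=0$ (and also $f(r_1)=0$ when $\Lambda>0$), this gives $\Psi>0$ on a one-sided neighborhood of $r_0$, and on a neighborhood of $r_1$ in the case $\Lambda>0$. This settles the ``$s$ close to $r_0$'' assertion in all of (1)--(3) and the ``$s$ close to $r_1$'' assertion in (3).

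For the large-$s$ behavior in (1) and (2) I would substitute $f(s)=1-2ms^{3-n}-\Lambda s^2$ to get
\[
	\Psi(s)=\bigl(1+4(n-3)\ell^2\Lambda\bigr)s^2-4(n-3)\ell^2+8(n-3)\ell^2 m\, s^{3-n},
\]
and use $n\ge 4$ so the last term tends to $0$. When $\Lambda=0$ the leading coefficient is $1$, so $\Psi(s)\to\infty$, proving (1). When $\Lambda<0$ the leading coefficient is $1+4(n-3)\ell^2\Lambda$: it is positive exactly when $\Lambda>-\tfrac{1}{4(n-3)\ell^2}$, in which case $\Psi(s)\to+\infty$; if instead $1+4(n-3)\ell^2\Lambda\le 0$ then $\Psi(s)\to-\infty$ (or $\to-4(n-3)\ell^2<0$ in the borderline case), so \eqref{eq:radius} fails for all large $s$. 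This is the iff in (2).

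The substantive case is (4), where $n=4$ (so $n-3=1$). At $\Lambda=0$ one has $r_0=2m$, $\ell=2m$, $f(s)=1-2m/s$, and with $x=s/(2m)\ge 1$ the inequality \eqref{eq:radius} becomes $x^3-4x+4>0$; I would check that $p(x)=x^3-4x+4$ has $p(1)=1>0$, $p'(x)=3x^2-4$ vanishing only at $x=2/\sqrt3\in[1,\infty)$, and $p(2/\sqrt3)=4-\tfrac{16}{3\sqrt3}>0$, so $p>0$ on $[1,\infty)$ and hence $\Psi(s)>0$ for all $s\ge r_0$ when $\Lambda=0$. To pass to small $\Lambda\ne 0$ I would combine continuity with a uniform tail bound. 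First note $r_0(\Lambda)$ is a simple root of $f$ (since $f'(r_0)\ne 0$ for $\Lambda$ near $0$), hence depends continuously on $\Lambda$, and $\ell(\Lambda)=1/f'(r_0(\Lambda))$ likewise; both converge to $2m$ as $\Lambda\to0$, and \eqref{eq:Lambda} holds automatically for small $\Lambda>0$ so $r_1$ exists. For $\Lambda>0$ one has $f(s)<1$, hence $\Psi(s)>s^2-4\ell^2>0$ once $s\ge 2\ell$, so only the bounded range $[r_0(\Lambda),2\ell(\Lambda)]$ needs attention, and there $\Psi$ converges uniformly in $\Lambda$ to $\Psi|_{\Lambda=0}$, which is bounded below by a positive constant on $[2m,\infty)$. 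For $\Lambda<0$ one instead uses that the leading coefficient $1+4\ell(\Lambda)^2\Lambda$ is bounded below by $\tfrac12$ for $\Lambda$ small, giving $\Psi(s)\ge\tfrac12 s^2-4\ell^2>0$ for $s$ beyond a uniform threshold, and again invokes uniform convergence on the complementary bounded range.

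I expect the only genuine obstacle to be the bookkeeping in case (4): one must make precise that $r_0(\Lambda)$ and $\ell(\Lambda)$ vary continuously (via the implicit function theorem, using $f'(r_0)\ne0$), and that the ``bulk'' interval where $\Psi|_{\Lambda=0}$ could be small is contained in a compact set uniform in $\Lambda$, so that uniform convergence of $\Psi$ in $\Lambda$ applies there while the large-$s$ tail is controlled separately by the bound $f<1$ (for $\Lambda>0$) or by the sign of the leading coefficient (for $\Lambda<0$). Cases (1)--(3) reduce entirely to the endpoint evaluation $\Psi(\hat s)=\hat s^{\,2}>0$ at zeros of $f$ and the asymptotic expansion above.
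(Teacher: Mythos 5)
Your proposal is correct and follows essentially the same route as the paper: both analyze the single polynomial-type function $p(s)=s^2-4(n-3)\ell^2 f(s)$, get positivity near $r_0$ (and $r_1$) from $f$ vanishing there, read off the large-$s$ behavior from the sign of the leading coefficient $1+4(n-3)\ell^2\Lambda$, and settle item (4) by an explicit positivity check at $\Lambda=0$, $n=4$ (your cubic $x^3-4x+4$ in $x=s/2m$ is the paper's evaluation $p(s_c)=3\cdot16^{2/3}m^2-16m^2>0$ at the unique critical point, in different variables). Your continuity-plus-uniform-tail argument for $\Lambda$ near $0$ is in fact spelled out more carefully than in the paper, which leaves that step implicit; just make sure the compact set on which you invoke uniform convergence covers $r_0(\Lambda)<2m$ when $\Lambda<0$ (harmless, since $\Psi|_{\Lambda=0}>0$ on all of $(0,\infty)$).
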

\begin{proof}
 Define 
\[
 p(s) : =s^2- 4 (n-3)\ell^2 f(s) =  (1+ 4(n-3) \ell^2 \Lambda) s^2 + 4(n-3) \ell^2 (2m s^{3-n} -1). 
\] 
Then the condition  \eqref{eq:radius} becomes $p(s)>0$. Because $r_0$ is a root of $f(s)$, it is obvious that $p(s) >0$ for $s$ near $r_0$ for any case of $\Lambda$. For the same reason,  $p(s)>0$ for $s$ near $r_1$ when $\Lambda>0$.  In the case $\Lambda\le 0$, then $p(s) >0$ for $s$ sufficiently large is equivalent to that the coefficient of $s^2$ is positive, that is, 
\[
1+ 4(n-3) \ell^2 \Lambda>0 \quad \mbox{ equivalently} \quad  \Lambda> -\frac{1}{4(n-3)\ell^2}.
\]

One can compute that  
\[
	p'(s) = 2(1+4(n-3) \ell^2 \Lambda) s - 8 (n-3)^2 \ell^2 m s^{2-n}.
\]
There is a unique positive critical point 
\[
	s_c = \left(\frac{4(n-3) \ell^2 m}{1+ 4(n-3) \ell^2 \Lambda}\right)^{\frac{1}{n-1}}.
\]
For either case that $s_c\le r_0$ or $p(s_c)>0$, we can conclude that $p(s)>0$ for all $s\ge r_0$.  We only check the later case for $\Lambda =0$ and $n=4$. In this case, $\ell = 2m, s_c = 16^{\frac{1}{3}} m$, and 
 \[
 	p(s) = s^2 + 4\ell^2 (2m s^{-1} - 1) = 3\cdot 16^{\frac{2}{3}} m^2 - 16 m^2 > 0.
 \]
\end{proof}

\appendix

\section{Linearized formulas}\label{se:va}     

Let $(\Omega, g)$ be a compact Riemannian manifold with smooth boundary $\Sigma$. Let $g(s)$ be a smooth family of Riemannian metrics with $g(0)= g$ and $g'(0)=h$. In what follows, we  use the prime notation $'$ to denote the linearized operator. For example,  we denote $A'|_g(h)  =\left. \ds \right|_{s=0} A_{g(s)}$ for the linearized second fundamental form at $g^\intercal$ on $\Sigma$ and $\Ric'|_g(h) =\left. \ds \right|_{s=0} \Ric_{g(s)}$ for the linearized Ricci curvature at $g$ on $\Omega$. 

Consider a local frame $\{ e_0, e_1, \dots, e_{n-1}\}$ in a collar neighborhood of $\Sigma\subset \Omega$ such that $e_0$ is the parallel extension of $\nu_g$ along itself and $e_1, \dots, e_{n-1}$ are tangent to $\Sigma$. 
 The linearized unit normal, second fundamental form, and mean curvature of the boundary $\Sigma$ are given by the following formulas:   for tangential direction $a, b, c \in \{ 1, \dots, n-1\}$ on $\Sigma$,         
\begin{align}
	\nu '|_g(h)&=-\tfrac{1}{2} h (\nu_g, \nu_g) \nu_g - g^{ab} \omega(e_a) e_b \notag\\
	A'|_g(h) &= \tfrac{1}{2} (\mathscr L_{\nu_g} h)^\intercal - \tfrac{1}{2} \mathscr L_{\omega} (g^\intercal) - \tfrac{1}{2} h(\nu_g, \nu_g) A_g \label{eq:A}\\
	H'|_g(h)&= \tfrac{1}{2} \nu_g(\tr_g h^\intercal) - \Div_\Sigma \omega - \tfrac{1}{2} h(\nu_g, \nu_g)H_g \label{eq:H}
 \end{align}         
 where $\omega(e_a) = h(\nu_g, e_a)$.
 See the derivation at, e.g. \cite[Lemma 2.1]{An-Huang:2022}.

To express $\Ric|_g'(h)$, we define the Killing operator $\mathcal D_g$ and the Bianchi operator $\beta_{g} $: for any vector field $X$ and symmetric $2$-tensor $h$, 
\begin{align} \label{eq:KB}
	\mathcal D_g X = \tfrac{1}{2} \mathscr L_X g \quad\mbox{ and } \quad \beta_{g} h = - \Div_{g} h + \tfrac{1}{2} d \tr_{g} h.
\end{align}
By direct computation, 
\begin{align} 
{ \beta_{g}  \mathcal D_g X} &{= -\tfrac{1}{2} (\Delta_g X +  \Ric_{g} (X, \cdot) )}\label{eq:Laplace0}
\end{align}
where $\Delta_g$ denotes the trace of the Hessian operator of $g$. The formal $\mathcal L^2$ adjoint operators become
\begin{align}
\label{eq:KB-adjoint}
\begin{split}
	\mathcal D_g^* h &=-\Div_g h \quad \mbox{and} \quad \beta^*_g X = \tfrac{1}{2} \left(\mathscr L_X g - (\Div_g X )g\right)\\
\mathcal D_g^* \beta_{g}^* X  &=   {-\tfrac{1}{2} (\Delta_g X +  \Ric_{g} (X, \cdot) )}.
\end{split}
\end{align}
When $g = \bar g$ is Einstein such that $\Ric_{\bar g} = (n-1)\Lambda \bar g$, we have
{\begin{align*}
\begin{split}
	\beta  \mathcal D X &= -\tfrac{1}{2} (\Delta X + (n-1)\Lambda X)\\
	 \mathcal D^* \beta^* X&= -\tfrac{1}{2} (\Delta  X + (n-1)\Lambda X). 
\end{split}
\end{align*}
Above and in the following, we will omit the subscript $\bar g$ when evaluating at the background Einstein metric $\bar g$. 
}

By direct computations, 
\begin{align}
\begin{split}\label{equation:Ricci}
	(\Ric'|_g(h))_{ij} &=-\tfrac{1}{2} g^{k\ell} h_{ij;k\ell} + \tfrac{1}{2} g^{k\ell} (h_{i k; \ell j} + h_{jk; \ell i} ) \\
	&\quad - \tfrac{1}{2} (\tr_g h)_{;ij}  + \tfrac{1}{2} (R_{i\ell} h^\ell_j + R_{j\ell} h^\ell_i )\\
	&\quad - R_{ik\ell j} h^{k\ell} \qquad \qquad  \mbox{ for all } i, j =0,1,\dots, n-1.
\end{split} 
\end{align}
By letting $g = \bar g$ be Einstein such that $\Ric_{\bar g} = (n-1)\Lambda \bar g$, we can simplify the above expression  
\begin{align*}
	(\Ric'(h))_{ij} 
	&= -\tfrac{1}{2} (\Delta h)_{ij}  -(\mathcal D \beta  h )_{ij}+ (n-1)\Lambda h_{ij}-R_{ik\ell j} h^{k\ell}.
\end{align*}
Therefore, the linearized Ricci and its formal $\mathcal L^2$ adjoint operator are given by
\begin{align}
	(\Ric'(h))_{ij}  -  (n-1) \Lambda h_{ij}&=-\tfrac{1}{2} (\Delta h)_{ij}  -(\mathcal D \beta h)_{ij} - R_{ik\ell j} h^{k\ell} \notag \\
	((\Ric')^*(\gamma))_{ij}  - (n-1) \Lambda \gamma_{ij} &= -\tfrac{1}{2} (\Delta \gamma)_{ij}  + (\beta^* \Div \gamma)_{ij} - R_{ik\ell j} h^{k\ell}. \label{eq:liEin}
\end{align} 
Note that 
\begin{align}\label{eq:divergence}
	\Div\big( (\Ric')^*(\gamma)  - (n-1) \Lambda \gamma \big)=0 \quad \mbox{ in } \Omega
\end{align}
because for any $X\in \C^\infty_c(\Omega)$
\begin{align*}
	&\int_\Omega X \cdot \Div\big(  (\Ric')^*(\gamma)  - (n-1) \Lambda \gamma \big)  \dvol\\
	 &=-\tfrac{1}{2} \int_\Omega  \mathscr L_X g\cdot \big(  (\Ric')^*(\gamma)  - (n-1) \Lambda \gamma \big)  \dvol\\
	&= -\tfrac{1}{2} \int_\Omega \big( \Ric'(\mathscr L_X g) - (n-1) \Lambda\mathscr L_X g \big)  \cdot\gamma \dvol = 0.
\end{align*}

\section{Banach manifolds}
We verify the Banach manifold structure of the function spaces used in this paper.   Let $\Sigma$ be an $(n-1)$-dimensional closed manifold.               
\begin{lemma}\label{le:Banach}
Let $\mathcal S_1^{k,\alpha}(\Sigma)$ denote the space of $\C^{k,\alpha}$ Riemannian metrics on $\Sigma$ whose determinant is $1$ (with respect to a fixed background metric $\bar \gamma$). Then $\mathcal S_1^{k,\alpha}(\Sigma)$ is a Banach manifold, and its tangent space at an arbitrary $\gamma$ is $T_\gamma \mathcal S_1^{k,\alpha} = \{ \tau : \tr_{\gamma} \tau = 0\}$. 
\end{lemma}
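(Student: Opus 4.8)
The plan is to realize $\mathcal S_1^{k,\alpha}(\Sigma)$ as the zero set of a submersion and invoke the Banach manifold implicit function theorem. First I would fix a smooth background metric $\bar\gamma$ on the closed manifold $\Sigma$ (the determinant being taken relative to $\bar\gamma$) and consider the open subset $\mathcal M^{k,\alpha}(\Sigma)\subset \C^{k,\alpha}(\Sigma; S^2 T^*\Sigma)$ of $\C^{k,\alpha}$ Riemannian metrics. Define the map
\[
F:\mathcal M^{k,\alpha}(\Sigma)\to \C^{k,\alpha}(\Sigma),\qquad F(\gamma)=\log\!\left(\frac{\da_\gamma}{\da_{\bar\gamma}}\right),
\]
so that $\mathcal S_1^{k,\alpha}(\Sigma)=F^{-1}(0)$ (equivalently $|\gamma|=1$). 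Since $F$ is a smooth map between (open subsets of) Banach spaces, it suffices to show $F$ is a submersion at each $\gamma\in F^{-1}(0)$, i.e. that $DF|_\gamma$ is surjective with split (here, complemented) kernel. The kernel is then the claimed tangent space.

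The second step is the computation of $DF|_\gamma$. Using the standard first variation of the volume form, $\left.\tfrac{d}{ds}\right|_{s=0}\da_{\gamma+s\tau}=\tfrac12(\tr_\gamma\tau)\da_\gamma$, one gets $DF|_\gamma(\tau)=\tfrac12\tr_\gamma\tau$. Surjectivity onto $\C^{k,\alpha}(\Sigma)$ is immediate: given $u\in\C^{k,\alpha}(\Sigma)$, take $\tau=\tfrac{2u}{n-1}\gamma$, so that $\tr_\gamma\tau=2u$. Hence $DF|_\gamma$ is a bounded linear surjection with kernel $\{\tau\in\C^{k,\alpha}(\Sigma;S^2T^*\Sigma):\tr_\gamma\tau=0\}$.

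The third step is to check that this kernel is complemented in $\C^{k,\alpha}(\Sigma;S^2T^*\Sigma)$, which is what makes the implicit function theorem applicable. This is routine: the operator $\tau\mapsto \tau-\tfrac{1}{n-1}(\tr_\gamma\tau)\gamma$ is a bounded projection onto $\ker DF|_\gamma$, with complementary closed subspace $\{v\gamma:v\in\C^{k,\alpha}(\Sigma)\}$ (pointwise scalar multiples of $\gamma$), and $\C^{k,\alpha}(\Sigma;S^2T^*\Sigma)=\ker DF|_\gamma\oplus\{v\gamma\}$ as a topological direct sum. Therefore $F$ is a submersion at every point of $F^{-1}(0)$, so by the Banach-space implicit function theorem (or the regular value theorem for Banach manifolds) $\mathcal S_1^{k,\alpha}(\Sigma)=F^{-1}(0)$ is a smooth Banach submanifold of $\mathcal M^{k,\alpha}(\Sigma)$, and its tangent space at $\gamma$ is $\ker DF|_\gamma=\{\tau:\tr_\gamma\tau=0\}$. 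In particular the identification of $\mathcal S_1^{k,\alpha}(\Sigma)$ with actual metrics of unit determinant (rather than conformal classes) is legitimate, since each conformal class $[\gamma]$ contains a unique representative with $|\gamma|=1$, namely $|\gamma_0|^{-1/(n-1)}\gamma_0$.

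The only mild subtlety — and the closest thing to an obstacle — is bookkeeping about which Banach space of tensors one works in and checking that all the maps ($\gamma\mapsto\da_\gamma$, $\log$, multiplication by $\gamma$, the trace with respect to $\gamma$) are genuinely smooth between the relevant $\C^{k,\alpha}$ spaces; this is standard once one notes that $\C^{k,\alpha}(\Sigma)$ is a Banach algebra and that $\gamma\mapsto\da_\gamma$ is a smooth (indeed real-analytic) nonlinear map because $\det$ is a polynomial. No serious difficulty arises, so the proof is essentially the three steps above.
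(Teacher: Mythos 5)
Your proposal is correct and follows essentially the same route as the paper: the paper also realizes $\mathcal S_1^{k,\alpha}(\Sigma)$ as a level set of the determinant/volume-ratio map (without the logarithm), computes the linearization as $\tfrac12(\tr_\gamma h)\,\tfrac{\da_\gamma}{\da_{\bar\gamma}}$, notes surjectivity and the split kernel via the traceless--conformal decomposition, and invokes the Submersion Theorem. The extra details you supply (the explicit bounded projection and the smoothness bookkeeping) are fine and only elaborate on what the paper leaves implicit.
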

\begin{proof}
Let $f: \mathcal M^{k,\alpha}(\Sigma)\to \C^{k,\alpha}(\Sigma)$ be $f(\gamma) = |\gamma|^\frac{1}{2} = \frac{\da_{\gamma}}{\da_{\bar \gamma}}$. The linearization at $\gamma$ is $f'(h) = \tfrac{1}{2} (\tr_\gamma h) \frac{\da_{\gamma}}{\da_{\bar \gamma}}$. It is clear that $f'(h)$ is surjective and  the kernel splits (as  $h$ can be decomposed into the traceless part and the conformal part). By the Submersion Theorem, the level set $f^{-1}(1)$ is a submanifold, and the tangent space is as described.

\end{proof}

     \section{Local Riemannian isometries}

We let $n\ge 2$ (only in this section) and let  $\Omega$ be an $n$-dimensional, compact, simply-connected manifold possibly with connected boundary and $(S_\Lambda, g_\Lambda)$ be the $n$-dimensional spaceform of sectional curvature $\Lambda$ and $g_\Lambda$ is smooth. We discuss how isometric immersions of constant sectional spaces $(\Omega, g)$ into spaceforms depend on~$g$. 

\begin{proposition}\label{pr:immersion}
Suppose $F_1: (\Omega, g_1)\to (S_\Lambda, g_{\Lambda})$ is $\C^{k+1,\alpha}$ local Riemannian isometry. Fix $p\in \Omega$. Then there exists $0<\epsilon_0\ll 1$ and $C>0$ such that if $\epsilon<\epsilon_0$ and $g_2\in \C^{k,\alpha}(\Omega)$ has constant sectional curvature $\Lambda$,  $\| g_1 - g_2\|_{\C^{k, \alpha}(\Omega)}<\epsilon$ and $dF_1|_p:(T_p\Omega, g_2)\to (T_{F_1(p)}S_\Lambda, g_{\Lambda})$ is a linear isometry, then there exists a unique $\C^{k+1,\alpha}$ local Riemannian isometry $F_2: (\Omega, g_2)\to (S_\Lambda, g_\Lambda)$ whose $1$-jet coincides with that of $F_1$, i.e. $F_1(p)= F_2(p)$, $dF_1|_p = dF_2|_p$, such that
\begin{align}\label{eq:estimate}
	\| F_1 - F_2 \|_{\C^{k+1, \alpha}(\Omega)}<C\epsilon.
\end{align}
\begin{remark}

\end{remark}
\margin{remark?}
\end{proposition}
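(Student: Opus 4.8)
\textbf{Proof proposal for Proposition~\ref{pr:immersion}.}

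The plan is to reduce the statement to a standard application of the Inverse (or Implicit) Function Theorem in the appropriate Banach setting, using the classical fact that a simply-connected Riemannian manifold of constant sectional curvature $\Lambda$ admits an essentially unique local isometric immersion (a ``developing map'') into the model spaceform $S_\Lambda$, determined by its $1$-jet at a point. Concretely, I would first set up the developing map as the solution of a frame ODE/PDE system: fix $p$, a $g_1$-orthonormal frame at $p$ whose image under $dF_1|_p$ is the prescribed frame in $T_{F_1(p)}S_\Lambda$, and recall that $F_1$ is recovered by integrating the Cartan structure equations along paths from $p$ (this is where simple connectivity enters, guaranteeing path-independence). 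The key point is that this construction depends on the metric $g$ only through its Christoffel symbols and curvature, hence is a smooth operation on $g \in \C^{k,\alpha}$ with values in $\C^{k+1,\alpha}$ maps.

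The cleanest way to package this is to define a map $\mathcal G : \mathcal M^{k,\alpha}(\Omega) \times \mathrm{Imm}^{k+1,\alpha}_p(\Omega, S_\Lambda) \to \C^{k-1,\alpha}(\Omega; \mathrm{Sym}^2)$ (or into the appropriate bundle of $(0,2)$-tensors) by $\mathcal G(g, F) = F^* g_\Lambda - g$, the ``isometry defect.'' Then $\mathcal G(g_1, F_1) = 0$ by hypothesis. I would linearize $\mathcal G$ in the second argument at $(g_1, F_1)$: the derivative is $F \mapsto$ (linearization of the pullback metric), and on the space of immersions fixing the $1$-jet at $p$ this linearized operator is an isomorphism onto its target precisely because of the rigidity of developing maps — an infinitesimal deformation of $F_1$ through local isometries fixing the $1$-jet at $p$ must vanish (this is the infinitesimal version of the uniqueness clause, again using $\pi_1(\Omega)=0$). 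Granting this, the Implicit Function Theorem produces, for each $g_2$ near $g_1$ with $dF_1|_p$ a $g_2$-isometry at $p$, a unique nearby $F_2 \in \mathrm{Imm}^{k+1,\alpha}_p(\Omega, S_\Lambda)$ with $\mathcal G(g_2, F_2) = 0$, i.e. a local Riemannian isometry $(\Omega, g_2) \to (S_\Lambda, g_\Lambda)$, together with the Lipschitz estimate $\|F_1 - F_2\|_{\C^{k+1,\alpha}} \le C\|g_1 - g_2\|_{\C^{k,\alpha}} < C\epsilon$, which is exactly \eqref{eq:estimate}.

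I expect the main obstacle to be the surjectivity/isomorphism of the linearized operator $D_2\mathcal G|_{(g_1,F_1)}$ and the verification that its kernel is trivial on the $1$-jet–normalized space: one has to show that the equation ``$\mathscr L_{(\cdot)}$-type linearization of $F^*g_\Lambda$ vanishes'' forces the variation field to be a restriction of a Killing field of $S_\Lambda$, and then that the $1$-jet normalization at $p$ kills all such Killing fields — this is where the constant-curvature hypothesis and simple connectivity are really used, via the fact that the isometry group of $S_\Lambda$ acts freely on $1$-jets (orthonormal frames). A secondary technical point is choosing function spaces so that $\mathcal G$ is genuinely $\C^1$ (indeed smooth): since $F \mapsto F^* g_\Lambda$ involves composition with the fixed smooth metric $g_\Lambda$ and one derivative of $F$, the natural target is $\C^{k-1,\alpha}$ wait — more care is needed, as $F^*g_\Lambda$ only involves $dF$, so it lands in $\C^{k,\alpha}$; I would instead work with $\mathcal G(g,F)=F^*g_\Lambda - g \in \C^{k,\alpha}$, which is the consistent choice. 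Once these domain/codomain bookkeeping issues and the isomorphism claim are settled, the estimate \eqref{eq:estimate} is the automatic quantitative output of the Implicit Function Theorem and requires no separate argument.
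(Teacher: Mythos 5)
Your proposal has a genuine gap at its core step. You package the problem as solving $\mathcal G(g,F)=F^*g_\Lambda-g=0$ by the Implicit Function Theorem, which requires the linearization in $F$ at $(g_1,F_1)$ to be an isomorphism onto the target space of $\C^{k,\alpha}$ symmetric $2$-tensors. That linearization is $V\mapsto \mathscr L_{V}$-type, i.e. the symmetrized covariant derivative (Killing operator) applied to the variation field $V$ along $F_1$. The $1$-jet normalization does make its kernel trivial, but the operator is overdetermined and is nowhere near surjective: its image has infinite-dimensional complement (roughly, divergence-free/TT tensors are not of this form), reflecting the fact that $F^*g_\Lambda$ is always a metric of constant curvature $\Lambda$, so arbitrary perturbations $g_2$ of $g_1$ cannot be realized. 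The constant-curvature hypothesis on $g_2$ is precisely the integrability condition that makes the nonlinear problem solvable (Cartan structure equations/Frobenius, or Petersen's theorem cited in the paper), and it cannot be recovered from an IFT applied to $\mathcal G$ with the full space of metrics as codomain; restricting the codomain to constant-curvature metrics would require a Banach-manifold structure and a tangent-space identification that is essentially equivalent to the nonlinear existence statement you are trying to prove, so the argument becomes circular.

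There is also a secondary issue you wave at but do not resolve: the estimate \eqref{eq:estimate} gains a derivative. Your developing-map construction depends on $g$ through the Christoffel symbols, which are only $\C^{k-1,\alpha}$, so path-integration of the frame equations yields control of $F_2$ (and of $F_1-F_2$) only at the level of roughly $\C^{k,\alpha}$, not $\C^{k+1,\alpha}$; asserting that the construction is ``a smooth operation on $g\in\C^{k,\alpha}$ with values in $\C^{k+1,\alpha}$ maps'' assumes exactly the regularity gain that needs proof. The paper's proof takes existence and uniqueness of $F_2$ as classical and concentrates on the estimate: it first compares $F_1$ and $F_2$ through their exponential-map representations on a finite chain of normal neighborhoods (smooth dependence of geodesics gives a $\C^{k-1}$ bound), and then upgrades to $\C^{k+1,\alpha}$ by composing with harmonic coordinates on $S_\Lambda$, so that $u\circ F_i$ are harmonic for $g_i$ and the difference satisfies an elliptic equation amenable to interior Schauder estimates. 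Some version of this elliptic bootstrap (or of the Calabi--Hartman/Taylor regularity theory for isometries) is needed in your argument as well, and it is absent.
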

\begin{proof}
The existence  and uniqueness of the isometric  immersion $F_2$ is standard, see, e.g. \cite[Theorem 5.6.7]{Petersen:2016}. We just need to show \eqref{eq:estimate}. By compactness of $\Omega$, there exist finitely many points $p_i \in \Omega$ for $i=1,\dots N$ where $p_1 = p$ and normal neighborhoods $U_i$ of $p_i$ with respect to both $g_1, g_2$, provided $\epsilon_0\ll 1$. By relabelling if necessary, we can assume $U_i\cap U_j\neq \emptyset$ for some $1\le j< i$. Denote the exponential maps at $p$ with respect to $g_1, g_2$ at $p$  by $\exp^1_p$ and $\exp^2_p$ respectively.  We will construct $F_2$ starting on $U_1$. Note that $F_1$ on $U_1$ is uniquely determined by, for $q\in U_1$, 
\[
	F_1(q) = \exp_{F_1(p_1)}\circ (dF_1)|_{p_1} \circ (\exp_{p_1}^1)^{-1}(q). 
\]
Since $g_2$ has constant sectional curvature, there is a unique local Riemannian isometry $F_2: (\Omega,\gamma) \to S_{\Lambda}$ that has the same 1-jet at $p_1$ as $F_1$. Note that $F_2$ is at least $\C^{k-1}$ because $F_2$ is uniquely determined by, for $q\in U_1$, 
\[
	F_2(q) = \exp_{F_2(p_1)}\circ (dF_2)|_{p_1} \circ (\exp_{p_1}^2)^{-1}(q)
\]
and $\exp_{p_1}^2\in \C^{k-1}$. 
Since the 1-jets of $F_1$ and $F_2$ coincide at $p_1$ and  $\|(\exp_{p_1}^1)^{-1}-(\exp_{p_1}^2)^{-1} \|_{\C^{k-1}(U_1)}<C\epsilon$ for some $C$ depending only on $g_1$ by smooth dependence of geodesics, we obtain
\[
	\| F_1 - F_2 \|_{\C^{k-1}(U_1)} < C\epsilon
\]
where $C$ depends on $g_1, F_1$ and $U_1$.

We also note that in the above argument, even if the 1-jets of $F_1$ and $F_2$ do not coincide, we can still obtain the same estimate, as long as $|F_1(p_1)-F_2(p_1)| + |dF_1|{p_1} - dF_2|{p_1}|< C\epsilon$ for some $C$ depending only on $g_1$ and $F_1$. Then we repeat the argument for each successive $U_i$ and obtain the estimate on the entire $\Omega$ (we can enlarge $C$ if necessary):
\begin{align}\label{eq:estimate2}
\| F_1 - F_2 \|_{\mathcal{C}^{k-1}(\Omega)} < C\epsilon
\end{align}
where $C$ depends on $g_1, F_1$ and $U_1$. 

While $F_2$ is at least $\C^{k-1}$ from the above argument,  in fact  $F_2\in \mathcal{C}^{k+1,\alpha}(\Omega)$ by \cite[Theorem 2.1]{Taylor:2006}, and we will adapt the argument there to upgrade the estimate \eqref{eq:estimate2} to \eqref{eq:estimate}. Given $x\in S_\Lambda$, pick local harmonic coordinates $(u_1, \dots, u_n)$ on a neighborhood $V$ of $x$. Since $g_\Lambda\in \C^\infty$, each $u_i\in \mathcal{C}^\infty$. We assume $U\subset \Omega$ is sufficiently small so that both  $F_1(U)$ and $F_2(U)$ are contained in $V$.  Since $F_1$ and $F_2$ are local isometries, $v_1:=u\circ F_1$ and $v_2:=u\circ F_2$ are also harmonic on $U$ with respect to $g_1$ and $g_2$ respectively. Therefore, estimating $F_1-F_2$ is equivalent to estimating $v_1 - v_2$. From \eqref{eq:estimate2}, we have $\| v_1 - v_2 \|_{\mathcal{C}^{k-1}(U)} < C\epsilon$, where $C$ depends on $g_1, F_1, U_1, u$.  

 Subtracting the harmonic equations of $v_1$ and $v_2$ gives an elliptic equation for $v_1 - v_2$ with the inhomogeneous terms of the form $|g_1 - g_2||\partial^2 v_1|+ |g_1 \partial g_1 - g_2 \partial g_2| |\partial v_1|$. Let $U_0 \subset U$ be a proper open subset.  The interior Schauder estimate says
\[
	\| v_1 - v_2 \|_{\C^{k+1,\alpha}(U_0)}  \le C\left( \| \mbox{inhomogeneous terms} \|_{\C^{k-1,\alpha}(U)} +\| v_1 - v_2 \|_{\C^0(U_0)} \right).
\]
The right hand side can be bounded by $\| g_1 - g_2 \|_{\C^{k,\alpha}}$, $\|u\circ  F_1\|_{\C^{k+1,\alpha}}$, and $\| v_1 - v_2 \|_{\C^0(U_0)}$, and thus we  obtain $\| v_1 - v_2 \|_{\C^{k+1,\alpha}(U_0)} \le C\epsilon$. It is then direct to see that the local estimate can be patched up to obtain the global estimate, which implies \eqref{eq:estimate}.

\end{proof}

Let $\mathcal N_\Lambda^{k,\alpha} (\Omega)$ denote the space of $\C^{k,\alpha}$ metrics on $\Omega$ with constant sectional curvature $\Lambda$. Note that when $n=3$, $\mathcal N_\Lambda^{k,\alpha} (\Omega) = \mathcal M_\Lambda^{k,\alpha} (\Omega) $, the space of Einstein metrics. Let $F_0 : \Omega \to S_\Lambda$ be a smooth immersion. Recall $\mathrm{Imm}^{k+1,\alpha}_p(\Omega, S_\Lambda)  =\big \{ F:\Omega \to S_\Lambda :  \mbox{$F$ is a $\C^{k+1,\alpha}$  immersion with } F(p) = F_0(p), dF|_p = d F_0|_p \big\}$ and $\mathscr D_p^{k+1,\alpha} (\Omega) = \left\{ \psi \in \mathscr D^{k+1,\alpha}(\Omega): \psi(p)=p, \,  d\psi|_p=\mathrm{Id} \right\}$. 

\begin{lemma}\label{le:embedding}
There is a local diffeomorphism
\[
\Psi: \mathrm{Imm}_p^{k+1,\alpha} (\Omega, S_\Lambda)/\mathscr D_p^{k+1,\alpha}(\Omega) \to \mathcal N_\Lambda^{k,\alpha} (\Omega)/\mathscr D^{k+1,\alpha}(\Omega).
\] 
\end{lemma}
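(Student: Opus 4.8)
\textbf{Proof proposal for Lemma~\ref{le:embedding}.}

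The plan is to build the map $\Psi$ by sending an immersion $F$ to the pulled-back metric $F^*g_\Lambda$ and to verify that this descends to a well-defined local diffeomorphism between the indicated quotient spaces. First I would define $\widetilde\Psi: \mathrm{Imm}_p^{k+1,\alpha}(\Omega, S_\Lambda)\to \mathcal N_\Lambda^{k,\alpha}(\Omega)$ by $\widetilde\Psi(F)=F^*g_\Lambda$; this is well-defined because pulling back a constant-curvature-$\Lambda$ metric by an immersion yields a metric of constant sectional curvature $\Lambda$, and it is a smooth map of Banach manifolds. Next I would check equivariance: for $\psi\in\mathscr D_p^{k+1,\alpha}(\Omega)$ one has $\widetilde\Psi(F\circ\psi)=\psi^*(F^*g_\Lambda)=\psi^*\widetilde\Psi(F)$, and since $\psi$ restricts to the identity on $\Sigma$ and fixes the $1$-jet at $p$, the group $\mathscr D_p^{k+1,\alpha}(\Omega)$ maps into $\mathscr D^{k+1,\alpha}(\Omega)$ acting on the target. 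Hence $\widetilde\Psi$ descends to a map $\Psi$ on the quotients. (One should note that an immersion $F$ and its precomposition $F\circ\psi$ with a $1$-jet-fixing $\psi$ are genuinely identified in the source quotient, so this is the natural equivalence.)

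The substantive point is that $\Psi$ is a local diffeomorphism, and here I would invoke Proposition~\ref{pr:immersion}. Injectivity near a base immersion $F_0$: if $F_1, F_2$ are two immersions close to $F_0$ with $F_1^*g_\Lambda$ and $F_2^*g_\Lambda$ differing by a diffeomorphism $\psi\in\mathscr D^{k+1,\alpha}(\Omega)$, then $F_2\circ\psi$ and $F_1$ are both local Riemannian isometries of $(\Omega, F_1^*g_\Lambda)$ into $S_\Lambda$; after adjusting $\psi$ within its class so that the $1$-jets at $p$ match (using that $\psi$ is free to differ by an element fixing the $1$-jet at $p$, i.e.\ by an element of $\mathscr D_p^{k+1,\alpha}(\Omega)$), the rigidity of isometric immersions of constant-curvature spaces (\cite[Theorem 5.6.7]{Petersen:2016}, as already cited in Proposition~\ref{pr:immersion}) forces $F_2\circ\psi = F_1$, so the two classes in the source quotient coincide. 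Surjectivity onto a neighborhood: given $g_2\in\mathcal N_\Lambda^{k,\alpha}(\Omega)$ close to $g_1=F_1^*g_\Lambda$, one first composes with an element of $\mathscr D^{k+1,\alpha}(\Omega)$ so that $dF_1|_p$ becomes a linear isometry $(T_p\Omega, g_2)\to (T_{F_1(p)}S_\Lambda, g_\Lambda)$ — this is a finite-dimensional adjustment at the single point $p$, absorbed by the diffeomorphism freedom — and then Proposition~\ref{pr:immersion} produces a local isometry $F_2$ with $F_2^*g_2 = g_2$... more precisely with $F_2:(\Omega,g_2)\to(S_\Lambda,g_\Lambda)$ an isometry, hence $\widetilde\Psi(F_2)=g_2$, and the estimate \eqref{eq:estimate} gives the needed continuity. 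That $\Psi$ is a diffeomorphism onto its image (not merely a continuous bijection) follows because $\widetilde\Psi$ is smooth with smooth local inverse given by the $F_2$-construction, whose smooth dependence on $g_2$ is again \eqref{eq:estimate} upgraded to differentiability by differentiating the defining ODE/exponential-map relations.

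The main obstacle I anticipate is the bookkeeping of the two diffeomorphism actions — $\mathscr D_p^{k+1,\alpha}(\Omega)$ on the source and $\mathscr D^{k+1,\alpha}(\Omega)$ on the target — and in particular showing that the $1$-jet normalization at $p$ exactly compensates for the kernel of the $\mathscr D^{k+1,\alpha}(\Omega)$-action relative to the $\mathscr D_p^{k+1,\alpha}(\Omega)$-action. Concretely, one must argue that the ``extra'' freedom in the target quotient, namely diffeomorphisms that move the $1$-jet at $p$, is precisely matched by the normalization $F(p)=F_0(p), dF|_p = dF_0|_p$ imposed in the source; this is a linear-algebra/transversality check at the point $p$, combined with the fact that the isometry group of $S_\Lambda$ acts simply transitively on orthonormal frames. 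The analytic input (existence, uniqueness, and smooth dependence of isometric immersions) is entirely supplied by Proposition~\ref{pr:immersion} and the cited rigidity theorem, so no genuinely new estimates are needed.
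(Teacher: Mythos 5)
Your definition of $\Psi$ and the check that $F\mapsto F^*g_\Lambda$ descends to the quotients match the paper, which treats this part as obvious. Where you diverge is in how the local-diffeomorphism property is obtained. The paper does not run an injectivity-plus-surjectivity argument at all: it constructs a smooth local inverse by representing a nearby class in $\mathcal N_\Lambda^{k,\alpha}(\Omega)/\mathscr D^{k+1,\alpha}(\Omega)$ by its harmonic-gauge representative $\gamma$ (Remark~\ref{re:harmonic}), correcting $\gamma$ to $\gamma_1=\psi_1^*\gamma$ by a diffeomorphism $\psi_1$ that is the identity near $\Sigma$ and whose differential at $p$ is a smoothly chosen linear isometry $S_\gamma:(T_p\Omega,g)\to(T_p\Omega,\gamma)$ (so that $dF_0$ becomes a linear isometry for $\gamma_1$ and Proposition~\ref{pr:immersion} applies), and then proving that the resulting class $\llbracket G_1\rrbracket$ is independent of the choice of $\psi_1$, via the uniqueness statement in Proposition~\ref{pr:immersion} (the identity $G_1\circ\psi_1^{-1}\circ\psi_2=G_2$). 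Your surjectivity paragraph is essentially this construction stated loosely, but it omits the two points the paper is careful about: the smooth dependence of the jet correction on the metric (the role of $S_\gamma$ and $\psi_1$), and the well-definedness of the inverse, i.e.\ its independence of the correcting diffeomorphism.

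The genuine gap is in your injectivity argument. From $\psi^*(F_2^*g_\Lambda)=F_1^*g_\Lambda$ with $\psi\in\mathscr D^{k+1,\alpha}(\Omega)$, the rigidity of local isometries into $S_\Lambda$ only yields $F_2\circ\psi=\Theta\circ F_1$ for some ambient isometry $\Theta$ of $S_\Lambda$; to conclude that the classes coincide you must eliminate $\Theta$, which requires the $1$-jets of $F_2\circ\psi$ and $F_1$ at $p$ to agree. Your proposed fix --- ``adjusting $\psi$ within its class so that the $1$-jets at $p$ match, using that $\psi$ is free to differ by an element of $\mathscr D_p^{k+1,\alpha}(\Omega)$'' --- does not achieve this: precomposing with an element of $\mathscr D_p^{k+1,\alpha}(\Omega)$ leaves the $1$-jet of $\psi$ at $p$ unchanged (that is precisely the defining property of $\mathscr D_p^{k+1,\alpha}(\Omega)$), and any other modification of $\psi$ destroys the relation $\psi^*(F_2^*g_\Lambda)=F_1^*g_\Lambda$ unless the modifying map is an isometry of $F_1^*g_\Lambda$. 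Note that $\psi$ is only constrained on $\Sigma$; at the interior point $p$ it can have a nontrivial $1$-jet, and when $d\psi|_p$ is a nontrivial linear isometry for the pulled-back metrics the isometry $\Theta$ above is genuinely nontrivial (for instance $F_1=F_0$, $\psi$ equal to a small rotation fixing $p$ on a neighborhood of $p$ and the identity near $\Sigma$, and $F_2=\Theta\circ F_0\circ\psi^{-1}\in\mathrm{Imm}_p^{k+1,\alpha}(\Omega,S_\Lambda)$), so your deduction stalls at $F_2\circ\psi=\Theta\circ F_1$ and the claimed injectivity does not follow by the route you describe. If you wish to keep a two-sided argument rather than the paper's inverse construction, you must supply an additional step that absorbs or rules out this ambient isometry; as written, the transversality ``bookkeeping'' you flag as the main obstacle is exactly the point left unproved.
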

\begin{proof}
Define the map 
\[
\Psi :\mathrm{Imm}_p^{k+1,\alpha} (\Omega, S_\Lambda)/\mathscr D_p^{k+1,\alpha}(\Omega) \to \mathcal N_\Lambda^{k,\alpha} (\Omega)/\mathscr D^{k+1,\alpha}(\Omega)
\]
 by sending the equivalent class $\llbracket F \rrbracket$ to $\llbracket F^*(g_\Lambda|_{F(\Omega)})\rrbracket$. It is obvious that $\Psi$ is well-defined and smooth, and $\Psi(\llbracket F \rrbracket)$ is independent of the choice of representative~$F$. 

We construct a local inverse for $\Psi$ as follows. Fix $F\in \mathrm{Imm}_p^{k+1,\alpha}(\Omega, S_\Lambda)$ and let $g = F^* (g_\Lambda|_{F(\Omega)})$. As in Lemma~\ref{le:gauge} and Remark~\ref{re:harmonic}, let $\mathcal U_\Lambda$ be a neighborhood of $g$ in $\mathcal{N}_\Lambda^{k,\alpha} (\Omega)$. Let  $\gamma$ be the unique harmonic representative in  $\llbracket \gamma \rrbracket\in \mathcal U_\Lambda/\mathscr D_p^{k+1,\alpha}(\Omega) $. Our goal is to identify $\gamma$ with some $G\in  \mathrm{Imm}_p^{k+1,\alpha} (\Omega, S_\Lambda)$. Note that we cannot simply take $G$ to be a local Riemannian isometry of $(\Omega, \gamma)$ into the spaceform because $dF_0$ need not be a linear isometry between $(T_p\Omega, \gamma)$ and $(T_{F_0(p)} S_\Lambda, g_\Lambda)$ and thus such $G$ does not have the same $1$-jet as $F_0$.  Below we describe the procedure to find another representative in $\llbracket \gamma\rrbracket $ that has such linear isometry and show that the procedure depends smoothly on $\gamma$ and is well-defined.

Let  $S_\gamma:   (T_p \Omega, g) \to (T_p\Omega, \gamma) $ be a linear isometry that depends smoothly on $\gamma$.  One can find $\psi_1$ sufficiently close to $\mathrm{Id}_\Omega$ such that $\psi_1(p)=p$ and $d\psi_1|_p = S_\gamma$.  (For instance, construct $\psi_1$ locally around a coordinate chart of $p$ to be $S_\gamma$ at $p$, and then transition $\psi_1$ to be the identity map near the boundary.) Let $\gamma_1 = \psi_1^* \gamma$, and thus $(T_p\Omega, \gamma_1)$ and $(T_p \Omega, g)$ are linearly isometric, and hence $dF_0: (T_p\Omega, \gamma_1)\to (T_{F_0(p)}S_\Lambda, g_{\Lambda})$ is a linear isometry.  By Proposition~\ref{pr:immersion}, there is a unique isometric immersion $ G_1: (\Omega, \gamma_1)\to (S_\Lambda, g_\Lambda)$ having the same 1-jet as $F_0$ at $p$. By construction, the map from $\gamma$ to $G_1$ is smooth. We define $\Psi^{-1}(\llbracket \gamma \rrbracket)=\llbracket G_1 \rrbracket$.

We verify that the inverse map is independent of the choice of the diffeomorphism $\psi_1$. Let another $\psi_2$ satisfy $\psi_2(p)=p$, $d\psi_2|_p = S_\gamma$, and $\gamma_2 := \psi_2^* \gamma $. Denote the corresponding immersion by $G_2: (\Omega, \gamma_2)\to (S_\Lambda, g_\Lambda)$. It is direct to verify that $G_1\circ \psi_1^{-1}\circ \psi_2$ is also an immersion of $(\Omega, \gamma_2)$ that has the same 1-jet as $G_2$ at $p$. By the uniqueness of such immersion, we must have $G_1\circ \psi_1^{-1}\circ \psi_2 = G_2$, which implies $G_1$ and $G_2$ are in the same equivalence class.

\end{proof}

\section{Global rigidity for conformal Cauchy boundary}\label{se:global}
 We give the  global rigidity for the (nonlinear) conformal Cauchy boundary data to motivate the linear version Theorem~\ref{th:infinitesimal}. It is of independent interest and is not used elsewhere in the paper.

The Fundamental Theorem of Surfaces asserts that if $\hat F, F: \Sigma \to \mathbb R^3$ are two immersions with the same Cauchy boundary condition, i.e. the same induced metric and second fundamental form, then $\hat F$ and $F$ are congruent. We extend this fundamental theorem to accommodate the conformal Cauchy boundary condition.

\begin{proposition}\label{pr:global}
Let $\Sigma$ be a closed surface and $\hat F, F: \Sigma\to \mathbb R^3$ be two immersions. Suppose there is a $\C^2$ function $u>0$ on $\Sigma$ such that 
\begin{align*}
\hat \gamma = u \gamma\quad \mbox{ and } \quad \hat A = u A
\end{align*}
where $\hat \gamma, \gamma$ are the induced metrics and $\hat A, A$ are the induced second fundamental form of $\hat F, F$ respectively. Then  $\hat F, F$ are congruent. 
\end{proposition}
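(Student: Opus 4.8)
The plan is to reduce the conformal Cauchy boundary condition to the classical Cauchy boundary condition by showing that the conformal factor $u$ must be identically constant, after which the Fundamental Theorem of Surfaces applies directly. The key observation is that if $\hat\gamma = u\gamma$ and $\hat A = uA$ then the mean curvatures satisfy $\hat H = \tr_{\hat\gamma}\hat A = u^{-1}\tr_\gamma(uA) = \tr_\gamma A = H$, so the two immersions have the \emph{same} mean curvature, and likewise the Gauss curvatures of the two induced metrics must be compared. Indeed, the second fundamental forms being conformally related with the same factor $u$ forces the shape operators $S = \gamma^{-1}A$ and $\hat S = \hat\gamma^{-1}\hat A = \gamma^{-1}A = S$ to coincide. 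Hence the principal curvatures of $\hat F$ and $F$ at corresponding points are equal, and in particular the extrinsic Gauss curvature $K_{\mathrm{ext}} = \det S$ is the same for both immersions.

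First I would extract from $\hat S = S$ the two consequences: $\hat H = H$ and $\det\hat S = \det S$, i.e. the Gauss curvatures of $\hat\gamma$ and $\gamma$ are equal, $K_{\hat\gamma} = K_\gamma$ by Gauss's Theorema Egregium (both equal $\det S$ since the ambient space is flat $\mathbb R^3$). Next I would use the fact that $K_{\hat\gamma} = K_{u\gamma}$ transforms under the conformal change by the standard two-dimensional formula
\begin{align*}
	K_{u\gamma} = \tfrac{1}{u}\left( K_\gamma - \tfrac{1}{2}\Delta_\gamma \log u \right).
\end{align*}
Combining with $K_{\hat\gamma} = K_\gamma$ yields the elliptic equation
\begin{align*}
	\tfrac{1}{2}\Delta_\gamma \log u = (1 - u)\, K_\gamma \quad \text{on } \Sigma.
\end{align*}
This is one equation for $\log u$; to conclude $u\equiv\mathrm{const}$ I need a second relation. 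That second relation comes from the Codazzi equations: since both $(\gamma, A)$ and $(\hat\gamma,\hat A) = (u\gamma, uA)$ satisfy Codazzi for immersions into $\mathbb R^3$, subtracting the two Codazzi identities produces first-order constraints coupling $du$ to $A$ (schematically, $A(\nabla\log u,\cdot)$ equals its trace part, forcing $du$ to lie in a specific distribution determined by the traceless part $\mathring A$ of $A$). Where $\mathring A \neq 0$ this pins down $du = 0$; where $\mathring A \equiv 0$ (umbilic region) one invokes instead that an umbilic immersion of a piece of a closed surface is a piece of a round sphere, and on a round sphere the constant-mean-curvature and conformal constraints are incompatible with nonconstant $u$ unless $u$ is constant — this is where Hopf's theorem on constant mean curvature spheres, mentioned in the text, enters to rule out exotic behavior on a globally umbilic $\Sigma$ (which must be a round sphere). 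Assembling these, $u$ is locally constant, hence constant since $\Sigma$ is connected (a closed surface; if disconnected, argue componentwise and note the scaling is determined).

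Once $u$ is a constant $c > 0$, rescaling $F$ by $\sqrt{c}$ gives an immersion $\sqrt{c}\,F$ with induced metric $c\gamma = \hat\gamma$ and second fundamental form $\sqrt{c}\,A$; but we need $\hat A = cA$, so in fact I should be more careful: $\hat A = cA$ together with $\hat\gamma = c\gamma$ means $\hat S = \hat\gamma^{-1}\hat A = (c\gamma)^{-1}(cA) = \gamma^{-1}A = S$ is automatic, and the immersion $\sqrt{c}\,F$ has induced metric $c\gamma$ and second fundamental form $\sqrt{c}\,A$, which equals $\hat A = cA$ only if $\sqrt{c} = c$, i.e. $c = 1$. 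So in fact the constancy argument, combined with matching the second fundamental forms (not just the shape operators), forces $c=1$ directly, giving $\hat\gamma = \gamma$ and $\hat A = A$; then the Fundamental Theorem of Surfaces (Bonnet) yields that $\hat F$ and $F$ are congruent.

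\textbf{Main obstacle.} The delicate point is ruling out nonconstant $u$: the elliptic equation $\Delta_\gamma\log u = 2(1-u)K_\gamma$ alone does not force $u$ constant (it is a genuine semilinear PDE with many solutions in general), so the argument genuinely requires the \emph{extra} first-order information from the Codazzi equations and a careful case analysis on the umbilic locus, exactly paralleling the structure of the proof of Theorem~\ref{th:infinitesimal}. Handling the umbilic region correctly — where one must patch the local "piece of a round sphere" conclusion across $\Sigma$ and bring in Hopf's uniqueness theorem — is the step I expect to be the most technical, and it is the reason the statement is restricted to surfaces in $\mathbb{R}^3$ rather than being a soft consequence of Bonnet's theorem.
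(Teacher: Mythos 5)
Your skeleton (take traces to get $\hat H=H$; Gauss to equate curvatures and derive a semilinear equation for the conformal factor; Codazzi to kill $du$ where $\mathring A\neq 0$; Bonnet at the end) is the same as the paper's, but your handling of the umbilic locus is a genuine gap, not just a technicality. Off the umbilic set the constraint $\mathring A(\nabla \log u,\cdot)=0$ does give $du=0$, but on an open umbilic piece the Codazzi constraint is vacuous, and the curvature equation $\tfrac12\Delta_\gamma\log u=(1-u)K_\gamma$ by itself does \emph{not} force $u$ to be constant there: on a planar umbilic piece ($K_\gamma=0$) any nonconstant harmonic $\log u$ solves it, and on a spherical cap the linearization at $u\equiv1$ is $\Delta v+2K_\gamma v=0$, whose kernel on the round sphere is spanned by first spherical harmonics — exactly the infinitesimal non-rigidity of Example~\ref{ex:conformal} and Theorem~\ref{th:infinitesimal}, so no soft ``incompatibility'' argument can be available. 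Hopf's theorem only disposes of the case where $\Sigma$ is \emph{globally} umbilic (then both immersions are CMC spheres of the same mean curvature); it says nothing about a partial umbilic region of a non-umbilic surface. Moreover, ``$u$ locally constant on the non-umbilic set, hence constant by connectedness'' does not follow: the umbilic locus is just a closed set and can disconnect the non-umbilic set, so $u$ could a priori take different constants on different components.

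The paper closes precisely this gap by a different mechanism. After discarding the globally umbilic case via Hopf, it shows there is a nonempty open set $U$ on which \emph{both} $R_\Sigma\neq0$ and $\mathring A\neq0$; this needs its own argument (if $\mathring A\equiv0$ on $\{R_\Sigma\neq0\}$, Codazzi makes $H$ locally constant there, Gauss makes $R_\Sigma$ locally constant, and continuity up to the boundary of that set forces $R_\Sigma\equiv0$, a contradiction), and you have no substitute for it. On $U$ the Codazzi identity \eqref{eq:con2} gives $\nabla u=0$, and then the curvature equation \eqref{eq:con} together with $R_\Sigma\neq0$ gives the exact value $u\equiv1$, not merely constancy; unique continuation for that elliptic equation then propagates $u\equiv1$ over all of $\Sigma$, so the umbilic locus never has to be analyzed, and Bonnet finishes. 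If you want to keep your route you need both of these ingredients (existence of $U$, and unique continuation instead of ``patching'' across the umbilic set). Finally, your concluding scaling step is a non sequitur as written: comparing $\hat F$ with $\sqrt c\,F$ only shows those two particular immersions are not congruent; it does not force $c=1$. The repair is immediate from your own equation: $u\equiv c$ gives $(1-c)K_\gamma\equiv0$, and a closed surface in $\mathbb R^3$ has a point with $K_\gamma>0$, hence $c=1$.
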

\begin{remark}
 The global rigidity holds for round spheres via uniqueness of CMC spheres, while the infinitesimal rigidity fails for round spheres  as demonstrated in Theorem~\ref{th:infinitesimal}.

\end{remark}
\begin{proof}
We denote the geometric quantities associated with $\hat g$ and $g$ by an hat and without an hat respectively. By taking the trace of the second fundamental form, those two immersions give the same induced mean curvature $\hat H = H$. 

We may assume that $\hat F(\Sigma)$ is not umbilic; otherwise $\hat H$ and $H$ are both constant, and the desired conclusion follows directly from Hopf's uniqueness theorem of immersed spheres of constant mean curvature. 

The Gauss equation implies the scalar curvatures $\hat R_\Sigma=R_\Sigma$. Together with the  conformal transformation formula  $\hat R_\Sigma = u^{-1} (R_\Sigma -\Delta_\Sigma u + u^{-2} |\nabla_\Sigma u|^2)$,   we obtain 
\begin{align}\label{eq:con}
	\Delta_\Sigma u- u^{-2} |\nabla_\Sigma u|^2 = R_\Sigma (1-u).
\end{align}
By the Codazzi equation, $\Div_{\hat \gamma} \hat  A = dH = \Div_\gamma A$, and we compute 
\begin{align}
	0 = \Div_\gamma (u^{-1} \hat  A - A) &= \Div_{\hat  \gamma} \hat  A + \tfrac{1}{2} u^{-1} d u H - u^{-2} u_{,k} \hat A_{i\ell} \gamma^{k\ell} - \Div_\gamma A \notag\\
	&= \tfrac{1}{2} u^{-1} d u H - u^{-2} u_{,k} \hat A_{i\ell} \gamma^{k\ell} \notag\\
	&= - u^{-2} \big(\hat  A- \tfrac{1}{2} H \hat  \gamma\big)(\nabla_\gamma u, \cdot).\label{eq:con2}
\end{align}

Define the open subset   $\Sigma_0\subset \hat F(\Sigma)$ consisting of the points where $\hat R_\Sigma \neq 0$. Note that $\Sigma_0\neq \emptyset$. We claim that the traceless part of the second fundamental form  $\hat  A- \tfrac{1}{2} H \hat  \gamma$ is not  identically zero on $\Sigma_0$.  We may assume that $\Sigma_0$ is not the entire $\hat F(\Sigma)$; otherwise the claim follows. 

 Suppose, to get a contradiction, $\hat  A- \tfrac{1}{2} H \hat  \gamma\equiv 0$ on $\Sigma_0$. Then by the Codazzi equation $\hat H$ is locally constant on $\Sigma_0$, and then so is $\hat R_\Sigma$ by the Gauss equation. It implies that $\hat R_\Sigma$ is identically zero. A contradiction. 

Let $U$ be the subset of $\hat F(\Sigma)$ so both $\hat R_\Sigma \neq 0$ and $\hat  A- \tfrac{1}{2} H \hat  \gamma \neq 0$. Then by \eqref{eq:con} and \eqref{eq:con2}, $u\equiv 1$ on $U$, and hence $u\equiv 1$ on $\Sigma$ by unique continuation. By Fundamental Theorem of Surfaces $\hat  F, F$ are congruent.
\end{proof}

\section{Desingularization} \label{se:remove}

In this section, we discuss the procedure to ``desingularize" the coordinate singularity in the Euclidean Schwarzschild, anti-de Sitter Schwarzschild metric, and de Sitter Schwarzschild by periodizing the ``time'' coordinate $t$ to a circle of appropriate period. While this method is familiar to experts, we had difficulties in locating literature covering the cases of $\Lambda \neq 0$ and the higher dimensional cases $n>4$. Below, we present a unified approach and determine the explicit period for $t$ in all cases. 
 
 For each $n\ge 4$,  $\Lambda \in \mathbb R$, and $m>0$, we define the function $f(r)=1-2mr^{3-n}-\Lambda r^2$. When $\Lambda>0$, we furthermore assume 
  \begin{align}
 m^2\Lambda^{n-3}<\frac{(n-3)^{n-3}}{(n-1)^{n-1}}.\tag{\ref{eq:Lambda}}
 \end{align} 
 Then, as discussed in Section~\ref{se:example}, $f(r)$ has a unique positive root $r_0$ when $\Lambda \le 0$, while $f(r)$ has two positive roots $r_0 < r_1$ when $\Lambda > 0$. In the following, we will simultaneously address all cases of $\Lambda$, making it convenient to denote $r_1 = +\infty$ for the case $\Lambda \le 0$.

Our goal is to show that the metric
\[
g= f(r)dt^2+\frac{1}{f(r)}dr^2+r^2 g_{S^{n-2}},
\]
can be extended at $r=r_0$. It is sufficient to extend the two dimensional metric $f(r)dt^2+\frac{1}{f(r)}dr^2$. Consider the following change of coordinates. Let $F:[r_0, r_1)\to \mathbb R$ be a smooth function satisfying $F(r_0)=0$ and $F'(r)>0$ and let $\ell$ be a positive constant, both to be determined below. Define the new coordinates
\[
	x = \sqrt{F(r)} \cos \left(\frac{t}{2\ell}\right) \quad \mbox{ and } \quad y= \sqrt{F(r)} \sin\left (\frac{t}{2\ell}\right)
\]  
where $\frac{t}{2\ell} \in [0, 2\pi)$. It is direct to compute 
\begin{align*}
\frac{4\ell }{F'(r)} (dx^2 + dy^2) &= \frac{\ell F'(r)}{F(r)} dr^2 + \frac{F(r)}{\ell F'(r)} dt^2.
\end{align*}

We show in the proposition below that there is  a positive constant $\ell>0$ and a smooth function $F: [r_0, r_1)\to \mathbb R$  so that 
\begin{align}\label{eq:F}
F(r_0)=0, \quad F'(r)>0,\quad \mbox{ and } \quad \frac{\ell  F'(r)}{F(r)} = \frac{1}{ f(r)}, \quad \mbox{ for all } r\ge r_0.
\end{align}
It then implies that 
\[
f(r) dt^2 + \frac{1}{f(r)} dr^2 = \frac{4\ell }{F'(r)} (dx^2 + dy^2)
\]
and thus the metric can be extended at $r=r_0$.

\begin{proposition}
Let $\ell =  \frac{1}{(n-3) r_0^{-1} - (n-1)\Lambda r_0} >0$. Then there is a smooth function $F:[r_0, r_1)\to \mathbb R$ satisfying \eqref{eq:F}.
\end{proposition}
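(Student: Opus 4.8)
The plan is to construct $F$ explicitly by solving the separable ODE in \eqref{eq:F} and then checking the regularity at the endpoint $r_0$. Rewriting the equation $\frac{\ell F'(r)}{F(r)} = \frac{1}{f(r)}$ as $\frac{d}{dr}\log F(r) = \frac{1}{\ell f(r)}$, the natural candidate is
\[
F(r) = \exp\left( \frac{1}{\ell}\int_{r_0}^r \frac{ds}{f(s)}\right) \cdot C
\]
for some constant, except that this diverges as $r \to r_0^+$ because $f$ has a simple zero at $r_0$. Indeed $f(r_0) = 0$ and $f'(r_0) = (n-3)r_0^{-1} - (n-1)\Lambda r_0 = 1/\ell \neq 0$ (using $2mr_0^{3-n} = 1 - \Lambda r_0^2$ to simplify $f'(r_0)$), so near $r_0$ we have $f(s) \approx \frac{1}{\ell}(s - r_0)$, hence $\frac{1}{\ell f(s)} \approx \frac{1}{s-r_0}$ and the integral behaves like $\log(r - r_0)$. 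Therefore $F(r) \sim c(r - r_0)$ as $r\to r_0$, which is exactly what makes $F(r_0) = 0$ possible with $F$ extending smoothly.

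The cleaner approach is to write $F(r) = (r - r_0) G(r)$ and determine $G$. Substituting into \eqref{eq:F}: we need $\ell\big(G(r) + (r-r_0)G'(r)\big) = \frac{(r-r_0)G(r)}{f(r)}$, i.e.
\[
\ell\left(\frac{G(r)}{r - r_0} + G'(r)\right) = \frac{G(r)}{f(r)}.
\]
Since $f(r)/(r - r_0)$ is a smooth, positive function on $[r_0, r_1)$ — call it $h(r)$, with $h(r_0) = f'(r_0) = 1/\ell$ — this rearranges to $\frac{G'(r)}{G(r)} = \frac{1}{\ell}\left(\frac{1}{f(r)} - \frac{1}{\ell(r - r_0)}\cdot \ell\right) \cdot$ something; more precisely $\frac{G'(r)}{G(r)} = \frac{1}{(r-r_0)}\left(\frac{1}{\ell h(r)} - 1\right)\cdot\frac{1}{\ell}\cdot\ell$. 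The key point is that $\frac{1}{\ell h(r)} - 1$ vanishes at $r_0$ (since $h(r_0) = 1/\ell$), so $\frac{1}{r - r_0}\big(\frac{1}{\ell h(r)} - 1\big)$ extends to a smooth function on $[r_0, r_1)$. Hence $\log G$ has a smooth derivative on $[r_0, r_1)$, so $G(r) = \exp\big(\int_{r_0}^r (\text{smooth})\,ds\big)$ is smooth and strictly positive on $[r_0, r_1)$, and then $F(r) = (r - r_0)G(r)$ is smooth, satisfies $F(r_0) = 0$, $F'(r_0) = G(r_0) > 0$, and $F'(r) = G(r) + (r-r_0)G'(r) > 0$ wherever the right side is positive — which near $r_0$ is clear, and away from $r_0$ follows since $F'(r)/F(r) = \frac{1}{\ell f(r)} > 0$ and $F(r) > 0$.

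The main obstacle — really the only non-routine point — is verifying that $f'(r_0) = 1/\ell$, which is what forces the apparent singularity of $\frac{1}{f}$ at $r_0$ to be exactly cancelled so that $G$ is smooth rather than merely continuous; this is a direct computation using $f(r_0) = 0$ to eliminate $m$. The remaining work is bookkeeping: confirming $f$ has no other zeros in $(r_0, r_1)$ (already established in Section~\ref{se:example}, so $1/f$ is smooth there), checking $F'>0$ throughout, and noting that the circle period $2\ell$ for $t$ matches (the coordinates $x = \sqrt{F(r)}\cos(t/2\ell)$, $y = \sqrt{F(r)}\sin(t/2\ell)$ are genuinely defined on a disk near $r_0$ precisely because $t/2\ell$ has period $2\pi$). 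I would present the $F(r) = (r - r_0)G(r)$ substitution, do the $f'(r_0)$ computation, invoke smoothness of $h(r) = f(r)/(r-r_0)$, and conclude.
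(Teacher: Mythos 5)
Your proposal is correct and takes essentially the same route as the paper: both solve the separable ODE in \eqref{eq:F} by isolating the simple pole of $\frac{1}{\ell f}$ at $r_0$ and verifying that its coefficient is exactly $1$, your key computation $f'(r_0)=\frac{1}{\ell}$ being equivalent to the paper's condition $a=\frac{r_0^{n-3}}{\ell\, p(r_0)}=1$ since $p(r_0)=r_0^{n-3}f'(r_0)$. Writing the ansatz $F(r)=(r-r_0)G(r)$ and solving a smooth ODE for $G$, versus the paper's partial-fraction split of $\frac{1}{f}=\frac{r^{n-3}}{(r-r_0)p(r)}$ followed by exponentiation, is only a cosmetic difference.
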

\begin{proof}
It is straightforward to see that $r_0$ is a root of the polynomial $r^{n-3} f(r)$ with multiplicity $1$, so $r^{n-3} f(r)=(r - r_0) p(r)$, where $p(r)$ is a polynomial  and $p(r) > 0$ for all $ r \in [r_0,  r_1)$. By the Taylor expansion of $\frac{r^{n-3}}{p(r)}$ at $r=r_0$, we have 
\begin{align*}
	\frac{1}{f(r)} = \frac{r^{n-3}}{(r-r_0) p(r)} = \frac{r_0^{n-3}}{p(r_0)(r-r_0)} + b_0(r)
\end{align*}
where $b_0(r)$ is a smooth function defined for all $r\in [r_0, r_1)$. Denote the positive constant 
\[
a=  \frac{r_0^{n-3}}{\ell p(r_0)}
\] 
and note
\[
	p(r_0 )=\lim_{r\to r_0} \frac{r^{n-3} f(r)}{r-r_0} = (n-3)r_0^{n-4} - (n-1)\Lambda r_0^{n-2} > 0. 
\]
Therefore,  
\begin{align*}
\int_{r_0}^r \frac{1}{\ell f(s)} \,ds &= \int_{r_0}^r \frac{a}{ (s-r_0)} \, ds + \int_{r_0}^r \frac{1}{\ell} b_0(s)\, ds=\log (s-r_0)^a \Big|_{s=r_0}^{s=r} + b_1(r)
\end{align*}
where $b_1(r)$ is a smooth function for all $r\in [ r_0, r_1)$.  

We define $F(r)$ as follows: 
\begin{align*}
	F(r) &= e^{\int_{r_0}^r \frac{1}{\ell f(s)} \,ds}=(r-r_0)^a e^{b_1(r)}
\end{align*}
and compute 
\begin{align*}
	F'(r) &=\frac{F(r)}{\ell f(r)}=\frac{(r-r_0)^a r^{n-3}e^{b_1(r)}}{\ell \, (r-r_0)p(r)}=\frac{(r-r_0)^{a-1} r^{n-3}e^{b_1(r)}}{\ell \, p(r)}.
\end{align*}
If we require the exponent $a=1$, or equivalently 
\[
 \ell =  \frac{1}{(n-3) r_0^{-1} - (n-1)\Lambda r_0},
\]
it is direct to see that $F(r)$ defined above satisfies all the properties of \eqref{eq:F}.
\end{proof}
\begin{remark}
For $\Lambda>0$, one can also apply the same procedure at the other root $r=r_1$ and show that by letting 
\[
 \hat \ell =  -\frac{1}{(n-3) r_1^{-1} - (n-1)\Lambda r_1},
\]
one can extend $f(r) dt^2 + \frac{1}{f(r)} dr^2$ to be defined at $r=r_1$ for  $\frac{t}{2\hat \ell} \in [0, 2\pi)$. Note that the function $(n-3) r^{-1} - (n-1)\Lambda r$ is strictly decreasing for $r>0$, so $\ell$ is never equal to $\hat {\ell}$. Therefore, the desingularized metric must have a conical singularity at the other root.

\end{remark}

\bibliographystyle{amsplain}
\bibliography{2023}
\end{document}